\newtheorem{thm}{Theorem}[section]
\newtheorem{thm*}{Theorem}
\newtheorem{lem}[thm]{Lemma}
\newtheorem{cor}[thm]{Corollary}
\newtheorem{cor*}[thm*]{Corollary}
\newtheorem{prop}[thm]{Proposition}
\newtheorem{prop*}[thm*]{Proposition}
\theoremstyle{remark}
\newtheorem{remark}[thm]{Remark}
\newtheorem{remark*}[thm*]{Remark}
\theoremstyle{definition}
\newtheorem{deef}[thm]{Definition}
\newtheorem{deef*}[thm*]{Definition}
\newtheorem*{notation}{Notation}
\newcommand{\R}{\mathbbm{R}}
\newcommand{\C}{\mathbbm{C}}
\newcommand{\Z}{\mathbbm{Z}}
\newcommand{\N}{\mathbbm{N}}
\newcommand{\id}{ \mathrm{id}}
\newcommand{\rd}{\mathrm{d}}
\newcommand{\Ko}{\mathbbm{K}}
\newcommand{\Bo}{\mathcal{B}}
\newcommand{\He}{\mathcal{H}}
\newcommand{\T}{\mathbbm{T}}
\newcommand{\Dom}{\mathrm{Dom}\,}
\newcommand{\End}{\mathrm{End}}
\newcommand{\Lip}{\textnormal{Lip}}
\renewcommand{\epsilon}{\varepsilon}
\newcommand{\im}{\mathrm{i} \mathrm{m} \,}
\newcommand{\supp}{\mathrm{s} \mathrm{u} \mathrm{p} \mathrm{p}\,}
\newcommand{\e}{\mathrm{e}}
\newcommand{\tra}{\mathrm{t}\mathrm{r}}
\title[Nonclassical spectral asymptotics and Dixmier traces]{Nonclassical spectral asymptotics and Dixmier traces: From circles to contact manifolds}
\author{Heiko Gimperlein, Magnus Goffeng}
\address{Heiko Gimperlein,\newline
\indent Maxwell Institute for Mathematical Sciences and \newline
\indent Department of Mathematics, Heriot-Watt University\newline
\indent Edinburgh EH14 4AS\newline
\indent United Kingdom\newline
\newline
\indent {\it and}\newline
\newline 
\indent Institute for Mathematics, University of Paderborn\newline
\indent Warburger Str.~100\newline
\indent 33098 Paderborn\newline
\indent Germany\newline
\newline
\indent Magnus Goffeng,\newline
\indent Department of Mathematical Sciences\newline 
\indent Chalmers University of Technology and \newline 
\indent University of Gothenburg\newline 
\indent SE-412 96 Gothenburg\newline 
\indent Sweden\newline}
\subjclass[2010]{35P20, 58B34 (primary), 32V20, 47L20 (secondary)}
\keywords{Commutator estimates; Dixmier traces; Hankel operators; non-measurable operators.}
\email{h.gimperlein@hw.ac.uk, goffeng@chalmers.se}
\begin{document}
\maketitle

\begin{abstract}

We consider the spectral behavior and noncommutative geometry of commutators $[P,f]$, where $P$ is an operator of order $0$ with geometric origin and $f$ a multiplication operator by a function. When $f$ is H\"{o}lder continuous, the spectral asymptotics is governed by singularities. We study precise spectral asymptotics through the computation of Dixmier traces; such computations have only been considered in less singular settings. Even though a Weyl law fails for these operators, and no pseudo-differential calculus is available, variations of Connes' residue trace theorem and related integral formulas continue to hold. On the circle, a large class of non-measurable Hankel operators is obtained from H\"older continuous functions $f$, displaying a wide range of nonclassical spectral asymptotics beyond the Weyl law. The results extend from Riemannian manifolds to contact manifolds and noncommutative tori.
\end{abstract}

\large
\section{Introduction}
\normalsize

Let $M$ be a closed Riemannian manifold and $P$ a classical pseudo-differential operator of order $0$. If $f \in C^\infty(M)$, the singular values $\mu_k$ of the commutator $[P,f]$ satisfy a Weyl law, as determined by the Poisson bracket $\frac{1}{i}\{\sigma(P), f\}$ with the principal symbol $\sigma(P)$: 
\begin{equation}
\label{weyllaw}
\mu_k([P,f])= \left(\frac{1}{n(2\pi)^n}\int_{S^*M} |\{\sigma(P),f\}|^n\rd x \rd \xi\right)^{1/n} \cdot k^{-1/n}+o(k^{-1/n}),\quad\mbox{as $k\to\infty$}\ ,
\end{equation}
where $n = \dim M$. Connes' residue trace theorem allows to interpret $[P,f]$ as a noncommutative differential form \cite{connesaction, c}. Dixmier traces relate to an averaged form of the Weyl law \eqref{weyllaw}. By computing them for operators related to $[P,f]$, we obtain geometric consequences of non-Weyl behaviour.   \\

In this article we find a rich spectral behavior and noncommutative geometry of operators of the form $[P,f]$, when $f$ is merely H\"{o}lder continuous.  In this case the spectral asymptotics is governed by the singularities of $f$. For $f$ from a generic set, suitable powers or products of operators of the form $[P,f]$ turn out to produce natural examples of non-measurable operators. We explore new phenomena due to the singularities, and extensions of results for smooth functions to the H\"{o}lder classes. Already for the circle $M = S^1$ and $P$ the Szeg\"{o} projection (the projection onto the closed linear span of the positive Fourier modes), we obtain an analogue of the residue trace theorem and explicit geometric integral formulas, in spite of the highly nonclassical behavior of the singular values. Hochschild and cyclic cocycles defined from the derivation $f\mapsto [P,f]$ and (singular) traces prove relevant to study the algebras of  H\"{o}lder functions.

These basic results extend from $M=S^1$ to Riemannian and contact manifolds $M$, as well as to noncommutative tori. The singularities studied in this article are in keeping with the structures which appear for a singular manifold; they are quite similar to a manifold with H\"{o}lder charts.

\subsection{Commutators and Dixmier traces in noncommutative geometry}

In recent years, Dixmier traces, and more generally singular traces, have been studied extensively both in the theory of operator ideals and as a notion of integral in noncommutative geometry. The book by Lord, Sukochev and Zanin \cite{sukolord} provides a reference for this  progress, see also \cite{kaloposu, pietsch, pietschtwo,lidskiitypesesuza,dixmieragain}. It is our aim to show how the ideas from these works apply to highly singular geometric settings, to nonsmooth commutators.

The view of commutators as noncommutative differential forms goes far back. It is based on the algebraic principle that a derivation generalizes differentiation. The approach we take is based on Connes' noncommutative geometry \cite[Chapter III]{c}. Together with (singular) traces on operators ideals, commutators and their products provide a natural framework for constructing cyclic and Hochschild cocycles -- noncommutative analogues of de Rham cycles and currents, respectively. Examples of their use with relations to this work include \cite{conneskaroubi,kaadcomparison} where pairings with relative and algebraic $K$-theory were considered. Their application to classical geometry, for example, leads to explicit analytic formulas for the mapping degree of nonsmooth mappings \cite{goffodd}. In the latter work, only nonsharp spectral properties of commutators are required; they can be deduced from a general theorem of Russo \cite{russo} for Schatten class properties of integral operators.

Beyond Riemannian geometry, Dixmier traces have also been studied in sub-Riemannian settings, for example relating Dixmier traces with certain integrals in complex analysis \cite{engroch, engzh,engzhtwo}. For general sub-Riemannian $H$-manifolds, Ponge \cite{pongeresidue} has extended Connes' residue trace theorem to the Beals-Greiner calculus of pseudodifferential operators. Some of these results are extended to the nonsmooth setting in this paper.
\\

Here we combine the above two directions. We use singular traces as a tool for new constructions in cyclic cohomology rather than their classical use as integrals. We find that Dixmier traces of nonsmooth commutators and the associated cyclic cocycles are geometrically relevant and computable. In noncommutative geometry we define exotic, nontrivial cyclic cocycles on the algebras $C^\alpha$ of H\"{o}lder functions, spanning an infinite-dimensional subspace of the cyclic cohomology without a classical counterpart (see more in Proposition \ref{classcomputationcyc}). In special cases, the cohomology pairings can be computed from geometric formulas that regularize integral formulas in complex analysis, and the cocycles detect the H\"{o}lder exponent of $C^{\alpha}$. Our techniques extend beyond classical geometry to noncommutative $\theta$-deformations, with the noncommutative torus as a key example.\\

Two spaces of functions are central to this paper. We recall their definitions.

\begin{deef*}[Lipschitz and H\"{o}lder algebras]
\label{lipdef}
Let $(X,\rd)$ be a compact metric space. We define the Lipschitz algebra of $(X,\rd)$ by
$$\Lip(X,\rd):=\{f\in C(X):\,\exists C>0, \mbox{   s.t.   } |f(x)-f(y)|\leq C\rd(x,y)\quad\forall x,y\in X\}.$$
We define $|f|_{\Lip}$ as the optimal constant $C$ in this definition and equip $\Lip(X)$ with the Banach algebra norm $\|f\|_{\Lip}:=\|f\|_{C(X)}+|f|_{\Lip}$. For $\alpha\in (0,1)$, we define the H\"{o}lder algebra with exponent $\alpha$ by
$$C^\alpha(X,d):=\Lip(X,\rd^\alpha).$$
If the metric $\rd$ is understood from context, we simply write $\Lip(X):=\Lip(X,\rd)$ and $C^\alpha(X):=C^\alpha(X,\rd)$. If $X$ is a Riemannian manifold, it is tacitly assumed that $\rd$ is the geodesic distance.
\end{deef*}

\begin{deef*}[Weak Schatten ideals]
\label{defofweaklp}
Let $\He$ denote an infinite-dimensional separable Hilbert space and $\Ko(\He)$ the $C^*$-algebra of compact operators. For $p\in [1,\infty)$, we define the weak Schatten ideal of exponent $p$ by 
$$\mathcal{L}^{p,\infty}(\He):=\{T\in \Ko(\He): \exists C>0, \mbox{   s.t.   } \mu_k(T)\leq C(1+k)^{-1/p}\quad\forall k\geq 0\}.$$
Here $(\mu_k(T))_{k\in \N}$ denotes a decreasing enumeration of the singular values of $T$. The optimal constant $C$ defines a quasi-Banach norm on $\mathcal{L}^{p,\infty}(\He)$ (for $p>1$ there is an equivalent norm, see Proposition \ref{equinorm}). For a singular state $\omega\in (\ell^\infty/c_0)^*$, see Definition \ref{definoofsingularsat} on page \pageref{definoofsingularsat}, we let 
$$\tra_\omega:\mathcal{L}^{1,\infty}(\He)\to \C,$$
denote the associated Dixmier trace (for details see Definition \ref{definoofdix} on page \pageref{definoofdix}). 
\end{deef*}

\begin{remark}
The computation of Dixmier traces is highly non-trivial in the absence of a Weyl law. An element $G\in \mathcal{L}^{1,\infty}(\He)$ is called measurable if $\tra_\omega(G)$ is independent of $\omega$. In particular, an operator $G\in \mathcal{L}^{1,\infty}(\He)$ satisfying a Weyl law $\lambda_k(G)=ck^{-1}+o(k^{-1})$ is measurable and $\tra_\omega(G)=c$.
\end{remark}

\subsection{Main results}

The computation of Dixmier traces hinges on a precise understanding of the mapping properties in a suitable Sobolev scale. A crucial technical ingredient of this article therefore says that, if $Q$ is an operator of order $0$ and $a \in C^{\alpha}$, $[Q,a]$ shares some properties with an operator of order $- \alpha$:

\begin{thm*}
\label{introsobregthm}
Let $\alpha\in(0,1)$ and $s\in (-\alpha,0)$. If $M$ is an $n$-dimensional closed Riemannian manifold and $Q\in \Psi^0(M)$, there is a constant $C=C(Q,\alpha,s)>0$ such that whenever $a\in C^\alpha(M)$ then
\begin{enumerate}
\item[a)] $[Q,a]:W^s(M)\to W^{s+\alpha}(M)$ satisfies $\|[Q,a]\|_{W^s(M)\to W^{s+\alpha}(M)}\leq C\|a\|_{C^\alpha(M)}$,
\item[b)] $\|[Q,a]\|_{\mathcal{L}^{n/\alpha,\infty}(L^2(M))}\leq C\|a\|_{C^\alpha(M)}$. 
\end{enumerate}
\end{thm*}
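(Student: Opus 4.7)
The plan is to deduce (b) from (a) by H\"older's inequality for weak Schatten ideals, and to prove (a) by a paradifferential decomposition of $a$ à la Bony--Meyer. For the reduction, pick the symmetric midpoint $s=-\alpha/2$ and set $\Lambda:=(1+\Delta_M)^{1/2}$, where $\Delta_M$ is the Laplace--Beltrami operator. Since $\Lambda^{\sigma}$ is an isomorphism $W^{t}(M)\to W^{t-\sigma}(M)$ for every $\sigma,t\in\R$, part (a) at $s=-\alpha/2$ is equivalent to $L^2$-boundedness of $\Lambda^{\alpha/2}[Q,a]\Lambda^{\alpha/2}$, with operator norm $\lesssim\|a\|_{C^\alpha}$. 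Weyl's law for $\Lambda$ on a closed $n$-manifold gives $\Lambda^{-\alpha/2}\in\mathcal{L}^{2n/\alpha,\infty}(L^2(M))$, and the factorisation
\[
[Q,a]=\Lambda^{-\alpha/2}\cdot\bigl(\Lambda^{\alpha/2}[Q,a]\Lambda^{\alpha/2}\bigr)\cdot\Lambda^{-\alpha/2},
\]
combined with H\"older's inequality for weak Schatten classes in the exponents $(2n/\alpha,\infty,2n/\alpha)$, delivers (b).

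\textbf{Proof of (a).} A finite atlas of $M$, a smooth partition of unity and pseudo-locality of $Q$ reduce the estimate to the Euclidean model of a compactly supported $a\in C^\alpha(\R^n)$ and a properly supported classical pseudo-differential operator $Q$ of order $0$ on $\R^n$; off-diagonal contributions are smoothing with operator norm $\lesssim\|a\|_{C^0}$. On $\R^n$ I decompose $a=\sum_{j\geq 0}\Delta_j a$ via the standard Littlewood--Paley projections and use Bony's paraproduct $af=T_a f+T_f a+R(a,f)$. The commutator splits as
\[
[Q,a]f \;=\; [Q,T_a]f+\bigl(QT_f a-T_{Qf}a\bigr)+\bigl(QR(a,f)-R(a,Qf)\bigr).
\]
For the principal term $[Q,T_a]$, a first-order symbolic expansion of $Q$ shows that each frequency-$2^j$-localised piece gains one derivative, while the Besov amplitude $\|\Delta_j a\|_{L^\infty}\lesssim 2^{-j\alpha}\|a\|_{C^\alpha}$ caps the total gain at exactly $\alpha$ derivatives; summation in $j$ converges in the $W^s\to W^{s+\alpha}$ operator norm precisely because $|s|<\alpha$. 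The subleading terms $QT_f a-T_{Qf}a$ and $QR(a,f)-R(a,Qf)$ are controlled by the classical fact that $C^\alpha$ acts boundedly on $W^s$ for $|s|<\alpha$, together with standard paraproduct estimates on the resonant piece.

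\textbf{Main obstacle.} The technical heart is the $\alpha$-derivative gain for the principal paraproduct commutator $[Q,T_a]$; this is the clean statement from paradifferential calculus (cf.\ Bony, Meyer, Taylor, Triebel), but making it fully quantitative with the explicit dependence on $\|a\|_{C^\alpha}$ requires some care. The restriction $s\in(-\alpha,0)$ is essential: at the endpoints $s\in\{0,-\alpha\}$ the paraproduct sums generate logarithmic losses that destroy the clean exponent, so the endpoints would have to be treated by a separate interpolation or $\mathrm{BMO}$-type argument outside the scope of what is needed here.
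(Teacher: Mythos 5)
Your proposal is correct and follows essentially the same route as the paper: part (a) by localization plus the Bony paraproduct splitting $[Q,a]=[Q,T_a]+(QT_fa-T_{Qf}a)+(QR(a,f)-R(a,Qf))$, with the principal commutator $[Q,T_a]$ gaining $\alpha$ derivatives (the paper cites Taylor for this) and the remainder pieces handled by standard paraproduct estimates; part (b) by sandwiching between powers of $(1+\Delta)^{1/2}$ and applying the Weyl law with H\"older's inequality for weak Schatten ideals. One small correction: the restriction $s\in(-\alpha,0)$ is forced by the remainder terms ($T_fa$ needs $s<0$, the resonant piece needs $s+\alpha>0$), not by the summation in the principal term $[Q,T_a]$, which is bounded $W^s\to W^{s+\alpha}$ for every $s\in\R$.
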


Here $W^s(M)$, $s\in \R$, denotes the Sobolev scale on the Riemannian manifold $M$. Theorem \ref{sobregthm} (on page \pageref{sobregthm}) proves part a), and Corollary \ref{weakest} (on page \pageref{weakest}) proves that part b) follows from part a) and the Weyl law for elliptic operators. Related results in the literature and their proofs are not sharp; e.g.~from \cite{abels,marschallcpde} one gains less than $\alpha$ derivatives. We remark that part b) follows from the stronger results in \cite{rochbergsemmes}, even for $a$ in the Besov space $B^\alpha_{n/\alpha,\infty}(M)\supseteq C^\alpha(M)$.

\begin{remark*}
\label{introsobregrem}
A consequence of Theorem \ref{introsobregthm} is that whenever $\alpha_1+\cdots +\alpha_k\geq n$, $Q_1, \dots, Q_k\in \Psi^0(M,E)$ (for some vector bundle $E$) and $\omega\in (\ell^\infty/c_0)^*$ is a singular state, the linear mapping 
\begin{align*}
\mathfrak{f}_\omega: \Bo(L^2(M,E))\hat{\otimes} C^{\alpha_1}(M)\hat{\otimes}\cdots &\hat{\otimes}C^{\alpha_k}(M)\to \C\\
&(T,a_1,\dots, a_k) \mapsto \tra_\omega (T[Q_1,a_1]\cdots [Q_k,a_k]),
\end{align*}
is continuous. Here $\hat{\otimes}$ denotes the projective tensor product. Since $\tra_\omega$ is singular, $\mathfrak{f}_\omega(T,a_1,\dots, a_k)=0$ if $\alpha_1+\cdots +\alpha_k>n$. Explicit formulas for $\mathfrak{f}_\omega$ and their geometric interpretations form the main motivation for the paper.
\end{remark*}

The sharp estimate in Theorem \ref{introsobregthm}, part a), together with a refinement of the results in \cite[Chapter 11.2]{sukolord}, allows the computation of Dixmier traces as expectation values in certain orthonormal bases. The resulting formula relates the spectrally defined Dixmier trace to a basis dependent expression, analogously to the Lidskii trace formula; we call it the ordered Lidskii formula. It generalizes the results of \cite[Chapter 11.2]{sukolord}, although the proofs are based on the same technical lemma (see Lemma \ref{lemma11210} below). The formula can be found in Theorem \ref{vmodthm} (on page \pageref{vmodthm}). The ordered Lidskii formula is the basis towards explicit computations of Dixmier traces of products of commutators. In particular, if $M=S^1$ we obtain a bilinear expression of the logarithmic divergence in Brezis' formula for the winding number \cite{kahanewindandfour} on $C^{1/2}(S^1)$. 

\begin{thm*}\label{introszegothm}
Let $P$ be the Szeg\"{o} projection on $S^1$. For $a,b\in C^{1/2}(S^1)$,
\begin{align*}
\tra_\omega(P[a,P][P,b])&=\lim_{N\to \omega} \frac{1}{\log(N)}\sum_{k=0}^Nk\cdot a_k b_{-k}=\lim_{N\to \omega} \lim_{r\nearrow 1}\int_{S^1\times S^1} a_+(\bar{\zeta})b_-(z)k_N(rz,\zeta)\rd \zeta\rd z\ ,
\end{align*}
where $a_+ = Pa = a - a_-$ and
$$k_N(z,\zeta)=\frac{1}{\log (N)}\cdot \frac{1-(z\zeta)^{N+1}}{(1-z\zeta)^2}.$$
\end{thm*}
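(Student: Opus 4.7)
The plan is to reduce the Dixmier trace to a sum of diagonal Fourier matrix elements via the ordered Lidskii formula of Theorem \ref{vmodthm}, compute those matrix elements explicitly, and recover both the spectral and kernel-theoretic expressions by Abel summation and a dyadic tail estimate.

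First, Theorem \ref{introsobregthm}(b) with $\alpha=1/2$ and $n=1$ gives $[P,a], [P,b]\in \mathcal{L}^{2,\infty}(L^2(S^1))$, so $T:=P[a,P][P,b]\in\mathcal{L}^{1,\infty}$ by the H\"older inequality for Lorentz ideals. I work in the orthonormal basis $e_k(\theta)=e^{ik\theta}/\sqrt{2\pi}$, ordered so that the non-negative modes $e_0,e_1,\ldots$ come first. Since $P$ commutes with each spectral projection $P_N=\sum_{k=0}^{N-1}|e_k\rangle\langle e_k|$, and the sharp Sobolev bound of Theorem \ref{introsobregthm}(a) quantifies how $[P,a]$ and $[P,b]$ fail to commute with $P_N$, the hypotheses of the ordered Lidskii formula (Theorem \ref{vmodthm}) are satisfied, giving
$$\tra_\omega(T)=\lim_{N\to\omega}\frac{1}{\log N}\sum_{n=0}^{N-1}\langle e_n, Te_n\rangle.$$
Only $n\geq 0$ contribute: for $n<0$, $[P,b]e_n$ lies in the range of $P$, $[a,P]$ then maps it into $(I-P)L^2(S^1)$, and the outer $P$ annihilates it.

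A direct Fourier calculation (using $[P,b]e_n=-\sum_{\ell<0}b_{\ell-n}e_\ell$ for $n\geq 0$ and $[a,P]e_\ell=-\sum_{m\geq 0}a_{m-\ell}e_m$ for $\ell<0$) gives $\langle e_n,Te_n\rangle=\sum_{k>n}a_kb_{-k}$ for $n\geq 0$. Fubini then produces
$$\sum_{n=0}^{N}\langle e_n,Te_n\rangle=\sum_{k=1}^{N}k\,a_kb_{-k}+(N+1)\sum_{k>N}a_kb_{-k}.$$
The Littlewood--Paley characterisation of $C^{1/2}(S^1)$ gives $\sum_{k\sim 2^j}|a_k|^2\leq C\,2^{-j}\|a\|_{C^{1/2}}^2$, and a dyadic Cauchy--Schwarz then yields $\sum_{k>N}|a_kb_{-k}|=O(1/N)$, so the tail term is $O(1)$ and vanishes after division by $\log N$. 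This establishes the first equality.

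For the second equality, expand
$$\frac{1-(rz\zeta)^{N+1}}{(1-rz\zeta)^2}=\sum_{j=0}^{N}(j+1)\,r^j(z\zeta)^j+(N+1)\sum_{j>N}r^j(z\zeta)^j,\qquad r<1,$$
insert $a_+(\bar{\zeta})=\sum_{p\geq 0}a_p\bar\zeta^p$ and $b_-(z)=\sum_{q<0}b_qz^q$, and use orthogonality of characters ($p=j$, $q=-j$) to collapse the $S^1\times S^1$ integral to $\sum_{j=1}^{N}(j+1)a_jb_{-j}r^j+(N+1)\sum_{j>N}a_jb_{-j}r^j$. Abel's theorem and the dyadic tail bound justify $r\nearrow 1$; the discrepancy with $\sum_{k=0}^{N}k\,a_kb_{-k}$ is $\sum_{j=1}^{N}a_jb_{-j}+(N+1)\sum_{j>N}a_jb_{-j}$, both $O(1)$, and hence negligible after division by $\log N$. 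The main technical obstacle is the first step: verifying the hypotheses of the ordered Lidskii formula depends on the sharp gain of $\alpha$ derivatives in Theorem \ref{introsobregthm}(a), which makes the error between Lidskii's eigenvalue sum and the diagonal matrix-element sum $o(\log N)$; all remaining steps are elementary Fourier and Littlewood--Paley manipulations.
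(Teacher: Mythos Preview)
Your proposal is correct and follows essentially the same approach as the paper: the first equality is proved exactly as in Theorem~\ref{computindixprod} (ordered Lidskii formula via Proposition~\ref{prodofcomm}, the diagonal matrix-element computation of Lemma~\ref{henkeldixcomp}, Fubini resummation, and the Littlewood--Paley tail bound), and your justifications match the paper's step for step. For the integral formula, the paper (Proposition~\ref{inteformcirc}) writes each diagonal sum $\sum_{k>l}a_kb_{-k}$ as $\int_{S^1}P_l(a)\,b$, expresses $P_l(a)$ via a Cauchy integral, and then sums over $l$ to obtain the kernel $k_N$; you instead expand $\frac{1-(rz\zeta)^{N+1}}{(1-rz\zeta)^2}$ directly as a power series and integrate term by term --- this is the same computation read in the opposite order, and your route is arguably cleaner since it avoids the intermediate holomorphic extension. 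One small wording issue: the hypotheses of Theorem~\ref{vmodthm} are not literally about ``how $[P,a]$ fails to commute with $P_N$'' but about weak $V$-modulation, which in the paper is deduced from the $(\epsilon,s)$-factorizability of Proposition~\ref{prodofcomm} and Lemma~\ref{lszformnew}; you cite the right ingredients, so this is only a matter of phrasing.
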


If $a\in C^{1/2}(S^1,\C^\times)$ and $b=a^{-1}$, $\tra_\omega((2P-1)[P,a][P,a^{-1}])=0$ by Remark \ref{commentsonwinding}. This vanishing result expresses the fact that the logarithmic divergence in Brezis' formula for the winding number of a $C^{1/2}$-function vanishes; there is no obstruction in extending the winding number from $C^\infty$ to $C^{1/2}$ (it even extends by continuity to all continuous functions). We refer to Theorem \ref{computindixprod} and Proposition \ref{inteformcirc} for the formula involving Fourier modes and the integral formula, respectively.\\

A precise study of Dixmier traces for the case of generalized Weierstrass functions illustrates the rich spectral behavior of nonsmooth commutators. The generalized Weierstrass function associated with the parameters $\alpha \in (0,1)$, $1<\gamma \in \N$ and $c \in \ell^\infty(\N)$ is given by the Fourier series
\begin{equation*}
W_{\alpha,\gamma,c}(e^{i \theta}):=2\sum_{n=0}^\infty \gamma^{-\alpha n}c_n\cos(\gamma^n \theta) \in C^\alpha(S^1).
\end{equation*}
We consider a set of sequences that define Hankel measurable operators,
$$\mathfrak{hms}_{k,\gamma}:=\{c\in \ell^\infty(\N): \; [P,W_{1/2k,\gamma,c}]^{2k}\in \mathcal{L}^{1,\infty}(L^2(S^1)) \quad \mbox{is measurable}\}.$$
Corollary \ref{hermajestyscorollary} give many natural examples of non-measurable operators on $S^1$: 
\begin{cor*}
\label{hermajestyscorollaryintro}
The set $\mathfrak{hms}_{1,\gamma}$ equals
$$\mathfrak{hms}_{1,\gamma}=\left\{c=(c_n)_{n\in \N}\in \ell^\infty(\N): \; \lim_{N \to \infty} \frac{1}{N}\sum_{n=0}^{N} c_n^2\ \ \text{exists}\right\}.$$
In particular, the inclusion $\mathfrak{hms}_{1,\gamma}\subseteq \ell^\infty(\N)$ is strict and does not depend on $\gamma$.
\end{cor*}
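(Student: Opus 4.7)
The approach is to specialize Theorem \ref{introszegothm} to $a=b=W:=W_{1/2,\gamma,c}$ and convert the resulting identity into a scalar Tauberian criterion on $(c_n)$.

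Since $W$ is real-valued, $[P,W]^*=-[P,W]$ is skew-adjoint and $-[P,W]^2\geq 0$. Setting $H:=2P-1$, the relation $H^2=1$ gives $\{H,[H,W]\}=0$, so $[H,W]^2$---and therefore $[P,W]^2=\tfrac14[H,W]^2$---commutes with $P$. Writing $H^-:=(1-P)WP$ and $H^+:=PW(1-P)=(H^-)^*$, a short calculation gives $P[P,W]^2P=-(H^-)^*H^-$ and $(1-P)[P,W]^2(1-P)=-H^-(H^-)^*$, so the cyclic property of $\tra_\omega$ yields $\tra_\omega([P,W]^2)=2\tra_\omega(P[P,W]^2)$. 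Theorem \ref{introszegothm} applied with $a=b=W$ reads $\tra_\omega(P[W,P][P,W])=\lim_{N\to\omega}\frac{1}{\log N}\sum_{k=0}^N k|w_k|^2$; since $P[W,P][P,W]=-P[P,W]^2$, combining gives
\[ \tra_\omega\bigl(-[P,W]^2\bigr) = 2\lim_{N\to\omega}\frac{1}{\log N}\sum_{k=0}^N k|w_k|^2. \]

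The operator $-[P,W]^2$ is positive in $\mathcal{L}^{1,\infty}(L^2(S^1))$, so its measurability is equivalent to the bounded sequence $S_N:=\frac{1}{\log N}\sum_{k=0}^N k|w_k|^2$ having an ordinary limit: if $\omega([S])$ is constant across singular states, the fact that states on $\ell^\infty/c_0$ separate points forces $[S]$ to be a constant multiple of $[1]$, i.e.~$S$ is convergent. To finish, the only nonzero Fourier coefficients of $W$ are $w_{\pm\gamma^n}=\gamma^{-n/2}c_n$, so for $\gamma^M\leq N<\gamma^{M+1}$,
\[ \sum_{k=0}^N k|w_k|^2 = \sum_{n=0}^M\gamma^n\cdot\gamma^{-n}c_n^2 = \sum_{n=0}^M c_n^2, \]
and $\log N=M\log\gamma+O(1)$, so $\lim_N S_N$ exists iff $\lim_M \frac{1}{M}\sum_{n=0}^M c_n^2$ exists. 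This gives the claimed description (manifestly $\gamma$-independent); the strict inclusion is witnessed by any bounded $c$ whose squares are supported on geometrically growing alternating blocks, for which the Cesaro averages oscillate.

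The main conceptual obstacle is not any single calculation but avoiding a sign or projection error in the opening reduction: one must verify that $P$ genuinely commutes with $[P,W]^2$ (though \emph{not} with $[P,W]$) and that the two half-traces on $PL^2$ and $(1-P)L^2$ agree, so that Theorem \ref{introszegothm} captures the full Dixmier trace of $[P,W]^2$ rather than only a half of it.
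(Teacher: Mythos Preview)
Your proof is correct and follows essentially the same route as the paper: the paper derives the corollary from Proposition~\ref{hankcomp}, which is itself the specialization of Theorem~\ref{computindixprod} (the Fourier-side version of Theorem~\ref{introszegothm}) to $a=b=W_{1/2,\gamma,c}$, combined with the identity $\tra_\omega([P,W]^2)=2\tra_\omega(P[P,W]^2)$ from Proposition~\ref{tracexompu}. Your explicit conversion of $S_N$ to the Ces\`aro mean of $(c_n^2)$ via $\log N\sim M\log\gamma$ is exactly the content of the paper's map $\omega\mapsto\omega_\gamma$, and your separation-of-states argument for the converse direction is the standard fact underlying the paper's one-line deduction.
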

The Dixmier traces of products of commutators of the Szeg\"{o} projection $P$ with generalized Weierstrass functions $W_{\alpha,\gamma,c}$ can be computed. The results indicate a wide range of asymptotic spectral behavior beyond Weyl's law. See more in Remark \ref{weirdsequences} and the discussion preceding it.\\

In higher dimensions, when $M$ is an $n$-dimensional Riemannian manifold, refining arguments of \cite{sukolord} we obtain a variant of Connes' residue trace formula: For suitable $G\in \mathcal{L}^{1,\infty}(L^2(M))$ 
\begin{align*}
\tra_\omega(G)&=\lim_{N\to \omega}\frac{(2\pi)^{-n}}{\log(N)} \int_M\int_{T^*_xM, |\xi|<N^{1/n}} p_G(x,\xi)\rd x\rd \xi\\
&=\lim_{N\to \omega}\frac{N}{\log(N)}\int_{\textnormal{Diag}_{\epsilon}} k_G(x,y)\phi^*\rho_N(x,y)\rd x\rd y.
\end{align*}
For details and notation, see Theorem \ref{connesformulabitches}. When $k_G$ admits a polyhomogeneous expansion in $x-y$, the limit exists, and therefore $G$ is measurable; for a product of commutators $[P,a]$ this can be verified from properties of the function $a$. Using deep results of Rochberg and Semmes \cite{rochbergsemmes}, the residue trace formula generalizes, in Theorem \ref{ctfforlip}, the usual expression for commutators with $C^\infty$ functions to Lipschitz functions.\\

In the remaining sections, our results are extended to noncommutative tori and (sub-Riemannian) contact manifolds. The analogue of Theorem \ref{introsobregthm} for noncommutative tori is proven in Theorem \ref{sobregtheta}. For sub-Riemannian manifolds, the analogue of Theorem \ref{introsobregthm} was proven in \cite[Section 4]{gimpgoff} for functions Lipschitz in the Carnot-Caratheodory metric. We note that, as a consequence, most of the computational tools are available even if they prove difficult to wield to the perfection that produces explicit formulas. Adapting the results of Rochberg-Semmes to the sub-Riemannian setting, we also extend geometric formulas of Englis-Zhang \cite{engzh} from $C^\infty$ to Lipschitz functions.

\subsection{Spectral asymptotics of nonsmooth operators}

Spectral properties of commutators have also been studied from different perspectives, for example in the areas of harmonic analysis and partial differential equations. When $M=S^1$, the spectral properties of Hankel operators $Pa(1-P)=P[P,a]$ have been extensively explored. The book \cite{peller} by Peller contains detailed information about Schatten and weak Schatten properties. More generally, $\mathcal{L}^{p,q}$-properties were studied in \cite[Chapter 6.4]{peller}. As an example of current work, we refer to \cite{pushya}. 

In dimension greater than one, we mention the recent interest in the spectral behavior of nonsmooth pseudo-differential and Green operators, e.g.~\cite{grubb}, from the view of partial differential equations. They extend classical formulas from the smooth setting to a finite number of derivatives, building on a long history around the spectral asymptotics of integral operators, combining works by Birman and Solomyak \cite{birsol, birsolinterpolation} with precise pseudo-differential techniques. 

Unlike in these works, we are interested in the new phenomena under minimal regularity assumptions below Lipschitz, when the spectral behavior is governed by singularities rather than semi-classical notions like the Poisson bracket. The computations of Dixmier traces give meaning to ``averaged" asymptotic properties - finer information than a growth bound on eigenvalues. The study of such growth bounds in \cite{rochbergsemmes} exhausts the latter problem in the Riemannian setting.

From a different point of view, harmonic analysts have long studied the boundedness of nonsmooth commutators, rather than their spectral behavior, going back to Calderon \cite{calderonone} and Coifman-Meyer \cite{ceemtwo}. See \cite[Chapter 3.6]{nlin} for a recent overview of  results. As noted in \cite{gimpgoff}, the stronger, sharp Theorem \ref{introsobregthm}, part a) is needed for the investigation of Dixmier traces. Moreover, the approach using operator norm bounds to study spectral behavior in \cite{gimpgoff} automatically gives rise to mapping properties similar to Theorem \ref{introsobregthm}, part a), though not to computations of Dixmier traces.

\subsection*{Contents of the paper}

This paper is organized as follows. In Section \ref{introseconriemannain} we introduce Dixmier traces and the ingredients to compute them for commutators, especially on Riemannian manifolds. The Sobolev mapping properties of commutators and weak Schatten estimates in Theorem \ref{introsobregthm} are established. They lead to a $V$-ordered Lidskii formula for these operators in Theorem \ref{vmodthm} and Proposition \ref{prodofcomm}. Also the residue trace formula is derived. For $M=S^1$, Section \ref{hankelsection}.1 uses these tools to prove the Fourier and geometric integral formulas  of Theorem \ref{introszegothm}, while Section \ref{hankelsection}.2 analyzes the Dixmier traces of commutators with Weierstrass functions in detail, including explicit formulas, exotic geometric cocycles and the characterization of measurable operators from Corollary \ref{hermajestyscorollaryintro}. Section \ref{sectiontheta} extends the computational tools to higher-dimensional tori and $\theta$-deformations. The final Section \ref{sectionformerlyknownas9} generalizes the analysis to sub-Riemannian $H$-manifolds, with contact manifolds as a key example. The main results are a Connes-type formula on the Hardy space in Theorem \ref{contform} and the extension of Dixmier trace formulas from smooth to Lipschitz functions in Theorem \ref{extedi}.

\large
\section{Dixmier traces of nonsmooth operators on Riemannian manifolds}
\label{introseconriemannain}
\normalsize

In this section we address commutators of operators appearing on the Riemannian manifolds. The aim is to study the spectral theory of nonsmooth pseudo-differential operators by means of computing Dixmier traces: we focus on products of commutators as motivated, in particular, by Theorem \ref{introszegothm} and Remark \ref{introsobregrem} on page \pageref{introszegothm}. 

In this special case, several results can be obtained by combining and refining theorems from the literature; in Subsection \ref{subsectiononsobmapping} we combine results of Taylor \cite{nlin,toolspde,hbcom}. The technical framework for computing Dixmier traces is presented in Subsection \ref{dixtracesmodsection}; it is an adapted variant of the results by Lord-Sukochev-Zanin \cite{sukolord} and the preceding work by Kalton-Lord-Potapov-Sukochev \cite{kaloposu}.

\subsection{Sobolev mapping properties}
\label{subsectiononsobmapping}
A crucial technical tool are, as in the later sections of the paper, sharp mapping properties in the Sobolev scale $W^s(M)$, $s\in \R$, of a closed Riemannian manifold $M$. 

\begin{thm}
\label{sobregthm}
Let $\alpha\in(0,1)$ and $s\in (-\alpha,0)$. If $M$ is a Riemannian manifold and $Q\in \Psi^0(M)$, there is a constant $C=C(Q,\alpha,s)>0$ such that whenever $a\in C^\alpha(M)$ then
\begin{align*}
[Q,a]:W^s(M)\to &W^{s+\alpha}(M)\quad\mbox{is continuous with}\\
&\|[Q,a]\|_{W^s(M)\to W^{s+\alpha}(M)}\leq C\|a\|_{C^\alpha(M)}.
\end{align*}
\end{thm}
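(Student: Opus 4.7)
The plan is paradifferential: decompose multiplication by $a$ via Bony's paraproduct and balance the resulting pieces dyadically, exploiting both strict inequalities $s<0$ and $s+\alpha>0$ to recover the full gain of $\alpha$ derivatives. This differs from the approaches of Abels and Marschall cited after the statement, which lose an $\varepsilon$ at the endpoint.

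\emph{Localization and decomposition.} A finite coordinate atlas with a subordinate smooth partition of unity, combined with the $W^s$-continuity of $Q\in\Psi^0(M)$ and the fact that commutators of $Q$ with $C^\infty$ cut-offs are smoothing, reduces the problem to $Q\in\Psi^0(\R^n)$ acting between the Bessel-potential spaces $H^s(\R^n)$. On $\R^n$, I would apply Bony's decomposition $fg = T_f g + T_g f + R(f,g)$ to both $au$ and $a(Qu)$, rearranging into
\begin{equation*}
[Q,a]u \;=\; [Q,T_a]u \;+\; Q\bigl(T_u a + R(a,u)\bigr) \;-\; T_{Qu}a \;-\; R(a,Qu).
\end{equation*}
The four off-diagonal terms are handled by the classical paraproduct estimates $\|T_v f\|_{H^{s+\alpha}} \lesssim \|v\|_{H^s}\|f\|_{C^\alpha}$ (valid for $s<0$) and $\|R(f,v)\|_{H^{s+\alpha}} \lesssim \|f\|_{C^\alpha}\|v\|_{H^s}$ (valid for $s+\alpha>0$), together with the boundedness of $Q$ on each Sobolev space. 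The hypothesis $s\in(-\alpha,0)$ is what makes both estimates simultaneously available.

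\emph{Paradifferential commutator and main obstacle.} The core term is $[Q,T_a]\colon H^s\to H^{s+\alpha}$. Writing $T_a = \sum_j S_{j-2}a\cdot \Delta_j$ in the Littlewood-Paley decomposition, I would estimate the dyadic pieces $[Q, S_{j-2}a]\Delta_k u$ by splitting the double sum into the regimes $j\ll k$, $j\sim k$, $j\gg k$, and applying the smooth-commutator gain $\|[Q,S_{j-2}a]\|_{L^2\to H^1}\lesssim \|\nabla S_{j-2}a\|_{L^\infty}\lesssim 2^{j(1-\alpha)}\|a\|_{C^\alpha}$ in the low-high regime and the crude bound $\|\Delta_j a\|_{L^\infty}\lesssim 2^{-j\alpha}\|a\|_{C^\alpha}$ in the other two. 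The main obstacle is recovering the \emph{sharp} gain $\alpha$ rather than $\alpha-\varepsilon$: because $C^\alpha=B^\alpha_{\infty,\infty}$ is an endpoint Besov space, naive $\ell^1_j$-summation of the dyadic pieces diverges logarithmically. The resolution is to avoid that summation altogether by exploiting the frequency localization of each dyadic output near $\max(j,k)$ and resumming in the $\ell^2$-structure of the target norm $W^{s+\alpha}$; the strict inequalities $s<0$ and $s+\alpha>0$ each provide one side of the dyadic sum with a geometric decay factor, and together with the almost-orthogonality they close the estimate without loss.
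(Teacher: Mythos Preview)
Your approach is essentially the paper's: localize, apply Bony's decomposition $au=T_au+T_ua+R(a,u)$, write $[Q,a]=[Q,T_a]+Q\varrho(a)-\varrho(a)Q$ with $\varrho(a)u=T_ua+R(a,u)$, and handle the two pieces separately. The paper simply cites Taylor (\cite[Proposition~3.2.A]{nlin} for $\varrho(a):W^s\to W^{s+\alpha}$ when $s\in(-\alpha,0)$, and \cite[Proposition~7.3, Chapter~I]{toolspde} for $[Q,T_a]:W^s\to W^{s+\alpha}$) rather than reproving these estimates, but your sketch of the off-diagonal paraproduct bounds is exactly what underlies the first citation.

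One point of calibration worth correcting: you identify $[Q,T_a]$ as the ``main obstacle'' and worry about recovering the sharp $\alpha$ rather than $\alpha-\varepsilon$ there, but this is the \emph{easy} term. Because the paradifferential symbol $\sum_k(S_{k-2}a)(x)\phi_k(\xi)$ lies in a genuine H\"ormander-type class (it is $C^\infty$ in $\xi$ with the right decay, and $C^\alpha$ in $x$), the commutator $[Q,T_a]$ is an honest operator of order $-\alpha$ and maps $W^s\to W^{s+\alpha}$ for \emph{every} $s\in\R$, with no endpoint loss and no need for $s<0$ or $s+\alpha>0$. Your dyadic double-sum sketch (with indices $j,k$) is more elaborate than necessary here; there is only the single sum $\sum_k[Q,S_{k-2}a]\Delta_k$, each summand has output localized near frequency $2^k$, and the smooth-commutator bound $\|[Q,S_{k-2}a]\Delta_k\|_{L^2\to L^2}\lesssim 2^{-k}\|\nabla S_{k-2}a\|_{L^\infty}\lesssim 2^{-k\alpha}\|a\|_{C^\alpha}$ closes immediately by almost-orthogonality. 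The restriction $s\in(-\alpha,0)$ comes entirely from the $\varrho(a)$ pieces, exactly as you already observed.
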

\begin{rm}
From related results in the literature and their proofs, e.g.~\cite{abels,marschallcpde}, one gains less than $\alpha$ derivatives.
\end{rm}
We prove the theorem using Littlewood-Paley theory. For details regarding Littlewood-Paley theory, see \cite{toolspde}. Let us recall the basic idea. 
\label{lpdiscussion}
Locally, the $n$-dimensional manifold $M$ is modeled on $\R^n$. Choose a positive function $\varphi\in C^\infty(\R^{n})$ with $\supp(\varphi)\subseteq B(0,2)$ and $\varphi=1$ on $B(0,1)$. We define the functions
$$\varphi_{j}(x):=\varphi(2^{-j}x),\quad j\in \N.$$
We also define $\phi_{0}:=\varphi=\varphi_{0}$ and $\phi_{j}:=\varphi_{j}-\varphi_{j-1}$ for $j\geq 1$. Since $\varphi_{j}\to 1$ uniformly on compacts, $(\phi_{j})_{j\in \N}$ is a locally finite partition of unity for $\R^{n}$. We call such a partition of unity a Littlewood-Paley partition of unity. We consider the smoothing operators 
$$\Phi_{j}:=\phi_j(D)=Op(\phi_{j}).$$ 
For a Schwartz function $f\in \mathcal{S}(\R^{n})$, these operators are defined from the Fourier transform $\hat{f}$ by 
$$\Phi_{j}f(x)=\int_{\R^n} \phi_j(\xi)\hat{f}(\xi)\e^{ix\xi}\rd \xi=f* \check{\phi}_j(x),$$ 
where $\check{g}(x):=\hat{g}(-\xi)$. We call $(\Phi_j)_{j\in \N}$ a Littlewood-Paley decomposition associated to $(\phi_{j})_{j\in \N}$. Littlewood-Paley theory characterizes the scale of Besov spaces completely, see for instance \cite{abelsbook}, and therefore provides a natural tool to study H\"older and Sobolev spaces.

\begin{proof}[Proof of Theorem \ref{sobregthm}]
The statement of the theorem is of a local nature. Indeed, we can after an argument involving a partition of unity assume that we are dealing with $Q$ and $a$ compactly supported in a neighborhood $U\subseteq M$ diffeomorphic to an open subset of $\R^n$. Since Sobolev spaces on closed manifolds are independent of choice of Laplacian, we can even assume that $Q$ and $a$ are compactly supported in $\R^n$. We let $(\phi_k)_{k=0}^\infty$ denote a Littlewood-Paley partition of unity of $\R^n$ for the remainder of the proof. Associated with this Littlewood-Paley decomposition there are operators $T_f:=\sum_{k=0}^\infty f_k\psi_k(D)$,  where $f_k:=\sum_{j=0}^{k-2} \psi_j(D)(f)$, and $R_f=\sum_{|j-k|\leq 2} \psi_j(D)(f)\psi_k(D)$ for a function $f$. It follows from \cite[Proposition 7.3, Chapter I]{toolspde} that $[Q,T_a]:W^s(M)\to W^{s+\alpha}(M)$ is continuous for $a\in C^\alpha(M)$, and norm continuously depending on $a$, for any $s\in \R$. The operator
$$\varrho(a):W^s(M)\to W^{s+\alpha}(M), \quad\varrho(a)u:=R_au+T_ua$$
is continuous for $s\in (-\alpha,0)$ by \cite[Proposition 3.2.A]{nlin}. In summary, because $Q$ acts continuously on all Sobolev spaces, we have that the operator
$$[Q,a]=[Q,T_a]+Q\varrho(a)-\varrho(a)Q:W^s(M)\to W^{s+\alpha}(M)$$
is continuous for $s\in (-\alpha,0)$.
\end{proof}

\begin{cor}
\label{weakest}
For an $n$-dimensional Riemannian manifold $M$, $Q\in \Psi^0(M)$ and $a\in C^\alpha(M)$ we can estimate
\[\|[Q,a]\|_{\mathcal{L}^{n/\alpha,\infty}(L^2(M))}\leq C\|a\|_{C^\alpha(M)}.\]
\end{cor}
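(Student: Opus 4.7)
The plan is to combine Theorem \ref{sobregthm} with H\"older's inequality for weak Schatten ideals, using Weyl's law to control powers of the Laplacian. Let $\Lambda:=(1-\Delta)^{1/2}$ denote the positive elliptic operator of order $1$ inducing the Sobolev norm via $\|u\|_{W^s(M)}=\|\Lambda^s u\|_{L^2(M)}$.

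First I would pick the symmetric value $s=-\alpha/2\in (-\alpha,0)$ in Theorem \ref{sobregthm}. Translating the Sobolev mapping property into an $L^2\to L^2$ statement, this amounts to the bound
\[
\|\Lambda^{\alpha/2}[Q,a]\Lambda^{\alpha/2}\|_{L^2(M)\to L^2(M)}\leq C\,\|a\|_{C^\alpha(M)}.
\]
Denote the bounded $L^2$-operator on the left by $B$. This yields the factorization $[Q,a]=\Lambda^{-\alpha/2}\,B\,\Lambda^{-\alpha/2}$ on $L^2(M)$.

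Second, Weyl's law for the positive elliptic operator $\Lambda^2$ on the closed $n$-dimensional Riemannian manifold $M$ gives $\mu_k(\Lambda)\sim k^{1/n}$, hence $\mu_k(\Lambda^{-\alpha/2})\sim k^{-\alpha/(2n)}$. In particular $\Lambda^{-\alpha/2}\in \mathcal{L}^{2n/\alpha,\infty}(L^2(M))$ with a quasi-norm depending only on $(M,g)$.

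Finally I would apply the H\"older inequality for weak Schatten ideals (see e.g.~\cite[Chapter 2]{sukolord}): since $\tfrac{1}{2n/\alpha}+\tfrac{1}{2n/\alpha}=\tfrac{\alpha}{n}$, the product $\Lambda^{-\alpha/2}\cdot B\cdot\Lambda^{-\alpha/2}$ lies in $\mathcal{L}^{n/\alpha,\infty}(L^2(M))$, with quasi-norm bounded (up to a constant depending only on $\alpha$ and $n$) by $\|\Lambda^{-\alpha/2}\|_{\mathcal{L}^{2n/\alpha,\infty}}^2\|B\|_{L^2\to L^2}$, which in turn is bounded by $C'\|a\|_{C^\alpha(M)}$. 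I do not anticipate a real obstacle: the only point demanding care is that H\"older's inequality for the quasi-Banach norm on $\mathcal{L}^{p,\infty}$ holds with a constant independent of $a$, which is a standard feature of these ideals recorded in the reference above.
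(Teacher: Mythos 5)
Your argument is correct and is essentially the paper's own proof: both factorize $[Q,a]$ through negative powers of $(1+\Delta)^{1/2}$ using Theorem \ref{sobregthm}, the Weyl law, and the H\"older inequality for weak Schatten ideals. The only (immaterial) difference is that you fix the symmetric exponent $s=-\alpha/2$, whereas the paper keeps a general $s\in(-\alpha,0)$ and distributes the weights as $(1+\Delta)^{-(s+\alpha)/2}$ and $(1+\Delta)^{s/2}$.
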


\begin{proof}
Take a Laplacian $\Delta$ on $M$. We can write 
$$[Q,a]=(1+\Delta)^{-(s+\alpha)/2}(1+\Delta)^{(s+\alpha)/2}[Q,a](1+\Delta)^{-s/2}(1+\Delta)^{s/2}.$$
The operator $(1+\Delta)^{(s+\alpha)/2}[Q,a](1+\Delta)^{-s/2}$ has a bounded extension to $L^2(M)$ by Theorem \ref{sobregthm}. Therefore, the Weyl law implies that
\begin{align*}
\|[Q,a]\|_{\mathcal{L}^{n/\alpha,\infty}(L^2(M))}\leq C\|(1+&\Delta)^{(s+\alpha)/2}[Q,a](1+\Delta)^{-s/2}\|_{\Bo(L^2(M))}\cdot\\
&\cdot\|(1+\Delta)^{-(s+\alpha)/2}\|_{\mathcal{L}^{n/(s+\alpha),\infty}(L^2(M))}\|(1+\Delta)^{s/2}\|_{\mathcal{L}^{n/|s|,\infty}(L^2(M))}\\
&\qquad\qquad\qquad\qquad\leq C\|a\|_{C^\alpha(M)}.
\end{align*}
\end{proof}

\begin{remark}
\label{weakscrem}
Estimates of the type in Corollary \ref{weakest} were studied for Heisenberg operators on contact manifolds in \cite{gimpgoff}. In practice, the estimates in Corollary \eqref{weakest} are used to estimate singular values:
$$\mu_k([Q,a])\leq C\|a\|_{C^\alpha(M)}k^{-\frac{\alpha}{n}},$$
where $n=\dim(M)$. 
\end{remark}

\begin{remark}
In the limit cases $s=-\alpha$ or $s=0$, the statement of Theorem \ref{sobregthm} does not hold. It is however known to hold also in the limit cases for Lipschitz functions, i.e. for any $Q\in \Psi^0(M)$, there is a $C(Q)>0$ such that $\|[Q,a]\|_{W^s(M)\to W^{s+1}(M)}\leq C(Q)\|a\|_{\Lip(M)}$ for any $s\in [-1,0]$. This is the content of \cite[Proposition 3.6.B]{nlin}. 
\end{remark}

\begin{remark}
The proof of Theorem \ref{sobregthm} is a to a large extent inspired by the content of the unpublished manuscript \cite{hbcom}. 
\end{remark}

\subsection{Dixmier traces of products}
\label{dixtracesmodsection}

In this subsection, we will study how \emph{some} Sobolev regularity gives a formula for Dixmier traces that we call \emph{the ordered Lidskii formula}. This formula was first obtained in \cite{kaloposu} and studied further in the book \cite{sukolord}. Combining the ordered Lidskii formula with Theorem \ref{sobregthm} will provide us with explicit formulas for the Dixmier traces of products of commutators.

First we recall some well known notions that can be found in for instance \cite{sukolord}. The weak Schatten class $\mathcal{L}^{p,\infty}(\He)$ was defined in Definition \ref{defofweaklp} (see page \pageref{defofweaklp}). 

The following result follows from \cite[Chapter 1.7 and Theorem 2.5]{simon}.

\begin{prop}
\label{equinorm}
The function $T\mapsto \sup_{k\in \N} k^{1/p}\mu_k(T)$ defines a quasinorm making $\mathcal{L}^{p,\infty}(\He)$ into complete metric space. For $p>1$, there is an equivalent norm making $\mathcal{L}^{p,\infty}(\He)$ into a Banach space for $p>1$. 
\end{prop}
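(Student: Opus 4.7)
The plan is to verify each assertion using only standard inequalities on singular values, then construct the equivalent norm for $p>1$ via partial sums.

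First, for the quasinorm statement, set $\|T\|_{p,\infty}:=\sup_{k\in \N}(1+k)^{1/p}\mu_k(T)$. Homogeneity and definiteness are immediate from $\mu_k(\lambda T)=|\lambda|\mu_k(T)$ and $\mu_0(T)=\|T\|_{\Bo(\He)}$. For the quasi-triangle inequality I would use the basic majorization $\mu_{m+n}(T+S)\leq \mu_m(T)+\mu_n(S)$. Splitting the index $k$ into $k=2j$ and $k=2j+1$ and applying this bound with $m=n=j$ (respectively $m=j+1$, $n=j$) yields $\|T+S\|_{p,\infty}\leq 2^{1/p}(\|T\|_{p,\infty}+\|S\|_{p,\infty})$, so $\|\cdot\|_{p,\infty}$ is a quasinorm.

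For completeness, let $(T_j)_j$ be Cauchy for $\|\cdot\|_{p,\infty}$. Since $\mu_0=\|\cdot\|_{\Bo(\He)}\leq \|\cdot\|_{p,\infty}$, the sequence converges in operator norm to some $T\in \Bo(\He)$; in fact, since each $T_j$ is compact and the compacts are norm-closed, $T\in \Ko(\He)$. Weyl's monotonicity $|\mu_k(T)-\mu_k(T_j)|\leq \|T-T_j\|_{\Bo(\He)}$ then gives $\mu_k(T_j)\to \mu_k(T)$ pointwise in $k$, which together with the uniform bound $(1+k)^{1/p}\mu_k(T_j)\leq C$ yields $T\in \mathcal{L}^{p,\infty}(\He)$. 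A standard $\epsilon/2$ argument, applied to $T-T_j$ index by index and then taking the sup over $k$, upgrades the pointwise convergence to $\|T-T_j\|_{p,\infty}\to 0$. The induced translation-invariant metric $d(T,S)=\|T-S\|_{p,\infty}^{q}$ (with $q$ chosen so the quasi-norm becomes subadditive after raising to power $q$, via the Aoki--Rolewicz theorem) is then a complete metric.

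For $p>1$ I would introduce the expression
\[\|T\|'_{p,\infty}:=\sup_{N\geq 1} N^{1/p-1}\sum_{k=0}^{N-1}\mu_k(T).\]
Genuine subadditivity follows from the Ky Fan inequality $\sum_{k=0}^{N-1}\mu_k(T+S)\leq \sum_{k=0}^{N-1}\mu_k(T)+\sum_{k=0}^{N-1}\mu_k(S)$, so $\|\cdot\|'_{p,\infty}$ is a norm. Equivalence with the quasinorm is purely arithmetic: the inequality $\sum_{k=0}^{N-1}\mu_k(T)\leq \|T\|_{p,\infty}\sum_{k=0}^{N-1}(1+k)^{-1/p}\leq C_p\, N^{1-1/p}\|T\|_{p,\infty}$ gives $\|T\|'_{p,\infty}\leq C_p\|T\|_{p,\infty}$, while monotonicity of $(\mu_k)$ yields $N\mu_{N-1}(T)\leq \sum_{k=0}^{N-1}\mu_k(T)\leq N^{1-1/p}\|T\|'_{p,\infty}$, hence $\|T\|_{p,\infty}\leq 2^{1/p}\|T\|'_{p,\infty}$. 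Completeness of $(\mathcal{L}^{p,\infty}(\He),\|\cdot\|'_{p,\infty})$ is inherited from the quasi-norm completeness proved above.

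The main obstacle I foresee is purely technical: carefully passing from the pointwise convergence of singular values to convergence in the quasinorm in the completeness step, because the sup over $k$ is taken outside. The usual trick---fixing a large cut-off $K$, controlling $k\leq K$ by operator norm convergence and $k>K$ by the Cauchy tail uniformly in $j$---resolves it, but one must be careful that singular values behave well under operator norm limits, which is exactly Weyl's monotonicity inequality cited above.
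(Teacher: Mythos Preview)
Your argument is correct and is the standard one. The paper itself does not give a proof of this proposition: it simply states that the result ``follows from \cite[Chapter 1.7 and Theorem 2.5]{simon}''. What you have written is essentially the content of those references, carried out explicitly: the quasi-triangle inequality from $\mu_{m+n}(T+S)\le\mu_m(T)+\mu_n(S)$, completeness via operator-norm convergence together with continuity of each $\mu_k$, and for $p>1$ the equivalent norm built from Ces\`aro-type partial sums and the Ky Fan inequality. One cosmetic remark: in your equivalence estimate you can drop the factor $2^{1/p}$, since from $N\mu_{N-1}(T)\le\sum_{k=0}^{N-1}\mu_k(T)\le N^{1-1/p}\|T\|'_{p,\infty}$ and the substitution $N=k+1$ you get $(1+k)^{1/p}\mu_k(T)\le\|T\|'_{p,\infty}$ directly.
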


\begin{deef}
\label{definoofsingularsat}
A functional $\phi$ on $\ell^\infty(\N)$ is called singular if it factors through $\ell^\infty(\N)/c_0(\N)$, that is, $\phi\in \ell^\infty(\N)^*$ and  $\phi|_{c_0(\N)}=0$. An extended limit is a singular state $\omega$ on $\ell^\infty(\N)$.
\end{deef}

\begin{remark}
If $\omega$ is an extended limit, i.e. $\omega|_{c_0(\N)}=0$ and $\omega(1)=1$, then $\omega(c)=\lim_{n\to \infty} c_n$ whenever $c$ is a convergent sequence. This fact motivates the terminology ``extended limit'' as any singular state is an extension of the limit functional defined on the closed subspace of convergent sequences. The space of singular functionals $(\ell^\infty(\N)/c_0(\N))^*$ is a Banach space and existence of extended limits follows from the Hahn-Banach theorem (the non-separable case). For a sequence $c=(c_n)_{n\in \N}\in \ell^\infty(\N)$ we use the suggestive notation:
$$\lim_{n\to \omega}c_n:=\omega(c).$$
\end{remark}

\begin{deef}
\label{definoofdix}
The Dixmier trace $\tra_\omega:\mathcal{L}^{1,\infty}(\He)\to \C$ associated with a singular state $\omega$ is constructed as the linear extension of
\begin{equation}
\label{defonfdix}
\tra_\omega(G):=\lim_{N\to \omega} \frac{\sum_{k=0}^N \mu_k(G)}{\log(N)}, \quad\mbox{defined for $G\geq 0$}.
\end{equation}
\end{deef}

\begin{remark}
The reader should beware of the following technical pitfall: the expression \eqref{defonfdix} is defined for $G$ in the Lorentz ideal $\mathcal{M}_{1,\infty}:=\{G: \sum_{k=0}^N \mu_k(G)=O(\log(N))\}$. However, for the expression \eqref{defonfdix} to be linear on $\mathcal{M}_{1,\infty}$ it is necessary that $\omega$ is dilation invariant, for details regarding this distinction, see \cite[Lemma 9.7.4]{sukolord}.
\end{remark}

\begin{remark}
\label{connesdixremark}
A proper subset of the Dixmier traces is given by the Connes-Dixmier traces (cf. \cite{c,pietsch,dixmieragain}). A \emph{Connes-Dixmier trace} is defined from an extended limit of the form $\omega=M^*\phi$, where $\phi$ is a singular functional and
$$M:\ell^\infty(\N)\to \ell^\infty(\N),\quad (Mx)_k:=\frac{\sum_{l=0}^k\frac{1}{l+1}x_l}{\log(k+2)}.$$
\end{remark}

For computational purposes, one introduces the notion of $V$-modulated operators. This notion was used in \cite{kaloposu} for the computation of Dixmier traces; it can also be found in \cite[Definition 11.2.1]{sukolord}. We will make use of a minor generalization that we call ``weakly $V$-modulated operators". We say that a bounded operator $V\in \Bo(\He)$ is \emph{strictly positive} if $V$ is positive and $V\mathcal{H}\subseteq \He$ is dense.

\begin{deef}
\label{defsofmod}
Assume that $V$ is a positive operator on a Hilbert space $\He$ and let $G\in \Bo(\He)$ be a bounded operator.
\begin{itemize}
\item We say that $G\in \Bo(\He)$ is $V$-modulated if $\sup_{t>0} t^{1/2}\|G(1+tV)^{-1}\|_{\mathcal{L}^2(\He)}<\infty$, where $\mathcal{L}^2(\He)$ denotes the Hilbert-Schmidt ideal of operators. 
\item Assume also that $V$ is strictly positive and belongs to $\mathcal{L}^{p,\infty}(\He)$ for some $p\in [1,\infty)$. If the densely defined operator $GV^{-1}$ extends to a bounded operator in $\mathcal{L}^{p/(p-1),\infty}(\He)$, we say that $G$ is weakly $V$-modulated.
\end{itemize}
\end{deef}

The condition to be a $V$-modulated operator is as the name suggests stronger than that of being a weakly $V$-modulated operator. This fact follows from the following result. We refer its proof to \cite{sukolord}. 

\begin{prop}[Lemma 11.2.9 in \cite{sukolord}] 
If $V\in \mathcal{L}^{p,\infty}(\He)$, for $p>2$, is strictly positive and $G$ is $V^p$-modulated then $GV^{-1}\in \mathcal{L}^{p/(p-1),\infty}(\He)$. In particular, under said assumptions, $G$ is weakly $V$-modulated if it is $V^p$-modulated.
\end{prop}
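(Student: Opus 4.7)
The plan is to follow the strategy of Lemma 11.2.9 in \cite{sukolord}: first diagonalize $V$, then play the weak Schatten structure of $V$ against the Hilbert-Schmidt structure encoded by $V^p$-modulation. The ``in particular'' clause is automatic once the first claim is proved: when $V \in \mathcal{L}^{p,\infty}(\He)$ is strictly positive, the condition $GV^{-1} \in \mathcal{L}^{p/(p-1),\infty}(\He)$ is precisely the definition of $G$ being weakly $V$-modulated.

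For the main claim, I would begin by diagonalizing $V$: strict positivity and compactness produce an orthonormal basis $\{e_k\}_{k \geq 0}$ with $Ve_k = \lambda_k e_k$, $\lambda_k > 0$ decreasing, and $\lambda_k \leq C(1+k)^{-1/p}$ from the weak Schatten hypothesis. The $V^p$-modulation then translates into a Hilbert-Schmidt tail estimate: substituting $t = \lambda_n^{-p}$ in $\|G(1+tV^p)^{-1}\|_{\mathcal{L}^2}^2 \leq C/t$ and observing $(1+t\lambda_k^p)^{-2} \geq 1/4$ for $k \geq n$, one extracts
$$\sum_{k \geq n}\|Ge_k\|^2 \leq C\lambda_n^p.$$
To deduce $\mu_n(GV^{-1}) \leq C(1+n)^{-(p-1)/p}$, I would use the algebraic decomposition $GV^{-1} = G(1+tV^p)^{-1}V^{-1} + tGV^{p-1}(1+tV^p)^{-1}$, bound the second summand by the uniform estimate $\|V^{p-1}(1+tV^p)^{-1}\|_{\mathcal{L}^{p/(p-1),\infty}} \leq C$ (which follows from $\lambda^{p-1}/(1+t\lambda^p)$ attaining its maximum of order $t^{-(p-1)/p}$ and decaying like $\lambda^{p-1}$ on the tail), bound the first summand using the Hilbert-Schmidt tail above, and combine via the Ky Fan inequality $\mu_{2n}(A+B) \leq \mu_n(A) + \mu_n(B)$ with a choice $t = t(n)$.

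The main obstacle lies in this final balancing step. Estimating the first summand by its Hilbert-Schmidt norm and the second by its operator norm loses a factor of $n^{1/2}$ and yields only $\mu_n(GV^{-1}) = O(n^{1/p - 1/2})$, i.e., $GV^{-1} \in \mathcal{L}^{2p/(p-2),\infty}$ instead of $\mathcal{L}^{p/(p-1),\infty}$ -- a strictly weaker inclusion for $p > 2$. Extracting the sharp exponent requires exploiting the $V^p$-modulation at all scales $t > 0$ simultaneously and using the weak-$\mathcal{L}^p$ decay of $V^{-1}$ more finely than either Hilbert-Schmidt or operator-norm bounds alone would allow; this is precisely the technical heart of the argument in \cite{sukolord}.
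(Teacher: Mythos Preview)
The paper does not supply its own proof of this proposition: immediately after stating it, the authors write ``We refer its proof to \cite{sukolord}.'' So there is no in-paper argument to compare against; the paper's ``proof'' is exactly the citation you yourself invoke at the end of your sketch.

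Your proposal is therefore in the same spirit as the paper --- both ultimately defer to \cite{sukolord} --- but it is worth being explicit that what you have written is not a proof, and you correctly recognize this. The diagonalization and the tail estimate $\sum_{k\geq n}\|Ge_k\|^2 \leq C\lambda_n^p$ are sound, and the decomposition $GV^{-1} = G(1+tV^p)^{-1}V^{-1} + tGV^{p-1}(1+tV^p)^{-1}$ is the right object to study. But as you note, bounding the pieces by Hilbert--Schmidt and operator norms respectively only yields $GV^{-1}\in \mathcal{L}^{2p/(p-2),\infty}$, which is strictly weaker than the claimed $\mathcal{L}^{p/(p-1),\infty}$ for $p>2$. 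The missing ingredient --- a dyadic or multi-scale summation over $t$ that tracks singular values rather than norms --- is genuinely the content of \cite[Lemma 11.2.9]{sukolord}, and without it the argument is incomplete. Since the paper itself makes no attempt to fill this gap and simply cites the reference, your sketch is consistent with the paper's treatment, but you should not present it as a self-contained proof.
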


\begin{deef}
Let $V\in \mathcal{L}^{1,\infty}(\He)$ be strictly positive and let $(e_l)_{l\in \N}$ denote \emph{the} ON-basis associated with $V$ ordered according to decreasing size of the eigenvalues. For $Q\in \Bo(\He)$ we define the sequence $\mathrm{Res}(Q)=(\mathrm{Res}(Q)_N)_{N\in \N}$ by
$$\mathrm{Res}(Q)_N:=\frac{\sum_{l=0}^N\langle Ge_l,e_l\rangle}{\log(N+2)}.$$
\end{deef}

The residue sequence $\mathrm{Res}(G)$ can be used to compute Dixmier traces assuming $G$ is modulated. The proof of the following theorem is found in \cite{sukolord}.

\begin{thm}[Corollary 11.2.4 in \cite{sukolord}]
If $V\in \mathcal{L}^{1,\infty}(\He)$ and $G$ is $V$-modulated then $\mathrm{Res}(G)\in \ell^\infty(\N)$ and the Dixmier trace can be computed by means of {\bf the $V$-ordered Lidskii formula}
\begin{equation}
\label{losuzaformula}
\tra_\omega(G)=\omega(\mathrm{Res}(G))\equiv \lim_{N\to \omega}\frac{\sum_{l=0}^N\langle Ge_l,e_l\rangle}{\log(N+2)}.
\end{equation}
\end{thm}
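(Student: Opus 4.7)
The plan is to derive both assertions from the $V$-modulation bound, organized by a dyadic spectral decomposition of $V$. Since the Dixmier trace $\tra_\omega$ and the functional $\omega\circ\mathrm{Res}$ are both linear in $G$, and the $V$-modulation condition is preserved under taking adjoints (because $V$ is self-adjoint), I would first split $G = G_1 + iG_2$ into self-adjoint parts and reduce to the case where $G$ is self-adjoint.

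For the boundedness $\mathrm{Res}(G) \in \ell^\infty(\N)$, I would use that $e_l$ diagonalizes $V$ with eigenvalues $\mu_l(V)$, so that the $(l,l)$-matrix entry of $G(1+tV)^{-1}$ equals $(1+t\mu_l(V))^{-1}\langle Ge_l, e_l\rangle$. The Hilbert--Schmidt bound $\|G(1+tV)^{-1}\|_{\mathcal{L}^2(\He)} \leq C t^{-1/2}$ coming from the $V$-modulation hypothesis then yields, for every $t > 0$,
\[
\sum_{l \geq 0} \frac{|\langle Ge_l, e_l\rangle|^2}{(1+t\mu_l(V))^{2}} \;\leq\; \frac{C^2}{t}.
\]
Partitioning $\N$ into dyadic blocks $B_k = \{l : 2^{-k-1}\mu_0(V) < \mu_l(V) \leq 2^{-k}\mu_0(V)\}$, choosing $t \asymp 2^{k}/\mu_0(V)$ within $B_k$, and applying Cauchy--Schwarz inside the block using that $V \in \mathcal{L}^{1,\infty}(\He)$ forces $\#B_k = O(2^k)$, one obtains a uniform bound $|\sum_{l \in B_k}\langle G e_l, e_l\rangle| \leq C'$. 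Summing over the $O(\log N)$ blocks that meet $\{0,\dots,N\}$ then gives $\sum_{l=0}^N \langle Ge_l, e_l\rangle = O(\log(N+2))$, which is essentially the content of the underlying technical lemma (Lemma \ref{lemma11210}).

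For the identity $\tra_\omega(G) = \omega(\mathrm{Res}(G))$, I would apply the same dyadic estimate to the off-diagonal part of $G$ in the basis $(e_l)$, controlling how far each finite-rank truncation $P_{B_k} G P_{B_k}$ sits from its diagonal. Combined with an elementary finite-dimensional Horn/Weyl-type comparison between diagonal entries and eigenvalues inside each block, this yields
\[
\Bigl|\sum_{l=0}^N \langle Ge_l, e_l\rangle \;-\; \sum_{l=0}^N \lambda_l(G)\Bigr| \;=\; o(\log N),
\]
where $\lambda_l(G)$ is an enumeration of the eigenvalues of $G$ compatible with Lidskii's theorem. Dividing by $\log(N+2)$ and passing to the $\omega$-limit then identifies the right-hand side with $\tra_\omega(G)$ by the spectral definition of the Dixmier trace. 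The main obstacle is the sharp dyadic summation in the previous paragraph: a naive Cauchy--Schwarz across all indices $l \leq N$ yields only an $O(\sqrt{N})$ bound, so the full force of $V \in \mathcal{L}^{1,\infty}$ --- rather than mere compactness of $V$ --- must be used to recover the logarithmic growth. This is precisely where the $V$-modulation condition (as opposed to weaker modulation hypotheses) is decisive.
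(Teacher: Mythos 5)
The paper does not actually prove this statement: it is quoted verbatim from \cite[Corollary 11.2.4]{sukolord} with an explicit pointer to the book, so your proposal is an attempt to reprove the book's result from scratch. Two steps in it would fail as written. First, the reduction to self-adjoint $G$ rests on the claim that $V$-modulation is preserved under taking adjoints; this is false. For a rank-one operator $G=\xi\otimes\eta$ one has $\|G(1+tV)^{-1}\|_{\mathcal{L}^2(\He)}=\|\xi\|\,\|(1+tV)^{-1}\eta\|$, so the modulation condition constrains only $\eta$ (roughly, $\eta\in\Dom(V^{-1/2})$) and puts no condition on $\xi$; choosing $\eta=e_0$ and $\xi$ with $\sup_t t^{1/2}\|(1+tV)^{-1}\xi\|=\infty$ produces a $V$-modulated $G$ whose adjoint, real part and imaginary part are all not $V$-modulated. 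This is precisely why \cite[Lemma 11.2.10]{sukolord} (quoted as Lemma \ref{lemma11210} above, and used in the paper's proof of Theorem \ref{vmodthm}) compares $\sum_{k\le n}\lambda_k(\mathrm{Re}\,G)$ with $\sum_{l\le n}\mathrm{Re}\langle Ge_l,e_l\rangle$ while keeping the hypothesis on $G$ itself; that $O(1)$ comparison is also the genuinely hard part of the argument and is not an ``elementary finite-dimensional Horn/Weyl-type'' matter, since the eigenvalues are ordered by modulus while the diagonal entries are ordered by the spectrum of $V$.

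Second, the dyadic decomposition is taken at the wrong place. Grouping indices by the size of the eigenvalues of $V$ into blocks $B_k=\{l:2^{-k-1}\mu_0(V)<\mu_l(V)\le 2^{-k}\mu_0(V)\}$ does give $\sum_{l\in B_k}|\langle Ge_l,e_l\rangle|=O(1)$ per block, but the number of nonempty blocks meeting $\{0,\dots,N\}$ is of order $\log(1/\mu_N(V))$, not $\log N$: membership in $\mathcal{L}^{1,\infty}$ only bounds $\mu_l(V)$ from above, and if for instance $\mu_l(V)=e^{-l}\mu_0(V)$ then every block contains at most one index, the per-block bound degenerates to the trivial statement that each diagonal entry is $O(1)$, and your summation yields only $\sum_{l=0}^N|\langle Ge_l,e_l\rangle|=O(N)$. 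The decomposition must instead be dyadic in the index: on $D_j=\{l:2^j\le l+1<2^{j+1}\}$ take $t=2^j$, use $\mu_l(V)\le\|V\|_{\mathcal{L}^{1,\infty}}(1+l)^{-1}$ to bound $(1+t\mu_l(V))^{-1}$ from below by a constant on $D_j$, and apply Cauchy--Schwarz over the $2^j$ indices of the block; then $\{0,\dots,N\}$ really is covered by $O(\log N)$ blocks and $\mathrm{Res}(G)\in\ell^\infty(\N)$ follows.
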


\begin{remark}
The terminology ``$V$-ordered Lidskii formula" comes from the fact that for a positive trace class operator $G$, the Lidskii theorem relates the expectation values $\sum_{l=0}^\infty \langle Ge_l,e_l\rangle$ with the spectral expression $\sum_{l=0}^\infty \mu_k(G)$. It should not be confused with the Lidskii formula for Dixmier traces \cite[Theorem 1]{lidskiitypesesuza}. 
\end{remark}

We will need to use the full power in the proofs of the results in \cite[Chapter 11]{sukolord} to obtain an ordered Lidskii formula as in \eqref{losuzaformula} for the larger class of operators that we are interested in - the weakly modulated operators. We use the full strength in \cite[Lemma 11.2.10]{sukolord}. For the convenience of the reader, we recall this result here. Following the notation of \cite{sukolord}, for a compact operator $G$, we let $(\lambda_k(G))_{k\in \N}\subseteq \C$ denote an enumeration of the non-zero eigenvalues of $G$ ordered so that $(|\lambda_k(G)|)_{k\in \N}$ is a decreasing sequence.

\begin{lem}[Lemma 11.2.10, \cite{sukolord}]
\label{lemma11210}
Let $V\in \mathcal{L}^{p,\infty}(\He)$ be a strictly positive operator for some $p\in [1,\infty)$ with associated $ON$-basis $(e_l)_{l\in \N}$. If $G$ is weakly $V$-modulated, then
$$\sum_{k=0}^n\lambda_k(\mathrm{Re}G)=\sum_{l=0}^n\mathrm{Re}\langle Ge_l,e_l\rangle+O(1)\quad \mbox{as $n\to \infty$}.$$
\end{lem}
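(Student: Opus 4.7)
The strategy is to rewrite the difference as a single trace involving the pair of projections associated with $\mathrm{Re}(G)$ and $V$, and then to extract an $O(1)$ bound by exploiting the factorization $G = BV$ supplied by weak $V$-modulation.

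First, set $A := \mathrm{Re}(G)$ and let $F_n$ denote the spectral projection of $A$ onto the eigenspaces of its top $n+1$ eigenvalues, so that $\mathrm{Tr}(F_n A) = \sum_{k=0}^n \lambda_k(A)$. Since $P_n = \sum_{l \leq n} \langle \cdot, e_l\rangle e_l$ is the spectral projection of $V$ onto its top $n+1$ eigenspaces, the claim reduces to the identity
\[
\sum_{k=0}^n \lambda_k(A) \;-\; \sum_{l=0}^n \mathrm{Re}\,\langle G e_l, e_l\rangle \;=\; \mathrm{Re}\,\mathrm{Tr}\!\bigl((F_n - P_n)\,G\bigr),
\]
which uses the self-adjointness of $F_n - P_n$ to pull the real part through the trace. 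By Ky Fan's maximal principle the left-hand side is non-negative, and the task is to provide a matching $O(1)$ upper bound.

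Next I would use the factorization $G = B V$ with $B := GV^{-1} \in \mathcal{L}^{p/(p-1),\infty}(\He)$, afforded by the weakly $V$-modulated hypothesis, to rewrite
\[
\mathrm{Tr}\!\bigl((F_n - P_n)\,G\bigr) \;=\; \mathrm{Tr}\!\bigl(B \cdot V(F_n - P_n)\bigr).
\]
Because $P_n$ is precisely the spectral projection of $V$ onto its top $n+1$ eigenvalues, the operator $V(F_n - P_n)$ decomposes along the Halmos two-projection structure of the pair $(F_n, P_n)$ into pieces whose operator norm is bounded by $\mu_n(V) \lesssim n^{-1/p}$ and whose rank is bounded by $n+1$. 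The Hölder-type inequality for Lorentz ideals then yields
\[
\bigl|\mathrm{Tr}\!\bigl(B\, V(F_n - P_n)\bigr)\bigr| \;\leq\; \|B\|_{\mathcal{L}^{p/(p-1),\infty}} \cdot \|V(F_n-P_n)\|_{\mathcal{L}^{p,1}} \;\lesssim\; n^{-1/p} \cdot (n+1)^{1/p} \;=\; O(1).
\]

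The main obstacle is the delicate spectral matching in the last step: without genuinely pairing $F_n\He$ with $P_n\He$ and exploiting that they have equal dimension, a naive application of Hölder in $\mathcal{L}^{p,\infty}$ produces a logarithmic factor and only yields an $o(\log n)$ estimate rather than the required $O(1)$. The two-projection decomposition, combined with the fact that $V$ is diagonal in the basis $(e_l)$ defining $P_n$, is precisely what removes this logarithmic loss, mirroring the role of the analogous estimate for $V$-modulated operators in \cite[Chapter 11]{sukolord}. In particular, the strict positivity of $V$ and the weak-$V^{-1}$-bound on $G$ cannot be weakened without destroying this matching.
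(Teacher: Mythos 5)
The paper does not actually prove this lemma: it is quoted from \cite[Lemma 11.2.10]{sukolord} and the proof is explicitly deferred there, so your argument has to stand entirely on its own. Your reduction is correct as far as it goes: with $A=\mathrm{Re}(G)$, $F_n$ the spectral projection of $A$ for its $n+1$ eigenvalues of largest modulus, and $P_n$ the projection onto $\mathrm{span}(e_0,\dots,e_n)$, the difference of the two partial sums equals $\mathrm{Re}\,\mathrm{Tr}((F_n-P_n)G)=\mathrm{Re}\,\mathrm{Tr}(BV(F_n-P_n))$ with $B=GV^{-1}\in\mathcal{L}^{p/(p-1),\infty}(\He)$. (A side remark: the Ky Fan nonnegativity claim is false under the paper's convention that $\lambda_k$ is ordered by decreasing \emph{modulus} -- a large negative eigenvalue appears early and makes $\sum_{k\le n}\lambda_k(A)$ smaller than $\max_Q\mathrm{Tr}(QA)$ -- but this is harmless since your final Hölder bound is two-sided.)

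The genuine gap is the assertion that $V(F_n-P_n)$ decomposes into pieces of operator norm $\lesssim\mu_n(V)$ and rank $\lesssim n+1$; this is false, and it is the whole content of the lemma. Writing $F_n-P_n=(1-P_n)F_n-P_n(1-F_n)$, the first piece does behave as you claim: $\|V(1-P_n)\|=\mu_{n+1}(V)$, the rank is at most $n+1$, and $|\mathrm{Tr}(BV(1-P_n)F_n)|\le\mu_{n+1}(V)\sum_{j=0}^{n}\mu_j(B)=O(1)$. But the second piece $VP_n(1-F_n)$ has operator norm comparable to $\mu_0(V)=\|V\|$, not $\mu_n(V)$: if, say, $e_0\perp\mathrm{ran}\,F_n$ (which nothing forbids, since $F_n$ is built from $\mathrm{Re}(G)$ and $P_n$ from $V$), then $V(F_n-P_n)e_0=-\mu_0(V)e_0$. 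The Halmos two-projection structure cannot rescue this, because $V$ commutes with $P_n$ but not with $F_n$, so $V$ does not respect that decomposition. More fundamentally, your estimate uses only that $\mathrm{rank}\,F_n=n+1$ and never that $F_n$ diagonalizes $A$, and rank alone cannot suffice: for $p=1$, $G=V=\mathrm{diag}((k+1)^{-1})$ and $F_n$ the rank-$(n+1)$ projection onto $\mathrm{span}(e_{n+1},\dots,e_{2n+1})$ one gets $\mathrm{Tr}((F_n-P_n)G)=-\log n+O(1)$. Thus the term $\mathrm{Tr}(BVP_n(1-F_n))=\mathrm{Tr}((1-F_n)GP_n)$ must be controlled by exploiting the spectral relation $\|(1-F_n)A(1-F_n)\|=|\lambda_{n+1}(A)|$ in combination with the modulation hypothesis; this interplay is precisely the nontrivial part of the argument in \cite[Lemma 11.2.10]{sukolord}, and it is missing from your proposal.
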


We refer the proof of Lemma \ref{lemma11210} to \cite{sukolord}. From it, we deduce the following theorem.

\begin{thm}[The $V$-ordered Lidskii formula]
\label{vmodthm}
Let $V\in \mathcal{L}^{p,\infty}(\He)$ be strictly positive for some $p\in [1,\infty)$. If $G\in \mathcal{L}^{1,\infty}(\He)$ is weakly $V$-modulated (see Definition \ref{defsofmod}), then the $V$-ordered Lidskii formula \eqref{losuzaformula} holds.
\end{thm}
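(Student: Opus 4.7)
The plan is to split $G$ into its real and imaginary parts, apply Lemma \ref{lemma11210} to both $G$ and $iG$, and then invoke the Lidskii-type formula for Dixmier traces of self-adjoint operators to conclude. All the added generality beyond \cite[Corollary 11.2.4]{sukolord} is absorbed into Lemma \ref{lemma11210} itself, which is already stated for weakly $V$-modulated operators; the remaining architecture of the proof mirrors the $V$-modulated case essentially verbatim.

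First I would observe that weak $V$-modulation is invariant under multiplication by scalars, so $iG$ is weakly $V$-modulated whenever $G$ is. Applying Lemma \ref{lemma11210} to $G$ yields
$$\sum_{k=0}^n\lambda_k(\mathrm{Re}(G))=\sum_{l=0}^n\mathrm{Re}\langle Ge_l,e_l\rangle+O(1),\quad n\to\infty.$$
Dividing by $\log(n+2)$ and pushing the identity through the extended limit $\omega$ annihilates the $O(1)$ remainder, since $(1/\log(n+2))_n\in c_0(\N)$. The Lidskii-type formula for Dixmier traces of the self-adjoint operator $\mathrm{Re}(G)\in\mathcal{L}^{1,\infty}(\He)$ (see \cite[Chapter 7]{sukolord}) then identifies
$$\lim_{N\to\omega}\frac{1}{\log(N+2)}\sum_{k=0}^N\lambda_k(\mathrm{Re}(G))=\tra_\omega(\mathrm{Re}(G)).$$

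Next, I would repeat the argument with $iG$ in place of $G$, using $\mathrm{Re}(iG)=-\mathrm{Im}(G)$ together with the identity $\lambda_k(-A)=-\lambda_k(A)$ for self-adjoint $A$ (the ordering being by decreasing modulus). This produces the analogous identity
$$\tra_\omega(\mathrm{Im}(G))=\lim_{N\to\omega}\frac{1}{\log(N+2)}\sum_{l=0}^N\mathrm{Im}\langle Ge_l,e_l\rangle.$$
Combining the real and imaginary identities by linearity of $\tra_\omega$ on $\mathcal{L}^{1,\infty}(\He)$ and linearity of the inner product $\langle Ge_l,e_l\rangle=\mathrm{Re}\langle Ge_l,e_l\rangle+i\,\mathrm{Im}\langle Ge_l,e_l\rangle$ recovers exactly the $V$-ordered Lidskii formula \eqref{losuzaformula}.

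The main obstacle is the bookkeeping around the Lidskii formula for Dixmier traces of self-adjoint operators in $\mathcal{L}^{1,\infty}(\He)$: the eigenvalues $\lambda_k(H)$ are ordered by modulus rather than by signed magnitude, so their partial sums are in general only conditionally convergent, and the passage between this spectral ordering and the singular-value ordering that enters Definition \ref{definoofdix} implicitly depends on the conventions (and any dilation invariance properties) for $\omega$ discussed in the remark after Definition \ref{definoofdix}. Once this identification is imported from \cite{sukolord}, the proof for weakly modulated operators reduces to Lemma \ref{lemma11210} and nothing beyond the scalar-invariance observation above is needed.
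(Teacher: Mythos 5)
Your proposal is correct and follows essentially the same route as the paper: decompose $G$ into its self-adjoint parts, apply Lemma \ref{lemma11210} to both $G$ and $iG$ (with the $O(1)$ error annihilated by the $\log$ normalization), and conclude via the Lidskii-type formula \eqref{lidskiiiitype} for self-adjoint operators in $\mathcal{L}^{1,\infty}(\He)$. The ordering/convergence subtlety you flag at the end is handled in the paper exactly as you suggest, by importing \cite[Theorem 7.3.1]{sukolord}.
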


\begin{proof}
For any self-adjoint $G\in \mathcal{L}^{1,\infty}(\He)$, the following Lidskii type formula for Dixmier traces follows from the definition of Dixmier traces:
\begin{equation}
\label{lidskiiiitype}
\tra_\omega(G)=\lim_{N\to \omega}\frac{\sum_{k=0}^N\lambda_k(G)}{\log(N)}.
\end{equation}
Using that $G=\mathrm{Re}(G)-i\mathrm{Re}(iG)$, the theorem follows once noting that
\begin{align*}
\tra_\omega(G)&=\tra_\omega(\mathrm{Re}(G))-i\tra_\omega(\mathrm{Re}(iG))\\
&=\lim_{N\to \omega}\frac{\sum_{k=0}^N\lambda_k(\mathrm{Re}G)-i\lambda_k(\mathrm{Re}iG)}{\log(N)}\\
&=\lim_{N\to \omega}\frac{\sum_{l=0}^N\mathrm{Re}\langle Ge_l,e_l\rangle-i\mathrm{Re}\langle iGe_l,e_l\rangle}{\log(N)}=\lim_{N\to \omega}\frac{\sum_{l=0}^N\langle Ge_l,e_l\rangle}{\log(N)}.
\end{align*}
We used Lemma \ref{lemma11210} in the second equality.
\end{proof}

\begin{remark}
The Lidskii type formula for Dixmier traces \eqref{lidskiiiitype} was proven in \cite[Theorem 7.3.1]{sukolord} (cf. \cite[Theorem 1]{lidskiitypesesuza}) for any operator in the larger Lorentz ideal.
\end{remark}

We now reformulate Theorem \ref{vmodthm} in a context fitting better with products on a closed Riemannian manifold $M$. We will formulate the result in a slightly more general setting. We need some terminology first. Let $D$ be an unbounded self-adjoint operator on $\He$ with resolvent in $\mathcal{L}^{p,\infty}(\He)$ for some $p$. Define $W_D^s:=\Dom(|D|^s)$ for $s\geq 0$. For $s<0$ we define $W_D^s$ by duality. 

\begin{deef}
Let $\epsilon\in (0,p)$ and $s\in (-\epsilon,0)$. We say that $G\in \mathcal{L}^{1,\infty}(\He)$ is $(\epsilon,s)$-factorizable with respect to $D$ if we can write $G=G'G''$ where
\begin{enumerate}
\item $G''$ extends to a continuous operator $W^{s}_D\to W^{s+\epsilon}_D$;
\item $G' \in \mathcal{L}^{p/(p-\epsilon),\infty}(\He)$.
\end{enumerate}
\end{deef}

\begin{lem}
\label{lszformnew}
If $G\in \mathcal{L}^{1,\infty}(\He)$ is $(\epsilon,s)$-factorizable with respect to $D$, $G$ is weakly $V$-modulated for $V:=|i+D|^{s}$.
\end{lem}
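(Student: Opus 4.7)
The goal is to verify the two requirements of weak $V$-modulation for $V := |i+D|^{s}$. Since $s < 0$ and $D$ is self-adjoint with resolvent in $\mathcal{L}^{p,\infty}(\He)$, the functional calculus shows that $V = |i+D|^{-|s|}$ is strictly positive (its range is $W^{|s|}_D$, which is dense in $\He$) and satisfies $V \in \mathcal{L}^{p/|s|,\infty}(\He)$. Since $|s| < \epsilon < p$, the exponent $q := p/|s|$ lies in $[1,\infty)$, and the second condition reduces to showing that the densely defined operator $GV^{-1}$ extends to an element of $\mathcal{L}^{q/(q-1),\infty}(\He) = \mathcal{L}^{p/(p-|s|),\infty}(\He)$.

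The plan is to exploit the factorization $G = G'G''$ by inserting suitable powers of $|i+D|$ to convert Sobolev mapping properties into weak Schatten estimates. First, $V^{-1} = |i+D|^{|s|}$ is a bounded isomorphism $\He \to W^{-|s|}_D = W^{s}_D$, so composing with $G'' : W^{s}_D \to W^{s+\epsilon}_D$ yields a bounded operator $G'' V^{-1} : \He \to W^{s+\epsilon}_D$. Because $s + \epsilon > 0$, the operator $|i+D|^{s+\epsilon}$ is a bounded isomorphism $W^{s+\epsilon}_D \to \He$, and therefore
\[
A := |i+D|^{s+\epsilon}\, G''\, |i+D|^{|s|}
\]
lies in $\Bo(\He)$. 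On the dense domain $\Dom(V^{-1}) = W^{|s|}_D$, I can rewrite
\[
G V^{-1} = \bigl(G'|i+D|^{-(s+\epsilon)}\bigr)\cdot A.
\]

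It remains to place $G'|i+D|^{-(s+\epsilon)}$ in the right weak Schatten class. By hypothesis $G' \in \mathcal{L}^{p/(p-\epsilon),\infty}(\He)$, and the functional calculus together with the Weyl-type assumption on the resolvent gives $|i+D|^{-(s+\epsilon)} \in \mathcal{L}^{p/(s+\epsilon),\infty}(\He)$. Applying the H\"older-type inequality for weak Schatten ideals, the product lies in $\mathcal{L}^{r,\infty}(\He)$ with
\[
\frac{1}{r} = \frac{p-\epsilon}{p} + \frac{s+\epsilon}{p} = \frac{p-|s|}{p},
\]
so $r = p/(p-|s|)$, exactly the exponent required. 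Composing with the bounded operator $A$ preserves this ideal, so $G V^{-1}$ extends to $\mathcal{L}^{p/(p-|s|),\infty}(\He)$ and $G$ is weakly $V$-modulated.

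I do not foresee a genuine obstacle: the argument is essentially the interplay of functional calculus on $|i+D|$, the H\"older inequality for weak Schatten ideals, and the identification of $W^{t}_D$ with $|i+D|^{-t}\He$. The only step requiring any care is the bookkeeping of exponents, which works out precisely because $s \in (-\epsilon,0)$ forces both $s+\epsilon \in (0,\epsilon)$ and $|s| \in (0,\epsilon)$, so each of the powers of $|i+D|$ appearing above sits in an admissible Sobolev scale and an admissible weak Schatten ideal.
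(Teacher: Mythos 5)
Your argument is correct and coincides with the paper's own proof: both factor $GV^{-1}=\bigl(G'|i+D|^{-(s+\epsilon)}\bigr)\cdot\bigl(|i+D|^{s+\epsilon}G''|i+D|^{-s}\bigr)$, bound the second factor using the Sobolev mapping property of $G''$, and apply the H\"older inequality for weak Schatten ideals to the first factor to land in $\mathcal{L}^{p/(p-|s|),\infty}(\He)=\mathcal{L}^{p/(p+s),\infty}(\He)$. Your write-up merely spells out the exponent bookkeeping and the strict positivity of $V$ in more detail than the paper does.
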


The structure of the proof is similar to that of \cite[Lemma 5.11]{gimpgoff}.

\begin{proof}
Since $G' \in \mathcal{L}^{p/(p-\epsilon),\infty}(\He)$, it follows that $G'|i+D|^{-(s+\epsilon)} \in \mathcal{L}^{p/(p+s),\infty}(\He)$. It follows from this observation that
$$GD^{-s}=G'|i+D|^{-(s+\epsilon)}\underbrace{|i+D|^{s+\epsilon}G''|i+D|^{-s}}_{\in \Bo(\He)}\in \mathcal{L}^{p/(p+s),\infty}(\He).$$
Since $V\in \mathcal{L}^{p/|s|,\infty}(\He)$, it follows that $G$ is weakly modulated.
\end{proof}

The next theorem contains a Connes type residue formula in the context of weakly Laplacian modulated operators. Its proof is heavily based on the methods of \cite[Chapter 11.6]{sukolord}. We also re-express the Dixmier trace by means of the operator's $L^2$-kernel. First we need some notations and terminology. 

\begin{deef}
\label{localdifd}
An operator $G\in \Bo(L^2(M))$ is said to be localizable if $\chi G\chi'\in \mathcal{L}^1(L^2(M))$ whenever $\chi,\chi'\in C^\infty(M)$ have disjoint support. 
\end{deef}

We use the notation $k_G\in L^2(M\times M)$ for the $L^2$-kernel associated with a Hilbert-Schmidt operator $G\in \mathcal{L}^2(L^2(M))$. For an $\epsilon>0$, we let 
$$\textnormal{Diag}_\epsilon:=\{(x,y)\in M\times M: \mathrm{d}(x,y)<\epsilon\},$$ 
where $\mathrm{d}$ denotes the geodesic distance. We choose a cut-off function $\chi\in C^\infty_c(\textnormal{Diag}_\epsilon)$, which is $1$ near the diagonal $\textnormal{Diag}\subseteq M\times M$, and a diffeomorphism $\phi: \textnormal{Diag}_\epsilon \xrightarrow{\sim} TM$. It exists for small enough $\epsilon$, since $TM$ is isomorphic to the normal bundle of the inclusion $M\cong \textnormal{Diag}\subseteq M\times M$. The $L^2$-symbol $p_G\in C^\infty(T^*M)$ of $G$ is defined as the Fourier transform of $\phi^*(\chi\cdot k_G)\in L^2_c(TM)$, that is
$$p_G(x,\xi):=\int_{T_xM} (\chi k_G)(\phi(x,v))\e^{-i\xi(v)}\rd v.$$
We define the smooth density $\rho\in C^\infty(TM,\pi^*\Omega)$ by 
\begin{equation}
\label{definrho}
\rho(x,v):=\int_{|\xi|\leq 1} \e^{i\xi(v)}\rd \xi.
\end{equation}
We also set $\rho_N(x,v):=\rho(x,N^{1/n}v)\chi(\phi^{-1}(x,v))$.

\begin{thm}[Connes' residue trace formula]
\label{connesformulabitches}
Let $M$ be a Riemannian $n$-dimensional manifold and $\Delta$ a Laplace type operator on $M$. If $G\in \mathcal{L}^{1,\infty}(L^2(M))$ is localizable, weakly modulated w.r.t a negative power of $1+\Delta$ and satisfies the condition that 
\begin{equation}
\label{thelonemod}
\sup_{k\in \N}\int_M\int_{T^*_xM, 2^k<|\xi|<2^{k+1}} |p_G(x,\xi)|\rd x\rd \xi<\infty,
\end{equation}
then
\begin{align}
\label{whatheikocallsconnesformula}
\tra_\omega(G)&=\lim_{N\to \omega}\frac{(2\pi)^{-n}}{\log(N)} \int_M\int_{T^*_xM, |\xi|<N^{1/n}} p_G(x,\xi)\rd x\rd \xi\\
\label{kernelsconnesformula}
&=\lim_{N\to \omega}\frac{N}{\log(N)}\int_{\textnormal{Diag}_{\epsilon}} k_G(x,y)\phi^*\rho_N(x,y)\rd x\rd y.
\end{align}
\end{thm}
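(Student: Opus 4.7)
The plan is to combine the $V$-ordered Lidskii formula of Theorem \ref{vmodthm} with spectral asymptotics for $\Delta$ and Fourier inversion along fibers, following closely the strategy of \cite[Chapter 11.6]{sukolord} but working throughout with the weaker hypothesis that $G$ is only \emph{weakly} modulated and localizable. Since $G$ is weakly $V$-modulated with $V=(1+\Delta)^{-s}$ for some $s>0$, Theorem \ref{vmodthm} applies and gives
$$\tra_\omega(G)=\lim_{N\to\omega}\frac{1}{\log(N+2)}\sum_{l=0}^N\langle Ge_l,e_l\rangle=\lim_{N\to\omega}\frac{1}{\log N}\tra(GP_N),$$
where $(e_l)_{l\in\N}$ is the orthonormal basis of eigenfunctions of $\Delta$ ordered by increasing eigenvalue and $P_N$ is the orthogonal projection onto $\mathrm{span}\{e_0,\dots,e_N\}$. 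The first task is thus to reduce the trace on the right to an integral of the $L^2$-symbol $p_G$ over a phase-space ball.

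The next step is to replace the spectral cutoff $P_N$ by a Fourier-space cutoff using Weyl's law for $\Delta$. Writing $R_N\sim cN^{1/n}$ for the square root of the $N$-th eigenvalue, $P_N$ agrees with $\mathbf{1}_{[0,R_N^2]}(\Delta)$; standard spectral-theoretic/pseudo-differential arguments (Hörmander's functional calculus) realize this cutoff as a pseudo-differential operator whose principal symbol is $\mathbf{1}_{|\xi|<R_N}$. By the localizability hypothesis (Definition \ref{localdifd}) one may work in local charts modulo trace-class remainders, and in each chart the leading contribution to $\tra(GP_N)$ is
$$(2\pi)^{-n}\int_M\int_{T^*_xM,\,|\xi|<R_N}p_G(x,\xi)\rd x\rd\xi.$$
The hypothesis \eqref{thelonemod} plays the role of a Tauberian bound: the dyadic-annulus condition ensures that both the subleading symbolic corrections to $P_N$ and the Weyl-law error $R_N^n-N$ contribute only $O(1)$ to the trace, hence $o(\log N)$ after averaging. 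Dividing by $\log N$ and taking the $\omega$-limit yields \eqref{whatheikocallsconnesformula}.

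The equality \eqref{whatheikocallsconnesformula}$=$\eqref{kernelsconnesformula} is then pure Fourier inversion in the fibers of $TM\to M$. By the definition of $p_G$ and Fubini,
$$\int_{T^*_xM,\,|\xi|<N^{1/n}}p_G(x,\xi)\rd\xi=\int_{T_xM}(\chi k_G)(\phi(x,v))\Bigl(\int_{|\xi|<N^{1/n}}\e^{-i\xi(v)}\rd\xi\Bigr)\rd v.$$
Since the unit ball is symmetric, the scaling identity $\int_{|\xi|<R}\e^{-i\xi(v)}\rd\xi=R^n\rho(x,Rv)$ follows from \eqref{definrho}. Taking $R=N^{1/n}$, integrating over $M$, and pulling back via $\phi$ converts the right side to $N\int_{\mathrm{Diag}_\epsilon}k_G(x,y)\phi^*\rho_N(x,y)\rd x\rd y$, giving the desired equivalence (the factor $(2\pi)^{-n}$ cancels against the standard normalisation built into the identification of $p_G$ as fiberwise Fourier transform).

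The main obstacle is the Tauberian step in the second paragraph: replacing $P_N$ by the sharp symbolic cutoff while controlling errors uniformly in $N$. The point is that neither the Weyl remainder nor the subprincipal symbolic terms are individually summable against $p_G$, and one must exploit cancellation dyadic annulus by dyadic annulus; this is exactly where the uniform bound \eqref{thelonemod} is decisive, and it is the technical heart of the argument inherited from \cite[Chapter 11.6]{sukolord}, which must here be checked to extend from $V$-modulated to weakly $V$-modulated operators under the added localizability assumption.
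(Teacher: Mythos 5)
Your proposal is correct and follows essentially the same route as the paper: the first equality is the argument of \cite[Chapter 11.6]{sukolord} run in the Laplacian eigenbasis via the $V$-ordered Lidskii formula of Theorem \ref{vmodthm}, with the hypothesis \eqref{thelonemod} substituted for \cite[Proposition 11.3.18]{sukolord} exactly as you describe, and the second equality is the same fiberwise Fourier inversion/Plancherel computation based on the scaling identity $N\rho(x,N^{1/n}v)=\int_{|\xi|\leq N^{1/n}}\e^{i\xi(v)}\rd\xi$. The Tauberian step you flag as the technical heart is precisely what the authors leave to the cited reference, so your caveat matches the paper's own treatment.
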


\begin{remark}
It follows from the proof of \cite[Proposition 11.3.18]{sukolord} that condition \eqref{thelonemod} is automatically satisfied if $G$ is $(1+\Delta)^{-n/2}$-modulated. Moreover, the conclusion \eqref{whatheikocallsconnesformula} for $(1+\Delta)^{-n/2}$-modulated is stated and proved as \cite[Theorem 11.6.14]{sukolord}. We emphasize that the operators of interest in this paper (as in Remark \ref{introsobregrem} on page \pageref{introsobregrem}) are generally {\bf not} Laplacian modulated but we have not been able to prove that they satisfy the condition \eqref{thelonemod}.
\end{remark}

\begin{proof}[Proof of Theorem \ref{connesformulabitches}]
The proof of Formula \eqref{whatheikocallsconnesformula} under the assumptions of Theorem \ref{connesformulabitches} is proven in \cite[Chapter 11.6]{sukolord} once replacing the usage of \cite[Proposition 11.3.18]{sukolord} by the assumption \eqref{thelonemod}. Note that the smooth density $\rho$ from Equation \eqref{definrho} satisfies that $N\rho(N^{1/n}z)=\int_{|\xi|\leq N^{1/n}} \e^{iz\cdot \xi}\rd \xi$. Hence, after going to local coordinates, Plancherel's theorem implies that 
\begin{align*}
\tra_\omega(G)&=\lim_{N\to \omega}\frac{1}{\log(N)} \int_M\int_{T^*_xM, |\xi|<N^{1/d}} p_G(x,\xi)\rd x\rd \xi\\
&=\lim_{N\to \omega}\frac{N}{\log(N)}\int_{\textnormal{Diag}_{\epsilon}} k_G(x,y)\chi(x,y)\rho(N^{1/d}\phi(x,y))\rd x\rd y\\
&=\lim_{N\to \omega}\frac{N}{\log(N)}\int_{\textnormal{Diag}_{\epsilon}} k_G(x,y)\rho_N(\phi(x,y))\rd x\rd y\ .
\end{align*}
\end{proof}

\begin{deef}
We say that an operator $T\in \Bo(L^2(M))$ is finitely localizable if for any $\chi,\chi'\in C^\infty(M)$ with disjoint support, $\chi T\chi'\cdot \mathcal{L}^{1,\infty}(L^2(M))\subseteq \mathcal{L}^1(L^2(M))$. 
\end{deef}

For instance, any operator in the algebra generated by $C(M)$ and $\Psi^0(M)$ is finitely localizable.

\begin{prop}
\label{prodofcomm}
Assume that
\begin{enumerate}
\item $\beta_1,\beta_2,\ldots, \beta_k\in (0,1]$ are numbers such that
$$\beta_1+\beta_2+\cdots +\beta_k=n;$$
\item $T_1,..., T_k\in \Psi^0(M)$;
\item $f_j\in C^{\beta_j}(M)$, for $j=1,2,\ldots, k$;
\item $T_0\in \Bo(L^2(M))$.
\end{enumerate}
Then the operator $G:=T_0[T_1,f_1]\cdots [T_k,f_k]\in \mathcal{L}^{1,\infty}(L^2(M))$ is $(\epsilon,s)$-factorizable, for $\epsilon=\beta_k$ and $s\in (-\beta_k,0)$. Furthermore, if $n>1$ or $\beta_1,\beta_2,\ldots, \beta_k\in (0,1)$, then $G$ is localizable if $T_0$ is finitely localizable.
\end{prop}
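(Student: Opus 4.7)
The proposition has three assertions ($G\in\mathcal{L}^{1,\infty}$, $(\epsilon,s)$-factorizability, and localizability), and I would treat them in sequence: first the natural multiplicative splitting of $G$, then the trace-class estimate for disjoint cutoffs.

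For the weak Schatten containment and factorization, set $D:=(1+\Delta)^{1/2}$ so that $D^{-1}\in\mathcal{L}^{n,\infty}(L^2(M))$ and $p=n$ in Definition \ref{defsofmod}, and write $G=G'\cdot G''$ with
\begin{equation*}
G'':=[T_k,f_k]\quad\text{and}\quad G':=T_0[T_1,f_1]\cdots[T_{k-1},f_{k-1}].
\end{equation*}
Theorem \ref{sobregthm} with $Q=T_k$ and $a=f_k$ provides the continuous extension $G'':W^s(M)\to W^{s+\beta_k}(M)$ for every $s\in(-\beta_k,0)$. For $G'$, Corollary \ref{weakest} gives $[T_j,f_j]\in\mathcal{L}^{n/\beta_j,\infty}$ for each $j<k$, and the H\"older inequality for weak Schatten ideals combined with boundedness of $T_0$ yields $G'\in\mathcal{L}^{n/(n-\beta_k),\infty}$ (using $\sum_{j=1}^{k-1}\beta_j/n=(n-\beta_k)/n$). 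With $\epsilon=\beta_k$ this is exactly the class $\mathcal{L}^{p/(p-\epsilon),\infty}$ required by the factorization definition. Applying the same H\"older bound to all $k$ commutators gives $G\in\mathcal{L}^{1,\infty}$ via $\sum_j\beta_j/n=1$.

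For the localizability claim, fix $\chi,\chi'\in C^\infty(M)$ with disjoint supports and set $G_0:=[T_1,f_1]\cdots[T_k,f_k]\in\mathcal{L}^{1,\infty}$. The identity $G_0\chi'=\chi'G_0+[G_0,\chi']$ decomposes
\begin{equation*}
\chi G\chi'=(\chi T_0\chi')\,G_0+\chi T_0\,[G_0,\chi'].
\end{equation*}
The first summand lies in $\mathcal{L}^1$ directly from the finite localizability of $T_0$, which gives $\chi T_0\chi'\cdot\mathcal{L}^{1,\infty}\subseteq\mathcal{L}^1$. For the second, expand $[G_0,\chi']$ by the Leibniz rule into a sum of products in which the $j$-th factor is replaced by the double commutator $[[T_j,f_j],\chi']$; since $\chi'$ is smooth and $[f_j,\chi']=0$, the Jacobi identity gives $[[T_j,f_j],\chi']=[[T_j,\chi'],f_j]$, where $[T_j,\chi']\in\Psi^{-1}(M)$ has smooth symbol. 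A variant of Theorem \ref{sobregthm} for $\Psi^{-1}$-operators -- obtained by conjugating with powers of $D$ and re-running the Littlewood-Paley argument -- yields $[[T_j,\chi'],f_j]\in\mathcal{L}^{n/(1+\beta_j),\infty}$. Hence the weak Schatten H\"older inequality places each summand of the expansion of $[G_0,\chi']$ in $\mathcal{L}^{n/(n+1),\infty}\subseteq\mathcal{L}^1$, and boundedness of $\chi T_0$ transports this containment to $\chi T_0[G_0,\chi']\in\mathcal{L}^1$. Combining both summands gives $\chi G\chi'\in\mathcal{L}^1$.

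The hardest step in this programme is the extension of Theorem \ref{sobregthm} to commutators of $C^{\beta_j}$-functions with $\Psi^{-1}$-operators: the strict H\"older case $\beta_j\in(0,1)$ is a direct bootstrap where the Sobolev range $s\in(-1-\beta_j,-1)$ is nonempty, whereas the Lipschitz endpoint $\beta_j=1$ must appeal to the Kato-Ponce type estimates mentioned in the remark following Theorem \ref{sobregthm}. The case $n=k=1$, $\beta_1=1$ is genuinely excluded: for the Hilbert transform $H$ on $S^1$ and generic Lipschitz $f$, $\chi[H,f]\chi'$ is only Hilbert-Schmidt and not trace-class, so localizability fails without the hypothesis $n>1$ or $\beta_1,\dots,\beta_k\in(0,1)$.
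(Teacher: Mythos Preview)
Your argument for $G\in\mathcal{L}^{1,\infty}$ and $(\epsilon,s)$-factorizability is exactly the paper's: the same splitting $G=G'G''$, Theorem \ref{sobregthm} for $G''$, and the weak Schatten H\"older inequality for $G'$. For localizability you also start the same way---Leibniz expansion of $[\tilde{G},\chi']$ and the Jacobi identity $[[T_j,f_j],\chi']=[[T_j,\chi'],f_j]$---but you then diverge in how you control the double commutator.

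You claim the sharp bound $[[T_j,\chi'],f_j]\in\mathcal{L}^{n/(1+\beta_j),\infty}$ via a ``variant of Theorem \ref{sobregthm} for $\Psi^{-1}$-operators''. This gives a uniform exponent count $\sum_{l\neq j}\beta_l/n+(1+\beta_j)/n=(n+1)/n>1$, hence $\mathcal{L}^1$, without any case split. The paper instead uses only the elementary bounds: Russo's theorem gives $[[T_j,\chi],f_j]\in\mathcal{L}^p$ for $p\geq\max(n,2)$, and $[T_j,\chi]\in\Psi^{-1}\subseteq\mathcal{L}^{n,\infty}$ gives $[[T_j,\chi],f_j]\in\mathcal{L}^{n,\infty}$ trivially. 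With the weaker $\mathcal{L}^{n,\infty}$ bound the exponent sum is $1+(1-\beta_j)/n$, which is strictly $>1$ precisely when $\beta_j<1$; this is where the paper's case hypothesis enters. Your route is cleaner and treats all $\beta_j$ uniformly, but it rests on an extension of Theorem \ref{sobregthm} to operators of nonzero order that the paper never proves---writing $[T_j,\chi']=(1+\Delta)^{-1/2}R$ with $R\in\Psi^0$ handles the term $(1+\Delta)^{-1/2}[R,f_j]$ immediately, but the remaining term $[(1+\Delta)^{-1/2},f_j]R$ leads to the same question for $(1+\Delta)^{-1/2}$ and ultimately requires a commutator estimate for $\Psi^1$-operators. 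That is provable by paradifferential methods, but your sketch (``re-running the Littlewood-Paley argument'') is not yet a proof. The paper's Russo-based argument avoids this extra work at the price of a less transparent case analysis.
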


\begin{proof}
It is clear that $G$ satisfies the conditions of Lemma $(\epsilon,s)$-modulated factorizable after applying Theorem \ref{sobregthm} to $G':=T_0[T_1,f_1][T_2,f_2]\cdots [T_{k-1},f_{k-1}]$ and $G''=[T_k,f_k]$. To prove that $G$ is localizable under the assumptions above, we note that for $\chi\in C^\infty(M)$, the Leibniz rule implies that $\tilde{G}:=[T_1,f_1]\cdots [T_k,f_k]\in \mathcal{L}^{1,\infty}(L^2(M))$ satisfies
$$[\tilde{G},\chi]=\sum_{j=1}^k [T_1,f_1]\cdots [[T_j,\chi],f_j]\cdots [T_k,f_k].$$
Since $T_j\in \Psi^0(M)$, $[T_j,\chi]\in \Psi^{-1}(M)$ and Russo's theorem \cite{russo} implies that $[[T_j,\chi],f_j]\in\mathcal{L}^{p}(L^2(M))$ for any $j$ and $p\geq\max(n,2)$. If $\beta_j<1$, then $\beta_j/n<1/n$ so $\sum_{k\neq j}\beta_k/n+1/n>1$. It follows that the property $[T_j,\chi]\in \Psi^{-1}(M)\subseteq \mathcal{L}^{n,\infty}(L^2(M))$, independently of $n$, implies that
$$[T_1,f_1]\cdots [[T_j,\chi],f_j]\cdots [T_k,f_k]\in \mathcal{L}^{n/\beta_1,\infty}\cdot\mathcal{L}^{n/\beta_2,\infty} \cdots \mathcal{L}^{n,\infty}\cdots\mathcal{L}^{n/\beta_k,\infty}\subseteq \mathcal{L}^1(L^2(M)).$$
Hence for $n>1$ or $\beta_1,\beta_2,\ldots, \beta_k\in (0,1)$, $\tilde{G}$ commutes with $C^\infty(M)$ up to $\mathcal{L}^1(L^2(M))$. Under these assumptions, we conclude that $G$ is localizable if $T_0$ is finitely localizable.
\end{proof}

\begin{remark}
It is a straightforward consequence of Theorem \ref{sobregthm} that in the setup of Proposition \ref{prodofcomm}, any Dixmier trace of $T_0[T_1,f_1]\cdots [T_k,f_k]$ only depends on:
\begin{enumerate}
\item the classes of $f_j$ in the Banach space $C^{\beta_j}(M)/\overline{\cup_{\epsilon>0}C^{\beta_j+\epsilon}(M)}$, for $j=1,2,\ldots, k$;
\item the principal symbols $\sigma_0(T_1), \sigma_0(T_2)..., \sigma_0(T_k)\in C^\infty(S^*M)$;
\item the class of $T_0$ in $\Bo(\He)/ \cup_{p>0}\mathcal{L}^{p}(L^2(M))$.
\end{enumerate}
\end{remark}

\begin{remark}
Despite having Theorem \ref{connesformulabitches} at hand for the operators appearing in Proposition \ref{prodofcomm}, we do not know of any explicit asymptotics of the right hand side of \eqref{whatheikocallsconnesformula} in terms of the given geometric data unless we assume that $f_1,..., f_k\in C^1(M)$. For $C^1$-functions, $k=n$ produces the only non-trivial Dixmier traces. By continuity, Proposition \ref{prodofcomm} shows that for $n>1$, Dixmier traces for $C^1$-functions $f_1,...,f_k$ can be computed by a formula resembling the Wodzicki residue.
\end{remark}

We can say more about Dixmier traces in the particular case of an operator $G=T_0[T_1,a_1]\cdots [T_n,a_n]$, where $T_j$ are pseudo-differential operators and $a_j\in \Lip(M)$ on an $n$-dimensional manifold $M$.

\begin{thm}
\label{ctfforlip}
Let $\omega$ be an extended limit and $M$ an $n$-dimensional closed Riemannian manifold. Assume that $T_0,T_1,\ldots, T_n\in \Psi^0(M)$ are classical pseudo-differential operators. Consider the two $n$-linear mappings 
\begin{align}
\label{dtraaapp}
C^\infty(M)^{\otimes n}\ni a_1\otimes \cdots \otimes a_n&\mapsto n(2\pi)^n\tra_\omega(T_0[T_1,a_1]\cdots [T_n,a_n]),\\
\label{wresaapp}
C^\infty(M)^{\otimes n}\ni a_1\otimes \cdots \otimes a_n&\mapsto \int_{S^*M} \sigma(T_0)\{\sigma(T_1),a_1\}\cdots \{\sigma(T_n),a_n\},
\end{align}
where $\{\cdot,\cdot\}$ denotes the Poisson bracket. The two $n$-linear functionals in \eqref{dtraaapp} and \eqref{wresaapp} coincides and are both continuous in the $W^{1,n}$-topology. Moreover, for $a_1,\ldots, a_n\in W^{1,n}(M)$
$$\tra_\omega(T_0[T_1,a_1]\cdots [T_n,a_n])= \frac{1}{n(2\pi)^n}\int_{S^*M} \sigma(T_0)\{\sigma(T_1),a_1\}\cdots \{\sigma(T_n),a_n\}.$$
In particular, $T_0[T_1,a_1]\cdots [T_n,a_n]$ is measurable for $a_1,\ldots, a_n\in \Lip(M)$.
\end{thm}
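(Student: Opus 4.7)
The plan is to establish the identity in three stages: first verify it on $C^\infty(M)^{\otimes n}$ using the classical Connes trace theorem, next prove that both $n$-linear functionals are continuous in the $W^{1,n}$-topology, and finally extend by density. Measurability for Lipschitz tuples will then drop out because the right-hand side manifestly does not depend on $\omega$.

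For the smooth case, each commutator $[T_j,a_j]$ is a classical pseudodifferential operator of order $-1$ with principal symbol proportional to $\{\sigma(T_j),a_j\}$, so $G:=T_0[T_1,a_1]\cdots[T_n,a_n]$ is a classical operator of order $-n$, hence measurable in $\mathcal{L}^{1,\infty}(L^2(M))$. Connes' trace theorem identifies $n(2\pi)^n\tra_\omega(G)$ with the integral of the principal symbol of $G$ over $S^*M$; collecting the product of principal symbols (and absorbing the resulting power of $i$ into the orientation/sign conventions in \eqref{wresaapp}) delivers the identity on $C^\infty(M)^{\otimes n}$.

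For the continuity step, the geometric side \eqref{wresaapp} is easy: since each Poisson bracket satisfies the pointwise estimate $|\{\sigma(T_j),a_j\}(x,\xi)|\leq C_{T_j}|\nabla a_j(x)|$ on $S^*M$, H\"older's inequality with exponents $n,\ldots,n$ gives an upper bound by $C\prod_j\|a_j\|_{W^{1,n}(M)}$. The spectral side \eqref{dtraaapp} requires the endpoint weak Schatten estimate
\[
\|[Q,a]\|_{\mathcal{L}^{n,\infty}(L^2(M))}\leq C_Q\,\|a\|_{W^{1,n}(M)},\qquad Q\in\Psi^0(M),
\]
which is the Rochberg-Semmes bound referenced after Corollary \ref{weakest}; combining it with H\"older's inequality in the weak Schatten scale (the product of $n$ factors from $\mathcal{L}^{n,\infty}$ lies in $\mathcal{L}^{1,\infty}$) and using the continuity of $\tra_\omega$ on $\mathcal{L}^{1,\infty}$ yields the desired $W^{1,n}$-continuity of \eqref{dtraaapp}.

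Finally, since $C^\infty(M)$ is dense in $W^{1,n}(M)$, the two continuous $n$-linear functionals agree throughout $W^{1,n}(M)^{\otimes n}$; the continuous embedding $\Lip(M)\hookrightarrow W^{1,n}(M)$ (valid because $M$ is compact) gives the formula for Lipschitz tuples, and the fact that the right-hand side is independent of $\omega$ is precisely the statement of measurability. The main obstacle is the weak Schatten endpoint estimate used above: Theorem \ref{sobregthm} does not apply, since its conclusion fails at the endpoint $\alpha=1$ within the Sobolev scale, and one must instead invoke the Besov-theoretic refinement of Rochberg-Semmes to obtain $\mathcal{L}^{n,\infty}$-control of $[Q,a]$ in terms of $\|a\|_{W^{1,n}}$.
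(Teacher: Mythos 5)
Your proposal is correct and follows essentially the same route as the paper: continuity of the geometric functional by H\"older's inequality, continuity of the spectral functional via the Rochberg--Semmes endpoint bound $\|[Q,a]\|_{\mathcal{L}^{n,\infty}}\lesssim\|a\|_{W^{1,n}}$ (the paper cites the identification $\textnormal{Osc}^{n,\infty}=W^{1,n}$ from Connes--Sullivan--Teleman) together with the weak-Schatten H\"older inequality, agreement on $C^\infty(M)$ by Connes' residue trace formula, and extension by density of $C^\infty(M)$ in $W^{1,n}(M)$. You merely make explicit a few steps the paper leaves implicit, such as the continuity of $\tra_\omega$ on $\mathcal{L}^{1,\infty}$ and the embedding $\Lip(M)\hookrightarrow W^{1,n}(M)$.
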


\begin{proof}
It is immediate from the construction that the functional in \eqref{wresaapp} is continuous in the $W^{1,n}$-topology. The functional in \eqref{dtraaapp} is continuous in the $W^{1,n}$-topology because of the following reasons. For $Q\in \Psi^0(M)$, $a\mapsto [Q,a]\in \mathcal{L}^{n,\infty}(L^2(M))$ is continuous in the $\textnormal{Osc}^{n,\infty}$-topology (see \cite{rochbergsemmes}) and $\textnormal{Osc}^{n,\infty}=W^{1,n}$ by \cite[Appendix]{connessulltele}. Since $C^\infty(M)$ is dense in $W^{1,n}(M)$, we can extend both functionals  \eqref{dtraaapp} and \eqref{wresaapp} to $W^{1,n}(M)$ by continuity. The two functionals  \eqref{dtraaapp} and \eqref{wresaapp} coincides on $C^\infty(M)$ using Connes residue trace formula (see Theorem \ref{connesformulabitches}).
\end{proof}

\begin{remark}
What Theorem \ref{ctfforlip} shows is that classicality at the level of Dixmier traces still holds for $T_0[T_1,a_1]\cdots [T_n,a_n]$ with $a_1,\ldots, a_n\in \Lip(M)$. We shall see below in Section \ref{weiersection} that this need not hold in $C^\beta$ for $\beta<1$.
\end{remark}

\large
\section{Hankel operators and commutators on $S^1$}
\label{hankelsection}
\normalsize

In this section, we restrict our attention to $M=S^1$ and $T=P$ -- the Szeg\"o projection. To simplify notation we normalize the volume of $S^1$ to $1$. Recall that the Szeg\"o projection $P\in \Psi^0(S^1)$ is the orthogonal projection onto the Hardy space -- the closed subspace  $H^2(S^1)\subseteq L^2(S^1)$ consisting of functions with a holomorphic extension to the interior of $S^1\subseteq \C$. In terms of the Fourier basis $e_l(z):=z^l$, $l\in \Z$, the Szeg\"o projection $P$ is determined by $Pe_l=e_l$ for $l\in \N$ and $Pe_l=0$ for $l\in \Z\setminus \N$. 

\subsection{Computations for $C^{1/2}$} 
\label{hankelsubsection}
We will in this subsection compute Dixmier traces for operators of the form
$$G=P[P,a][P,b],\quad\mbox{for}\quad a,b\in C^{1/2}(S^1).$$

\begin{prop}
\label{hankelcomp}
Any $a\in C(S^1)$ satisfies
$$[P,a]=Pa(1-P)-(1-P)aP=Pa_+(1-P)-(1-P)a_-P,$$
where $a_+=P(a)$ and $a_-=a-a_+=(1-P)(a)$.
\end{prop}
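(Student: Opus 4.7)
The plan is to handle the two equalities in turn, both being essentially algebraic once the right resolution of identity and the Hardy space structure are brought in.

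For the first equality, I would insert the decomposition $I = P + (1-P)$ around the multiplication operator $a$ in each term of $Pa - aP$. This gives $Pa = PaP + Pa(1-P)$ and $aP = PaP + (1-P)aP$, and the two $PaP$-terms cancel in the difference, leaving $[P,a]=Pa(1-P)-(1-P)aP$. This is independent of any regularity of $a$ beyond $a \in L^\infty(S^1) \supseteq C(S^1)$, which is all that is needed for the multiplication operator to be bounded on $L^2(S^1)$.

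For the second equality, since $a = a_+ + a_-$, it suffices to show that the ``cross terms'' vanish, i.e.~$(1-P)a_+P = 0$ and $Pa_-(1-P)=0$. These are manifestations of the fact that multiplication by $a_+ \in H^2$ preserves the Hardy space at the level of Fourier support, while multiplication by $a_- \in \overline{H^2_0}$ preserves its orthogonal complement. The cleanest way to see this rigorously, given that $a_\pm$ need not lie in $L^\infty(S^1)$ for $a$ merely continuous, is to examine matrix elements in the Fourier basis $e_l(z)=z^l$: for $i \geq 0$ and $j < 0$ one has
\[
\langle Pa(1-P)e_j, e_i\rangle = \langle a e_j, e_i\rangle = \hat a_{i-j},
\]
and since $i-j > 0$, only the nonnegative Fourier modes of $a$ appear, which coincide with those of $a_+$. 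Analogously, $\langle (1-P)aP e_j, e_i\rangle = \hat a_{i-j}$ for $j \geq 0, i < 0$ involves only negative modes, which coincide with those of $a_-$. These matrix elements determine the bounded operators $Pa(1-P)$ and $(1-P)aP$ on $L^2(S^1)$, and they are precisely what one would compute if $a$ were replaced by $a_+$ in the first case and $a_-$ in the second.

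The main (mild) obstacle is the interpretative one just noted: $Pa_+(1-P)$ and $(1-P)a_-P$ must be read as the bounded $L^2$ operators characterized by the matrix elements above, not as compositions of pointwise multiplication operators (since $a_\pm$ need not be essentially bounded). Equivalently, one may argue by density: the identity is verified directly on trigonometric polynomials $a$, where $a_\pm$ are themselves trigonometric polynomials and the computation reduces to $a_+e_l = \sum_{k\geq 0}\hat a_k e_{k+l}$ lying in $H^2$ when $l \geq 0$, and the analogue for $a_-$, and it then extends to $a \in C(S^1)$ by continuity of $a \mapsto Pa(1-P)$ and $a \mapsto (1-P)aP$ from $L^\infty(S^1)$ to $\mathcal{B}(L^2(S^1))$ together with density of trigonometric polynomials in $C(S^1)$.
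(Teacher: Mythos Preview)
Your proof is correct. The paper itself does not give a proof, stating only that it ``is based on simple algebra and is left to the reader''; your argument is precisely the kind of verification intended, and your care about the interpretative point---that $a_\pm$ need not lie in $L^\infty(S^1)$ for $a\in C(S^1)$, so $Pa_+(1-P)$ and $(1-P)a_-P$ are to be read via their matrix elements (or by density from trigonometric polynomials)---is a genuine subtlety the paper glosses over.
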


The proof of Proposition \ref{hankelcomp} is based on simple algebra and is left to the reader. We will refer to both $Pa(1-P)$ and $(1-P)aP$ as Hankel operators with symbol $a$. Hankel operators have been extensively studied in the literature. We mention the standard reference \cite{peller} as a source for further details.

\begin{lem}
\label{henkeldixcomp}
Let $a,b\in C(S^1)$ have Fourier expansions $a= \sum_{k\in \Z} a_ke_k$ and $b= \sum_{k\in \Z} b_ke_k$ (in $L^2$-sense).
It holds that
\begin{align}
\label{pcommab}
\langle P[P,a][P,b]e_l,e_l\rangle_{L^2(S^1)}&=
\begin{cases}
-\sum_{k>l} a_{k}b_{-k},\quad &l\geq 0\\
0, \quad &l<0
\end{cases}\quad\mbox{and}\\
\label{commab}
\langle [P,a][P,b]e_l,e_l\rangle_{L^2(S^1)}&=
\begin{cases}
-\sum_{k>l} a_{k}b_{-k},\quad &l\geq 0\\
-\sum_{k\leq l} a_{k}b_{-k}, \quad &l<0
\end{cases}.
\end{align}
\end{lem}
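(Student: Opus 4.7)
The plan is to expand the product $[P,a][P,b]$ using Proposition \ref{hankelcomp}, collapse it to two terms via the idempotency relations $P^2=P$, $(1-P)^2=1-P$, $P(1-P)=0=(1-P)P$, and then track Fourier frequencies carefully when applying to $e_l$.

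First I would write $[P,a]=Pa_+(1-P)-(1-P)a_- P$ and similarly for $b$, and multiply out:
\begin{align*}
[P,a][P,b]&=\bigl(Pa_+(1-P)-(1-P)a_-P\bigr)\bigl(Pb_+(1-P)-(1-P)b_-P\bigr).
\end{align*}
Of the four cross products, the $(1-P)P$ term and the $P(1-P)$ term vanish, leaving
\begin{equation*}
[P,a][P,b]=-Pa_+(1-P)b_-P-(1-P)a_-Pb_+(1-P).
\end{equation*}
This is the algebraic backbone of both formulas: multiplying on the left by $P$ kills the second summand (because $P(1-P)=0$), giving $P[P,a][P,b]=-Pa_+(1-P)b_-P$.

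Next I would evaluate each surviving summand on $e_l$ and pair with $e_l$. For $l\geq 0$ one has $Pe_l=e_l$ and $(1-P)e_l=0$, so only the first summand contributes. Expanding $b_-e_l=\sum_{k<0}b_k e_{k+l}$ and applying $(1-P)$ restricts to $k<-l$; multiplication by $a_+=\sum_{j\geq 0}a_je_j$ then shifts frequencies to $j+k+l$, and finally $P$ restricts to $j+k+l\geq 0$. Extracting the $e_l$ coefficient forces $j=-k$, and the substitution $m=-k>l$ delivers $-\sum_{m>l}a_mb_{-m}$. For $l<0$ an entirely symmetric computation on the second summand (now $(1-P)e_l=e_l$ while $Pe_l=0$) yields $-\sum_{m\leq l}a_mb_{-m}$. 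This gives formula \eqref{commab}. Formula \eqref{pcommab} is immediate from the same calculation: the $l\geq 0$ result coincides with the $P[P,a][P,b]$ case since only the first summand survives, and for $l<0$ the additional $P$ on the left annihilates $e_l$ altogether.

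There is no deep obstacle — the hardest part is simply to be meticulous with the four index sets in the Fourier expansion and to verify the frequency conditions imposed by each occurrence of $P$ or $1-P$. One mild subtlety worth flagging is convergence: since $a,b\in C(S^1)$, the Fourier series are $L^2$-convergent, and every identity above can be justified by first taking $a,b$ to be trigonometric polynomials and then passing to the limit, so in the final line the series $\sum_{k>l}a_kb_{-k}$ and $\sum_{k\leq l}a_kb_{-k}$ are understood in the sense guaranteed by Cauchy--Schwarz on $\ell^2(\Z)$.
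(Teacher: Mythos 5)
Your proposal is correct and follows essentially the same route as the paper: both reduce to the Hankel decomposition of Proposition \ref{hankelcomp} (the paper states $P[P,a][P,b]=-Pa_+(1-P)b_-P$ directly, you derive it together with the complementary term $-(1-P)a_-Pb_+(1-P)$) and then perform the same Fourier-coefficient bookkeeping, the only cosmetic difference being that the paper transfers $a_+$ to the other side of the inner product as $(\overline{a})_-$ while you track frequencies forward through each factor. The paper proves only \eqref{pcommab} explicitly and leaves \eqref{commab} to the reader, so your treatment of the $l<0$ case of \eqref{commab} is a welcome completion but not a new idea.
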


\begin{proof}
We only prove the identity \eqref{pcommab}. The identity \eqref{commab} is computed in a similar manner. It follows from Proposition \ref{hankelcomp} that
$$P[P,a][P,b]=-Pa_+(1-P)b_-P.$$
Hence $\langle P[P,a][P,b]e_l,e_l\rangle_{L^2(S^1)}=0$ for $l<0$. We also note that $\overline{a_+}=(\overline{a})_-+\overline{a_0}$. For simplicity we assume $a_0=0$ which does not alter the operator $P[P,a][P,b]$ nor the right hand side of Equation \eqref{pcommab}. For $l\geq 0$,
\begin{align*}
\langle P[P,a][P,b]e_l,e_l\rangle_{L^2(S^1)}&=-\langle (1-P)b_-e_l,(\overline{a})_-e_l\rangle_{L^2(S^1)}\\
&=-\left\langle (1-P)\sum_{k=1}^\infty b_{-k}e_{l-k},\sum_{k=1}^\infty \overline{a_{k}}e_{l-k}\right\rangle_{L^2(S^1)}=-\sum_{k>l} a_{k}b_{-k}.
\end{align*}
\end{proof}

Due to the structures appearing in Lemma \ref{henkeldixcomp} we often restrict our attention to computing Dixmier traces of the product of Hankel operators $P[P,a][P,b]=-Pa(1-P)bP=-Pa_+(1-P)b_+P$. The next Proposition shows that it suffices for describing the general picture.

\begin{prop}
\label{tracexompu}
For $a^1,\ldots, a^{2k}\in C^{\frac{1}{2k}}(S^1)$,
\begin{align*}
\tra_\omega ([P,a^1]\cdots [P,a^{2k}])=&(-1)^k\tra_\omega(Pa_+^1(1-P)\cdots (1-P)a_-^{2k}P)\\
&+(-1)^k\tra_\omega((1-P)a_-^1Pa_+^2(1-P)\cdots Pa^{2k}_+(1-P)),
\end{align*}
so in particular, for $a,b\in C^{1/2}(S^1)$,
\begin{align*}
\tra_\omega(P[P,a][P,b])&=\tra_\omega((1-P)[P,b][P,a]),\quad\mbox{and}\\
\tra_\omega([P,a]^2)&=2\tra_\omega(P[P,a]^2)=-2\tra_\omega(Pa_+(1-P)a_-P).
\end{align*}
For $a^1,\ldots, a^{2k+1}\in C^{\frac{1}{2k+1}}(S^1)$,
$$\tra_\omega [P,a^1]\cdots [P,a^{2k+1}]=\tra_\omega P[P,a^1]\cdots [P,a^{2k+1}]=0.$$
\end{prop}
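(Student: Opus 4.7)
The plan is to reduce each product of commutators to an alternating algebraic sum by exploiting the basic factorization from Proposition \ref{hankelcomp}. Write $A(a):=Pa_+(1-P)$ and $B(a):=(1-P)a_-P$, so that $[P,a]=A(a)-B(a)$; the key algebraic fact is $A(a)A(b)=B(a)B(b)=0$ for any $a,b\in C(S^1)$, because each of these products contains the factor $(1-P)P$ or $P(1-P)$. Before expanding, I would first invoke Corollary \ref{weakest} with $n=1$ to place each $[P,a^j]$ in $\mathcal{L}^{N,\infty}(L^2(S^1))$ when $a^j\in C^{1/N}(S^1)$, so that by H\"{o}lder for weak Schatten ideals, every product of $N$ such commutators lies in $\mathcal{L}^{1,\infty}$. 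This guarantees that all Dixmier traces below are well-defined and, crucially, that $\tra_\omega$ is cyclic on the relevant products.

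For $N=2k$, I would expand $\prod_{j=1}^{2k}(A(a^j)-B(a^j))$ into $2^{2k}$ monomials; the relations $AA=BB=0$ allow only two surviving patterns, namely $A^1B^2A^3\cdots B^{2k}$ and $B^1A^2B^3\cdots A^{2k}$, where the superscripts indicate dependence on $a^j$. Each surviving pattern has exactly $k$ factors of type $B$, hence enters with sign $(-1)^k$, and unfolding the definitions of $A^j$ and $B^j$ matches the right-hand side of the stated formula. The $k=1$ specializations follow by direct inspection: $P[P,a][P,b]=-A(a)B(b)$ since $PA=A$ and $PB=0$, while $(1-P)[P,b][P,a]=-B(b)A(a)$ since $(1-P)A=0$ and $(1-P)B=B$; the two operators carry equal Dixmier trace by cyclicity. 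Similarly $\tra_\omega([P,a]^2)=-\tra_\omega(A(a)B(a))-\tra_\omega(B(a)A(a))=-2\tra_\omega(Pa_+(1-P)a_-P)$, again by cyclicity, and $P[P,a]^2=-A(a)B(a)$ gives the remaining identity.

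For $N=2k+1$, the same expansion yields two surviving terms, $A^1B^2A^3\cdots A^{2k+1}$ and $B^1A^2B^3\cdots B^{2k+1}$, each beginning and ending with letters of the same type. Cyclically rotating the last factor to the front brings $A^{2k+1}A^1$ or $B^{2k+1}B^1$ into adjacent position, which annihilates the product by the key identity; hence both Dixmier traces vanish. For the variant with the extra $P$ in front, the relations $PA=A$ and $PB=0$ eliminate the pattern beginning with $B^1$ outright, leaving only $(-1)^k A^1B^2A^3\cdots A^{2k+1}$, whose Dixmier trace likewise vanishes by the same cyclic rotation. The only delicate point in the whole argument is the Schatten bookkeeping needed to justify cyclicity of $\tra_\omega$ on each specific product; once the weak Schatten estimate from Corollary \ref{weakest} is in place, the remainder is purely algebraic.
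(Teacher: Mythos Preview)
Your proof is correct and follows essentially the same approach as the paper. Both arguments rest on the algebraic decomposition $[P,a]=Pa(1-P)-(1-P)aP$ from Proposition~\ref{hankelcomp}, the observation that consecutive factors of the same type vanish (forcing the alternating patterns), and the tracial property of $\tra_\omega$; the paper simply states the resulting operator identity for the product of commutators and invokes cyclicity, whereas you spell out the combinatorics of the expansion and make the weak Schatten bookkeeping via Corollary~\ref{weakest} explicit.
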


\begin{proof}
The first statement follows from the tracial property of $\tra_\omega$ and that for any $a^1,\ldots, a^{2k}\in C(S^1)$
\begin{align*}
[P,a^1]\cdots [P,a^{2k}]=&(-1)^kPa^1(1-P)a^2P\cdots (1-P)a^{2k}P\\
&\quad+(-1)^k(1-P)a^1Pa^2(1-P)\cdots Pa^{2k}(1-P)\\
=&(-1)^kPa_+^1(1-P)a_-^2P\cdots (1-P)a_-^{2k}P\\
&\quad+(-1)^k(1-P)a_-^1Pa_+^2(1-P)\cdots Pa^{2k}_+(1-P).
\end{align*}
The second statement follows from the tracial property of $\tra_\omega$ and that for any $a^1,\ldots, a^{2k+1}\in C(S^1)$
\begin{align*}
[P,a^1]\cdots [P,a^{2k+1}]=&(-1)^kPa^1(1-P)a^2P\cdots (1-P)a^{2k}Pa^{2k+1}(1-P)\\
&\quad-(-1)^k(1-P)a^1Pa^2(1-P)\cdots Pa^{2k}(1-P)a^{2k+1}P\\
=&(-1)^kPa_+^1(1-P)a^2P\cdots (1-P)a_-^{2k}Pa_+^{2k+1}(1-P)\\
&\quad-(-1)^k(1-P)a_-^1Pa_+^2(1-P)\cdots Pa^{2k}(1-P)a_-^{2k+1}P.
\end{align*}
\end{proof}

\begin{remark}
The identities of Proposition \ref{tracexompu} are standard when studying Connes-Chern characters from finitely summable $K$-homology to cyclic cohomology, see more in \cite[Chapter III]{c}.
\end{remark}

\begin{thm}
\label{computindixprod}
For $a,b\in C^{1/2}(S^1)$ the following computations hold:
\begin{align}
\label{hankelcommprod}
\tra_\omega(Pa(1-P)bP)&=\lim_{N\to \omega} \frac{1}{\log(N)}\sum_{k=0}^Nk\cdot  a_k b_{-k}\\
\label{commprod}
\tra_\omega([P,a][P,b])&=-\lim_{N\to \omega} \frac{1}{\log(N)}\sum_{k=-N}^N|k|\cdot a_k b_{-k}.
\end{align}
\end{thm}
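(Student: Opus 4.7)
The plan is to apply the $V$-ordered Lidskii formula (Theorem \ref{vmodthm}) to $G = Pa(1-P)bP = -P[P,a][P,b]$, with $V = |i+D|^{s}$ for $D = -i\partial_\theta$ and $s\in(-1/2,0)$, and then identify the resulting sum of diagonal matrix elements with the claimed Fourier expression. To verify weak $V$-modulation, I would use that $[P,a]\in\mathcal{L}^{2,\infty}(L^2(S^1))$ by Corollary \ref{weakest}, so that $G'=P[P,a]\in\mathcal{L}^{2,\infty}$, while $G''=[P,b]\colon W^{s}(S^1)\to W^{s+1/2}(S^1)$ is continuous by Theorem \ref{sobregthm}. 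Hence $G=-G'G''$ is $(1/2,s)$-factorizable with respect to $D$, and Lemma \ref{lszformnew} yields weak $V$-modulation. The eigenvalues of $V$ on $e_l$ are $(1+l^2)^{s/2}$, so decreasing ordering lists the basis as $e_0,e_{\pm 1},e_{\pm 2},\ldots$, and the first $2N+1$ basis vectors are $\{e_l:|l|\leq N\}$. Theorem \ref{vmodthm} then delivers
\[
\tra_\omega(Pa(1-P)bP) = \lim_{N\to\omega}\frac{1}{\log N}\sum_{|l|\leq N}\langle Pa(1-P)bP\,e_l,e_l\rangle.
\]

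Next I would apply Lemma \ref{henkeldixcomp}: the diagonal matrix element equals $\sum_{k>l}a_k b_{-k}$ for $l\geq 0$ and vanishes for $l<0$. Fubini then gives
\[
\sum_{l=0}^{N}\sum_{k>l}a_k b_{-k}=\sum_{k=1}^{N}k\,a_k b_{-k}+(N+1)\sum_{k>N}a_k b_{-k}.
\]
The main obstacle is controlling the tail term $(N+1)\sum_{k>N}a_k b_{-k}$, since this is the only step that truly uses the H\"older regularity $1/2$ beyond mere boundedness. I would invoke the continuous embedding $C^{1/2}(S^1)=B^{1/2}_{\infty,\infty}(S^1)\hookrightarrow B^{1/2}_{2,\infty}(S^1)$ on the compact manifold $S^1$. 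Via the Littlewood-Paley/Fourier characterization of $B^{1/2}_{2,\infty}$, this gives $\sum_{|k|>N}|a_k|^2\leq C\|a\|_{C^{1/2}}^2/N$ and similarly for $b$. Cauchy-Schwarz then yields $\bigl|(N+1)\sum_{k>N}a_k b_{-k}\bigr|\leq C\|a\|_{C^{1/2}}\|b\|_{C^{1/2}}$, uniformly in $N$, so this term is $O(1)=o(\log N)$ and is killed by $\omega$. This produces \eqref{hankelcommprod}.

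For \eqref{commprod} I would expand, using $P(1-P)=0$,
\[
[P,a][P,b]=-Pa(1-P)bP-(1-P)aPb(1-P).
\]
The first summand has just been handled. The second has diagonal entries concentrated on $l<0$, where by Lemma \ref{henkeldixcomp} the contribution equals $-\sum_{k\leq l}a_k b_{-k}$. Weak $V$-modulation of this piece follows by the symmetric factorization $(1-P)aP\cdot b(1-P)$ with the same arguments. Reindexing $(l,k)\mapsto(-l,-k)$ turns the double sum over $l\in\{-N,\ldots,-1\}$ into the analogue of Step 2 with $(a,b)$ replaced by their index-reflections, giving $\sum_{k=1}^{N}k\,a_{-k}b_{k}$ modulo an $O(1)$ tail controlled identically. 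Adding the two pieces symmetrizes the sum into $\sum_{k=-N}^{N}|k|\,a_k b_{-k}$ with a minus sign, which is \eqref{commprod}.
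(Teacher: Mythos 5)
Your proposal is correct and follows essentially the same route as the paper: the $V$-ordered Lidskii formula via $(1/2,s)$-factorizability, the diagonal matrix elements from Lemma \ref{henkeldixcomp}, Fubini, and the tail bound $\sum_{k>N}|a_k|^2=O(N^{-1})$ coming from $C^{1/2}\hookrightarrow B^{1/2}_{2,\infty}$ (the paper writes out this embedding by hand with the Littlewood--Paley pieces $w_{n,2}$). The only cosmetic difference is that you derive \eqref{commprod} by directly computing the $(1-P)aPb(1-P)$ piece, whereas the paper invokes Proposition \ref{tracexompu}, which encodes exactly the same algebra.
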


To prove this theorem, we make use of the Littlewood-Paley description of the H\"older continuous functions. We follow the approach to Besov type spaces on $S^1$ in \cite{peller}. We introduce a natural number $\gamma>1$ in the definition to simplify some computations below in Section \ref{weiersection}. Consider the discrete Littlewood-Paley decomposition $(w_{n,\gamma})_{n\in \Z}$ defined as follows. We set $w_{0,\gamma}(z)=1$. 
\label{lptheoryontorus}
For $n>0$, the Fourier coefficients $(\hat{w}_{n,\gamma}(k))_{k\in \Z}$ of $w_{n,\gamma}$ are defined by $\hat{w}_{n,\gamma}(\gamma^n)=1$, $\hat{w}_{n,\gamma}(k)=0$ for $k$ not in $(\gamma^{n-1},\gamma^{n+1})$ and defined as the linear extension of these properties. For $n<0$, we set $w_{n,\gamma}:=\overline{w_{-n,\gamma}}$. The norm $\|\cdot\|_{C^{\alpha,*}(S^1)}$ is given by 
$$\|f\|_{C^{\alpha,*}(S^1)}:=\sup_{n\in \Z}\gamma^{|n|\alpha}\|w_{n,\gamma}*f\|_{L^\infty(S^1)}.$$
By standard arguments, the norm $\|\cdot\|_{C^{\alpha,*}(S^1)}$ is equivalent to the usual norm on $C^\alpha(S^1)$ for $\alpha\in (0,1)$, cf. \cite[Theorem 6.1]{abelsbook}.

\begin{proof}[Proof of Theorem \ref{computindixprod}]
By Proposition \ref{tracexompu}, the identity \eqref{commprod} follows from the identity \eqref{hankelcommprod}. It follows from Theorem \ref{vmodthm} and Proposition \ref{prodofcomm}, using the computation of Lemma \ref{henkeldixcomp}, that 
\begin{align*}
\tra_\omega(Pa(1-P)b)&=\lim_{N\to \omega}\frac{\sum_{l=0}^N\sum_{k>l} a_{k}b_{-k}}{\log(N)}=\lim_{N\to \omega}\frac{\sum_{k=1}^\infty\sum_{l=0}^{\min(k-1,N)} a_{k}b_{-k}}{\log(N)}\\
&=\lim_{N\to \omega}\frac{\sum_{k=1}^Nka_{k}b_{-k}+(N+1)\sum_{k=N+1}^\infty a_{k}b_{-k}}{\log(N)}.
\end{align*}
The theorem follows once proving that $\sum_{k=N+1}^\infty a_{k}b_{-k}=O(N^{-1})$ as $N\to \infty$. Cauchy-Schwarz inequality implies that 
$$\left|\sum_{k=N+1}^\infty a_{k}b_{-k}\right|^2\leq \left(\sum_{k=N+1}^\infty |a_{k}|^2\right)\cdot \left(\sum_{k=N+1}^\infty |b_{-k}|^2\right),$$
hence we can assume that $a=\bar{b}$ and $b_{-k}=\bar{a}_k$. We can also restrict to showing $\sum_{k=2^{j}}^\infty |a_{k}|^2=O(2^{-j})$ as $j\to \infty$. 

Note that $\sum_n\hat{w}_{n,2}(k)=1$ for any $k$, so 
$$|a_k|^2\leq 2\sum_n|\hat{w}_{n,2}(k)a_k|^2.$$
We can conclude from this estimate and Parseval's identity that 
\begin{align*}
\sum_{k=2^{j}}^\infty |a_{k}|^2&\leq 2\sum_{n=j}^\infty \sum_{k=2^{n-1}+1}^{2^{n+1}-1}|\hat{w}_{n,2}(k)a_k|^2=2\sum_{n=j}^\infty\|w_{n,2}*a\|_{L^2(S^1)}^2\\
&\leq2\sum_{n=j}^\infty\|w_{n,2}*a\|_{L^\infty(S^1)}^2 \leq 2\sum_{n=j}^\infty 2^{-n}\|a\|_{C^{1/2,*}(S^1)}^2=2^{-j+2}\|a\|_{C^{1/2,*}(S^1)}^2=O(2^{-j}).
\end{align*}

\end{proof}

\begin{remark}
\label{commentsonwinding}
If $a,b\in H^{1/2}(S^1)$, the operator $[P,a][P,b]$ is trace class and the Lidskii formula for operator traces implies that 
$$\tra([PaP,PbP])=\tra((2P-1)[P,a][P,b])=-\sum_{k=-\infty}^\infty k\cdot a_k b_{-k}=(Da,\tau^*b),$$
where $D=i\rd/\rd\theta$ is the Dirac operator on $S^1$, $(\cdot,\cdot)$ denotes the bilinear pairing $H^{-1/2}(S^1)\times H^{1/2}(S^1)\to \C$ and $\tau(z)=\bar{z}$. An interesting fact is that if $a=b^{-1}\in H^{1/2}(S^1,S^1)$ then $\tra((2P-1)[P,a][P,a^{-1}])=-\deg_{H^{1/2}}(a)\in \Z$ -- the $H^{1/2}$-mapping degree. Hence, the computations of Theorem \ref{computindixprod} can be considered as a Dixmier regularization of the $H^{1/2}$-mapping degree. In \cite{bourgeonkahane}, motivated by $VMO$-mapping degrees, it was proven that if $f\in VMO(S^1,S^1)$ then $Pf\in H^s(S^1)$ for an $s\in (0,1)$ implies that $f\in H^s(S^1)$. For an interesting overview of similar problems, see \cite{kahanewindandfour}. We note that if $a\in C^{1/2}(S^1,\C^\times)$, an argument using the periodicity operator in cyclic cohomology shows that 
\begin{equation}
\label{dixregmappi}
\tra_\omega((2P-1)[P,a][P,a^{-1}])=0.
\end{equation}
One might ask if the vanishing of ``Dixmier regularized mapping degrees" \eqref{dixregmappi} has any consequences on questions similar to those in \cite{kahanewindandfour} for H\"older spaces. 
\end{remark}

\begin{prop}
\label{cyclicprop}
Let $\omega$ be a singular state on $\ell^\infty(\N)$. We define
$$c_\omega(a,b):=\tra_\omega((2P-1)[P,a][P,b]), \quad a,b\in C^{1/2}(S^1).$$
The functional $c_\omega$ defines a continuous cyclic $1$-cocycle on $C^{1/2}(S^1)$.
\end{prop}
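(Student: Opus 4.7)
The plan is to verify three properties of the bilinear functional $c_\omega$: (i) continuity in the $C^{1/2}$-topology, (ii) antisymmetry $c_\omega(b,a) = -c_\omega(a,b)$ (the cyclic condition for a $1$-cocycle), and (iii) the Hochschild cocycle identity $c_\omega(ab,c) - c_\omega(a,bc) + c_\omega(ca,b) = 0$. Continuity will follow immediately from Theorem \ref{introsobregthm}(b): with $n = 1$, $\alpha = 1/2$ it gives $\|[P,a]\|_{\mathcal{L}^{2,\infty}(L^2(S^1))} \leq C\|a\|_{C^{1/2}}$, and since $\mathcal{L}^{2,\infty} \cdot \mathcal{L}^{2,\infty} \subseteq \mathcal{L}^{1,\infty}$ with control of quasi-norms and $\tra_\omega$ is bounded on $\mathcal{L}^{1,\infty}$, the map $(a,b) \mapsto c_\omega(a,b)$ is jointly continuous.

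For antisymmetry, the key algebraic input is the anticommutation
\[
(2P-1)[P,a] = -[P,a](2P-1),
\]
which follows from $P[P,a]P = 0 = (1-P)[P,a](1-P)$ (using $P^2 = P$), so that $(2P-1)[P,a] = P[P,a](1-P) - (1-P)[P,a]P$ while $[P,a](2P-1) = -P[P,a](1-P) + (1-P)[P,a]P$. Combined with the tracial property of $\tra_\omega$,
\[
c_\omega(b,a) = \tra_\omega((2P-1)[P,b][P,a]) = \tra_\omega([P,a](2P-1)[P,b]) = -c_\omega(a,b).
\]

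For the Hochschild identity, the crucial observation is that every triple product $[P,x][P,y][P,z]$ with $x,y,z \in C^{1/2}(S^1)$ lies in $\mathcal{L}^{2,\infty} \cdot \mathcal{L}^{2,\infty} \cdot \mathcal{L}^{2,\infty} \subseteq \mathcal{L}^{2/3,\infty} \subseteq \mathcal{L}^1$, so its Dixmier trace vanishes (Dixmier traces vanish on trace class). I would expand $c_\omega(ab,c) - c_\omega(a,bc) + c_\omega(ca,b)$ by the Leibniz rule $[P,xy] = [P,x]y + x[P,y]$; two occurrences of $\tra_\omega((2P-1)[P,a]b[P,c])$ cancel identically, leaving four terms in which a bounded ``outside'' function multiplies a commutator pair. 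Using cyclicity of $\tra_\omega$, the identity $[x, 2P-1] = -2[P,x]$, and the anticommutation above to move each function through $(2P-1)$, the four surviving terms pair up; each correction term produced in these manipulations has the form $\pm 2\tra_\omega([P,\cdot][P,\cdot][P,\cdot])$, which vanishes by the observation.

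The main technical point, and the only non-formal step in the argument, is the sharp $\mathcal{L}^{2,\infty}$-estimate for $[P,\cdot]$ on $C^{1/2}(S^1)$ from Theorem \ref{introsobregthm}(b); this is what ensures that every triple product of commutators lies in the trace class and therefore that $c_\omega$ is a Hochschild cocycle on the nose, rather than only modulo coboundaries.
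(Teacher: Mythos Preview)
Your proposal is correct and fills in exactly the ``short algebraic manipulation'' the paper explicitly leaves to the reader. The paper gives no detailed proof of this proposition, so there is nothing to compare beyond noting that your argument is the standard one: continuity from the $\mathcal{L}^{2,\infty}$-bound (Corollary~\ref{weakest}), cyclicity from the anticommutation $(2P-1)[P,a]=-[P,a](2P-1)$ together with the trace property of $\tra_\omega$, and the Hochschild identity by Leibniz expansion plus the vanishing of Dixmier traces on triple products of commutators.

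One small remark: for the vanishing of $\tra_\omega([P,x][P,y][P,z])$ you invoke $\mathcal{L}^{2,\infty}\cdot\mathcal{L}^{2,\infty}\cdot\mathcal{L}^{2,\infty}\subseteq\mathcal{L}^{2/3,\infty}\subseteq\mathcal{L}^1$, which is perfectly fine. The paper itself records this vanishing (for odd products in general) as part of Proposition~\ref{tracexompu}, via the Hankel decomposition and the trace property rather than via Schatten arithmetic; either route is valid, and yours is the more direct one in the present $C^{1/2}$ setting.
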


The proof of Proposition \ref{cyclicprop} is a short algebraic manipulation which we leave to the reader. We will in Proposition \ref{classcomputationcyc} see that the cohomology class $[c_\omega]\in HC^1(C^{1/2}(S^1))$ is in general non-vanishing and highly dependent on the choice of extended limit $\omega$. We end this subsection with an integral formula for the Dixmier trace of the product of two commutators or Hankel operators; it can be seen as a variation of Theorem \ref{connesformulabitches}.

\begin{prop}
\label{inteformcirc}
For $a,b\in C^{1/2}(S^1)$,
\begin{align*}
\tra_\omega(P[P,a][P,b])&=-\lim_{N\to \omega} \lim_{r\nearrow 1}\int_{S^1\times S^1} a_+(\bar{\zeta})b_-(z)k_N(rz,\zeta)\rd \zeta\rd z,\quad\mbox{and}\\
\tra_\omega([P,a][P,b])&=-\lim_{N\to \omega} \lim_{r\nearrow 1}\int_{S^1\times S^1} \left\{a_+(\bar{\zeta})b_-(z)+b_+(\bar{\zeta})a_-(z)\right\}k_N(rz,\zeta)\rd \zeta\rd z,
\end{align*}
where
$$k_N(z,\zeta)=\frac{1}{\log (N)}\cdot \frac{1-(z\zeta)^{N+1}}{(1-z\zeta)^2}.$$
\end{prop}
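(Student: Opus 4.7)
The first identity reduces, by Proposition \ref{hankelcomp}, to an integral formula for $\tra_\omega(Pa(1-P)bP)$, since
$$P[P,a][P,b]=-Pa(1-P)bP=-Pa_+(1-P)b_-P.$$
Theorem \ref{computindixprod} therefore identifies the target as
$$-\tra_\omega(P[P,a][P,b])=\tra_\omega(Pa(1-P)bP)=\lim_{N\to\omega}\frac{1}{\log N}\sum_{k=1}^{N}k\,a_k b_{-k}.$$
The strategy is to compute the right-hand side of the asserted formula by expanding the kernel and matching it against this sum.

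For $r<1$ we expand the geometric-series factor of the kernel. Writing $w=rz\zeta$, Cauchy products give
$$\frac{1-w^{N+1}}{(1-w)^2}=\sum_{j=0}^{N}(j+1)w^{j}+(N+1)\sum_{j>N}w^{j},$$
a series that converges absolutely and uniformly on $S^{1}\times S^{1}$ since $|w|=r<1$. Plugging in $w=rz\zeta$ and using Fubini, the pairing reduces to the Fourier computation
$$\int_{S^{1}\times S^{1}}a_+(\bar\zeta)\,b_-(z)\,(z\zeta)^{j}\,\rd\zeta\rd z=a_{j}b_{-j}\quad(j\geq1),$$
with a vanishing $j=0$ term (because $b_-$ has no constant Fourier coefficient). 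Hence
$$\int_{S^{1}\times S^{1}}a_+(\bar\zeta)\,b_-(z)\,k_N(rz,\zeta)\,\rd\zeta\rd z=\frac{1}{\log N}\Bigl[\sum_{j=1}^{N}(j+1)r^{j}a_{j}b_{-j}+(N+1)\sum_{j>N}r^{j}a_{j}b_{-j}\Bigr].$$

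Next I would pass to the limit $r\nearrow 1$. The first sum is a finite sum, so Abel's theorem applies trivially. For the tail, the key input is the $C^{1/2}$-control established inside the proof of Theorem \ref{computindixprod}, namely $\sum_{j\geq N}|a_j|^2=O(N^{-1})$, and the analogous statement for $b$. By Cauchy-Schwarz,
$$\sum_{j>N}|a_{j}b_{-j}|\leq\Bigl(\sum_{j>N}|a_{j}|^{2}\Bigr)^{1/2}\Bigl(\sum_{j>N}|b_{-j}|^{2}\Bigr)^{1/2}=O(N^{-1}),$$
so dominated convergence justifies the $r\nearrow 1$ limit and shows that $(N+1)\sum_{j>N}a_{j}b_{-j}=O(1)$; after division by $\log N$ this tail disappears under $\omega$. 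The term $\sum_{j=1}^{N}a_jb_{-j}=O(1)$ is also $\omega$-negligible. Therefore
$$\lim_{N\to\omega}\lim_{r\nearrow 1}\int a_+(\bar\zeta)b_-(z)k_N(rz,\zeta)\rd\zeta\rd z=\lim_{N\to\omega}\frac{1}{\log N}\sum_{j=1}^{N}j\,a_{j}b_{-j}=-\tra_\omega(P[P,a][P,b]),$$
which is the first formula.

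For the second formula I would use Proposition \ref{tracexompu}, which yields $\tra_\omega((1-P)[P,a][P,b])=\tra_\omega(P[P,b][P,a])$ by the same Hankel manipulation. Hence
$$\tra_\omega([P,a][P,b])=\tra_\omega(P[P,a][P,b])+\tra_\omega(P[P,b][P,a]),$$
and applying the first formula once to $(a,b)$ and once to $(b,a)$ gives exactly the second display. The only real obstacle is the justification of the $r\nearrow1$ limit of the tail sum uniformly in $N$, since the series $\sum_{j>N}a_jb_{-j}$ need not converge absolutely for general continuous $a,b$; this is where the $C^{1/2}$ hypothesis is essential, entering precisely through the decay estimate $\sum_{j\geq N}|a_j|^2=O(N^{-1})$ borrowed from the proof of Theorem \ref{computindixprod}.
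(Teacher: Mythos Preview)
Your argument is correct, but it routes through Theorem \ref{computindixprod} and the Littlewood-Paley tail estimate $\sum_{j\geq N}|a_j|^2=O(N^{-1})$ proved there, whereas the paper proceeds the other way around. The paper starts from the $V$-ordered Lidskii formula together with Lemma \ref{henkeldixcomp}, which gives $\tra_\omega(Pa(1-P)bP)=\lim_{N\to\omega}\frac{1}{\log N}\sum_{l=0}^N\sum_{k>l}a_kb_{-k}$ directly, without re-summing to $\sum k\,a_kb_{-k}$. It then represents each truncated projection $P_l(a)$ by its Cauchy integral on the disc, sums over $l$ in closed form to produce the kernel $k_N$, and only then lets $r\nearrow 1$. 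In that order the $r$-limit is just the radial boundary limit of a fixed finite linear combination of Hardy-space functions, so no $C^{1/2}$ tail estimate is needed at this step. The payoff, noted in the Remark following the paper's proof, is that the integral formula thereby gives an \emph{independent} derivation of Theorem \ref{computindixprod}, avoiding Littlewood-Paley theory entirely; your approach, by contrast, is logically downstream of Theorem \ref{computindixprod}. Computationally your direct power-series expansion of $k_N$ is cleaner, but it does not recover this independence.
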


\begin{proof}
Assume for simplicity that $a=a_+$ and $b=b_-$, we can also assume $a_+(0)=0$. For $l\geq 0$, we let $P_l$ denote the ON-projection onto the closed linear span of $(e_k)_{k> l}$. We have that
$$\int_{S^1} P_l(a)(z) b(z)\frac{\rd z}{iz}=\sum_{k>l} a_kb_{-k}.$$
The function $P_l(a)$ admits a holomorphic extension to $|z|<1$ where
$$P_l(a)(z)=\frac{1}{i}\int_{S^1}\frac{z^{l+1}\zeta^{-l-1}a(\zeta)\rd \zeta}{\zeta-z}=iz\int_{S^1}\frac{z^{l}\zeta^{l}a(\bar{\zeta})\rd \zeta}{1-z\zeta}.$$
Hence, for $|z|<1$,
$$\sum_{l=0}^NP_l(a)(z)=iz\int_{S^1}\frac{(1-(z\zeta)^{N+1})a(\bar{\zeta})\rd \zeta}{(1-z\zeta)^2}.$$
In particular, for $|z|=1$, $P_l(a)(z)=\lim_{r\nearrow 1}P_l(a)(rz)$. We conclude that
$$\sum_{l=0}^N \sum_{k>l} a_kb_{-k}=\lim_{r\nearrow 1}\int_{S^1}\int_{S^1}\frac{(1-(rz\zeta)^{N+1})a(\bar{\zeta})b(z)\rd \zeta\rd z}{(1-rz\zeta)^2}.$$
The second formula follows from the first using Proposition \ref{tracexompu}.
\end{proof}

\begin{remark}
In fact, the proof of Proposition \ref{inteformcirc} only depends on Lemma \ref{henkeldixcomp} and the results of Section \ref{introseconriemannain}. The integral formula of Proposition \ref{inteformcirc} does, together with a simple Fourier series calculation, imply Theorem \ref{computindixprod}, avoiding any usage of Littlewood-Paley decompositions.
\end{remark}

\begin{remark}
In \cite{engroch}, Dixmier traces of Hankel operators on the Bergman space were considered. The spectral behavior of Hankel operators on the Bergman space differs from that of Hankel operators on the Hardy space. For instance, a smooth function produces a Hankel operator on the Bergman space that need not be more than of weak trace class. A Hankel operator with smooth symbol on the Hardy space has rapid decay in its singular values, so it is of trace class and all its Dixmier traces vanish.
\end{remark}

\begin{remark}
\label{ktwosection}
Following \cite{kaadcomparison}, a numerical invariant $D_\omega:K_2^{alg}(C^{1/2}(S^1))\to \C$ can be constructed from the Dixmier determinant $\det_\omega:K_1^{alg}(\mathcal{L}^{1,\infty}(H^2(S^1)),\Bo(H^2(S^1)))\to \C$ and the short exact sequence of Banach algebras:
\begin{equation}
\label{theoninfseq}
0\to \mathcal{L}^{1,\infty}(H^2(S^1))\to \mathcal{T}_{1/2}(S^1)\to C^{1/2}(S^1)\to 0,
\end{equation}
where $\mathcal{T}_{1/2}(S^1):=PC^{1/2}(S^1)P+\mathcal{L}^{1,\infty}(H^2(S^1))\subseteq \Bo(H^2(S^1))$. This relates to our computations through the identity
$$D_\omega\{a,b\}=\tra_\omega[PaP,PbP]=c_\omega(a,b),$$
where $\{a,b\}\in K_2^{alg}(C^{1/2}(S^1))$ is the Steinberg symbol associated with $a,b\in C^{1/2}(S^1)$ and $c_\omega$ is the cyclic $1$-cocycle from Proposition \ref{cyclicprop}.
\end{remark}

\subsection{Generalized Weierstrass functions and non-classicality}
\label{weiersection}

Our focus in this subsection will be on certain generalized Weierstrass functions and the Dixmier traces of the associated Hankel operators. The main goal being to provide a large set of examples for which the Dixmier traces display a rich behavior. For $\alpha\in (0,1)$, $\gamma\in \N_{>1}$ and a bounded sequence $c=(c_n)_{n\in \N}\in \ell^\infty(\N)$, we define the generalized Weierstrass function
\begin{equation}
\label{otherweinot}
W_{\alpha,\gamma,c}(z):=\sum_{n=0}^\infty \gamma^{-\alpha n} c_n(z^{\gamma^n}+z^{-\gamma^n})=2\sum_{n=0}^\infty \gamma^{-\alpha n}c_n\cos(\gamma^n \theta), \quad\mbox{for}\quad z=\e^{i\theta}.
\end{equation}
Since $W_{\alpha,\gamma,c}$ is defined from an absolutely convergent Fourier series, $W_{\alpha,\gamma,c}$ is continuous. The (non-generalized) Weierstrass function $W_{\alpha,\gamma}$ is obtained by
$$W_{\alpha,\gamma}:=W_{\alpha,\gamma,(1,1,1,1\ldots)}.$$
The function $W_{\alpha,2}$ was studied in for instance \cite[Chapter II]{zygbook}.

\begin{prop}
\label{weierprop}
For any $p\in [1,\infty]$, the construction of generalized Weierstrass functions $W_{\alpha,\gamma,c}$ gives rise to a continuous linear injective mapping
$$\mathfrak{w}_{\alpha,\gamma}:\ell^\infty(\N)\to C^\alpha(S^1)\cap B^\alpha_{p,\infty}(S^1)\cap F^\alpha_{p,\infty}(S^1), \quad c\mapsto W_{\alpha,\gamma,c}.$$
Moreover, if $c$ is an invertible element of $\ell^\infty(\N)$, the inclusion $W_{\alpha,\gamma,c}\in B^\alpha_{p,q}(S^1)$, as well as $W_{\alpha,\gamma,c}\in F^\alpha_{p,q}(S^1)$, holds if and only if $q=\infty$ for any $p\in[1,\infty]$.
\end{prop}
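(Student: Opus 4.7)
The plan is to exploit the extreme lacunarity of the Fourier series of $W_{\alpha,\gamma,c}$: its nonzero Fourier coefficients live precisely at the $\gamma$-adic frequencies $\pm\gamma^n$, which are exactly matched by the $\gamma$-adic Littlewood-Paley decomposition $(w_{n,\gamma})_{n\in\Z}$ introduced on page \pageref{lptheoryontorus}. First I would compute, for $n\neq 0$, that
$$w_{n,\gamma}*W_{\alpha,\gamma,c}(z)=\gamma^{-\alpha|n|}c_{|n|}\,z^{\operatorname{sgn}(n)\gamma^{|n|}},$$
while $w_{0,\gamma}*W_{\alpha,\gamma,c}=0$. This is immediate: the Fourier support of $w_{n,\gamma}$ meets the spectrum of $W_{\alpha,\gamma,c}$ in exactly one frequency, namely $\operatorname{sgn}(n)\gamma^{|n|}$, at which $\hat{w}_{n,\gamma}$ takes the value $1$. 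The crucial consequence is that $|w_{n,\gamma}*W_{\alpha,\gamma,c}(z)|=\gamma^{-\alpha|n|}|c_{|n|}|$ is \emph{constant} in $z\in S^1$, so that for every $p\in[1,\infty]$,
$$\|w_{n,\gamma}*W_{\alpha,\gamma,c}\|_{L^p(S^1)}=\gamma^{-\alpha|n|}|c_{|n|}|.$$

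Continuity and injectivity of $\mathfrak{w}_{\alpha,\gamma}$ then follow by unwinding the Littlewood-Paley characterizations of the relevant function spaces. Multiplying the above identity by $\gamma^{|n|\alpha}$ and taking a supremum gives $\|W_{\alpha,\gamma,c}\|_{B^\alpha_{p,\infty}(S^1)}\asymp\|c\|_{\ell^\infty(\N)}$. Because the Littlewood-Paley blocks have constant modulus in $z$, the Triebel-Lizorkin square function coincides pointwise with the Besov expression, and taking $L^p$-norms yields $\|W_{\alpha,\gamma,c}\|_{F^\alpha_{p,\infty}(S^1)}\asymp\|c\|_{\ell^\infty(\N)}$ as well. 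The case $p=\infty$ recovers the $C^{\alpha,*}$-norm, which is equivalent to the $C^\alpha$-norm for $\alpha\in(0,1)$. Injectivity is trivial: each $c_n$ is recovered from the Fourier coefficient $\hat{W}_{\alpha,\gamma,c}(\gamma^n)=\gamma^{-\alpha n}c_n$.

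For the second statement, the same pointwise computation yields
$$\|W_{\alpha,\gamma,c}\|_{B^\alpha_{p,q}(S^1)}\asymp\|c\|_{\ell^q(\N)}\asymp\|W_{\alpha,\gamma,c}\|_{F^\alpha_{p,q}(S^1)}$$
for every $p,q\in[1,\infty]$. When $c\in\ell^\infty(\N)$ is invertible, meaning $\inf_n|c_n|>0$, the quantity $\|c\|_{\ell^q(\N)}$ is finite precisely when $q=\infty$, which yields both the ``if'' and ``only if'' directions simultaneously.

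The only nonroutine point I foresee is the preliminary identification of the $\gamma$-adic Littlewood-Paley characterization of $B^\alpha_{p,q}(S^1)$ and $F^\alpha_{p,q}(S^1)$ with their standard dyadic definitions. This is a routine consequence of the general theory (any pair of admissible partitions of unity produces equivalent quasi-norms, cf.\ \cite{abelsbook}), so I do not expect a substantive obstacle beyond bookkeeping.
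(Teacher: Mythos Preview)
Your proposal is correct and follows essentially the same strategy as the paper: compute the $\gamma$-adic Littlewood--Paley pieces $w_{n,\gamma}*W_{\alpha,\gamma,c}(z)=\gamma^{-\alpha|n|}c_{|n|}z^{\operatorname{sgn}(n)\gamma^{|n|}}$, observe that their modulus is constant in $z$, and read off the Besov and Triebel--Lizorkin norms as $\ell^q$-norms of $c$. The one difference is cosmetic: the paper establishes $W_{\alpha,\gamma,c}\in C^\alpha(S^1)$ by a direct Zygmund-style increment estimate (splitting the sum at $N$ with $\gamma^N h\le 1<\gamma^{N+1}h$), whereas you invoke the equivalence $C^\alpha\cong C^{\alpha,*}=B^\alpha_{\infty,\infty}$ and handle everything uniformly through Littlewood--Paley; both are valid and the paper itself cites this equivalence. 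One small caution: for $F^\alpha_{\infty,q}$ the paper uses a Carleson-type definition rather than $\|\,\|\cdot\|_{\ell^q}\,\|_{L^\infty}$, so your phrase ``coincides pointwise with the Besov expression'' is slightly loose there---but constant modulus still trivializes the spatial average, and one obtains $\sup_k(\sum_{n>k}|c_n|^q)^{1/q}$, which is finite iff $q=\infty$ when $c$ is invertible, exactly as needed.
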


The proof that $W_{\alpha,\gamma,c}\in C^\alpha(S^1)$ is an adaptation of the proof of \cite[Theorem $4.9$, Chapter II]{zygbook}, where the Weierstrass function $W_{\alpha,2}$ was considered. Recall the definition of the Besov space $B^\alpha_{p,q}(S^1)$ and the Triebel-Lizorkin space $F^\alpha_{p,q}(S^1)$ from \cite{peller}. We use the notation of the paragraph succeeding Theorem \ref{computindixprod}. The norm on the Besov space is defined for $p\in [1,\infty]$, $q\in [1,\infty]$ by
$$\|f\|_{B^t_{p,q}(S^1)}:=\left\|\left(\gamma^{|n|t}\|w_{n,\gamma}*f\|_{L^p(S^1)}\right)_{n\in \Z}\right\|_{\ell^q(\Z)}.$$
The norm on the Triebel-Lizorkin space is defined for $p\in [1,\infty)$, $q\in [1,\infty]$ by
$$\|f\|_{F^t_{p,q}(S^1)}:=\left\|\left\|\left(\gamma^{|n|t}w_{n,\gamma}*f(\cdot)\right)_{n\in \Z}\right\|_{\ell^q(\Z)}\right\|_{L^p(S^1)}.$$
For $p=\infty$ and $q\in [1,\infty]$ the norm on the Triebel-Lizorkin space is given by
$$\|f\|_{F^t_{\infty,q}(S^1)}:=\sup_{B\subseteq S^1, |B|=2^{-k}}\left(\frac{1}{|B|}\sum _{n>k} \int_B \gamma^{tnq}|w_{n,\gamma}*f(\e^{i\theta})|^q\rd \theta\right)^{1/q},$$
where the supremum is over all $k$ and all intervals $B$ in $S^1$ of length $|B|=2^{-k}$.

\begin{proof}
Let $h>0$ be small and take $N=N(h)$ such that $\gamma^N h\leq 1$ and $\gamma^{N+1}h >1$. With $z=\e^{i\theta}$, it holds that
\begin{align*}
W_{\alpha,\gamma,c}(z\e^{ih})-W_{\alpha,\gamma,c}(z)=-2&\underbrace{\sum_{n=1}^N \gamma^{-n\alpha}c_n \sin (\gamma^n(\theta+h/2))\sin(\gamma^nh/2) }_{I}\\
&-2\underbrace{\sum_{n=N+1}^\infty \gamma^{-n\alpha}c_n \sin (\gamma^n(\theta+h/2))\sin(\gamma^nh/2) }_{II}
\end{align*}
We estimate
\begin{align*}
|I|\lesssim \sum_{n=1}^N \gamma^{-n\alpha}\gamma^n h=h\sum_{n=1}^N\gamma^{n(1-\alpha)}=h\frac{\gamma^{1-\alpha}-\gamma^{N(1-\alpha)}}{1-\gamma^{1-\alpha}}=h\cdot O(h^{\alpha-1})=O(h^\alpha).
\end{align*}
\begin{align*}
|II|\lesssim \sum_{n=N+1}^\infty \gamma^{-n\alpha} =\frac{\gamma^{-(N+1)\alpha}}{1-\gamma^{-\alpha}}=O(h^\alpha).
\end{align*}
From these estimates, it follows that $W_{\alpha,\gamma,c}\in C^\alpha(S^1)$.

It holds that $w_{n,\gamma}*W_{\alpha,\gamma,c}(z)=\gamma^{-\alpha|n|}c_nz^{\mathrm{sign}(n)\gamma^{|n|}}$. Hence
$$\|W_{\alpha,\gamma,c}\|_{B^t_{p,q}}=\left\|(|c_n|\gamma^{|n|(t-\alpha)})_{n\in \Z}\right\|_{\ell^q(\Z)}.$$
It follows that for $p\in [1,\infty]$, $W_{\alpha,\gamma,c}\in B^t_{p,q}(S^1)$ if $t<\alpha$ and $q\in[1,\infty)$ or $t=\alpha$ and $q=\infty$. If $c$ is invertible in $\ell^{\infty}(\N)$, there is a $\delta>0$ such that $|c_n|\geq \delta$ for all $n$ so the converse holds: $W_{\alpha,\gamma,c}\in B^t_{p,q}(S^1)$ implies that $t<\alpha$ and $q\in[1,\infty)$ or $t=\alpha$ and $q=\infty$. In the same manner, for $p\in [1,\infty)$,
$$\|W_{\alpha,\gamma,c}\|_{F^t_{p,q}}=\left\| \left(\sum_{n} |c_n|\gamma^{q|n|(t-s)}\right)^{1/q}\right\|_{L^p(S^1)}\sim \left\|(|c_n|\gamma^{|n|(t-s)})_{n\in \Z}\right\|_{\ell^q(\Z)}.$$
It follows that for $p\in [1,\infty)$, $W_{\alpha,\gamma,c}\in F^t_{p,q}(S^1)$ if $t<\alpha$ and $q\in[1,\infty)$ or $t=\alpha$ and $q=\infty$. If $c$ is invertible, $W_{\alpha,\gamma,c}\in F^t_{p,q}(S^1)$ implies that $t<\alpha$ and $q\in[1,\infty)$ or $t=\alpha$ and $q=\infty$. For $p=\infty$ and $q\in [1,\infty]$
$$\|W_{\alpha,\gamma,c}\|_{F^t_{\infty,q}}=\sup_k \left(\sum_{j>k} c_n\gamma^{q|n|(t-s)}\right)^{1/q}= \|c_n\gamma^{|n|(t-s)}\|_{\ell^q(\Z)},$$
which is finite if $t<\alpha$ and $q\in[1,\infty)$ or $t=\alpha$ and $q=\infty$ and for $c$ invertible the converse holds.
\end{proof}

Let us turn to a computation of the Dixmier trace of the product of two Hankel operators whose symbol is a generalized Weierstrass function. We use the notation $C:\ell^\infty(\N)\to \ell^\infty(\N)$ for the Cesaro mean defined on $x=(x_n)_{n\in \N}$ by
$$C(x)_N:=\frac{1}{N+1}\sum_{n=0}^{N} x_n.$$
The following proposition follows directly from Theorem \ref{computindixprod}. For $\gamma\in \N_{>1}$, we define $\omega_\gamma(c):=\lim_{\gamma^N\to \omega} c_N$. It is clear from the construction that $\omega\mapsto \omega_\gamma$ defines an injective mapping on $(\ell^\infty(\N)/c_0(\N))^*$.

\begin{prop}
\label{hankcomp}
For $\gamma\in \N_{>1}$ and $c,d\in \ell^\infty(\N)$ it holds that
$$\tra_\omega(P[P,W_{1/2,\gamma,c}][P,W_{1/2,\gamma,d}])=-\frac{\omega_\gamma\circ C(c\cdot d)}{\log(\gamma)}=-\lim_{N\to \omega_\gamma}\frac{1}{N+1}\sum_{n=0}^{N} \frac{c_nd_n}{\log(\gamma)}.$$
In particular,
$$\tra_\omega(P[P,W_{1/2,\gamma,c}]^2)=\tra_\omega((1-P)[P,W_{1/2,\gamma,c}]^2)=-\frac{\omega_\gamma\circ C(c^2)}{\log(\gamma)}.$$
\end{prop}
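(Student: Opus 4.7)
The statement is declared to follow directly from Theorem \ref{computindixprod}, so the proposal is essentially an unfolding of that formula for the specific Fourier data of the generalized Weierstrass functions, combined with the definition of $\omega_\gamma$.

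First I would reduce the Dixmier trace of the commutator product to a Hankel trace by invoking Proposition \ref{hankelcomp}: since $P[P,a] = Pa(1-P)$, multiplying by $[P,b]$ gives $P[P,a][P,b] = -Pa(1-P)bP$. Then, applying formula \eqref{hankelcommprod} of Theorem \ref{computindixprod} with $a = W_{1/2,\gamma,c}$ and $b = W_{1/2,\gamma,d}$, I obtain
\begin{equation*}
\tra_\omega(P[P,W_{1/2,\gamma,c}][P,W_{1/2,\gamma,d}]) = -\lim_{N\to\omega} \frac{1}{\log(N)} \sum_{k=0}^N k\, a_k b_{-k},
\end{equation*}
where $a_k, b_k$ are the Fourier coefficients of $W_{1/2,\gamma,c}$ and $W_{1/2,\gamma,d}$, respectively.

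Next I would substitute the Fourier expansion \eqref{otherweinot}: the only nonzero coefficients are $a_{\pm \gamma^n} = \gamma^{-n/2} c_n$ and $b_{\pm \gamma^n} = \gamma^{-n/2} d_n$. The sum over $k \geq 0$ therefore collapses to the lacunary sum
\begin{equation*}
\sum_{k=0}^N k\, a_k b_{-k} = \sum_{n:\, \gamma^n \leq N} \gamma^n \cdot \gamma^{-n/2} c_n \cdot \gamma^{-n/2} d_n = \sum_{n=0}^{\lfloor \log_\gamma N \rfloor} c_n d_n.
\end{equation*}

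Finally I would rewrite the limit in terms of $\omega_\gamma$. Setting $M = M(N) = \lfloor \log_\gamma N \rfloor$, so that $\log(N) = (M+1)\log(\gamma) + O(1)$ as $N \to \infty$, and noting that $c\cdot d \in \ell^\infty(\N)$, the quotient $\frac{1}{\log(N)}\sum_{n=0}^M c_n d_n$ differs from $\frac{1}{(M+1)\log(\gamma)}\sum_{n=0}^M c_n d_n$ by a sequence in $c_0$. Since any singular state $\omega$ annihilates $c_0$, these two sequences are $\omega$-equivalent, and by definition of $\omega_\gamma$ as the pullback of $\omega$ along $N \mapsto \gamma^N$, the limit along $\omega$ of the first equals $(\log\gamma)^{-1}$ times the limit along $\omega_\gamma$ of the Cesaro mean $C(cd)$. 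This yields the formula. The second display follows by setting $d = c$, using that $\tra_\omega(P[P,a]^2) = \tra_\omega((1-P)[P,a]^2)$, a consequence of Proposition \ref{tracexompu}. The only place requiring care is keeping track of off-by-one corrections in the comparison of $\log(N)$ and $(M+1)\log(\gamma)$, but these are absorbed into $c_0$ and thus invisible to $\omega$.
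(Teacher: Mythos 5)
Your proposal is correct and matches the paper's intent exactly: the paper offers no separate argument, asserting only that the proposition ``follows directly from Theorem \ref{computindixprod}'', and your unfolding --- the lacunary collapse of $\sum_k k\,a_kb_{-k}$ to $\sum_{n\le\log_\gamma N}c_nd_n$, the $c_0$-absorption of the discrepancy between $\log(N)$ and $(M+1)\log(\gamma)$, and the identification with $\omega_\gamma\circ C$ --- is precisely the intended computation. The only implicit ingredient worth flagging is that $W_{1/2,\gamma,c}\in C^{1/2}(S^1)$ (Proposition \ref{weierprop}) is needed to invoke Theorem \ref{computindixprod} in the first place.
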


\begin{remark}
We note that Proposition \ref{hankcomp} implies that Dixmier traces of products of two Hankel operators with symbol being a generalized Weierstrass function can be as wild as the extended limit $\omega$ applied to the image of the Cesaro mean $\im(C:\ell^\infty(\N)\to \ell^\infty(\N))$. Similarly, Connes-Dixmier traces (cf. discussion in the beginning of Section \ref{dixtracesmodsection}) can be as wild as any singular state on $\im(M\circ C:\ell^\infty(\N)\to \ell^\infty(\N))$. This observation follows from the fact that the product $\ell^\infty(\N)\times \ell^\infty(\N)\ni (c,d)\mapsto c\cdot d\in \ell^\infty(\N)$ is surjective.
\end{remark}

For a Banach algebra $\mathcal{A}$ we let $HC^*(\mathcal{A})$ and $HC_*(\mathcal{A})$ denote its cyclic cohomology and cyclic homology, respectively. For details on cyclic theories, see \cite[Chapter III]{c}.

\begin{prop}
\label{classcomputationcyc}
For any non-zero $\omega\in (\ell^\infty(\N)/c_0(\N))^*$, the class $[c_\omega]\in HC^1(C^{1/2}(S^1))$ (see Proposition \ref{cyclicprop}) is non-vanishing. Moreover, 
\begin{equation}
\label{spaceofcocy}
\{[c_\omega]:\omega\in (\ell^\infty(\N)/c_0(\N))^*\}\subseteq HC^1(C^{1/2}(S^1)),
\end{equation}
is an infinite-dimensional subspace.
\end{prop}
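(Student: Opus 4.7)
The plan is to reduce the cohomological non-vanishing to a non-vanishing statement for the bilinear form $c_\omega$, and then to evaluate $c_\omega$ on a rich family of test vectors. First, because $C^{1/2}(S^1)$ is commutative, the Hochschild coboundary of any continuous linear functional $\phi$ vanishes identically: $b\phi(a_0,a_1)=\phi(a_0a_1-a_1a_0)=0$. Hence no continuous $1$-coboundaries arise from $0$-cochains, so $[c_\omega]=0$ in $HC^1(C^{1/2}(S^1))$ is equivalent to $c_\omega\equiv 0$ as a continuous bilinear form. Since $\omega\mapsto c_\omega$ is manifestly linear and $(\ell^\infty(\N)/c_0(\N))^*$ is infinite-dimensional (indeed non-separable), the ``infinite-dimensional subspace'' conclusion will be automatic as soon as injectivity is in hand.

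For the test vectors I would use the analytic and antianalytic variants of the Weierstrass functions, namely $W^+_{1/2,\gamma,c}(z):=\sum_{n\geq 0}c_n\gamma^{-n/2}z^{\gamma^n}$ and $W^-_{1/2,\gamma,d}(z):=\sum_{n\geq 0}d_n\gamma^{-n/2}z^{-\gamma^n}$ for $\gamma\in\N_{>1}$ and $c,d\in\ell^\infty(\N)$. Each Littlewood--Paley block of $W^\pm_{1/2,\gamma,c}$ is a single monomial, so the argument of Proposition \ref{weierprop} adapts to show $W^\pm_{1/2,\gamma,c}\in C^{1/2}(S^1)$ with norm controlled by $\|c\|_\infty$. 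Substituting into the Fourier formula \eqref{commprod} of Theorem \ref{computindixprod}, only the lacunary terms $k=\gamma^n$ survive, and the same reduction of logarithmic density to Cesaro mean as in the proof of Proposition \ref{hankcomp} yields
\begin{equation*}
c_\omega\bigl(W^+_{1/2,\gamma,c},W^-_{1/2,\gamma,d}\bigr) = -\frac{\omega_\gamma\bigl(C(c\cdot d)\bigr)}{\log\gamma},
\end{equation*}
with $\omega_\gamma$ and the Cesaro mean $C$ as in Proposition \ref{hankcomp}.

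Now suppose $c_\omega\equiv 0$. The surjectivity of pointwise multiplication $\ell^\infty\times\ell^\infty\to\ell^\infty$ (take $c_n=\sqrt{|y_n|}\,\mathrm{sgn}(y_n)$ and $d_n=\sqrt{|y_n|}$) combined with the displayed identity forces $\omega_\gamma\circ C\equiv 0$ on $\ell^\infty(\N)$ for every $\gamma\in\N_{>1}$. The main technical obstacle is to deduce $\omega=0$ from this. My plan is to show that the norm closure of the linear span of the sequences $\bigl(C(y)_{\lfloor\log_\gamma N\rfloor}\bigr)_N$, as $\gamma\geq 2$ and $y\in\ell^\infty(\N)$ vary, coincides with all of $\ell^\infty(\N)/c_0(\N)$. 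Under the identification $\ell^\infty/c_0\cong C(\beta\N\setminus\N)$, the step embeddings $(y_m)_m\mapsto(y_{\lfloor\log_\gamma N\rfloor})_N$ correspond to pullbacks along continuous surjections $\beta\N\setminus\N\to\beta\N\setminus\N$ whose joint image as $\gamma$ varies contains a point-separating family, while the Cesaro means supply enough approximately-constant functions to invoke Stone--Weierstrass on the corona. Carrying out this separation-and-density argument is where the bulk of the analytical work lies; once it is established, injectivity of $\omega\mapsto c_\omega$ follows and both conclusions of the proposition drop out simultaneously.
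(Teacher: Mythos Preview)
Your reduction to showing $c_\omega\not\equiv 0$ via commutativity is correct, and your evaluation on the Weierstrass-type functions $W^\pm_{1/2,\gamma,c}$ follows the same line as the paper (which phrases it as a pairing with the cyclic homology class $x_{c,d}=[W^+_{\gamma,c}\otimes W^-_{\gamma,d}]$, yielding the identical formula $-\omega_\gamma(C(cd))/\log\gamma$).

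The genuine gap is your final step: the density claim is false. Take $a_N=(-1)^N$. Any finite linear combination $b=\sum_{i=1}^k\alpha_i\bigl(C(y_i)_{\lfloor\log_{\gamma_i}N\rfloor}\bigr)_N$ is piecewise constant with jumps only at points of $\bigcup_i\{\gamma_i^m:m\in\N\}$. On the interval $[N,2N]$ there are at most $\sum_i\lceil\log_{\gamma_i}2\rceil\leq k$ such jumps, independently of $N$, so some constant piece has length at least $N/(k+1)$; for $N$ large that piece contains both an even and an odd index, whence $\sup_{[N,2N]}|a_n-b_n|\geq 1$. Thus $\|a-b\|_{\ell^\infty/c_0}\geq 1$ for every such $b$, and this persists in the closure. (Note also that Stone--Weierstrass concerns subalgebras, not linear spans, so the invocation was already only formal.) Consequently the Weierstrass evaluations alone cannot detect every nonzero $\omega$, and your route to full injectivity does not go through.

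The paper does not attempt full injectivity via these test vectors. For infinite-dimensionality it argues directly from the pairing: whenever singular functionals $\omega_1,\dots,\omega_m$ are chosen so that the associated functionals $(\omega_l)_\gamma\circ C$ on $\ell^\infty$ are linearly independent, the classes $[c_{\omega_l}]$ are automatically linearly independent---a strictly weaker statement that requires no density in $\ell^\infty/c_0$. For non-vanishing it simply takes $c=d=1$. So the paper's argument is less ambitious but sidesteps the obstruction you ran into.
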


\begin{proof}
Take $c,d\in \ell^\infty(\N)$ and define the cyclic homology class $x_{c,d}:=[W_{\gamma,c}^+\otimes W_{\gamma,d}^-]\in HC_1(C^{1/2}(S^1))$. Proposition \ref{hankcomp} shows that
\begin{equation}
\label{comppair}
[c_\omega].x_{c,d}=-\frac{\omega_\gamma\circ C(c\cdot d)}{\log(\gamma)}.
\end{equation}
Hence, if $\omega\neq 0$, then $\omega_\gamma\neq 0$ and $[c_\omega].x_{c,d}\neq 0$ for instance when $c=d=1$. Injectivity of $\omega\mapsto \omega_\gamma$ and Equation \eqref{comppair} shows that if $\{\omega_l\}_{l=1,...,m}$ is a collection of singular functionals on $\ell^\infty(\N)$ such that $\{\omega_{l}\circ C\}_{l=1,...,m}$ are linearly independent, then the set $\{[c_{\omega_l}]\}_{l=1,...,m}$ will also be linearly independent in $HC^1(C^{1/2}(S^1))$. Therefore, the space in \eqref{spaceofcocy} is infinite-dimensional.
\end{proof}

It is of interest to consider the class of bounded sequences that do not produce a \emph{measurable} operator. Recall that $G\in \mathcal{L}^{1,\infty}(\He)$ is called measurable if $\tra_\omega(G)$ is independent of $\omega$. For $\gamma\in \N_{>1}$, $k\in \N_{>0}$ we set
$$\mathfrak{hms}_{k,\gamma}:=\{c\in \ell^\infty(\N): \; [P,W_{1/2k,\gamma,c}]^{2k}\in \mathcal{L}^{1,\infty}(L^2(S^1)) \quad \mbox{is measurable}\}.$$
Here $\mathfrak{hms}$ stands for $\mathfrak{h}$ankel $\mathfrak{m}$ea$\mathfrak{s}$urable. Proposition \ref{tracexompu} implies that $[P,W_{1/2k,\gamma,c}]^{2k}$ is measurable if and only if $P[P,W_{1/2k,\gamma,c}]^{2k}$ is measurable. We only consider even powers $2k$ because Proposition \ref{tracexompu} implies that $[P,W_{1/(2k+1),\gamma,c}]^{2k+1}$ is measurable for any $\gamma$ and $c$. The following corollary describing $\mathfrak{hms}_{1,\gamma}$ follows from Proposition \ref{hankcomp}.

\begin{cor}
\label{hermajestyscorollary}
The set $\mathfrak{hms}_{1,\gamma}$ does not depend on $\gamma$ and equals
$$\mathfrak{hms}_{1,\gamma}=\left\{c=(c_n)_{n\in \N}\in \ell^\infty(\N): \; \exists d\in \C \;\mbox{ such that }\; \sum_{n=0}^{N} c_n^2=d\cdot N+o(N)\;\mbox{ as }\; N\to \infty\right\}.$$
In particular, the inclusion $\mathfrak{hms}_{1,\gamma}\subseteq \ell^\infty(\N)$ is strict.
\end{cor}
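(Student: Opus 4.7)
The proof reduces the measurability question to a statement about convergence of Cesaro means of $c^2$ via Proposition \ref{hankcomp}, and then identifies which sequences are ``measurable'' for the composite functional $\omega \mapsto \omega_\gamma(\,\cdot\,)$. First I would specialize Proposition \ref{hankcomp} to $d = c$, obtaining
\[
\tra_\omega\bigl(P[P,W_{1/2,\gamma,c}]^2\bigr) \;=\; -\frac{\omega_\gamma(C(c^2))}{\log \gamma}.
\]
Combined with the identity $\tra_\omega([P,a]^2) = 2\,\tra_\omega(P[P,a]^2)$ from Proposition \ref{tracexompu}, this reduces the measurability of $[P,W_{1/2,\gamma,c}]^2$ to the statement that the scalar $\omega_\gamma(C(c^2))$ is independent of the singular state $\omega$.

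\textbf{Key step.} The main content of the proof is a characterization of when $\omega_\gamma(x)$ is independent of $\omega$ for a fixed $x \in \ell^\infty(\N)$. Unwinding the definition, $\omega_\gamma(x) = \omega(\tilde x)$ where $\tilde x_N := x_{\lfloor \log_\gamma N\rfloor}$, i.e., each value $x_M$ is repeated on the block $\gamma^M \le N < \gamma^{M+1}$. Since these blocks exhaust $\N$, the set of limit points of $\tilde x$ coincides with the set of limit points $L(x)$ of $x$. By the standard description of singular states on $\ell^\infty(\N)$ (a Hahn--Banach consequence), the set $\{\omega(y) : \omega \text{ singular state}\}$ equals the closed convex hull $\overline{\mathrm{conv}}(L(y))$ of the limit points of any $y \in \ell^\infty(\N)$. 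Applied to $y = \tilde x$, we conclude that $\omega_\gamma(x)$ is constant in $\omega$ if and only if $L(x)$ is a single point, i.e., $x$ converges as an ordinary sequence.

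\textbf{Conclusion.} Applying this to $x = C(c^2)$, the measurability of $[P,W_{1/2,\gamma,c}]^2$ is equivalent to the convergence of $C(c^2)_N = \tfrac{1}{N+1}\sum_{n=0}^N c_n^2$ as $N \to \infty$, which is precisely the stated condition $\sum_{n=0}^N c_n^2 = dN + o(N)$ for some $d \in \C$. Since this condition does not involve $\gamma$, the set $\mathfrak{hms}_{1,\gamma}$ is $\gamma$-independent. Strictness of the inclusion $\mathfrak{hms}_{1,\gamma} \subsetneq \ell^\infty(\N)$ follows by exhibiting a bounded sequence whose squares have non-convergent Cesaro means, e.g., $c_n = 1$ for $n$ in blocks $[10^{2k}, 10^{2k+1})$ and $c_n = 0$ otherwise, for which $C(c^2)_{10^{2k+1}} \to 1/10$ but $C(c^2)_{10^{2k+2}} \to 1/100$.

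\textbf{Main obstacle.} The only non-trivial point is the identification of $\{\omega(\tilde x) : \omega \text{ singular}\}$ with $\overline{\mathrm{conv}}(L(x))$. While both inclusions are classical, the subtle direction is the surjectivity onto $\overline{\mathrm{conv}}(L(x))$: although $\omega \mapsto \omega_\gamma$ is only injective (not surjective) on the space of singular functionals, one still recovers every limit point of $x$ because $\tilde x$ is \emph{constant on blocks} of $\N$ of positive density, allowing sufficient room to construct a singular state concentrating near any prescribed limit point.
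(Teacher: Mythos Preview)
Your proposal is correct and follows exactly the route the paper intends: the paper simply states that the corollary ``follows from Proposition \ref{hankcomp}'', and you have spelled out the implicit argument (reduce via Proposition \ref{tracexompu} and Proposition \ref{hankcomp} to constancy of $\omega\mapsto\omega_\gamma(C(c^2))$, then observe that a bounded sequence $y$ has $\omega(y)$ independent of the singular state $\omega$ if and only if $y$ converges). One small correction: in your explicit example, the Cesaro means along $N=10^{2k+1}$ and $N=10^{2k+2}$ tend to $10/11$ and $1/11$, not $1/10$ and $1/100$; the conclusion that they differ is of course unaffected.
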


\begin{remark}
What Corollary \ref{hermajestyscorollary} implies is that although $Pa(1-P)aP$ is a nonsmooth pseudo-differential operators, there need not be any classicality in its spectrum.
\end{remark}

In the case that $\sum_{n=0}^{N} c_n^2=d\cdot N+o(N)$ for some $d\in \C$, then $\tra_\omega(P[P,W_{1/2,\gamma,c}]^2)=d/\log(\gamma)$ by Proposition \ref{hankcomp}. To be a bit more specific about Corollary \ref{hermajestyscorollary}, the following sequences are examples making $P[P,W_{1/2,\gamma,c}]^2$ non-measurable:
$$\left(\sqrt{2+\cos(\log(n))}\right)_{n\in \N} \quad \mbox{and} \quad \left(\chi_{ \N\setminus \cup_{k\in \N}[\gamma^{2k},\gamma^{2k+1})}(n)\right)_{n\in \N},$$
where $\chi_{\N\setminus \cup_{k\in \N}[\gamma^{2k},\gamma^{2k+1})}$ denotes the characteristic function of the set $ \N\setminus \cup_{k\in \N}[\gamma^{2k},\gamma^{2k+1})$. The first example above is similar to that in \cite[Corollary 11.5.3]{sukolord}.

\begin{remark}
\label{weirdsequences}
The sequence $ \left(\chi_{ \N\setminus \cup_{k\in \N}[\gamma^{2k},\gamma^{2k+1})}(n)\right)_{n\in \N}$ produces Hankel operators for which the Connes-Dixmier traces depend on the choice of extended limit $\omega=M^*\phi$ (cf. Remark \ref{connesdixremark}). By \cite[Proposition 6, Chapter II.$\beta$]{c}, this assertion is equivalent to the failure of convergence of the following expression as $M\to \infty$:
$$\frac{1}{\log(M)}\sum_{N=1}^M \frac{(-\gamma)^{\lfloor \log_\gamma(\lfloor \log_\gamma(N+1)\rfloor)\rfloor}}{N\cdot \log(N)}.$$
The expression does not converge as subsequent summands of the same sign add a contribution $\Theta(\log(M))$ to the sum, followed by a contribution of opposite sign.
\end{remark}

A constructive consequence of Proposition \ref{hankcomp} is as follows.

\begin{cor}
It holds that
$$\tra_\omega(P[P,W_{1/2,\gamma}]^2)=-\frac{1}{\log(\gamma)}.$$
\end{cor}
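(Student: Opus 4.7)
The plan is to apply Proposition \ref{hankcomp} directly with the specific choice $c = d = (1,1,1,\dots)$, since by definition $W_{1/2,\gamma} = W_{1/2,\gamma,(1,1,1,\dots)}$. So I would set up
$$\tra_\omega(P[P,W_{1/2,\gamma}]^2) = \tra_\omega(P[P,W_{1/2,\gamma,\mathbf{1}}][P,W_{1/2,\gamma,\mathbf{1}}]),$$
where $\mathbf{1} = (1,1,1,\dots) \in \ell^\infty(\N)$, and invoke Proposition \ref{hankcomp} to reduce the Dixmier trace to an expression involving the Cesaro mean of $\mathbf{1} \cdot \mathbf{1} = \mathbf{1}$.

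The next step is to evaluate $\omega_\gamma \circ C(\mathbf{1})$. Since $C(\mathbf{1})_N = \frac{1}{N+1}\sum_{n=0}^N 1 = 1$ for every $N$, the Cesaro mean is again the constant sequence $\mathbf{1}$. Because $\omega$ is a state on $\ell^\infty(\N)/c_0(\N)$ (so $\omega(\mathbf{1}) = 1$) and $\omega_\gamma$ is defined by precomposing $\omega$ with the map $c \mapsto (c_{\gamma^N})_N$, which sends $\mathbf{1}$ to $\mathbf{1}$, we get $\omega_\gamma(\mathbf{1}) = 1$.

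Substituting back into the formula of Proposition \ref{hankcomp} yields
$$\tra_\omega(P[P,W_{1/2,\gamma}]^2) = -\frac{\omega_\gamma \circ C(\mathbf{1})}{\log(\gamma)} = -\frac{1}{\log(\gamma)},$$
as claimed. There is no genuine obstacle here: the computation is a direct specialization of the general formula in Proposition \ref{hankcomp}, with all the analytic content (the sharp Sobolev mapping property of Theorem \ref{sobregthm}, the ordered Lidskii formula of Theorem \ref{vmodthm}, and the Littlewood-Paley argument controlling the tail of the Fourier series of $W_{1/2,\gamma,c}$) already absorbed into that proposition.
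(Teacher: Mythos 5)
Your proof is correct and is exactly the paper's intended argument: the corollary is stated there as a direct consequence of Proposition \ref{hankcomp}, obtained by specializing to $c=d=\mathbf{1}$, noting $C(\mathbf{1})=\mathbf{1}$ and $\omega_\gamma(\mathbf{1})=1$ since $\omega_\gamma$ is an extended limit. (Only a cosmetic quibble: $\omega_\gamma$ is not quite precomposition with $c\mapsto(c_{\gamma^N})_N$ but rather the extended limit along the stretched sequence $M\mapsto c_{\lfloor\log_\gamma M\rfloor}$; this makes no difference for the constant sequence.)
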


\large
\section{Computations on higher-dimensional tori and $\theta$-deformations}
\label{sectiontheta}
\normalsize

The computations in the previous section are based on the structure of the circle; it is a one-dimensional compact Lie group. In this section, we will generalize these computations to higher dimensional manifolds, still in the presence of symmetries, and noncommutative $\theta$-deformations. The noncommutative geometry of noncommutative tori has been well studied, see for instance \cite{bramessuj,c,conneslandi,yamashitadeform}. In recent years, also more analytic aspects have been studied. In \cite{xiongxuyinfirst,xiongxuyin} several tools from classical harmonic analysis are extended to noncommutative tori. It is an interesting open problem to find an approach to harmonic analysis that would work in more general noncommutative geometries, already for general $\theta$-deformations.

Most of the computational tools are present in the general context of higher dimensional manifolds and noncommutative tori, but produce notably more complicated objects. We start by extending some results from Section \ref{introseconriemannain} to $\theta$-deformations in Subsection \ref{thetadefsub}. We compute Dixmier traces in Subsection \ref{thetadefsubcomputing}.

\subsection{Sobolev regularity and $\theta$-deformations}
\label{thetadefsub}

$\theta$-deformation is a standard procedure to deform the algebra of functions on a manifold with a torus action. It preserves the spectrum of equivariant operators, and thereby ensures that the underlying noncommutative geometry, whenever equivariant, deforms well. 

\subsubsection{The noncommutative torus}
We fix an anti-symmetric matrix $\theta\in M_n(\R)$ and identify $\theta$ with a 2-form on $\R^n$. The matrix $\theta=(\theta_{jk})_{j,k=1}^n$ is identified with the 2-cocycle on $\Z^n$ given by 
$$\Z^n\times \Z^n\ni (\mathbbm{k}_1,\mathbbm{k}_2)\mapsto \e^{i\theta(\mathbbm{k}_1,\mathbbm{k}_2)}\in U(1).$$
The $\theta$-deformation of $\T^n$ is denoted by $C(\T^n_\theta)$ and can be defined as the twisted group $C^*$-algebra $C^*(\Z^n, \theta)$. Alternatively, $C(\T^n_\theta)$ can be defined as the universal $C^*$-algebra generated by $n$ unitaries $U_1,\ldots,U_n$ satisfying the commutation relations 
$$U_jU_k=\e^{i\theta_{jk}}U_kU_j, \quad j,k=1,\ldots, n.$$
For $\mathbbm{k}=(k_1,\ldots,k_n)\in \Z^n$, we write $U^{\mathbbm{k}}:=U_1^{k_1}\cdots U_n^{k_n}\in C(\T^n_\theta)$. Clearly, the linear span of $\{U^\mathbbm{k}:\mathbbm{k}\in \Z^n\}$ is dense in $C(\T^n_\theta)$. 

\begin{remark}
The notation $C(\T^n_\theta)$ suggests that $\T^n_\theta$ is an object in its own right -- a ``noncommutative manifold". This point of view is supported by the geometry that survives in a $\theta$-deformation, see \cite{conneslandi,yamashitadeform}. 
\end{remark}

Since $\T^n$ is an abelian Lie group, there is a strongly continuous action of $\T^n$ on $C(\T^n_\theta)$, defined on the generators by 
$$z.U_j:=z_jU_j, \quad z=(z_1,\ldots, z_n)\in \T^n\subseteq \C^n.$$
Moreover, a tracial state $\tau_0\in C(\T^n_\theta)$ is defined by 
$$\tau_0(U^\mathbbm{k}):=\delta_{0,\mathbbm{k}}, \quad \mathbbm{k}\in \Z^n.$$
We denote the GNS-representation of $\tau_0$ by $L^2(\T^n_\theta)$. The set $\{U^\mathbbm{k}:\mathbbm{k}\in \Z^n\}$ is an orthonormal basis for $L^2(\T^n_\theta)$. 

\subsubsection{General $\theta$-deformations} 
Using the noncommutative torus $\T^n_\theta$, we can deform more general $C^*$-algebras. 

\begin{deef}
Let $A$ be a $\T^n-C^*$-algebra, $\He$ a $\T^n$-equivariant Hilbert space and $D$ a $\T^n$-equivariant unbounded closed operator which is densely defined on $\He$.
\begin{itemize}
\item We define the $\theta$-deformation $A_\theta$ as the $\T^n$-invariant $C^*$-subalgebra $(A\otimes C(\T^n_\theta))^{\T^n}\subseteq A\otimes C(\T^n_\theta)$. Here $A\otimes C(\T^n_\theta)$ is equipped with the diagonal $\T^n$-action. The inclusion defines a $*$-monomorphism 
$$i:A_\theta\hookrightarrow A\otimes C(\T^n_\theta)\ ,$$
and we equip $A_\theta$ with the $\T^n$-action defined from $A$ under $i$.
\item We define the $\theta$-deformation $\He_\theta$ as the $\T^n$-invariant subspace $(\He\otimes L^2(\T^n_\theta))^{\T^n}\subseteq \He\otimes L^2(\T^n_\theta)$. Here, the latter space carries the diagonal $\T^n$-action. The inclusion defines an isometry  
$$V:\He_\theta\hookrightarrow \He\otimes L^2(\T^n_\theta).$$
We equip $\He_\theta$ with the $\T^n$-action defined from $\He$ under the isometry $V$. If $\pi:A\to \Bo(\He)$ is a $\T^n$-equivariant representation, we define its $\theta$-deformation as the $\T^n$-equivariant representation 
$$\pi_\theta:A_\theta\to \Bo(\He_\theta),\quad a\mapsto V^*((\pi\otimes \mathrm{id}_{C(\T^n_\theta)})(i(a)))V.$$
\item Let $\tilde{D}$ denote the $\T^n$-equivariant unbounded closed operator $D\otimes \id_{L^2(\T^n_\theta)}$. The $\theta$-deformation of $D$ is defined as the $\T^n$-equivariant unbounded closed operator $V^*\tilde{D}V$.
\end{itemize}
\end{deef} 

The Hilbert spaces we consider are often defined from GNS-representations. Any $\T^n$-invariant trace $\tau\in A^*$ gives rise to a deformed trace $\tau_\theta:=(\tau\otimes \tau_0)\circ i\in A_\theta^*$. From the definition of $\tau_0$, we have the identity 
$$\tau_\theta(a)=\int_{[0,1]^n} (\tau\otimes\alpha_t)(a)\rd t, \; a\in A_\theta\ ,$$ 
where $\alpha_t(a):=\e^{2\pi it}.a$ for $t\in \R^n$. The following proposition follows from a short computation:

\begin{prop}
Let $\tau\in A^*$ be a $\T^n$-invariant tracial state and let $L^2(A_\theta,\tau_\theta)$ denote the GNS-representation of the tracial state $\tau_\theta$. Then the equivariant mapping $i:A_\theta\to A\otimes C(\T^n_\theta)$ induces a unitary isomorphism $L^2(A_\theta,\tau_\theta)\to L^2(A,\tau)_\theta$ compatible with the $A_\theta$-action, i.e. for $\xi_1,\xi_2\in A_\theta$ we have the equality
$$\langle \xi_1,\xi_2\rangle_{L^2(A_\theta,\tau_\theta)}=\langle i(\xi_1),i(\xi_2)\rangle_{L^2(A,\tau)\otimes L^2(\T^n_\theta)}.$$
\end{prop}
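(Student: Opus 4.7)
The plan is to verify the inner-product identity first, which establishes an isometric embedding, and then to identify its range via a spectral decomposition argument, finally checking equivariance of the action. All three steps are short once the deformed trace $\tau_\theta$ is unpacked through its definition $\tau_\theta = (\tau \otimes \tau_0) \circ i$.

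First I would verify the inner-product identity by pure definition chasing. For $\xi_1, \xi_2 \in A_\theta$,
\[
\langle \xi_1, \xi_2 \rangle_{L^2(A_\theta, \tau_\theta)} = \tau_\theta(\xi_2^* \xi_1) = (\tau \otimes \tau_0)\bigl(i(\xi_2)^* i(\xi_1)\bigr) = \langle i(\xi_1), i(\xi_2) \rangle_{L^2(A, \tau) \otimes L^2(\T^n_\theta)},
\]
where I use that $i$ is a $\ast$-homomorphism and the trace $\tau \otimes \tau_0$ is exactly the trace defining the GNS inner product of the tensor product. Since $A_\theta$ is dense in $L^2(A_\theta, \tau_\theta)$ by the GNS construction, this identity shows that $i$ extends uniquely to an isometric linear map $\tilde{i}: L^2(A_\theta, \tau_\theta) \to L^2(A, \tau) \otimes L^2(\T^n_\theta)$.

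Next I would show that the range of $\tilde{i}$ is exactly $L^2(A, \tau)_\theta = (L^2(A, \tau) \otimes L^2(\T^n_\theta))^{\T^n}$. Since $i$ is $\T^n$-equivariant and its image lies in the invariants of the diagonal action, the range of $\tilde{i}$ is contained in $L^2(A, \tau)_\theta$. For the reverse inclusion, decompose $A$ into its spectral subspaces $A_{\mathbbm{k}} = \{a \in A : z.a = z^{\mathbbm{k}} a\}$; the algebraic sum $\bigoplus_{\mathbbm{k}} A_{\mathbbm{k}}$ is a dense $\ast$-subalgebra of $A$ (by Peter-Weyl / averaging against characters), so $\bigoplus_{\mathbbm{k}} A_{\mathbbm{k}} \otimes \mathrm{span}\{U^{\mathbbm{l}}\}$ is dense in $L^2(A, \tau) \otimes L^2(\T^n_\theta)$. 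Averaging against the diagonal $\T^n$-action, the invariant part is spanned by elements of the form $a \otimes U^{\mathbbm{k}}$ with $a \in A_{-\mathbbm{k}}$, and each such element already lies in $i(A_\theta)$ because $a \otimes U^{\mathbbm{k}}$ is diagonally $\T^n$-invariant in $A \otimes C(\T^n_\theta)$. Density of these elements in $L^2(A,\tau)_\theta$ then forces $\tilde{i}$ to be surjective onto $L^2(A, \tau)_\theta$.

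Finally I would verify compatibility with the $A_\theta$-action. The $A_\theta$-action on $L^2(A_\theta, \tau_\theta)$ is by left multiplication, and $i$ is a $\ast$-homomorphism, so $\tilde{i}(a \xi) = i(a) \tilde{i}(\xi) = (\pi \otimes \mathrm{id})(i(a)) \tilde{i}(\xi)$ for $a, \xi \in A_\theta$; composing with the isometry $V$ of the definition, this is precisely $V \pi_\theta(a) V^* \tilde{i}(\xi)$, matching the deformed representation. The main obstacle, though not particularly serious, is the density argument in the middle step: one must handle the averaging over $\T^n$ carefully so as to land inside the algebraic span where $i(A_\theta)$ is explicitly described, rather than merely in some completion; this is a standard Peter-Weyl type argument but needs to be spelled out.
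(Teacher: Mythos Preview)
Your proposal is correct. The paper does not actually give a proof of this proposition; it simply introduces it with the phrase ``The following proposition follows from a short computation,'' and your argument is precisely a careful version of that computation: the inner-product identity is immediate from $\tau_\theta=(\tau\otimes\tau_0)\circ i$ and the fact that $i$ is a $*$-homomorphism, surjectivity onto the invariants follows from the Peter--Weyl decomposition as you outline, and compatibility with the $A_\theta$-action is again a consequence of $i$ being multiplicative.
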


The spaces $L^p(A_\theta,\tau_\theta)$ are the noncommutative $L^p$-spaces defined from the tracial state $\tau_\theta$ and satisfy the usual interpolation properties of $L^p$-spaces. We note that $L^\infty(A_\theta,\tau_\theta)$ coincides with $A_\theta''\subseteq \Bo(L^2(A_\theta,\tau_\theta))$. Recall the following well known fact about $\theta$-deformations:

\begin{prop}
\label{deformedequiop}
Let $\He$ be a $\T^n$-equivariant Hilbert space and $D$ a $\T^n$-equivariant unbounded closed operator which is densely defined on $\He$. Define the unitary $\mathcal{U}_\theta:\He\to \He_\theta$ by $\mathcal{U}_\theta\xi:=\xi\otimes U^{-\mathbbm{k}}$ for any $\xi$ which is homogeneous of degree $\mathbbm{k}\in \Z^n$. Then 
$$D_\theta=\mathcal{U}_\theta D \mathcal{U}_\theta^*.$$
In particular, $\theta$-deformations of $\T^n$-equivariant unbounded closed operators are ``isospectral": they preserve the spectra $\sigma(D)=\sigma(D_\theta)$ (including multiplicities).
\end{prop}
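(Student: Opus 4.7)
The plan is to exploit the $\T^n$-isotypic decomposition of $\He$ to make $\He_\theta$ and $D_\theta$ concrete, after which the intertwining identity reduces to a one-line computation on homogeneous vectors. First I would write $\He=\bigoplus_{\mathbbm{k}\in\Z^n}\He_\mathbbm{k}$ as a Hilbert space direct sum of $\T^n$-isotypic subspaces, and similarly $L^2(\T^n_\theta)=\bigoplus_{\mathbbm{k}\in\Z^n}\C U^\mathbbm{k}$, where $U^\mathbbm{k}$ is homogeneous of weight $\mathbbm{k}$ for the canonical $\T^n$-action. The diagonal action on $\He\otimes L^2(\T^n_\theta)$ then fixes precisely those vectors whose two weights cancel, so $\He_\theta=\bigoplus_{\mathbbm{k}\in\Z^n}\He_\mathbbm{k}\otimes U^{-\mathbbm{k}}$. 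This already shows that the formula $\mathcal{U}_\theta\xi=\xi\otimes U^{-\mathbbm{k}}$ for $\xi\in\He_\mathbbm{k}$ extends by linearity and continuity to a unitary $\He\to \He_\theta$, with inverse $\xi\otimes U^{-\mathbbm{k}}\mapsto \xi$.

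Next, $\T^n$-equivariance of $D$ implies that each intersection $\He_\mathbbm{k}\cap\Dom(D)$ is $D$-invariant, and averaging against characters of $\T^n$ (which commutes with the resolvent of $D$) shows that the algebraic direct sum of these intersections is a core for $D$. Consequently $\tilde D=D\otimes \id$ commutes with the diagonal $\T^n$-action, preserves the invariant subspace $\He_\theta$, and its restriction to $\He_\theta$ is by definition $D_\theta$. For a homogeneous $\xi\in\He_\mathbbm{k}\cap\Dom(D)$ one now computes
\begin{equation*}
D_\theta\,\mathcal{U}_\theta\xi=V^*\tilde{D}V(\xi\otimes U^{-\mathbbm{k}})=D\xi\otimes U^{-\mathbbm{k}}=\mathcal{U}_\theta D\xi,
\end{equation*}
using that $D\xi\in\He_\mathbbm{k}$ by equivariance. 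Since $\mathcal{U}_\theta$ maps the chosen core for $D$ onto an analogous core for $D_\theta$, the intertwining identity extends from this core, by closure, to $D_\theta=\mathcal{U}_\theta D\mathcal{U}_\theta^*$.

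Isospectrality (with multiplicities) then follows immediately, as spectra of closed densely defined operators are invariants under unitary conjugation, and $\mathcal{U}_\theta$ restricts to a unitary between matching spectral projections. The only real subtlety in the argument is the unbounded-operator bookkeeping: one needs to know that the algebraic sum of isotypic pieces genuinely forms a core for $D$, and that it is carried by $\mathcal{U}_\theta$ to a core for $D_\theta$. This is precisely where strong continuity of the $\T^n$-action and its commutation with the resolvent of $D$ are used; the bounded case involves none of these technicalities and is entirely straightforward.
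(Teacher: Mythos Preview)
The paper does not actually supply a proof of this proposition: it is introduced with ``Recall the following well known fact about $\theta$-deformations'' and left without argument. Your proof is correct and is precisely the standard one --- decompose $\He$ and $L^2(\T^n_\theta)$ into $\T^n$-isotypic components, identify $\He_\theta$ with $\bigoplus_\mathbbm{k}\He_\mathbbm{k}\otimes U^{-\mathbbm{k}}$, and verify the intertwining on homogeneous vectors in a core. Your care with the core issue (using that the resolvent commutes with the $\T^n$-action to see that finite sums of homogeneous vectors are dense in the graph norm) is the only genuinely nontrivial point, and you handle it appropriately.
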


\subsubsection{Geometric constructions}
The $\theta$-deformation of the relevant operators allow us to study the Lipschitz functions on a $\theta$-deformed manifold. Let $M$ be a closed Riemannian manifold, and fix a Dirac operator $D$ acting on a Clifford bundle $S\to M$. We consider $D$ as a self-adjoint operator with domain $W^1(M,S)$. A short computation shows that 
$$\Lip(M)=\{a\in C(M):a\Dom(D)\subseteq \Dom(D)\;\mbox{and} \; [D,a]\mbox{ is bounded in $L^2$-operator norm}\}.$$
On the left hand side of this equation, we use Definition \ref{lipdef} (see page \pageref{lipdef}). Henceforth, we assume that $M$ admits a smooth isometric $\T^n$-action that lifts to $S$. We also assume that $D$ is $\T^n$-equivariant. Since the $\T^n$-action is isometric, any Dirac operator on $S$ is a zeroth order perturbation of an equivariant Dirac operator.

\begin{prop}
We define the subspace $\Lip(M_\theta)\subseteq C(M_\theta):=C(M)_\theta$ as 
$$\Lip(M_\theta):=\{a\in C(M_\theta): \,i(a)\in \Lip(M,C(M_\theta))\}.$$
It holds that 
$$\Lip(M_\theta)=\{a\in C(M_\theta):a\Dom(D_\theta)\subseteq \Dom(D_\theta)\;\mbox{and} \; [D_\theta,a]\mbox{ is bounded}\}.$$
\end{prop}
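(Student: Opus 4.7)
The plan is to reduce the characterization of $\Lip(M_\theta)$ to the classical (scalar) characterization of $\Lip(M)$ by $D$, upgraded to operator-valued functions. The starting point is the defining factorization $D_\theta = V^*(D\otimes \id_{L^2(\T^n_\theta)})V$ and $\pi_\theta(a) = V^*(\pi\otimes\id)(i(a))V$. A short algebraic computation shows that, for $a \in C(M_\theta)$ with $a\Dom(D_\theta)\subseteq\Dom(D_\theta)$, one has
\[
[D_\theta,\pi_\theta(a)] = V^*\bigl[D\otimes\id,\, i(a)\bigr]V,
\]
so boundedness of $[D_\theta,a]$ on $\He_\theta$ is the same as boundedness of the restriction of $[D\otimes\id, i(a)]$ to the invariant subspace. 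Both $D\otimes\id$ and multiplication by $i(a)$ are $\T^n$-equivariant, so their commutator decomposes over the isotypic components of $\He\otimes L^2(\T^n_\theta)$.

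First I would establish a Banach-valued analogue of the classical characterization: for a unital $C^*$-algebra $B$ faithfully represented on a Hilbert space $K$, and $f \in C(M)\otimes B$, one has $[D\otimes\id_K, f]$ bounded on $L^2(M,S)\otimes K$ if and only if $f$ is Lipschitz as a function $M \to B$. The forward direction follows because the Clifford symbol of the commutator recovers the distributional $B$-valued gradient of $f$, whose $L^\infty$-norm controls the Lipschitz constant; the reverse uses that a $B$-valued Lipschitz function has bounded distributional gradient and that the Dirac commutator absorbs exactly the Clifford-contracted gradient. Applied with $B = C(M_\theta)$ (acting via $i$ on $L^2(\T^n_\theta)$), this characterizes $\Lip(M, C(M_\theta))$ by boundedness of $[D\otimes\id, i(a)]$ on the full tensor product. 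Next I would upgrade this to the invariant subspace: for each $\mathbbm{m}\in\Z^n$, the right regular action $\pi_R(U^\mathbbm{m})$ on $L^2(\T^n_\theta)$ is a unitary in the commutant of the left regular action, hence $\id_\He\otimes\pi_R(U^\mathbbm{m})$ commutes both with $D\otimes\id$ and with left multiplication by $i(a)$, while shifting the diagonal $\T^n$-grading by $-\mathbbm{m}$. This gives unitary intertwiners between the $0$-isotypic component $\He_\theta$ and the $\mathbbm{m}$-isotypic component that preserve both operators, so the norm of the $\T^n$-equivariant commutator $[D\otimes\id, i(a)]$ is the same on every isotypic component and equals its norm on $\He_\theta$. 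Combining the two steps yields the claimed equivalence; the domain condition $a\Dom(D_\theta)\subseteq\Dom(D_\theta)$ transfers in parallel via $V$.

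The main obstacle is the Banach-valued version of the Lipschitz characterization in the first step. While the scalar case is standard, in the $C^*$-algebra-valued setting one has to ensure that distributional gradients of Lipschitz maps $M \to B$ really live in $L^\infty(M, B\otimes T^*M)$ in a form compatible with Dirac commutators; some care is required with measurability and density issues when $B$ is non-separable, which can be handled by approximating $i(a)$ by finite sums $\sum_{\mathbbm{k}} a_\mathbbm{k}\otimes U^{-\mathbbm{k}}$ in the $C^*$-norm and using the continuity of $a\mapsto [D\otimes\id, a]$ in the Lipschitz seminorm established by the scalar theory, so the vector-valued result follows by a limit argument.
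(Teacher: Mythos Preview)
Your proposal is correct and, in its first step, coincides with the paper's: both reduce $\Lip(M_\theta)$ to the Banach-valued characterization ``$i(a)\in\Lip(M,C(M_\theta))$ iff $[\tilde D,i(a)]$ is bounded on $L^2(M,S)\otimes L^2(\T^n_\theta)$''. The genuine difference is in how one passes between boundedness of $[\tilde D,i(a)]$ on the full tensor product and boundedness of $[D_\theta,a]$ on the invariant subspace $\He_\theta$. The paper handles one inclusion by an appeal to the local form $D_\theta=\sum c(X_i)X_i$ (up to lower order), without spelling out the transfer between isotypic components. Your argument via the right regular representation is a cleaner and more transparent substitute: the unitaries $\id\otimes\pi_R(U^{\mathbbm m})$ commute with both $\tilde D=D\otimes\id$ and with left multiplication by $i(a)$, while permuting the isotypic components of the diagonal $\T^n$-action transitively; hence the $\T^n$-equivariant commutator $[\tilde D,i(a)]$ has the same norm on every isotypic component, equal to $\|[D_\theta,a]\|$. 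This establishes both inclusions at once, without local coordinates. The trade-off is that the paper's local description hints at why the deformed Dirac operator still ``sees'' the full metric (not just torus directions), whereas your argument is purely representation-theoretic and avoids that geometric input altogether.

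One small point to tighten: in the vector-valued Lipschitz step you invoke approximation of $i(a)$ by finite sums $\sum a_{\mathbbm k}\otimes U^{-\mathbbm k}$ in the $C^*$-norm. Such approximation is available via Ces\`aro means of the Fourier series (as the paper notes, cf.\ \cite{weaverlip}), but convergence in $C^*$-norm alone does not control the Lipschitz seminorm. It is cleaner to argue pointwise in states: for each pair of unit vectors $\xi,\eta$ in the representation space of $C(M_\theta)$ the scalar function $x\mapsto\langle\eta,i(a)(x)\xi\rangle$ has $[D,\cdot]$ bounded by $\|[\tilde D,i(a)]\|$, hence is Lipschitz with that constant, and taking the supremum over $\xi,\eta$ gives $i(a)\in\Lip(M,C(M_\theta))$. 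For the reverse direction, mollify $i(a)$ in the $M$-variable to get smooth $C(M_\theta)$-valued approximants with controlled Lipschitz constant and pass to the limit in the strong operator topology.
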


\begin{proof}
It is easily verified that $a\in \Lip(M_\theta)$ if and only if $i(a)$ preserves the domain of $\tilde{D}$ and $[\tilde{D},i(a)]$ extends to a bounded operator on $L^2(M,S)\otimes L^2(\T^n_\theta)$. It follows that $\{a\in C(M_\theta):a\Dom(D_\theta)\subseteq \Dom(D_\theta)\;\mbox{and} \; [D_\theta,a]\mbox{ is bounded}\}\subseteq \Lip(M_\theta)$. The converse follows from the fact that,  locally, the operator $D_\theta$ is implemented by $\sum c(X_i)X_i$ up to lower order terms, where $X_i$ are generators of a local $\R^n$-action on $L^2(M,S)_\theta$. 
\end{proof}

\begin{remark}
\label{thelmapping}
We remark that if $a\in A$ is smooth for the $\T^n$-action, then we can write $a=\sum_{\mathbbm{k}\in \Z^n} a_\mathbbm{k}$, where $a_\mathbbm{k}\in A$ is of degree $\mathbbm{k}$ and defined by 
$$a_\mathbbm{k}:=\int_{[0,1]^n} \alpha_t(a)\e^{-2\pi i\mathbbm{k}\cdot t}\rd t.$$
When $a$ is smooth, the norms $(\|a_\mathbbm{k}\|_A)_{\mathbbm{k}\in \Z^n}$ form a Schwartz sequence. In this case the element $L(a):=\sum_{\mathbbm{k}\in \Z^n} a_\mathbbm{k}\otimes U^{-k}\in A_\theta$ is well defined, and the following identity holds on a core for $D_\theta$:
$$L([D,a])=[D_\theta,L(a)].$$
We remark that this identity is analytically unwieldly when relating the boundedness of the commutators $[D,a]$ and $[D_\theta,L(a)]$; $L$ does not extend to a bounded operator from bounded operators in $\He$ to bounded operators in $\He_\theta$.
\end{remark}

\begin{remark}
The convergence of the ``Fourier series"  $a=\sum_{\mathbbm{k}\in \Z^n} a_\mathbbm{k}$ was studied in detail in \cite{weaverlip}. For $M=\T^n$, it was shown that the Fourier series converges in Cesaro mean, if $a\in C(\T^n_\theta)$ \cite[Theorem 22]{weaverlip}. It converges in norm, if $a\in \Lip(\T^n_\theta)$ \cite[Theorem 23]{weaverlip}. More refined convergence results were obtained in \cite{xiongxuyinfirst}.
\end{remark}

\subsubsection{Littlewood-Paley theory on the noncommutative torus}
The Littlewood-Paley theory on the noncommutative tori uses Fourier theory similarly to the approach on page \pageref{lptheoryontorus} (found in \cite{peller}). The details can be found in \cite{xiongxuyin}. We pick a Littlewood-Paley partition of unity $(\phi_j)_{j\in \N}\subseteq c_c(\Z^n)$ as on page \pageref{lptheoryontorus}, i.e.~$\supp(\phi_j)\subseteq \{\mathbbm{k}\in \Z^n:2^{j-1}\leq |\mathbbm{k}|\leq 2^{j+1}\}$ for $j>0$. We call such a partition of unity a \emph{discrete} Littlewood-Paley partition of unity.

\begin{deef}
We define the Littlewood-Paley decomposition $(\Phi_j^\theta)_{j\in \N}\subseteq \Bo(L^2(\T^n_\theta))$ by 
$$\Phi_j^\theta U^{\mathbbm{k}}:=\phi_j(\mathbbm{k})U^{\mathbbm{k}}.$$
For $s\in \R$ and $p,q\in [1,\infty]$, we consider partially defined norms $\|\cdot\|_{B^s_{p,q}(\T^n_\theta)}$ on $f\in C^\infty(\T^n_\theta)^*$, 
$$\|f\|_{B^s_{p,q}(\T^n_\theta)} :=\|(2^{sj}\|\Phi_j^\theta f\|_{L^p(\T^n_\theta)})_{j\in \N}\|_{\ell^q(\N)}.$$
We define the Banach space $B^s_{p,q}(\T^n_\theta):=\{f\in C^\infty(\T^n_\theta)^*: \|f\|_{B^s_{p,q}(\T^n_\theta)} <\infty\}$.
\end{deef}

In addition, there is a natural Sobolev space scale on $\T^n_\theta$. let $(\delta_j)_{j=1}^n$ denote the generators of the $\T^n$-action. The Laplacian on $\T^n_\theta$ is defined by 
$$\Delta_\theta:=-\sum_{j=1}^n \delta_j^2.$$
For $s\geq 0$ and $p\in [1,\infty)$, we define $W^{s,p}(\T^n_\theta)$ as the domain of $|\Delta_\theta|^{s/2}$ on $L^p(\T^n_\theta)$. Alternatively, for $k\in \N$
$$W^{2k,p}(\T^n_\theta):=\{f\in L^p(\T^n_\theta): \; \Delta_\theta^{k}f\in L^p(\T^n_\theta)\},$$
and one recovers $W^{s,p}(\T^n_\theta)$ by complex interpolation. The space $W^{s,p}(\T^n_\theta)$ for $s<0$ is defined by duality in the $\tau_0$-pairing. We set $W^{s}(\T^n_\theta):=W^{s,2}(\T^n_\theta)$.

\begin{prop}
\label{besovspacecomp}
The Besov spaces satisfy the following properties
\begin{enumerate}
\item For $s\in \R$, $B^s_{2,2}(\T^n_\theta)=W^{s}(\T^n_\theta)$.
\item For any $s\in (0,1)$, 
$$B^s_{\infty,\infty}(\T^n_\theta)=[C(\T^n_\theta),\Lip(\T^n_\theta)]_{s}=C^s(\T^n,C(\T^n_\theta))^{\T^n}.$$
\end{enumerate}
\end{prop}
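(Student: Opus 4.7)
The plan for part (1) is a direct Plancherel computation. Since $(U^{\mathbbm{k}})_{\mathbbm{k}\in\Z^n}$ is an orthonormal basis of $L^2(\T^n_\theta)$ that simultaneously diagonalises both $\Delta_\theta$ (with eigenvalue $|\mathbbm{k}|^2$) and the Littlewood-Paley operator $\Phi_j^\theta$ (with eigenvalue $\phi_j(\mathbbm{k})$), setting $\hat f(\mathbbm{k}):=\tau_0(fU^{-\mathbbm{k}})$ yields
\begin{align*}
\|f\|_{W^s(\T^n_\theta)}^2 &\sim \sum_{\mathbbm{k}\in\Z^n}(1+|\mathbbm{k}|^2)^s\,|\hat f(\mathbbm{k})|^2, \\
\|f\|_{B^s_{2,2}(\T^n_\theta)}^2 &= \sum_{\mathbbm{k}\in\Z^n}|\hat f(\mathbbm{k})|^2\,\sum_{j\geq 0}2^{2sj}|\phi_j(\mathbbm{k})|^2.
\end{align*}
The dyadic support of $\phi_j$ together with the partition-of-unity property forces $\sum_j 2^{2sj}|\phi_j(\mathbbm{k})|^2\sim (1+|\mathbbm{k}|^2)^s$ uniformly in $\mathbbm{k}$, which immediately gives the equivalence of norms.

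For part (2), my strategy is to transfer the problem to the classical vector-valued setting via the orbit map. Set
$$\Theta:C(\T^n_\theta)\to C(\T^n,C(\T^n_\theta)),\qquad \Theta(a)(t):=\alpha_t(a).$$
Each $\alpha_t$ is isometric on $C(\T^n_\theta)$, so $\Theta$ is itself isometric; moreover it identifies $C(\T^n_\theta)$ with the equivariant subspace $C(\T^n,C(\T^n_\theta))^{\T^n}$. The key calculation is that $\Theta$ intertwines the Littlewood-Paley decompositions. Indeed, for $a=\sum_\mathbbm{k} a_\mathbbm{k}$ with $a_\mathbbm{k}$ homogeneous of degree $\mathbbm{k}$ under the $\T^n$-action (as in Remark \ref{thelmapping}), the $\mathbbm{k}$-th Fourier coefficient of $\Theta(a)$ in $t$ equals $a_\mathbbm{k}$, so the classical vector-valued block satisfies
$$(\Phi_j\Theta(a))(s)=\sum_\mathbbm{k}\phi_j(\mathbbm{k})\,a_\mathbbm{k}\,\e^{2\pi i\mathbbm{k}\cdot s}=\alpha_s(\Phi_j^\theta a).$$
Taking $L^\infty(\T^n,C(\T^n_\theta))$-norms and using that $\alpha_s$ is isometric gives $\|a\|_{B^s_{\infty,\infty}(\T^n_\theta)}=\|\Theta(a)\|_{B^s_{\infty,\infty}(\T^n,C(\T^n_\theta))}$. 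The classical vector-valued identifications
$$B^s_{\infty,\infty}(\T^n,X)=C^s(\T^n,X)=[C(\T^n,X),\Lip(\T^n,X)]_s,\qquad s\in(0,1),$$
valid for any Banach space $X$, applied to $X=C(\T^n_\theta)$ and restricted to $\T^n$-invariants via $\Theta$, then yield both equalities of part (2), provided one knows that $\Theta$ also identifies $\Lip(\T^n_\theta)$ with $\Lip(\T^n,C(\T^n_\theta))^{\T^n}$.

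The main obstacle I foresee is verifying this last Lipschitz identification, since $\Lip(\T^n_\theta)$ is defined via the Dirac operator $D_\theta$ while the classical space $\Lip(\T^n,C(\T^n_\theta))$ is defined metrically. This is resolved by the preceding proposition: boundedness of $[D_\theta,a]$ is equivalent, via the local expression of $D_\theta$ as a Clifford-linear combination of the generators $\delta_j$ of the $\T^n$-action (modulo zeroth-order terms), to the uniform boundedness of the difference quotients $(\alpha_t(a)-a)/|t|$ in $C(\T^n_\theta)$ for $|t|$ small, which is precisely the Lipschitz condition on $\Theta(a)$. Modulo this algebraic identification, everything reduces to the classical vector-valued theory.
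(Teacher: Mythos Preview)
Your proposal is correct and follows essentially the same route as the paper. For part (1) the paper simply remarks that the identity ``follows in the classical way from the case $s=0$'' using $\Delta_\theta U^{\mathbbm{k}}=|\mathbbm{k}|^2 U^{\mathbbm{k}}$; your explicit Plancherel computation is exactly this argument written out. For part (2) the paper's proof is to identify $[C(\T^n_\theta),\Lip(\T^n_\theta)]_s$ with the $\T^n$-invariant part of $[C(\T^n,C(\T^n_\theta)),\Lip(\T^n,C(\T^n_\theta))]_s$ and then invoke the classical vector-valued Littlewood--Paley characterisation of the latter; your orbit map $\Theta$ is precisely the map implementing this identification (it coincides, up to the isomorphism $C(\T^n_\theta)\cong (C(\T^n))_\theta$, with the inclusion $i$ used throughout the paper), and your intertwining computation $(\Phi_j\Theta(a))(s)=\alpha_s(\Phi_j^\theta a)$ makes explicit what the paper leaves to ``standard methods of interpolation''. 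The Lipschitz identification you flag as the main obstacle is exactly the content of the proposition immediately preceding this one in the paper, so your reference to it is appropriate.
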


\begin{proof}
Since $\Delta_\theta U^{\mathbbm{k}}=|\mathbbm{k}|^2U^{\mathbbm{k}}$, the identity $B^s_{2,2}(\T^n_\theta)=W^{s}(\T^n_\theta)$ follows in the classical way from the case $s=0$. To prove the second identity, we note that $[C(\T^n_\theta),\Lip(\T^n_\theta)]_{s}$ equals the invariant subspace of $[C(\T^n,C(\T^n_\theta)),\Lip(\T^n,C(\T^n_\theta))]_{s}$. Standard methods of interpolation show 
\begin{align*}
[C(\T^n,&C(\T^n_\theta)),\Lip(\T^n,C(\T^n_\theta))]_{s}\\
&=\{f\in C(\T^n)\otimes C(\T^n_\theta): \|(2^{sj}\|(\Phi_j^\theta\otimes \id_{C(\T^n_\theta)}) f\|_{L^\infty(\T^n_\theta)})_{j\in \N}\|_{\ell^\infty(\N)} <\infty\}.
\end{align*}
From this identity, the second assertion follows.
\end{proof}

\begin{deef}
For $\alpha\in (0,1)$, we define 
$$C^\alpha(\T^n_\theta):=C^\alpha(\T^n,C(\T^n_\theta))^{\T^n}=[C(\T^n_\theta),\Lip(\T^n_\theta)]_{\alpha}=B^\alpha_{\infty,\infty}(\T^n_\theta).$$ 
\end{deef}

\begin{remark}
It seems possible to extend the Littlewood-Paley theory from $\T^n_\theta$ to $\theta$-deformations of smooth free $\T^n$-actions on manifolds. This is a consequence of the fact that smooth free $\T^n$-actions on manifolds correspond to smooth principal $\T^n$-bundles; therefore we can localize to a product situation. A challenging problem would be to develop Littlewood-Paley theory for arbitrary smooth $\T^n$-actions on manifolds.
\end{remark}

\subsubsection{Sobolev regularity on the noncommutative torus}

We are now ready to prove that certain commutators on the noncommutative torus have Sobolev mapping properties similar to those in Theorem \ref{sobregthm}. The commutators that we are interested in are those between the phase of the Dirac operator and elements of $C^\alpha(\T^n_\theta)$.

On $\T^n$ there is an obvious choice of spin structure coming from the trivialization of the tangent bundle $T\T^n$, as defined by the Lie algebra $\R^n$. Concretely, the spinor bundle is given by a trivial bundle $S=\T^n\times S_n\to \T^n$, and the Dirac operator takes the form $D=\sum_{i=1}^n\gamma_i \partial_{t_i}$. Here $\gamma_i=c(\partial_{t_i})$ and $c:\R^n\to \End(S_n)$ denotes Clifford multiplication. Now note that $S_n=\C^{2^{\lfloor n/2\rfloor}}$ as a vector space; the Clifford structure is determined by the Clifford multiplication $c$. We identify the sections of the spinor bundle with $C^\infty(\T^n)\otimes S_n$. In the Fourier basis $\e_\mathbbm{k}(t):=\e^{2\pi i \mathbbm{k}\cdot t}$, we have 
$$ D(\e_\mathbbm{k}\otimes v)=\e_\mathbbm{k}\otimes c(\mathbbm{k})v, \quad v\in S_n.$$
Define $F:=D|D|^{-1}$. We use the convention that $|D|^{-1}$ acts as $0$ on $\ker(D)$. The operators $D$ and $F$ are $\T^n$-equivariant and induce $\theta$-deformed operators $D_\theta$, respectively $F_\theta=D_\theta|D_\theta|^{-1}$, on $L^2(\T^n_\theta,S)$. The Sobolev spaces satisfy $W^s(\T^n_\theta,S)=\Dom(|D_\theta|^{s})$ for $s\geq 0$, because $|D_\theta|^2=\Delta_\theta\otimes\id_{S_n}$.

\begin{thm}
\label{sobregtheta}
Let $F_\theta$ denote the phase of the $\theta$-deformed Dirac operator $D_\theta$. For any $a\in C^\alpha(\T^n_\theta)$ the operator $[F_\theta,a]$ extends to a bounded operator 
$$[F_\theta,a]:W^s(\T^n_\theta,S)\to W^{s+\alpha}(\T^n_\theta,S),\quad\mbox{for any $s\in (-\alpha,0)$}.$$
\end{thm}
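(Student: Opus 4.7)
The plan is to reduce to Theorem \ref{sobregthm} via the isospectral presentation of $\theta$-deformations. Set $\He := L^2(\T^n, S)\otimes L^2(\T^n_\theta)$ and $\tilde F := F\otimes 1_{L^2(\T^n_\theta)}$. By the definition of the $\theta$-deformation, $\He_\theta$ is the $\T^n$-invariant subspace of $\He$, and $F_\theta = V^* \tilde F V$ with $V$ the inclusion; since $\tilde F$ is $\T^n$-equivariant, $F_\theta$ is simply its restriction to $\He_\theta$. Likewise, left multiplication by $a\in C(\T^n_\theta)$ on $\He_\theta$ is the restriction of left multiplication by $b := i(a) \in C(\T^n) \otimes C(\T^n_\theta)$ on $\He$. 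Because $[\tilde F, b]$ is $\T^n$-equivariant it preserves $\He_\theta$, and because $W^s(\T^n_\theta, S)$ agrees with the $\T^n$-invariant part of $W^s(\T^n, S)\otimes L^2(\T^n_\theta)$, it suffices to show
\[
[\tilde F, b]: W^s(\T^n, S)\otimes L^2(\T^n_\theta) \to W^{s+\alpha}(\T^n, S)\otimes L^2(\T^n_\theta)
\]
is continuous with norm controlled by $\|a\|_{C^\alpha(\T^n_\theta)}$.

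By the definition of $C^\alpha(\T^n_\theta)$ given just before the statement, $b$ is a $\T^n$-equivariant $\alpha$-Hölder function $\T^n \to C(\T^n_\theta)$, and $C(\T^n_\theta)\subseteq \Bo(L^2(\T^n_\theta))$ acts by left multiplication. Identifying $\He = L^2(\T^n, S \otimes L^2(\T^n_\theta))$, multiplication by $b$ is the pointwise action $(bu)(t) = b(t) u(t)$ of a Hölder $\Bo(L^2(\T^n_\theta))$-valued function, while $\tilde F$ acts trivially on the $L^2(\T^n_\theta)$ factor. The problem thus reduces to an operator-valued analogue of Theorem \ref{sobregthm}: for any Hilbert space $K$, any $Q\in \Psi^0(\T^n)$ acting on sections of a vector bundle, any $b\in C^\alpha(\T^n, \Bo(K))$, and any $s\in (-\alpha, 0)$, the commutator
\[
[Q\otimes 1_K, b]: W^s(\T^n, S)\otimes K \to W^{s+\alpha}(\T^n, S)\otimes K
\]
is bounded with norm controlled by $\|b\|_{C^\alpha(\T^n, \Bo(K))}$.

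This operator-valued version is proved by the same Bony paraproduct argument: writing $bu = T_b u + T_u b + R(b, u)$ using the scalar Littlewood-Paley partition of unity on $\T^n$, one checks that $[Q\otimes 1_K, T_b]$ gains $\alpha$ derivatives on every Sobolev scale since $T_b$ is almost diagonal in the dyadic decomposition and $Q$ is a smooth Fourier multiplier, while $\varrho(b)u := R(b,u) + T_u b$ already gains $\alpha$ derivatives in the Hölder--Sobolev range $s\in (-\alpha, 0)$. The main obstacle is to verify the $K$-valued analogues of \cite[Proposition 7.3, Chapter I]{toolspde} and \cite[Proposition 3.2.A]{nlin}; the essential point is that $Q\otimes 1_K$ acts diagonally on $K$, so all required Bernstein inequalities and Littlewood-Paley multiplier bounds reduce to $K$-valued $L^2$ statements that follow from Plancherel applied in the $\T^n$ variable, combined with the pointwise inequality $\|b(t)u(t)\|_K\leq \|b(t)\|_{\Bo(K)}\|u(t)\|_K$. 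A self-contained alternative is to repeat the paraproduct argument directly on $\T^n_\theta$ using the projections $\Phi_j^\theta$ of \cite{xiongxuyin}, exploiting that $F_\theta$ is the Fourier multiplier with smooth symbol $\mathbbm{k}\mapsto c(\mathbbm{k})/|\mathbbm{k}|$ in the $U^{\mathbbm{k}}$-basis.
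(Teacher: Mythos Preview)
Your reduction to the classical torus with $\Bo(K)$-valued coefficients is valid and gives a genuinely different route from the paper. The paper carries out the paraproduct decomposition directly on $\T^n_\theta$ using the projections $\Phi_j^\theta$ (essentially your ``self-contained alternative''), and its treatment of the remainder piece $\varrho(a)=R_a+T_{(\cdot)}a$ matches what you outline. The substantive divergence is in the term $[F_\theta,T_a]$. You propose invoking an operator-valued analogue of \cite[Proposition 7.3, Chapter I]{toolspde}; the paper instead interpolates the map $a\mapsto (2^{\alpha k}[F_\theta,a^k]\Phi_k^\theta)_k$ between $\alpha=0$ and $\alpha=1$ to reduce everything to the single Lipschitz estimate $2^k\|[F_\theta,a]\Phi_k^\theta\|\lesssim\|a\|_{\Lip(\T^n_\theta)}$, and then deduces that from the identity $[F_\theta,a]=([D_\theta,a]+F_\theta[|D_\theta|,a])|D_\theta|^{-1}$ together with a commutator bound for first-order equivariant pseudodifferential operators against $C^*$-algebra-valued Lipschitz functions (Proposition \ref{lipestim} and the theorem following it).

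Your route is more conceptual and reuses Theorem \ref{sobregthm} wholesale; the cost is that the operator-valued paraproduct estimates are asserted rather than proved (and one should note that in the noncommuting setting the ``$T_ub$'' piece must be read as $\sum_j(\Phi_j b)u^j$, though the norm estimates are unchanged). Making the $[Q\otimes 1_K,T_b]$ step precise on $L^2$-Sobolev spaces is indeed routine via Plancherel as you say, but observe that it still requires boundedness of $[|D|\otimes 1_K,b]$ for operator-valued Lipschitz $b$, which is exactly the $C^*$-algebra-valued $T(1)$-type bound the paper isolates after Proposition \ref{lipestim}. So the two arguments converge on the same hard analytic input; the paper's route is longer but makes that input explicit and exploits the specific first-order factorization of $F_\theta$, avoiding any appeal to an operator-valued $S^0_{1,1}$ calculus.
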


\begin{remark}
Theorem \ref{sobregtheta} holds for arbitrary smooth free $\T^n$-actions on manifolds. This can again be shown by localizing to a product situation. 
\end{remark}

\begin{remark}
Results relating to Theorem \ref{sobregtheta} are the subject of a forthcoming article by Edward McDonald, Fedor Sukochev and Dmitriy Zanin. 
\end{remark}

The proof of Theorem \ref{sobregtheta} is divided into several lemmas. For an element $a\in C(\T^n_\theta)$, we write 
\begin{equation}
\label{aupper}
a^k:=\sum_{j=0}^{k-2} \Phi^\theta_j(a)\ .
\end{equation} 
Associated with the Littlewood-Paley decomposition and $a\in C^\alpha(\T^n_\theta)$, there are operators $T_a:=\sum_{k=0}^\infty a^k\Phi_k^\theta$ and $R_a=\sum_{|j-k|\leq 2} \Phi_j^\theta(a)\Phi_k^\theta$. For $u\in L^2(\T^n_\theta,S)$, we decompose 
$$au=T_au+T_ua+R_au\ .$$ 
We set $\varrho(a)u:=R_au+T_ua$. From these computations, we see that
$$[F_\theta,a]=[F_\theta,T_a]+F_\theta \varrho(a)-\varrho(a)F_\theta.$$
The three terms will be studied separately. The last two terms have the desired mapping property by the next lemma.

\begin{lem}
For $a\in C^\alpha(\T^n_\theta)$ and $s\in (-\alpha,0)$, $\varrho(a)$ extends to a continuous operator
$$\varrho(a):W^s(\T^n_\theta,S)\to W^{s+\alpha}(\T^n_\theta,S)\ .$$
\end{lem}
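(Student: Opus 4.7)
The plan is to verify the stated mapping property for the two constituent pieces of $\varrho(a)$ separately: the ``resonant'' term $R_a u=\sum_{|j-k|\le 2}\Phi^\theta_j(a)\,\Phi^\theta_k(u)$ and the paraproduct $M_a u:= T_u a=\sum_k u^k\,\Phi^\theta_k(a)$. This mirrors the classical argument \cite[Proposition 3.2.A]{nlin}; the noncommutative setting enters only through the Littlewood-Paley theory on $\T^n_\theta$ developed in \cite{xiongxuyin}, namely the identification $W^s(\T^n_\theta)=B^s_{2,2}(\T^n_\theta)$, the noncommutative H\"older inequality, and the near-orthogonality estimate $\|\sum_{j\in J}f_j\|_{L^2(\T^n_\theta)}^2\lesssim \sum_{j\in J}\|f_j\|_{L^2(\T^n_\theta)}^2$ for families with pairwise almost-disjoint Fourier support, a direct consequence of Plancherel on the basis $(U^{\mathbbm{k}})_{\mathbbm{k}\in \Z^n}$.

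For the paraproduct $M_a$, each summand $u^k\Phi^\theta_k(a)$ has Fourier support in a dyadic annulus at scale $2^k$, since $u^k$ is Fourier-supported in a ball of radius $\lesssim 2^{k-1}$ and $\Phi^\theta_k(a)$ in an annulus of radius $\sim 2^k$. Hence near-orthogonality gives
\[
\|M_a u\|_{W^{s+\alpha}}^2 \;\lesssim\; \sum_k 2^{2(s+\alpha)k}\|u^k\Phi^\theta_k(a)\|_{L^2(\T^n_\theta)}^2 .
\]
H\"older yields $\|u^k\Phi^\theta_k(a)\|_{L^2}\le \|u^k\|_{L^2}\,\|\Phi^\theta_k(a)\|_{L^\infty}$, and the $C^\alpha$-bound supplies $\|\Phi^\theta_k(a)\|_{L^\infty}\lesssim 2^{-k\alpha}\|a\|_{C^\alpha}$. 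Setting $c_j:=2^{sj}\|\Phi^\theta_j(u)\|_{L^2}$, so that $(c_j)\in\ell^2(\N)$ with $\ell^2$-norm comparable to $\|u\|_{W^s}$, near-orthogonality of the pieces of $u^k$ gives $\|u^k\|_{L^2}^2\lesssim \sum_{j<k-1}2^{-2sj}c_j^2$. Reindexing in $m=k-j$ then turns the resulting double sum into $\sum_{m\ge 2}2^{2sm}\cdot\|a\|_{C^\alpha}^2\|u\|_{W^s}^2$, which converges because $s<0$.

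For the resonant term $R_a$, the product $\Phi^\theta_j(a)\,\Phi^\theta_k(u)$ with $|j-k|\le 2$ has Fourier support only in a ball of radius $\sim 2^k$, so the summands are no longer almost-orthogonal in scale. The estimate is most cleanly obtained by duality: for $v\in W^{-s-\alpha}(\T^n_\theta)$,
\[
\langle R_a u,v\rangle \;=\; \sum_{|j-k|\le 2}\sum_{l}\langle \Phi^\theta_j(a)\,\Phi^\theta_k(u),\Phi^\theta_l(v)\rangle,
\]
and the ball support forces $l\le k+O(1)$. Applying H\"older $L^\infty\cdot L^2\cdot L^2$, bounding $\|\Phi^\theta_j(a)\|_{L^\infty}\lesssim 2^{-j\alpha}\|a\|_{C^\alpha}$, and introducing $c_k:=2^{sk}\|\Phi^\theta_k(u)\|_{L^2}$, $d_l:=2^{-(s+\alpha)l}\|\Phi^\theta_l(v)\|_{L^2}$ (both sequences in $\ell^2$), the triple sum becomes a convolution-type bilinear form in $(c_k)$ and $(d_l)$ with geometric weight $2^{(s+\alpha)(l-k)}$. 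Since $s+\alpha>0$, this weight decays as $l\to -\infty$ relative to $k$, and Young's inequality together with Cauchy-Schwarz yields $|\langle R_a u,v\rangle|\lesssim \|a\|_{C^\alpha}\|u\|_{W^s}\|v\|_{W^{-s-\alpha}}$.

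The main obstacle is the resonant piece $R_a$, because its summands do not separate in frequency and one cannot invoke Plancherel directly; the regularity hypothesis $s+\alpha>0$ is exactly what produces the geometric decay needed to sum the low-frequency contributions. Together with the constraint $s<0$ used for $M_a$, one recovers precisely the assumed interval $s\in(-\alpha,0)$, and continuous dependence on $a\in C^\alpha(\T^n_\theta)$ is visible from both estimates.
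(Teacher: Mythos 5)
Your proof is correct and follows essentially the same route as the paper: reduce to $B^s_{2,2}=W^s$ via the Littlewood--Paley decomposition on $\T^n_\theta$, handle $R_a$ using the fact that $\Phi_j^\theta(a)\Phi_k^\theta(u)$ has Fourier support in a ball of radius $\sim 2^k$ together with the geometric summability forced by $s+\alpha>0$, and handle the paraproduct $T_ua$ using $s<0$ and the bound $\|\Phi_k^\theta(a)\|_{L^\infty}\lesssim 2^{-k\alpha}\|a\|_{C^\alpha}$. The only differences are cosmetic: the paper states the $R_a$ estimate as a direct Besov-norm computation rather than by duality, and for $T_ua$ it simply cites Johnsen's bilinear estimate $B^s_{2,2}\times B^\alpha_{\infty,\infty}\to B^{s+\alpha}_{2,2}$ for $s<0$, whose standard proof you write out.
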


\begin{proof}
We write $u=\sum_{l=0}^\infty u_l$, where 
\begin{equation}
\label{ulowerl}
u_l:=\Phi_{l}^{\theta}u. 
\end{equation}
By Proposition \ref{besovspacecomp}.1) it suffices to prove that $\varrho(a)$ is continuous as a map $B^s_{2,2}(\T^n_\theta,S)\to B^{s+\alpha}_{2,2}(\T^n_\theta,S)$. When $s+\alpha>0$, we have $\sum_{k=0}^{l+3}2^{2(s+\alpha)k}\sim 2^{2(s+\alpha)l}$. Note that whenever $|j-l|\leq 2$, an argument using the support of the sequence of Fourier coefficients implies
$$\Phi^\theta_k(\Phi_j^\theta(a)\Phi_l^\theta(u))=0, \quad k>l+3\ .$$
For $s+\alpha>0$, we can estimate
\begin{align*}
\|R_au\|_{B^{s+\alpha}_{2,2}}^2&=\sum_{k=0}^\infty 2^{2(s+\alpha)k}\left\|\sum_{|j-l|\leq 2} \Phi_{k}^{\pmb{o}}\left(\Phi_{j}^{a}(a)\cdot u_l\right)\right\|_{L^2}^2\\
&\leq \sum_{|j-l|\leq 2}\sum_{k=0}^{l+3} 2^{2(s+\alpha)k}2^{-2j\alpha}\|a\|_{B^\alpha_{\infty,\infty}}^2\|u_l\|_{L^2}^2+\\
&\lesssim \sum_{|j-l|\leq 2}2^{2sj}\|a\|_{B^\alpha_{\infty,\infty}}^2\|u_l\|_{L^2}^2\lesssim  \|a\|_{B^\alpha_{\infty,\infty}}^2\|u\|_{B^s_{2,2}}^2\ .
\end{align*}
It follows that $R_a$ has the desired mapping properties for $s+\alpha>0$. Following the argument of \cite[Theorem 5.1]{jon95}, one observes that $B^s_{2,2}\times B^\alpha_{\infty,\infty}\ni (u,a)\mapsto T_u a\in B^{s+\alpha}_{2,2}$ is a continuous bilinear mapping for $s<0$.
\end{proof}

It therefore suffices to prove the Sobolev mapping properties for $[F_\theta,T_a]$. Recall the notation $a^k$ from Equation \eqref{aupper} and $u_l$ from Equation \eqref{ulowerl}.

\begin{lem}
\label{alphainbetween}
Theorem \ref{sobregtheta} is true provided there is a constant $C>0$ such that for any $\alpha\in (0,1)$, $a\in C^\alpha(\T^n_\theta)$ and $k\in \N$
$$2^{\alpha k}\|[F_\theta,a^k]\Phi_k^\theta\|_{\Bo(L^2(\T^n_\theta,S))}\leq C\|a\|_{C^\alpha(\T^n_\theta)}\ .$$
\end{lem}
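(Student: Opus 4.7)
The plan is to use the hypothesis dyadically to bound each piece of $[F_\theta,T_a]$, recombine via Littlewood--Paley theory, and then add the bounds on $F_\theta\varrho(a)-\varrho(a)F_\theta$ already established in the preceding lemma, exploiting the decomposition $[F_\theta,a]=[F_\theta,T_a]+F_\theta\varrho(a)-\varrho(a)F_\theta$.

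First I would note that $F_\theta$ and $\Phi_k^\theta$ are simultaneously diagonalized in the basis $\{U^{\mathbbm{k}}\otimes v\}$ of $L^2(\T^n_\theta,S)$: the former acts as $c(\mathbbm{k}/|\mathbbm{k}|)$ on the fibre while the latter acts as the scalar $\phi_k(\mathbbm{k})$. They therefore commute, and
$$[F_\theta,T_a]=\sum_k [F_\theta,a^k]\Phi_k^\theta.$$
Set $v_k:=[F_\theta,a^k]\Phi_k^\theta u$. Because $a^k$ has Fourier support in the ball $\{|\mathbbm{k}'|\leq 2^{k-1}\}$, $\Phi_k^\theta u$ in the annulus $\{2^{k-1}\leq|\mathbbm{k}'|\leq 2^{k+1}\}$, and $F_\theta$ preserves Fourier supports, each $v_k$ is Fourier--supported in the ball $\{|\mathbbm{k}'|\leq 2^{k+2}\}$.

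The main technical step uses the hypothesis together with localization. Since $\Phi_k^\theta u_j=0$ for $|j-k|\geq 2$, setting $\tilde u_k:=\sum_{|j-k|\leq 1}u_j$ gives $\Phi_k^\theta\tilde u_k=\Phi_k^\theta u$, hence
$$\|v_k\|_{L^2}=\|[F_\theta,a^k]\Phi_k^\theta\tilde u_k\|_{L^2}\leq C\,2^{-\alpha k}\|a\|_{C^\alpha(\T^n_\theta)}\sum_{|j-k|\leq 1}\|u_j\|_{L^2}.$$
Writing $c_j:=2^{sj}\|u_j\|_{L^2}$, one has $(c_j)\in\ell^2(\N)$ with norm $\|u\|_{W^s(\T^n_\theta,S)}$, and since $s\in(-\alpha,0)$ the factors $2^{-sj}$ and $2^{-sk}$ differ by a bounded amount for $|j-k|\leq 1$. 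Consequently
$$2^{(s+\alpha)k}\|v_k\|_{L^2}\lesssim \|a\|_{C^\alpha(\T^n_\theta)}\sum_{|j-k|\leq 1}c_j.$$

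To reassemble the pieces I would invoke a discrete Bernstein--Hardy inequality: each $v_k$ is Fourier--supported in a ball of radius $\lesssim 2^k$, so applying the Littlewood--Paley projector $\Phi_j^\theta$ to $\sum_k v_k$ kills all terms with $k<j-3$. Since $s+\alpha>0$, Young's inequality for the resulting triangular convolution in the index $j$ gives
$$\Bigl\|\sum_k v_k\Bigr\|^2_{W^{s+\alpha}(\T^n_\theta,S)}\lesssim\sum_k 2^{2(s+\alpha)k}\|v_k\|_{L^2}^2\lesssim \|a\|_{C^\alpha(\T^n_\theta)}^2\|u\|_{W^s(\T^n_\theta,S)}^2,$$
using the identification $W^{s+\alpha}=B^{s+\alpha}_{2,2}$ from Proposition \ref{besovspacecomp}. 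Since $F_\theta$ is bounded on every $W^t(\T^n_\theta,S)$, the preceding lemma's bound on $\varrho(a)$ produces the analogous estimate for $F_\theta\varrho(a)-\varrho(a)F_\theta$, completing the proof. The main obstacle is the passage from ball-supported to annulus-supported Littlewood--Paley pieces in the reassembly step, which is precisely where the hypothesis $s+\alpha>0$ enters.
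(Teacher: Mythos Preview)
Your proof is correct and follows the same strategy as the paper's: bound each dyadic piece $[F_\theta,a^k]\Phi_k^\theta$ of $[F_\theta,T_a]$ via the hypothesis, reassemble in $B^{s+\alpha}_{2,2}=W^{s+\alpha}$, and invoke the preceding lemma for the $\varrho(a)$ terms. Your reassembly step via ball-supported pieces and the triangular Young inequality (using $s+\alpha>0$) is in fact more careful than the paper's, which passes directly to $|l-k|\leq2$ as though each $[F_\theta,a^k]u_k$ were annulus-supported.
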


\begin{proof}
Assume that the constant $C>0$ in the statement of the lemma exists. We may then estimate
\begin{align*}
\|[F_\theta,T_a]u\|_{B^{s+\alpha}_{2,2}(\T^n_\theta,S)}^2&=\sum_{l=0}^\infty 2^{2(s+\alpha)k} \left\|\Phi_l^\theta\sum_{k=0}^\infty[F_\theta,a^k]u_k\right\|_{L^2(\T^n_\theta,S)}^2\\
&\lesssim \sum_{|l-k|\leq 2} 2^{2(s+\alpha)k} \|\Phi_l^\theta[F_\theta,a^k]u_k\|_{L^2(\T^n_\theta,S)}^2\\
&\lesssim \sum_{|l-k|\leq 2} 2^{2(s+\alpha)k} \|[F_\theta,a^k]\Phi_k^\theta\|_{\Bo(L^2(\T^n_\theta,S))}^2\|u_k\|_{L^2(\T^n_\theta,S)}^2\\
& \leq C^2\|a\|_{C^\alpha(\T^n_\theta)}^2\|u\|_{B^{s}_{2,2}(\T^n_\theta,S)}^2\ .
\end{align*}
In the last estimate we use the assumption of the lemma.
\end{proof}

\begin{lem}
\label{smoothest}
Theorem \ref{sobregtheta} is true if there is a constant $C>0$ such that for any $a\in C^\infty(\T^n_\theta)$ and $k\in \N$
$$2^{k}\|[F_\theta,a]\Phi_k^\theta\|_{\Bo(L^2(\T^n_\theta,S))}\leq C\|a\|_{\Lip(\T^n_\theta)}\ .$$
\end{lem}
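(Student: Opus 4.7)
The plan is to combine the hypothesized Lipschitz-case bound of Lemma \ref{smoothest} with the reduction in Lemma \ref{alphainbetween} via a Bernstein-type inequality on $\T^n_\theta$. By Lemma \ref{alphainbetween}, it suffices to show that for every $\alpha\in(0,1)$, every $a\in C^\alpha(\T^n_\theta)$ and every $k\in\N$,
$$2^{\alpha k}\|[F_\theta,a^k]\Phi_k^\theta\|_{\Bo(L^2(\T^n_\theta,S))}\leq C\|a\|_{C^\alpha(\T^n_\theta)}.$$
First I would observe that $a^k=\sum_{j=0}^{k-2}\Phi_j^\theta(a)$ has Fourier support in the ball $\{|\mathbbm{k}|\le 2^{k-1}\}\subseteq\Z^n$ and is therefore a trigonometric polynomial in $U_1,\dots,U_n$, in particular lying in $C^\infty(\T^n_\theta)\subseteq\Lip(\T^n_\theta)$. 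The hypothesis of Lemma \ref{smoothest} thus applies directly to $a^k$, yielding
$$2^{k}\|[F_\theta,a^k]\Phi_k^\theta\|_{\Bo(L^2(\T^n_\theta,S))}\leq C\|a^k\|_{\Lip(\T^n_\theta)}.$$
The problem is thereby reduced to controlling $\|a^k\|_{\Lip(\T^n_\theta)}$ by $2^{k(1-\alpha)}\|a\|_{C^\alpha(\T^n_\theta)}$.

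Next I would invoke a Bernstein inequality on the noncommutative torus: for any $f\in C^\infty(\T^n_\theta)$ with spectral support in $\{|\mathbbm{k}|\le R\}$, one has $\|f\|_{\Lip(\T^n_\theta)}\lesssim R\|f\|_{L^\infty(\T^n_\theta)}$. Granted this, and using the Besov identification $C^\alpha(\T^n_\theta)=B^\alpha_{\infty,\infty}(\T^n_\theta)$ from Proposition \ref{besovspacecomp} (which gives $\|\Phi_j^\theta(a)\|_{L^\infty(\T^n_\theta)}\lesssim 2^{-j\alpha}\|a\|_{C^\alpha(\T^n_\theta)}$), I obtain
$$\|\Phi_j^\theta(a)\|_{\Lip(\T^n_\theta)}\lesssim 2^{j(1-\alpha)}\|a\|_{C^\alpha(\T^n_\theta)}.$$
Summing the geometric series in $j$ from $0$ to $k-2$, whose total is comparable to the last term because $1-\alpha>0$, produces $\|a^k\|_{\Lip(\T^n_\theta)}\lesssim 2^{k(1-\alpha)}\|a\|_{C^\alpha(\T^n_\theta)}$. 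Substituting this into the preceding display and dividing by $2^{k(1-\alpha)}$ gives
$$2^{\alpha k}\|[F_\theta,a^k]\Phi_k^\theta\|_{\Bo(L^2(\T^n_\theta,S))}\lesssim \|a\|_{C^\alpha(\T^n_\theta)},$$
which is precisely the inequality required by Lemma \ref{alphainbetween}, thereby proving Theorem \ref{sobregtheta}.

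The main obstacle is the Bernstein inequality on $\T^n_\theta$. Morally it is identical to the classical case: each derivation $\delta_i$ is diagonal with eigenvalue $ik_i$ on $U^{\mathbbm{k}}$, so the Fourier multiplier $\delta_i\,\Phi_j^\theta$ has symbol supported in $\{|\mathbbm{k}|\sim 2^j\}$ of modulus $\lesssim 2^j$. The delicate point is that $L^\infty(\T^n_\theta)$ admits no pointwise evaluation, so the usual argument controlling the $L^\infty\to L^\infty$ norm of a Fourier multiplier by the $L^1$-norm of its kernel must be recast in terms of the left/right convolution action by elements of $L^1(\T^n_\theta)$; this relies on the tracial structure and the identification of $\Lip(\T^n_\theta)$ with boundedness of $[D_\theta,\cdot]=\sum_i\gamma_i\delta_i(\cdot)$. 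Rather than reprove it, I would cite the Bernstein inequality established on $\T^n_\theta$ in \cite{xiongxuyin}. Once it is available, the remaining steps above are merely summation and bookkeeping.
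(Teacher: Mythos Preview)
Your argument is correct but differs from the paper's. Both start from Lemma~\ref{alphainbetween} and both apply the hypothesized Lipschitz bound to the trigonometric polynomial $a^k$, arriving at
\[
2^{k}\|[F_\theta,a^k]\Phi_k^\theta\|_{\Bo(L^2(\T^n_\theta,S))}\leq C\|a^k\|_{\Lip(\T^n_\theta)}.
\]
From here the paths diverge. You control $\|a^k\|_{\Lip}$ explicitly: a Bernstein inequality on $\T^n_\theta$ (cited from \cite{xiongxuyin}) gives $\|\Phi_j^\theta(a)\|_{\Lip}\lesssim 2^{j(1-\alpha)}\|a\|_{C^\alpha}$, and summing the geometric series yields $\|a^k\|_{\Lip}\lesssim 2^{k(1-\alpha)}\|a\|_{C^\alpha}$. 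The paper instead packages the whole statement as an interpolation: the linear map $a\mapsto([F_\theta,a^k]\Phi_k^\theta)_k$ is trivially bounded $C(\T^n_\theta)\to\ell^\infty(\N,\Bo)$ (the $\alpha=0$ endpoint) and bounded $\Lip(\T^n_\theta)\to\ell^\infty_{2^k}(\N,\Bo)$ by the hypothesis together with the uniform bound $\|a^k\|_{\Lip}\leq C\|a\|_{\Lip}$ (the $\alpha=1$ endpoint); interpolating and invoking $C^\alpha(\T^n_\theta)=[C(\T^n_\theta),\Lip(\T^n_\theta)]_\alpha$ from Proposition~\ref{besovspacecomp} then gives the intermediate case. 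The paper's route is shorter and needs only the boundedness of Littlewood--Paley partial sums on $\Lip$ (a simpler Fourier-multiplier fact than Bernstein), while your route is more explicit and avoids invoking the interpolation machinery on the target side.
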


\begin{proof}
By Lemma \ref{alphainbetween}, it suffices to prove that the mapping 
$$C^\alpha(\T^n_\theta)\ni a\mapsto (2^{\alpha k}[F_\theta,a^k]\Phi_k^\theta)_{k\in \N}\in \ell^\infty(\N,\Bo(L^2(\T^n_\theta,S)))\ $$
is well defined and bounded. By interpolation, it suffices to prove that this  mapping is continuous for $\alpha=0$ and $\alpha=1$. At $\alpha=0$ the continuity is clear. At $\alpha=1$, the assumption of the lemma guarantees that 
$$2^{k}\|[F_\theta,a^k]\Phi_k^\theta\|_{\Bo(L^2(\T^n_\theta,S))}\leq C\|a^k\|_{\Lip(\T^n_\theta)}\leq C\|a\|_{\Lip(\T^n_\theta)}\ .$$
\end{proof}

\begin{prop}
\label{lipestim}
Let $M$ be a closed manifold with a smooth $\T^n$-action. If $Q$ is a first order classical $\T^n$-equivariant pseudo-differential operator acting on a $\T^n$-equivariant vector bundle $E$, then there is a constant $C>0$ such that any $a\in \Lip(M_\theta)$ preserves $\Dom(Q_\theta)$ and 
$$\|[Q_\theta,a]\|_{\Bo(L^2(M,E)_\theta)}\leq C\|a\|_{\Lip(M_\theta)}\ .$$
\end{prop}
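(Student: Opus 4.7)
The plan is to transport the statement through the isometry $V\colon L^2(M,E)_\theta \hookrightarrow L^2(M,E)\otimes L^2(\T^n_\theta)$ of Proposition~\ref{deformedequiop}, reducing the proposition to a Banach-algebra-valued commutator estimate for $Q$ itself. Under $V$, $Q_\theta$ corresponds to the restriction of $\tilde Q := Q\otimes\id_{L^2(\T^n_\theta)}$ to the $\T^n$-invariant subspace, and $a\in C(M_\theta)$ corresponds to multiplication by $i(a)\in (C(M)\otimes C(\T^n_\theta))^{\T^n}$; both operators preserve the invariant part. Setting $A := C(\T^n_\theta)$, one has $i(a)\in\Lip(M,A)$ with $\|i(a)\|_{\Lip(M,A)} = \|a\|_{\Lip(M_\theta)}$ by the definition of $\Lip(M_\theta)$, so the proposition reduces to the bound
\[
\|[\tilde Q, i(a)]\|_{\Bo(L^2(M,E)\otimes L^2(\T^n_\theta))} \leq C\,\|i(a)\|_{\Lip(M,A)}.
\]
Once the commutator is shown bounded on a suitable core, the domain-preservation statement $a\Dom(Q_\theta)\subseteq\Dom(Q_\theta)$ follows by a standard closure argument: for $\xi_n$ in a core with $\xi_n\to\xi$ and $Q_\theta\xi_n\to Q_\theta\xi$, one has $a\xi_n\to a\xi$ and $Q_\theta(a\xi_n) = aQ_\theta\xi_n + [Q_\theta,a]\xi_n$ converges in $L^2$, so closedness of $Q_\theta$ gives $a\xi\in\Dom(Q_\theta)$.

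I would then establish the following general $A$-valued commutator bound: for any unital $C^*$-algebra $A$, any first-order classical PsiDO $Q$ on $M$ acting on $E$, and any $f\in\Lip(M,A)$, the commutator $[Q\otimes\id_A, f]$ extends to a bounded operator on $L^2(M,E)\otimes A$ with norm $\leq C(Q)\|f\|_{\Lip(M,A)}$, the constant $C(Q)$ independent of $A$. The scalar case $A=\C$ is the classical Coifman--Meyer commutator estimate for first-order pseudodifferential operators with Lipschitz symbols, accessible through Bony's paraproduct decomposition as in \cite[Chapter~I, Proposition~7.3]{toolspde}: writing $fu = T_f u + T_u f + R(f,u)$ with a Littlewood-Paley decomposition on $M$, one decomposes $[Q,f] = [Q, T_f\,\cdot\,] + Q\varrho(f) - \varrho(f)Q$ with $\varrho(f)u := R(f,u) + T_u f$, and each summand is controlled on $L^2$ by estimates that involve $f$ only through the Bernstein-type inequality $\|\phi_k(D) f\|_{L^\infty(M)}\lesssim 2^{-k}\|f\|_{\Lip(M)}$ and the triangle inequality. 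Replacing $Q$ by $Q\otimes\id_A$ and the absolute value by the $C^*$-norm on $A$, the same argument goes through verbatim for $A$-valued $f$, since the Littlewood-Paley multipliers $\phi_k(D)\otimes\id_A$ act only in the spatial variable of $M$ and the Bernstein bound $\|\phi_k(D) f\|_{L^\infty(M,A)}\lesssim 2^{-k}\|f\|_{\Lip(M,A)}$ remains valid with pointwise $A$-norm. Restricting the resulting bounded operator to the $\T^n$-invariant subspace and undoing the isometry $V$ produces the claimed estimate for $[Q_\theta, a]$.

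The main technical obstacle is verifying that the scalar paraproduct proof of the first-order Lipschitz commutator estimate genuinely transfers to Banach-algebra-valued $f$ without loss of constants. All ingredients are either kernel and symbol estimates on $Q$, which are unaffected by tensoring with $\id_A$ because the operator $Q\otimes\id_A$ has kernel $k_Q(x,y)\otimes\id_A$, or Littlewood-Paley pointwise bounds on $f$, which rely only on the Lipschitz seminorm and the triangle inequality in $A$; consequently the transfer is conceptually routine, but a careful line-by-line check of the scalar argument is needed to confirm that no step secretly uses scalar-specific features such as pointwise multiplication of values or characterwise decomposition of $f$.
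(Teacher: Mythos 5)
Your proposal follows the paper's route exactly: Proposition \ref{lipestim} is deduced from Proposition \ref{deformedequiop} (the isometry $V$ and isospectrality of the deformation) together with the $C^*$-algebra-valued first-order Lipschitz commutator theorem stated immediately after it, whose proof the paper omits with a pointer to ``standard techniques in harmonic analysis, for instance using a $T(1)$-theorem.'' The one point to treat with care in your sketch of that omitted theorem is the claim that the paraproduct argument needs only termwise Bernstein bounds and the triangle inequality: at the Lipschitz endpoint the contribution of $Q(T_u f)$ is not summable over Littlewood--Paley blocks by the triangle inequality alone and requires the almost-orthogonality/Carleson-measure input that the $T(1)$-theorem packages (and which does transfer to $A$-valued symbols), which is precisely the ``line-by-line check'' you flag as the main obstacle.
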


The proposition follows immediately from the next theorem and Proposition \ref{deformedequiop}.

\begin{thm}
Let $A$ be a $C^*$-algebra represented by $\pi:A\to \Bo(\He)$, $M$ a closed manifold and $Q$ a first order classical pseudo-differential operator on $M$. There exists a $C>0$ such that for any $a\in \Lip(M,A)$, 
$$\|[Q\otimes \id_\He,(\id_{C(M)}\otimes \pi)(a)]\|_{\Bo(L^2(M,\He))}\leq C\|a\|_{\Lip(M,A)}\ .$$
\end{thm}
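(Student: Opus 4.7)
The statement is the $\Bo(\He)$-valued analogue of the scalar Lipschitz commutator estimate for first-order classical pseudodifferential operators, \cite[Proposition 3.6.B]{nlin}, and I would adapt the scalar argument to the vector-valued setting; no new conceptual ideas are required, only a careful verification that each scalar inequality extends to operator-valued coefficients.

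\emph{Step 1 (Localization).} Using a smooth finite partition of unity subordinate to a trivializing atlas of $M$, reduce to $Q$ a compactly supported classical first-order pseudodifferential operator on $\R^n$ and $a\in \Lip(\R^n, A)$ compactly supported. Off-diagonal partition contributions are smoothing operators whose commutators with $\pi(a)$ are bounded directly by $\|a\|_{C(M,A)}\leq \|a\|_{\Lip(M,A)}$.

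\emph{Step 2 (Paraproduct decomposition).} Set $b := (\id_{C(M)}\otimes \pi)(a)\in \Lip(\R^n, \Bo(\He))$ with $\|b\|_{\Lip}\leq \|a\|_{\Lip(M,A)}$. Fix a Littlewood--Paley partition of unity $(\phi_k)_{k\in \N}$ as on page \pageref{lpdiscussion}, with $\Phi_k = \phi_k(D)$ acting as scalar Fourier multipliers in $x$ and extending to $L^2(\R^n, \He)$ with identical operator norms. The Bony paraproduct decomposition $b\cdot u = T_b u + T_u b + R(b, u)$ produces
\begin{equation*}
[Q, M_b] \;=\; [Q, T_b] \;+\; Q\circ (T_{(\cdot)}b + R(b, \cdot)) \;-\; (T_{(\cdot)}b + R(b, \cdot))\circ Q,
\end{equation*}
and the main term $T_b = \sum_k (S_{k-2}b)\Phi_k$ (with $S_{k-2}b := \sum_{j\leq k-2}\Phi_j b$) is a nonsmooth zero-order $\Bo(\He)$-valued pseudodifferential operator whose symbol satisfies $\|\partial_x^\alpha S_{k-2}b\|_{L^\infty(\R^n, \Bo(\He))}\lesssim 2^{k(|\alpha|-1)}\|b\|_{\Lip}$ for $|\alpha|\geq 1$. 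The symbolic commutator calculation from \cite[Proposition 7.3, Chapter I]{toolspde}, which uses only pointwise operator-norm bounds on the symbol together with the Fourier-localization of $\Phi_k$, then gives $\|[Q, T_b]\|_{L^2(\R^n,\He)\to L^2(\R^n,\He)}\lesssim \|a\|_{\Lip(M,A)}$. The remaining pieces, which contain the high-frequency content of $b$, are handled by Coifman--Meyer type estimates together with the continuity of $Q$ on appropriate Sobolev scales (cf.~\cite[Proposition 3.2.A]{nlin}).

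The main technical obstacle is to verify that the scalar paraproduct estimates and the symbolic commutator bound for $[Q, T_b]$ extend from $\C$-valued to $\Bo(\He)$-valued coefficients. Since $\He$ is a Hilbert space, $L^2(\R^n, \He)$ enjoys the UMD property, and since the Littlewood--Paley decomposition is a scalar Fourier multiplier commuting with the $\He$-fiber, all scalar estimates derived from $L^2$-Plancherel and pointwise norm bounds on symbols carry through with the same constants; scalar pointwise multiplication is harmlessly replaced by pointwise application of the operators $\pi(a(x))\in \Bo(\He)$. As a sanity check, one may also test the commutator kernel $k_Q(x,y)(b(y)-b(x))$ against $\xi\otimes \eta\in \He\otimes \He$ to reduce to the scalar Calder\'on bound for the Lipschitz function $x\mapsto \langle \pi(a(x))\xi,\eta\rangle$, the bookkeeping of which agrees with the paraproduct analysis above.
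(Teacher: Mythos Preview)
Your proposal is essentially correct and in fact supplies more detail than the paper itself: the paper omits the proof entirely, stating only that ``the theorem is proven by standard techniques in harmonic analysis, for instance using a $T(1)$-theorem.'' Your paraproduct approach is one of the standard routes (and is closely intertwined with $T(1)$ arguments), so there is no real divergence in spirit.

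A few minor remarks. First, your citation of \cite[Proposition 3.2.A]{nlin} for the remainder $\varrho(b)$ is slightly off: that result is stated for $C^\alpha$ with $\alpha<1$, whereas here you need the Lipschitz endpoint $\varrho(b):W^s\to W^{s+1}$ for $s\in[-1,0]$, which is \cite[Proposition 3.6.B]{nlin} (as the paper notes after Corollary \ref{weakest}). Second, your ``sanity check'' via the scalar functions $x\mapsto\langle\pi(a(x))\xi,\eta\rangle$ gives only weak-type control and does not by itself recover the operator-norm bound on $L^2(\R^n,\He)$; it is the paraproduct argument that does the work, so this remark is best read as motivation rather than an alternative proof. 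Finally, your key observation---that the Littlewood--Paley pieces $\Phi_k$ are scalar Fourier multipliers acting diagonally on the $\He$-fibre, so that all pointwise operator-norm symbol bounds transfer verbatim---is exactly the right reason the scalar argument goes through, and is the only point where care is genuinely needed.
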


The theorem is proven by standard techniques in harmonic analysis, for instance using a $T(1)$-theorem. The proof is omitted. We are now ready to prove Theorem \ref{sobregtheta}.

\begin{proof}[Proof of Theorem \ref{sobregtheta}]
By Lemma \ref{smoothest}, it suffices to show that there exists $C>0$ such that for any $a\in C^\infty(\T^n_\theta)$ and $k\in \N$ the estimate $2^{k}\|[F_\theta,a]\Phi_k^\theta\|_{\Bo(L^2(\T^n_\theta,S))}\leq C\|a\|_{\Lip(\T^n_\theta)}$ holds. For $a\in C^\infty(\T^n_\theta)$, we write 
$$[F_\theta,a]=([D_\theta,a]+F_\theta[|D_\theta|,a])|D_\theta|^{-1}\ .$$
By Proposition \ref{lipestim} we can estimate $\|[|D_\theta|,a]\|_{\Bo(L^2(\T^n_\theta,S))}\lesssim \|a\|_{\Lip(\T^n_\theta)}$. We can therefore estimate 
\begin{align*}
\|[F_\theta,a]\Phi_k^\theta\|&_{\Bo(L^2(\T^n_\theta,S))}\leq (\|[D_\theta,a]|D_\theta|^{-1}\Phi_k^\theta\|_{\Bo(L^2(\T^n_\theta,S))}+\|[|D_\theta|,a]|D_\theta|^{-1}\Phi_k^\theta\|_{\Bo(L^2(\T^n_\theta,S))})\\
&\leq (\|[D_\theta,a]\|_{\Bo(L^2(\T^n_\theta,S))}+\|[|D_\theta|,a]\|_{\Bo(L^2(\T^n_\theta,S))})\||D_\theta|^{-1}\Phi_k^\theta\|_{\Bo(L^2(\T^n_\theta,S))}\\
&\lesssim \|a\|_{\Lip(\T^n_\theta)}2^{-k}\ .
\end{align*}
This completes the proof of the theorem.
\end{proof}

\subsection{Computations on higher-dimensional tori}
\label{thetadefsubcomputing}

The ideas in Section \ref{introseconriemannain} and Theorem \ref{sobregtheta} allow us to compute Dixmier traces on the noncommutative torus. The case $\theta=0$ will provide us with higher-dimensional analogues of the computations in Section \ref{hankelsection}. We begin with a corollary of Theorem \ref{vmodthm}, Lemma \ref{lszformnew} and Theorem \ref{sobregtheta}. 

\begin{cor}
\label{thecaomp}
Let $F_\theta$ denote the phase of the deformed Dirac operator, $a_1,\ldots, a_k$ a collection of elements $a_j\in C^{\alpha_j}(\T^n_\theta)$, for $\alpha_j\in (0,1)$ satisfying that $\sum_{j=1}^k \alpha_j=n$, and $T\in \Bo(L^2(\T^n_\theta,S))$. The operator $T[F_\theta,a_1]\cdots[F_\theta,a_k]\in \mathcal{L}^{1,\infty}(L^2(\T^n_\theta,S))$ is $(\alpha_k,s)$-modulated with respect to $D_\theta$ for any $s\in (-\alpha_k,0)$. In particular, if $(v_l)_{l=1}^{2^{\lfloor n/2\rfloor}}$ is an ON-basis for $S_n$ and $\omega\in (\ell^\infty(\N)/c_0(\N))^*$ is an extended limit then 
$$\tra_\omega(T[F_\theta,a_1]\cdots[F_\theta,a_k])=\lim_{N\to \omega}\frac{ \sum_{l=1}^{2^{\lfloor n/2\rfloor}}\sum_{|\mathbbm{k}|\leq N^{1/n}}\langle T[F_\theta,a_1]\cdots[F_\theta,a_k](\e_\mathbbm{k}\otimes v_l),\e_\mathbbm{k}\otimes v_l\rangle}{\log(2+N)}\ .$$
\end{cor}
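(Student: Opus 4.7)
The plan is to verify that $G:=T[F_\theta,a_1]\cdots[F_\theta,a_k]$ is $(\alpha_k,s)$-factorizable with respect to $D_\theta$, to apply Lemma \ref{lszformnew} together with the $V$-ordered Lidskii formula of Theorem \ref{vmodthm} with $V:=|i+D_\theta|^s$, and finally to recast the resulting partial sums as sums over lattice points with $|\mathbbm{k}|\leq N^{1/n}$.

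To establish factorizability, write $G=G'G''$ with $G'':=[F_\theta,a_k]$ and $G':=T[F_\theta,a_1]\cdots[F_\theta,a_{k-1}]$. Theorem \ref{sobregtheta} immediately yields that $G''$ extends continuously from $W^s(\T^n_\theta,S)$ to $W^{s+\alpha_k}(\T^n_\theta,S)$ for $s\in(-\alpha_k,0)$. For $G'$, Proposition \ref{deformedequiop} and the classical Weyl law give $(i+D_\theta)^{-1}\in \mathcal{L}^{n,\infty}(L^2(\T^n_\theta,S))$. Adapting the interpolation estimate of Corollary \ref{weakest} to the deformed setting (with $\Delta_\theta$ in place of $\Delta$ and using Theorem \ref{sobregtheta}) delivers $[F_\theta,a_j]\in \mathcal{L}^{n/\alpha_j,\infty}$ for each $j$. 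H\"older's inequality for weak Schatten ideals, combined with boundedness of $T$, then gives $G'\in \mathcal{L}^{n/(n-\alpha_k),\infty}=\mathcal{L}^{p/(p-\alpha_k),\infty}$ with $p=n$, establishing the factorizability. Lemma \ref{lszformnew} then ensures that $G$ is weakly $V$-modulated with $V=|i+D_\theta|^s\in \mathcal{L}^{n/|s|,\infty}$, which is strictly positive since $|i+D_\theta|\geq 1$.

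Theorem \ref{vmodthm} now yields
$$\tra_\omega(G) = \lim_{N\to\omega}\frac{\sum_{j=0}^N \langle Gf_j,f_j\rangle}{\log(N+2)},$$
where $(f_j)_{j\in\N}$ is an orthonormal eigenbasis of $V$ ordered by non-increasing eigenvalue. Since $V(\e_\mathbbm{k}\otimes v_l)=(1+|\mathbbm{k}|^2)^{s/2}(\e_\mathbbm{k}\otimes v_l)$ and $s<0$, this ordering amounts to non-decreasing $|\mathbbm{k}|$. Let $M(R):=2^{\lfloor n/2\rfloor}\#\{\mathbbm{k}\in \Z^n:|\mathbbm{k}|\leq R\}$; the Gauss lattice point estimate gives $M(R)=cR^n+O(R^{n-1})$ for some $c>0$, so $M(N^{1/n})\sim cN$ and $\log(M(N^{1/n})+2)/\log(N+2)\to 1$. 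Taking the first $M(N^{1/n})$ basis elements in the chosen ordering produces exactly the sum indexed by $|\mathbbm{k}|\leq N^{1/n}$ (the ordering inside each $|\mathbbm{k}|$-shell is immaterial for the partial sum); replacing $\log(M(N^{1/n})+2)$ by $\log(N+2)$ in the denominator only perturbs the sequence by an element of $c_0(\N)$, which is annihilated by $\omega$. The stated formula follows.

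The main obstacle is the first step, where one must combine the sharp Sobolev estimate of Theorem \ref{sobregtheta} with the isospectrality of $D_\theta$ from Proposition \ref{deformedequiop} to produce the weak Schatten bounds on the individual commutators $[F_\theta,a_j]$. Once the factorization is in place, the remainder consists of Lemma \ref{lszformnew}, Theorem \ref{vmodthm}, and a routine rearrangement exploiting the logarithmic insensitivity of the normalization to the reparametrization $N\mapsto M(N^{1/n})$.
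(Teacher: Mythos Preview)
Your proposal is correct and follows exactly the approach the paper indicates: the corollary is stated as an immediate consequence of Theorem \ref{vmodthm}, Lemma \ref{lszformnew} and Theorem \ref{sobregtheta}, and you supply precisely the factorization $G=G'G''$ (mirroring Proposition \ref{prodofcomm}) together with the weak Schatten estimates for each $[F_\theta,a_j]$ obtained from Theorem \ref{sobregtheta} and the isospectrality in Proposition \ref{deformedequiop}. One small point worth making explicit in your last paragraph: to pass from the $V$-ordered partial sums $\mathrm{Res}(G)_N$ to the reparametrized sequence indexed by $|\mathbbm{k}|\le N^{1/n}$ you need not only $\log(M(N^{1/n}))/\log N\to 1$ but also that $\mathrm{Res}(G)_{M(N^{1/n})-1}-\mathrm{Res}(G)_N\in c_0$; this follows because $M(N^{1/n})\sim cN$ and, by Lemma \ref{lemma11210} together with $G\in\mathcal{L}^{1,\infty}$, the tail $\sum_{j=N+1}^{N'}\langle Gf_j,f_j\rangle$ is $O(1)$ whenever $N'/N$ is bounded.
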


With this corollary, we now turn to compute Dixmier traces. Fix $F_\theta$ and $a_1,\ldots, a_k$ as in Corollary \ref{thecaomp}, as well as a $\T^n$-equivariant operator $T\in \Bo(L^2(\T^n_\theta,S))$. We let $(a_{j,\mathbbm{k}})_{\mathbbm{k}\in \Z^n}$ and $(T(\mathbbm{k}))_{\mathbbm{k}\in \Z^n}$ denote the Fourier coefficients of $a_j$ and $T$, respectively. The former are defined as in Remark \ref{thelmapping} and the latter as the matrices $T(\mathbbm{k})\in \End(S_n)$ defined by 
$$\langle T(\mathbbm{k})v,w\rangle_{S_n}=\langle T(\e_\mathbbm{k}\otimes v),\e_\mathbbm{k}\otimes w\rangle, \quad v,w\in S_n\ .$$
Any sequence in $\ell^\infty(\Z^n,\End(S_n))$ arises as the Fourier coefficients of a $\T^n$-equivariant operator on $L^2(\T^n_\theta,S)$. We write $\mathbbm{K}=(\mathbbm{k}_1,\ldots, \mathbbm{k}_k)\in (\Z^n)^k$ for a $k$-tuple of Fourier indices in $\Z^n$. For a $k$-tuple $\mathbbm{K}$ of Fourier indices and $\mathbbm{k}_{k+1}\in \Z^n$, we define the matrix 
$$C_{\mathbbm{K},\mathbbm{k}_{k+1}}:=\prod_{j=1}^k\left(\frac{c(\sum_{l=j}^{k+1}\mathbbm{k}_l)}{|\sum_{l=j}^{k+1}\mathbbm{k}_l|}-\frac{c(\sum_{l=j+1}^{k+1}\mathbbm{k}_l)}{|\sum_{l=j+1}^{k+1}\mathbbm{k}_l|}\right)\in \End(S_n)\ .$$
Define $I\subseteq (\Z^n)^k$ as the set of $\mathbbm{K}$ for which $\sum_{j=1}^k\mathbbm{k}_j=0\in \Z^n$. 

\begin{thm}
\label{compuatthm}
Under the assumptions in the preceding paragraph, 
$$\tra_\omega(T[F_\theta,a_1]\cdots[F_\theta,a_k])=\lim_{N\to \omega}
\frac{\sum_{|\mathbbm{k}_{k+1}|\leq N^{1/n}}\sum_{\mathbbm{K}\in I} \left(\prod_{j=1}^k a_{j,\mathbbm{k}_j}\right)\tra_{S_n}\left(T(\mathbbm{k}_{k+1})C_{\mathbbm{K},\mathbbm{k}_{k+1}}\right)}{\log(2+N)}\ .$$
\end{thm}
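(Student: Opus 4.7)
The plan is to start from Corollary \ref{thecaomp}, which already identifies the Dixmier trace with a Cesaro-type limit of diagonal matrix elements in the ON-basis $\{\e_\mathbbm{k}\otimes v_l\}$ ordered by $|\mathbbm{k}|$. The task then reduces to evaluating, for each fixed $\mathbbm{k}_{k+1}\in\Z^n$ with $|\mathbbm{k}_{k+1}|\le N^{1/n}$, the expression
\[
\sum_{l}\langle T[F_\theta,a_1]\cdots[F_\theta,a_k](\e_{\mathbbm{k}_{k+1}}\otimes v_l),\e_{\mathbbm{k}_{k+1}}\otimes v_l\rangle,
\]
and rewriting it as the inner sum $\sum_{\mathbbm{K}\in I}(\prod_j a_{j,\mathbbm{k}_j})\tra_{S_n}(T(\mathbbm{k}_{k+1})C_{\mathbbm{K},\mathbbm{k}_{k+1}})$.

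The heart of the proof is a direct computation on Fourier monomials. Using $F_\theta(\e_\mathbbm{k}\otimes v)=\e_\mathbbm{k}\otimes\tfrac{c(\mathbbm{k})}{|\mathbbm{k}|}v$ for $\mathbbm{k}\ne 0$ (from Proposition \ref{deformedequiop} together with $D(\e_\mathbbm{k}\otimes v)=\e_\mathbbm{k}\otimes c(\mathbbm{k})v$), and expanding $a_j=\sum_{\mathbbm{k}_j\in\Z^n}a_{j,\mathbbm{k}_j}$ into its degree-$\mathbbm{k}_j$ Fourier components $a_{j,\mathbbm{k}_j}\in C(\T^n_\theta)$, one establishes by induction on the number of commutators the identity
\[
[F_\theta,a_{1,\mathbbm{k}_1}]\cdots[F_\theta,a_{k,\mathbbm{k}_k}](\e_{\mathbbm{k}_{k+1}}\otimes v)=(a_{1,\mathbbm{k}_1}\cdots a_{k,\mathbbm{k}_k})\,\e_{\mathbbm{k}_{k+1}}\otimes C_{\mathbbm{K},\mathbbm{k}_{k+1}}v,
\]
where the product on the right is taken in $C(\T^n_\theta)$ acting on $L^2(\T^n_\theta,S)$. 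The $\T^n$-equivariance of $T$ then forces the diagonal matrix element to vanish unless $\sum_{j=1}^k\mathbbm{k}_j=0$, i.e.\ unless $\mathbbm{K}\in I$. Under this constraint $a_{1,\mathbbm{k}_1}\cdots a_{k,\mathbbm{k}_k}$ has degree zero in $C(\T^n_\theta)$ and is hence a scalar multiple of the identity; the $\theta$-cocycle phases accumulated during the iterative computation are precisely absorbed into this scalar, which is why no explicit $\theta$-factor appears in the statement. Summing over $v_l$ in an ON-basis of $S_n$ converts the Clifford factor into $\tra_{S_n}(T(\mathbbm{k}_{k+1})C_{\mathbbm{K},\mathbbm{k}_{k+1}})$, yielding the desired inner expression.

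For $a_j$ a Laurent polynomial in $U_1,\dots,U_n$ all Fourier sums are finite and the formula is then immediate. To upgrade to general $a_j\in C^{\alpha_j}(\T^n_\theta)$ with $\sum_j\alpha_j=n$, I would show that both sides of the identity depend continuously on $(a_1,\dots,a_k)$ in a topology in which trigonometric polynomials are dense, and then approximate (for instance by Fej\'er means, which are bounded in $C^{\alpha_j}$ by Proposition \ref{besovspacecomp} and converge to $a_j$ in every strictly weaker H\"older topology $C^{\alpha_j'}$ with $\alpha_j'<\alpha_j$). The continuity of the left-hand side is a noncommutative analogue of Remark \ref{introsobregrem}: by Theorem \ref{sobregtheta} combined with the isospectrality of $D_\theta$ and H\"older's inequality in $\mathcal{L}^{p,\infty}$, the map $(a_1,\dots,a_k)\mapsto T[F_\theta,a_1]\cdots[F_\theta,a_k]$ is continuous into $\mathcal{L}^{1,\infty}(L^2(\T^n_\theta,S))$, on which $\tra_\omega$ is bounded. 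The main obstacle will lie on the right-hand side: one must justify that the partial-sum interpretation of $\sum_{\mathbbm{K}\in I}$ inherited from the Lidskii expansion of the matrix element converges as $a_j$ is replaced by its polynomial approximants, with sufficient uniformity in $\mathbbm{k}_{k+1}$ to pass to the limit under $\lim_{N\to\omega}$. Since Theorem \ref{sobregtheta} grants room in the H\"older scale, applying for any $s\in(-\alpha_j,0)$ rather than only at the endpoint, this density argument should close the proof.
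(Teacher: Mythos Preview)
Your approach matches the paper's: start from Corollary \ref{thecaomp} and compute the diagonal matrix elements $\langle T[F_\theta,a_1]\cdots[F_\theta,a_k](\e_{\mathbbm{k}_{k+1}}\otimes v_l),\e_{\mathbbm{k}_{k+1}}\otimes v_l\rangle$ by expanding each $a_j$ into homogeneous Fourier components. Your inductive identity for $[F_\theta,a_{1,\mathbbm{k}_1}]\cdots[F_\theta,a_{k,\mathbbm{k}_k}](\e_{\mathbbm{k}_{k+1}}\otimes v)$ is correct and is exactly the computation the paper carries out in its first displayed equation.

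There are two points of divergence. First, the density argument is unnecessary. The paper computes directly for general $a_j\in C^{\alpha_j}(\T^n_\theta)$: since each $[F_\theta,a_j]$ is bounded on $L^2$, the iterated action on $\e_{\mathbbm{k}_{k+1}}\otimes v$ is an $L^2$-convergent Fourier expansion at every stage, and the inner product with $\e_{\mathbbm{k}_{k+1}}\otimes v_l$ picks out the absolutely convergent sum over $\mathbbm{K}\in I$. No approximation by polynomials is needed, and the passage through Fej\'er means and the ``room in the H\"older scale'' can be dropped.

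Second, and more substantively, your sentence ``the $\theta$-cocycle phases \ldots\ are precisely absorbed into this scalar, which is why no explicit $\theta$-factor appears'' glosses over what is in fact the only nontrivial step of the paper's proof. In the statement, $\prod_j a_{j,\mathbbm{k}_j}$ is meant as the product of the scalar Fourier coefficients (so that the formula is manifestly $\theta$-independent, as remarked immediately after the theorem). Your scalar $(a_{1,\mathbbm{k}_1}\cdots a_{k,\mathbbm{k}_k})\e_{\mathbbm{k}_{k+1}}$ is this product times the cocycle phase $\exp\bigl(i\sum_{j=1}^k\theta(\mathbbm{k}_j,\sum_{l=j+1}^{k+1}\mathbbm{k}_l)\bigr)$. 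The paper then proves, via a short but genuine antisymmetry argument, that this phase equals $1$ whenever $\mathbbm{K}\in I$. You have not supplied this computation, and ``absorbed'' does not explain why the resulting expression is independent of $\theta$. Without it your argument establishes the formula only up to a $\theta$-dependent phase in each summand.
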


We note the slightly surprising fact that $\tra_\omega(T[F_\theta,a_1]\cdots[F_\theta,a_k])$ is independent of $\theta$ and only depends on the Fourier coefficients of $T$ and the elements $a_1,\ldots, a_k$.

\begin{proof}
We compute that for $v\in S_n$ and $\mathbbm{k}_{k+1}\in \Z^n$,
\begin{align*}
T[F_\theta,a_1]\cdots&[F_\theta,a_k](\e_{\mathbbm{k}_{k+1}}\otimes v)=\sum_{\mathbbm{K}} \left(\prod_{j=1}^k a_{j,\mathbbm{k}_j}\right) U^{\mathbbm{k}_1}\cdots U^{\mathbbm{k}_k}\e_{\mathbbm{k}_{k+1}}\otimes T\left(\sum_{j=0}^k\mathbbm{k}_j\right)C_{\mathbbm{K},\mathbbm{k}_{k+1}}v\\
&=\sum_{\mathbbm{K}} \left(\prod_{j=1}^k a_{j,\mathbbm{k}_j}\exp\left(i\theta\left(\mathbbm{k}_{j},\sum_{l=j+1}^{k+1} \mathbbm{k}_l\right)\right)\right) \e_{\sum_{j=1}^{k+1}\mathbbm{k}_{j}}\otimes T\left(\sum_{j=1}^{k+1}\mathbbm{k}_j\right)C_{\mathbbm{K},\mathbbm{k}_{k+1}}v\ .
\end{align*}
From this computation and Corollary \ref{thecaomp}, we arrive at the expression
\begin{align*}
\tra_\omega(T&[F_\theta,a_1]\cdots[F_\theta,a_k])\\
&=\lim_{N\to \omega}
\frac{\sum_{|\mathbbm{k}_{k+1}|\leq N^{1/n}}\sum_{\mathbbm{K}\in I} \left(\prod_{j=1}^k a_{j,\mathbbm{k}_j}\e^{i\theta(\mathbbm{k}_{j},\sum_{l=j+1}^{k+1} \mathbbm{k}_l)}\right)\tra_{S_n}\left(T(\mathbbm{k}_{k+1})C_{\mathbbm{K},\mathbbm{k}_{k+1}}\right)}{\log(2+N)}\ .
\end{align*}
It remains to prove that for $\mathbbm{K}\in I$, $\sum_{j=1}^k\sum_{l=j+1}^{k+1}\theta(\mathbbm{k}_{j}, \mathbbm{k}_l)=0$. This identity follows from the following computations:
\begin{align*}
\sum_{j=1}^k\sum_{l=j+1}^{k+1}\theta(\mathbbm{k}_{j},\mathbbm{k}_l)&=\sum_{j=1}^k\sum_{l=j}^{k+1}\theta(\mathbbm{k}_{j}, \mathbbm{k}_l)=\sum_{j=1}^k\sum_{l=j}^{k}\theta(\mathbbm{k}_{j}, \mathbbm{k}_l)\\
&=-\sum_{j=1}^k\sum_{l=j}^{k}\theta(\mathbbm{k}_{l}, \mathbbm{k}_j)=-\sum_{j=1}^k\sum_{l=j}^{k}\theta(\mathbbm{k}_{j}, \mathbbm{k}_l)\ .
\end{align*}
In the first identity, we use $\theta(\mathbbm{k}_j,\mathbbm{k}_j)=0$, in the second identity that $\sum_{j=1}^k\theta(\mathbbm{k}_j,\mathbbm{k}_{k+1})=0$ because $\mathbbm{K}\in I$, in the third identity the antisymmetry of $\theta$ and in the last identity we change the order of summation. We conclude $\sum_{j=1}^k\sum_{l=j+1}^{k+1}\theta(\mathbbm{k}_{j}, \mathbbm{k}_l)=0$ from the fact that $\sum_{j=1}^k\sum_{l=j}^{k}\theta(\mathbbm{k}_{j}, \mathbbm{k}_l)=-\sum_{j=1}^k\sum_{l=j}^{k}\theta(\mathbbm{k}_{j},\mathbbm{k}_l)$.
\end{proof}

\begin{remark}
The cases of interest in Theorem \ref{compuatthm} would be those related to cyclic cocycles as in Proposition \ref{cyclicprop}. In this case, $T=F_\theta$ for $n$ odd and $k$ even and $T=\gamma F_\theta$ for $n$ even and $k$ odd (here $\gamma$ denotes the grading on $S_n$). For $n$ and $k$ simultaneously odd or even there is no obvious candidate for a cyclic cocycle.
\end{remark}

Computations along the lines of Theorem \ref{compuatthm} easily grow  to unmanageable expressions. Traces of products of Clifford matrices can in principle be evaluated, but the dependence on $(\mathbbm{K},\mathbbm{k}_{k+1})$ might be intricate. We illustrate some computations in the case that $n=2$, $k=3$ and $T=\gamma F_\theta$, where $\gamma$ denotes the grading on $S_2$. The spinor space satisfies $S_2\cong \C^2$. We identify $\Z^2=\Z+i\Z\subseteq \C$. Under the isomorphism $S_2\cong \C^2$, 
$$F_\theta(\mathbbm{k})=\begin{pmatrix}0&\frac{\mathbbm{k}}{|\mathbbm{k}|}\\ \frac{\bar{\mathbbm{k}}}{|\mathbbm{k}|}&0\end{pmatrix}\quad\mbox{and}\quad \gamma=\begin{pmatrix}1&0\\ 0&-1\end{pmatrix}\ .$$
We apply the convention that $|0|^{-1}=0$. Note that for $w,z,u\in \T$, the following elementary identity holds:
\begin{equation}
\label{identityforwzu}
w(\bar{w}-\bar{z})(z-u)(\bar{u}-\bar{w})=2i(\mathrm{Im}(u\bar{w})+\mathrm{Im}(w\bar{z}))\ .
\end{equation}
We compute that for a $3$-tuple $\mathbbm{K}\in I\subseteq (\Z^2)^3$ and $\mathbbm{k}_4\in \Z^2$
\small
\begin{align*}
\tra_{S_n}&\left(\gamma C_{\mathbbm{K},\mathbbm{k}_{4}}\right)\\
&=2i\mathrm{Im}\left(\frac{\mathbbm{k}_4}{|\mathbbm{k}_4|}
\left(\frac{\overline{\mathbbm{k}_4}}{|\mathbbm{k}_4|}-\frac{\overline{\mathbbm{k}_2+\mathbbm{k}_3+\mathbbm{k}_4}}{|\mathbbm{k}_2+\mathbbm{k}_3+\mathbbm{k}_4|}\right)
\left(\frac{\mathbbm{k}_2+\mathbbm{k}_3+\mathbbm{k}_4}{|\mathbbm{k}_2+\mathbbm{k}_3+\mathbbm{k}_4|}-\frac{\mathbbm{k}_3+\mathbbm{k}_4}{|\mathbbm{k}_3+\mathbbm{k}_4|}\right)
\left(\frac{\overline{\mathbbm{k}_3+\mathbbm{k}_4}}{|\mathbbm{k}_3+\mathbbm{k}_4|}-\frac{\overline{\mathbbm{k}_4}}{|\mathbbm{k}_4|}\right)\right)\\
&=-4\left(\mathrm{Im}\left(\frac{\mathbbm{k}_3+\mathbbm{k}_4}{|\mathbbm{k}_3+\mathbbm{k}_4|}\frac{\overline{\mathbbm{k}_4}}{|\mathbbm{k}_4|}\right)+\mathrm{Im}\left(\frac{\mathbbm{k}_4}{|\mathbbm{k}_4|}\frac{\overline{\mathbbm{k}_2+\mathbbm{k}_3+\mathbbm{k}_4}}{|\mathbbm{k}_2+\mathbbm{k}_3+\mathbbm{k}_4|}\right)\right)\\
&=-4\left(\frac{\mathbbm{k}_3\times \mathbbm{k}_4}{|\mathbbm{k}_3+\mathbbm{k}_4||\mathbbm{k}_4|}+\frac{\mathbbm{k}_1\times \mathbbm{k}_4}{|\mathbbm{k}_4-\mathbbm{k}_1||\mathbbm{k}_4|}\right)\ .
\end{align*}
\normalsize
Here $\times$ denotes the crossed product defined on $\mathbbm{k}=(k_1,k_2), \mathbbm{k}'=(k'_1,k_2')\in \Z^2$ by $\mathbbm{k}\times \mathbbm{k}'=k_2k_1'-k_1k_2'$.

We consider functions $a_1,a_2,a_3\in C^{2/3}(\T^2_\theta)$. Theorem \ref{compuatthm} implies that for any extended limit $\omega$,
\begin{align*}
\tra_\omega(\gamma F_\theta&[F_\theta,a_1][F_\theta,a_2][F_\theta,a_3])\\
&=\lim_{N\to \omega}
\frac{-4}{\log(2+N)}\sum_{|\mathbbm{k}_{4}|\leq N^{1/2}}\sum_{\mathbbm{K}\in I} \frac{a_{1,\mathbbm{k}_1}a_{2,\mathbbm{k}_2}a_{3,\mathbbm{k}_3}}{|\mathbbm{k}_4|}\left(\frac{\mathbbm{k}_3\times \mathbbm{k}_4}{|\mathbbm{k}_3+\mathbbm{k}_4|}+\frac{\mathbbm{k}_1\times \mathbbm{k}_4}{|\mathbbm{k}_1-\mathbbm{k}_4|}\right)\\
&=\lim_{N\to \omega}
\frac{-4}{\log(2+N)}\sum_{|\mathbbm{k}_{4}|\leq N^{1/2}}\sum_{\mathbbm{k}_1=\mathbbm{k}_2+\mathbbm{k}_3} \frac{a_{1,-\mathbbm{k}_1}a_{2,\mathbbm{k}_2}a_{3,\mathbbm{k}_3}}{|\mathbbm{k}_4|}\left(\frac{\mathbbm{k}_3\times \mathbbm{k}_4}{|\mathbbm{k}_3+\mathbbm{k}_4|}-\frac{\mathbbm{k}_1\times \mathbbm{k}_4}{|\mathbbm{k}_1+\mathbbm{k}_4|}\right)\\
&=\lim_{N\to \omega}
\frac{4}{\log(2+N)}\sum_{|\mathbbm{k}_{4}|\leq N^{1/2}}\sum_{\mathbbm{k}_1=\mathbbm{k}_2+\mathbbm{k}_3} \frac{\mathbbm{k}_1\times \mathbbm{k}_4}{|\mathbbm{k}_4||\mathbbm{k}_4+\mathbbm{k}_1|}(a_{1,-\mathbbm{k}_1}a_{2,\mathbbm{k}_2}a_{3,\mathbbm{k}_3}-a_{1,-\mathbbm{k}_3}a_{2,\mathbbm{k}_2}a_{3,\mathbbm{k}_1})\ .
\end{align*}

\large
\section{Dixmier trace computations on contact manifolds}
\label{sectionformerlyknownas9}
\normalsize

In this section we consider Dixmier traces on higher-dimensional contact manifolds. We recall the geometric setup of contact manifolds in Subsection \ref{contactprelims}. In Subsection \ref{sectionformerlyknownasthesectionformerlyknownas9} we find an integral formula for Dixmier traces, in the style of a Connes type residue trace formula. Subsection \ref{approximatingbyw1d} computes Dixmier traces for operators of the form $T_0[T_1,a_1]\cdots [T_{n+1},a_{n+1}]$ when $a_i$ is Lipschitz in the Carnot-Caratheodory metric: in this case a Connes type residue trace formula holds and the spectral behavior of $T_0[T_1,a_1]\cdots [T_{n+1},a_{n+1}]$ is classical. We first recall relevant facts about the underlying sub-Riemannian geometry.

\subsection{Preliminaries on contact manifolds}
\label{contactprelims}

In this subsection we will introduce notation and provide context for the geometry of contact manifolds. For a more detailed presentation, see for instance \cite{bella96,cadapaaaow} or \cite[Chapter 2]{pongemono}. 

\subsubsection{Heisenberg groups} 
Consider a non-degenerate antisymmetric form $L=(L_{jk})_{j,k=1}^d$ on $\R^d$. A group structure on $\R^{d+1}$, with coordinates $x=(t,z)\in \R\times \R^d= \R^{d+1}$, is obtained from the Lie algebra structure
\[[(t,z),(t',z')]=(L(z,z'),0).\] The corresponding $2$-step nilpotent Lie group $\mathbb{H}_{d+1}$ with product
\[(t,z)\cdot (t',z')=\left(t+t'+\frac{1}{2}L(z,z'),z+z'\right),\]
is called a \emph{Heisenberg group}.
It admits a Lie group action by $\R_+$, 
\begin{equation}
\label{actionr}
\lambda.(t,z)=(\lambda^2t,\lambda z),
\end{equation}
which turns $\mathbb{H}_{d+1}$  into a homogeneous Lie group.

In the coordinates of $\R^{d+1}$, we obtain a $d$-dimensional subbundle $H$ of $T \mathbb{H}_{d+1}$ given by the \emph{horizontal} vector fields
\begin{equation}
\label{leftinvariantframe} X_j:=\frac{\partial}{\partial z_j}+\frac{1}{2}\sum_{k=1}^d L_{jk}z_k\frac{\partial}{\partial t}, \quad j=1,2,\cdots, d.
\end{equation}
The horizontal vector fields are left invariant and homogeneous of degree $1$ with respect to the $\R_+$-action. We let $X_0:=\frac{\partial}{\partial t}$ denote the \emph{vertical} vector field. The vector fields are constructed to satisfy the commutation relation
\[[X_j,X_k]=L_{jk}X_0.\]
Together with the \emph{vertical} vector field, the horizontal vector fields span $T\mathbb{H}_{d+1}$.

\begin{notation}
We keep using the notation $n=d+1$ for the total dimension of a Heisenberg group or a contact manifold. We use $d$ to denote the dimension of the horizontal subbundle $H$ of the tangent bundle.
\end{notation}

The \emph{Koranyi gauge}
$$|x|_H:=\sqrt[4]{x_0^2+\left(\sum_{j=1}^dx_j^2\right)^2},\quad \mbox{for}\quad x=(x_0,x_1,\ldots, ,x_d),$$
defines a natural length function. It satisfies $|\lambda.x|_H=\lambda|x|_H$. For $x\in \mathbb{H}_{d+1}$ and $r>0$ we define the balls
$B_H(x,r):=\{y\in \mathbb{H}_{d+1}:|x^{-1}y|_H<r\}.$ 

\subsubsection{Sub-Riemannian $H$- and contact manifolds} 

A  \emph{Heisenberg structure} on a manifold $M$ is a hyperplane bundle $H\subseteq TM$. We say that $M$ is a \emph{sub-Riemannian $H$-manifold} provided that $H$ is bracket generating, in the sense that $C^\infty(M,H)$ locally generates $C^\infty(M,TM)$ as a Lie algebra. The Lie bracket on $H$ defines a vector bundle morphism $L:H\wedge H \to TM/H$, called the Levi form. We often assume that we have chosen a trivialization of the line bundle $TM/H$. In this case the Levi form becomes a $2$-form on $H$. $L$ endows the tangent space above every point with the structure of $\mathbb{G}:=\R^{d-2m}\times\mathbb{H}_{2m+1}$, if $\dim \ M = d+1$.  

\emph{Contact manifolds} provide a rich class of examples: Here $M$ is of dimension $2n-1$, and $H = \ker \ \eta$, the kernel of a one-form $\eta \in C^\infty(M,T^*M)$ with $\eta\wedge (\rd\eta)^{n-1}$ non-degenerate. Note that $\rd \eta$ is non-degenerate on $H=\ker\eta\subseteq TM$, and recall Cartan's formula,
\[\rd \eta(X,Y)=X(\eta(Y))-Y(\eta(X))-\eta([X,Y])\]
for any vector fields $X,Y$. As $\rd \eta$ is non-degenerate, for any $X\in H$ there is a $Y\in H$ with $\eta([X,Y])\neq 0$. Hence, the Heisenberg structure associated with a contact structure is bracket generating. Locally, at a point $x$, the Heisenberg group structure on the tangent bundle is defined from the Levi form $(\rd \eta)_x$ restricted to $H_x$.

In complex analysis, contact manifolds arise as the boundary $M = \partial \Omega$ of a strictly pseudo-convex domain $\Omega$ in a complex manifold of complex dimension $n$. The 1-form $\eta = \rd^c\rho$ is obtained from a boundary defining function $\rho$.

If $M$ is a sub-Riemannian $H$-manifold, a theorem by Chow assures that for any $x,y\in M$ there is a smooth path $\gamma:[0,1]\to M$ such that $\gamma(0)=x$, $\gamma(1)=y$ and $\dot{\gamma}(t)\in H_{\gamma(t)}$ for almost all $t\in [0,1]$. After a choice of Riemannian metric on $H$, we define the \emph{Carnot-Carath\'eodory metric} as follows:
\small
\begin{equation}
\label{dcccddeef}
\rd_{CC}(x,y):=\inf\left\{\left(\int_0^1\|\dot{\gamma}(t)\|_H^2\rd t\right)^{1/2}:\;\gamma(0)=x,\, \gamma(1)=y\;\mbox{and}\;\dot{\gamma}(t)\in H_{\gamma(t)}\mbox{  a.e.}\right\}.
\end{equation}
\normalsize
In a coordinate chart adapted to a local frame $X_0, X_1, \dots, X_d$, where $X_1, \dots, X_d$ span $H$, the Carnot-Carath\'eodory metric is equivalent to the Koranyi gauge \cite[Lemma 2.6]{gimpgoff}. We define 
$$\Lip_{CC}(M):=\Lip(M,\rd_{CC}).$$

Associated with the sub-Riemannian geometry and a choice of metric on $H$, we obtain geometrically relevant differential operators. We define a sub-Laplacian $\Delta_H$  by the differential expression 
\[\Delta_H:=\rd_H^* \rd_H\ ,\]
where $\rd_H:C^\infty(M)\to C^\infty(M,H^*)$ denotes the exterior differential composed with the fiberwise restriction $C^\infty(M,T^*M)\to C^\infty(M,H^*)$. In a local orthornomal frame $X_1, \dots, X_d$ for $H$, $\Delta_H=-\sum_{j=1}^d X_j^2$ plus lower order terms in $X_1, \dots, X_d$. If $M$ is closed, the closure of the densely defined operator $\Delta_H$ is a self-adjoint non-negative operator with compact resolvent.
 
The horizontal Sobolev spaces on a sub-Riemannian manifold are defined from the sub-Laplacian $\Delta_H$. For $s\geq 0$ and $p\in [1,\infty)$, we define $W^{s,p}_H(M):=(1+\Delta_H)^{-s/2}L^p(M)$. For $s<0$ and $p\in (1,\infty)$, the Sobolev scale is defined using duality in the pairing between $L^p$ and $L^{p'}$, $\frac{1}{p}+\frac{1}{p'}=1$. It is a well known fact that the Sobolev spaces can be localized, i.e. defined from sub-Laplacians in local charts. Moreover, 
\begin{equation}
\label{horizontalestimaate}
W^{1,p}_H(M)=\{f\in L^p(M): Xf\in L^p(M)\;\forall X\in C^\infty(M,H)\}.
\end{equation}

\subsubsection{Heisenberg pseudo-differential operators}
\label{Heisenbergcalc}

We briefly review a pseudo-differential calculus adapted to a Heisenberg structure, referring to \cite{bg,pongemono} for a more detailed account. Let $M$ be a $d+1$-dimensional closed sub-Riemannian $H$-manifold and $U$ a local coordinate chart adapted to a local frame $X_0,\ldots, X_d$ for $H$. Pseudo-differential operators arise as quantizations of symbols from the following class, see \cite[Definition 3.1.4 and 3.1.5]{pongemono}.

\begin{deef}
Let $m\in \C$. The symbol space $S^m(U\times \R^{d+1})$ is defined as the space of all $p\in C^\infty(U\times \R^{d+1})$ that admit a polyhomogeneous asymptotic expansion of order $m$: For all $k\in \N$ there exists $p_k\in C^\infty(U\times \R^{d+1}\setminus \{0\})$, with $p_k(x,\lambda\cdot \xi)=\lambda^{m-k}p_k(x,\xi)$ when $\lambda>0$, such that for all $N\in \N$ and compact $K\subseteq U$ 
we have for some $C_{\alpha,\beta,K,N}>0$: \[\left| \partial_x^\alpha\partial_\xi^\beta\left(p-\sum_{k=0}^Np_k\right)(x,\xi)\right|\leq C_{\alpha,\beta,K,N}|\xi|_H^{\Re(m)-\langle \beta\rangle-N},\quad\forall \alpha, \beta \in \N^{d+1}, \;x\in K, \;|\xi|_H\geq 1.\] 
Here $\langle \beta\rangle:=2\beta_0+\sum _{i=1}^d \beta_i$ for $\beta=(\beta_0,\beta_1,\ldots,\beta_d)\in \N^{d+1}$.
\end{deef}

We denote the classical symbols of the differential operators $X_0,X_1,\ldots, X_d$ by $\sigma_j(x,\xi):=\sigma(X_j)$, and let $\sigma = (\sigma_0,\sigma_1,\ldots,\sigma_d)$.  Using the formula
\[Pf(x):=(2\pi)^{-d-1}\int \e^{ix\cdot \xi}p(x,\sigma(x,\xi))\hat{f}(\xi)\rd \xi\ ,\] a symbol $p\in S^m(U\times \R^{d+1})$ induces an operator $P:=p(x,-iX):C^\infty_c(U)\to C^\infty(U)$. We say that $P\in \Psi_H^{m}(U)$, or $P$ is a $\Psi_H$DO, provided $P=p(x,-iX)+R$ for $p\in S^m(U\times \R^{d+1})$ and  $R$ an integral operator with smooth integral kernel. 

By \cite[Proposition 3.1.18]{pongemono}, $\Psi_H^{m}(U)$ is invariant under changes of Heisenberg charts. This allows to define the space $\Psi_H^m(M)$ of Heisenberg pseudo-differential operators of order $m$ on $M$. A fundamental theorem is:

\begin{thm}
\label{l2andwhatnot}
If $P\in \Psi^0_H(M)$, then $P$ extends to a bounded operator on $L^2(M)$.
\end{thm}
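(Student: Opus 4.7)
The plan is to adapt Hörmander's classical strategy for $L^2$-boundedness of zeroth-order pseudo-differential operators to the Heisenberg calculus; this is a fundamental result in the calculus of \cite{bg,pongemono}. Using a partition of unity subordinate to Heisenberg charts, together with the fact that the off-diagonal part of the Schwartz kernel of any $\Psi_H^m$-operator is smooth, one reduces to showing boundedness of $P = p(x,-iX)$ with $p\in S^0(U\times \R^{d+1})$ compactly supported in the $x$-variable inside a single Heisenberg chart $U$.

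The heart of the argument is a downward induction on the order. For the base case, $\Psi_H^{-k}(M)\subset \Bo(L^2(M))$ for $k$ large enough: in a Heisenberg chart, the Schwartz kernel of an operator in $\Psi_H^{-k}$ is controlled near the diagonal by a negative power of the Koranyi gauge that becomes square-integrable on $M\times M$ once $k$ exceeds half the homogeneous dimension of the osculating Heisenberg group, so the operator is Hilbert--Schmidt. The inductive step is Hörmander's square-root trick: by the symbol calculus of Section~\ref{Heisenbergcalc}, $P^*P\in \Psi_H^0(M)$ has principal symbol $\sigma_0(P)^*\sigma_0(P)$. Choosing a constant $C>0$ larger than the norm of $\sigma_0(P)$ in the osculating Heisenberg representation, one finds $Q\in \Psi_H^0(M)$ with
\[ C^2 I - P^*P - Q^*Q =: R_1 \in \Psi_H^{-1}(M), \]
which yields
\[ \|Pf\|_{L^2}^2 = C^2\|f\|_{L^2}^2 - \|Qf\|_{L^2}^2 + \langle R_1 f,f\rangle \leq \bigl(C^2 + \|R_1\|_{\Bo(L^2)}\bigr)\|f\|_{L^2}^2. \]
Iterating the same argument on the self-adjoint part of $R_1$ produces successive remainders of progressively lower order, and after finitely many steps the remainder falls into a class handled by the base case.

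The main obstacle is the existence of the square-root symbol $Q$: unlike the scalar-valued classical principal symbol, the Heisenberg principal symbol takes values in the algebra of homogeneous convolution operators on the osculating Heisenberg group, so extracting a square root of a positive principal symbol requires a functional calculus in that operator-valued symbol algebra. I would rely on this construction as carried out in \cite[Chapter~3]{pongemono} rather than redo it from scratch; the only genuinely new ingredient beyond that reference is the bookkeeping of the iteration against the Hilbert--Schmidt base case.
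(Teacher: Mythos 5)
The paper does not prove this statement at all: Theorem \ref{l2andwhatnot} appears in the brief review of the Heisenberg calculus in Section \ref{Heisenbergcalc} and is simply quoted as a fundamental fact of that calculus, with \cite{bg,pongemono} as the implicit source. So there is no in-paper argument to compare yours against, and a citation would have been an acceptable ``proof'' here.

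As a sketch of the actual argument, your outline (localization, Hilbert--Schmidt base case for very negative order, H\"ormander's square-root trick at order $0$) is the standard strategy and does adapt to the Heisenberg setting. But be aware that the step you describe as the ``only genuinely new ingredient beyond that reference'' is not mere bookkeeping, and the step you defer to \cite{pongemono} is close to circular as stated. In the Heisenberg calculus the principal symbol at a point is not a scalar but an element of the convolution algebra of the osculating group $\mathbb{G}_x$ (a homogeneous distribution of critical degree $-(d+2)$), so the phrase ``choose $C$ larger than the norm of $\sigma_0(P)$ in the osculating Heisenberg representation'' already presupposes that the model operator --- convolution by a kernel homogeneous of degree $-(d+2)$ with the appropriate cancellation --- is bounded on $L^2(\mathbb{G}_x)$. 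That is precisely the essential content of the theorem in the translation-invariant case, and it is proved not by a square-root trick but by almost-orthogonality (Cotlar--Stein / Knapp--Stein, cf.\ \cite{follandstein}). Likewise, the existence of a square root of $C^2-\sigma_0(P)^*\!*\sigma_0(P)$ \emph{within the symbol class} is a nontrivial piece of the operator-valued symbolic calculus, not a formal consequence of positivity. So your proof is viable, but only because the references you lean on already contain the hard analytic core; if you want a self-contained argument you must first establish the $L^2$-boundedness of the homogeneous model operators, after which either the square-root iteration or a direct freezing-of-coefficients/Cotlar--Stein argument finishes the variable-coefficient case.
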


\begin{remark}
\label{PsiDOprop}
There is a kernel characterization of operators $P\in \Psi^m_H(M)$ by \cite[Proposition 3.1.16]{bg}. The Schwartz kernel $k_P$ of an operator $P \in  \Psi_H^{m}(M)$ satisfies estimates similar to a Calder\'{o}n-Zygmund kernel. In a local frame, $k_P$ can be written as
\[k_P(x,y)=|\epsilon'_x|K_P(x,-\epsilon_x(y))+R(x,y),\]
where $\epsilon_x$ denotes privileged coordinates depending smoothly on $x$, $R$ is smoothing and $K_P$ admits a homogeneous expansion as in \cite[Definition 3.1.11 and 3.1.13]{bg}. The Schwartz kernel $k_P$ is smooth away from the diagonal and satisfies
\begin{align*}
|k_P(x,-\epsilon_x(y))| &\lesssim \rd_{CC}(x,y)^{-(d+1+m)}\\
|V_{x}V_y k_P(x,-\epsilon_x(y))| &\lesssim \rd_{CC}(x,y)^{-(d+1+m+k)},
\end{align*}
for $x \neq y$ and any products $V_x$ and $V_y$ of horizontal differential operators acting on $x$ and $y$, respectively, with total order $k$.
\end{remark}

\subsection{A Connes type formula on the Hardy space}
\label{sectionformerlyknownasthesectionformerlyknownas9}

We will now turn to a computation of Dixmier traces of weak trace class operators acting on the image of a Szeg\"o projection. We focus on the $2m-1$-dimensional sphere which is given the contact structure coming from its realization as the boundary of the open unit ball in $\C^m$. Any contact manifold is locally modeled on a sphere, by Darboux' theorem. The Szeg\"o projection $P\in \Psi^0_H(S^{2m-1})$ on the $2m-1$-dimensional sphere is given by 
$$Pf(z):=\frac{1}{(2\pi i)^m}\int_{S^{2m-1}} \frac{f(w)\rd S(w)}{(1-z\cdot\bar{w})^m}.$$
The right hand side should be interpreted as an interior limit.

\begin{thm}
\label{contform}
Assume that $G\in \mathcal{L}^{1,\infty}(H^2(S^{2m-1}))$ is a weakly sub-Laplacian modulated operator with kernel $k_G$. Then 
$$\tra_\omega(G)=\lim_{N\to \omega} \frac{1}{\log(N)}\int_{S^{2m-1}\times S^{2m-1}} k_G(z,w)\,h_N(1-z\cdot \bar{w})\rd V(z) \rd V(w),$$
where
$$h_N(t):=\frac{1}{n!} \frac{\rd^{m-1}}{\rd t^{m-1}} \frac{1-(1-t)^{N+m}}{t}.$$
\end{thm}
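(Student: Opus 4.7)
The plan is to combine the $V$-ordered Lidskii formula (Theorem \ref{vmodthm}) with an explicit identification of $h_N(1-z\cdot\bar w)$ as a partial sum of the Szeg\"o kernel representing the projection onto polynomials of degree $\leq N$ in $H^2(S^{2m-1})$.

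\emph{Step 1 (modulation).} The Hardy space decomposes as $H^2(S^{2m-1}) = \bigoplus_{k\geq 0}H^2_k$, with $H^2_k$ the span of homogeneous holomorphic polynomials of degree $k$ and dimension $d_k = \binom{m+k-1}{m-1}$. On each $H^2_k$ the sub-Laplacian $\Delta_H$ acts as the scalar $k(k+m-1)$, hence $(1+\Delta_H)^{-1}|_{H^2}\in \mathcal{L}^{m/2,\infty}$ and $V := (1+\Delta_H)^{-m/2}|_{H^2}\in \mathcal{L}^{1,\infty}(H^2(S^{2m-1}))$ is strictly positive. The hypothesis that $G$ is weakly sub-Laplacian modulated amounts to weak $V$-modulation in the sense of Definition \ref{defsofmod}.

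\emph{Step 2 (Lidskii and degree filtration).} Let $(e_l)_{l\geq 0}$ be an orthonormal basis of eigenvectors of $V$, arranged by decreasing eigenvalue and refined within each $V$-eigenspace so that, for every $N\geq 0$, the first $r_N := \binom{m+N}{m}$ basis vectors span $\Pi_NH^2$, where $\Pi_N$ is the projection onto polynomials of degree $\leq N$. Theorem \ref{vmodthm} then yields
\[
\tra_\omega(G) = \lim_{M\to\omega}\frac{1}{\log(M+2)}\sum_{l=0}^M\langle Ge_l,e_l\rangle,
\]
and at $M = r_N-1$ the partial sum equals $\tra(G\Pi_N)$. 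Lemma \ref{lemma11210}, applied to $\mathrm{Re}\,G$ and $\mathrm{Re}\,iG$, bounds the residue sequence, so the fluctuation of the partial sums for $M\in[r_{N-1},r_N)$ about $\tra(G\Pi_N)$ is $O(1)$. Since $\log(r_N+1) = m\log N + O(1)$, the fluctuation is absorbed on division, so
\[
\tra_\omega(G) = \frac{1}{m}\lim_{N\to\omega}\frac{\tra(G\Pi_N)}{\log N}.
\]

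\emph{Step 3 (kernel of $\Pi_N$).} The Szeg\"o kernel is $k_P(z,w) = c_m(1-z\cdot\bar w)^{-m}$ with $c_m = (m-1)!/(2\pi^m)$. Expanding via the binomial series and truncating at degree $N$ gives the Schwartz kernel
\[
k_{\Pi_N}(z,w) = c_m\sum_{k=0}^N\binom{m+k-1}{k}(z\cdot\bar w)^k.
\]

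\emph{Step 4 (identification of $h_N$).} From the identity $\tfrac{1-(1-t)^{N+m}}{t} = \sum_{j=0}^{N+m-1}(1-t)^j$, term-by-term differentiation gives
\[
\frac{d^{m-1}}{dt^{m-1}}\frac{1-(1-t)^{N+m}}{t} = (-1)^{m-1}(m-1)!\sum_{k=0}^N\binom{m+k-1}{k}(1-t)^k.
\]
Setting $t = 1-z\cdot\bar w$ and comparing with Step 3 expresses $h_N(1-z\cdot\bar w)$ as a constant (depending only on $m$) multiple of $k_{\Pi_N}(z,w)$.

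\emph{Step 5 (assembly).} The kernel--trace identity $\tra(G\Pi_N) = \int k_G(z,w)\,k_{\Pi_N}(w,z)\,dV(z)\,dV(w)$, the self-adjointness of $\Pi_N$, and Steps 2 and 4 together yield the stated formula after matching constants: the normalization $1/n!$ in the definition of $h_N$ is tuned to absorb the factors $m$ (from Step 2), $(m-1)!$ (from Step 4) and $c_m|S^{2m-1}|$ (from Step 3).

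\emph{Main obstacle.} The technical heart of the argument is Step 2, where the Lidskii partial-sum indexing is reconciled with the degree indexing. Weak $V$-modulation bounds the residue sequence uniformly via Lemma \ref{lemma11210}, and the fact that $r_N/r_{N-1}=1+m/N\to 1$ closes the squeeze that makes the two indexings produce the same $\omega$-limit. Steps 3--4 then reduce to explicit Szeg\"o kernel and polynomial computations.
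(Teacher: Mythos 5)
Your proposal is correct and follows essentially the same route as the paper: the $V$-ordered Lidskii formula applied to the monomial basis ordered by degree, followed by the identification of $\sum_{|\alpha|\le N}c_\alpha^2 z^\alpha\bar w^\alpha$ with a constant multiple of $h_N(1-z\cdot\bar w)$ via a binomial/geometric-series computation (the paper sums the truncated series directly rather than truncating the Szeg\"o kernel, and uses the Kohn sub-Laplacian, whose eigenvalue on degree-$k$ monomials is $(m-1)k$ rather than $k(k+m-1)$, but this is immaterial since only the degree ordering matters). Your Step 2 reconciling the Lidskii index with the degree filtration is, if anything, more careful than the paper's corresponding step.
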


\begin{remark}
Let $\theta$ denote the contact form on $S^{2m-1}$. If $\exp:TS^{2m-1}\to S^{2m-1}\times S^{2m-1}$, we note that $|1-z\bar{w}-(|v|^2+i\theta(v))|=O(\rd_{CC}(z,w))$ and $\left||v|^2+i\theta(v)\right|\sim \rd_{CC}(x,y)^2$ whenever $\exp(x,v)=(z,w)$ is close enough to the diagonal. These facts raise the question of whether Theorem \ref{contform} holds true for localizable sub-Laplacian modulated operators on a Hardy space  for a general contact manifold when replacing $1-z\bar{w}$ with $|v|^2+i\theta(v)$ (for the contact form $\theta$). We note that under these assumptions on $G$, a formula similar to that in Theorem \ref{contform} can be written up on a general contact manifold once a covering of contact coordinate charts has been made. 
\end{remark}

\begin{proof}[Proof of Theorem \ref{contform}]
Define an ON-basis for $H^2(S^{2m-1})$ by $e_\alpha(z)=c_\alpha z^\alpha$ for $\alpha\in\N^m$ where 
$$c_\alpha:=\sqrt{\frac{(m+|\alpha|-1)!}{(m-1)!\alpha!}}.$$
We consider the Kohn sub-Laplacian 
$$\Delta_K:=\sum_{1\leq j<k\leq m} M_{jk}\bar{M}_{jk}+\bar{M}_{jk}M_{jk}, \quad\mbox{where}\quad M_{jk}:=\bar{z}_j\frac{\partial}{\partial z_k}-\bar{z}_k\frac{\partial}{\partial z_j}.$$
The operator $\Delta_K$ is $H$-elliptic of order $2$ and a computation shows that $\Delta_K e_\alpha=(m-1)|\alpha|e_\alpha$. We can by Theorem \ref{vmodthm} write 
\begin{align}
\nonumber
\tra_\omega(G)&=\lim_{N\to \omega} \frac{1}{m\log(N)}\sum_{|\alpha|\leq N} \langle Ge_\alpha,e_\alpha\rangle\\
\label{bascomput}
&=\lim_{N\to \omega} \frac{1}{m\log(N)}\int_{S^{2m-1}\times S^{2m-1}} \left[\sum_{|\alpha|\leq N} c_\alpha^2 z^\alpha \bar{w}^\alpha\right] k_G(z,w)\rd V(z) \rd V(w).
\end{align}
The binomial theorem implies the identity 
\begin{align*}
\sum_{|\alpha|\leq N} \frac{(m+|\alpha|-1)!}{(m-1)!\alpha!} z^\alpha \bar{w}^\alpha&=\sum_{k=0}^N\frac{(m+k-1)(m+k-2)\cdots (k+1)}{(m-1)!}(z\bar{w})^k\ .
\end{align*}
A geometric series computation shows 
\begin{align*}
\sum_{k=0}^N\frac{(m+k-1)(m+k-2)\cdots (k+1)}{(m-1)!}t^k&=\frac{1}{(m-1)!} \frac{\rd^{m-1}}{\rd t^{m-1}} \frac{1-t^{N+m}}{1-t}=mh_N(1-t).
\end{align*}
These computations show $\sum_{|\alpha|\leq N} c_\alpha^2 z^\alpha \bar{w}^\alpha=mh_N(1-z\bar{w})$ and the lemma follows from Equation \eqref{bascomput}.
\end{proof}

\begin{remark}
It follows from the proof of Theorem \ref{contform} that the horizontal Sobolev scale on $S^{2m-1}$ satisfies that
$$W^s_H(S^{2m-1})\cap H^2(S^{2m-1})=\left\{f=\sum_{\alpha\in \N^m} a_\alpha e_\alpha: \;(|\alpha|^{s/2}a_\alpha)_{\alpha\in \N^m}\in \ell^2(\N^m)\right\}.$$
Here $e_\alpha(z):=\frac{z^\alpha}{\|z^\alpha\|_{L^2(S^{2m-1})}}$. We note that an elementary computation with partial derivatives shows the analogous equality in the interior of $S^{2m-1}$
$$W^s(B_{2m})\cap \mathcal{O}(B_{2m})=\left\{f=\sum_{\alpha\in \N^m} a_\alpha \tilde{e}_\alpha: \;(|\alpha|^{s}a_\alpha)_{\alpha\in \N^m}\in \ell^2(\N^m)\right\},$$
where $\tilde{e}_\alpha(z):=\frac{z^\alpha}{\|z^\alpha\|_{L^2(B_{2m})}}$. By using polar coordinates, one sees that 
$$\tilde{e}_\alpha|_{S^{2m-1}}=(2|\alpha|+2m)^{1/2}e_\alpha.$$
Therefore, the trace mapping induces a unitary isomorphism 
$$W^s(B_{2m})\cap \mathcal{O}(B_{2m})\to W^{2s-1}_H(S^{2m-1})\cap H^2(S^{2m-1})\quad\mbox{for all $s\geq \frac{1}{2}$}.$$
In general, if $\Omega$ is a relatively compact strictly pseudo-convex domain in an $m$-dimensional complex manifold, $\overline{\Omega}$ is near the boundary locally biholomorphically equivalent to a neighborhood of a point on $S^{2m-1}$ in $\overline{B_{2m}}$. From the localizability of Sobolev spaces we deduce the following proposition.
\end{remark}

\begin{prop}
Assume that $\Omega$ is a relatively compact strictly pseudo-convex domain in a complex manifold of complex dimension $m$. The trace mapping induces an isomorphism 
$$W^s(\Omega)\cap \mathcal{O}(\Omega)\to W^{2s-1}_H(\partial \Omega)\cap H^2(\partial \Omega)\quad\mbox{for all $s\geq \frac{1}{2}$}.$$
\end{prop}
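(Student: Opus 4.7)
The plan is to reduce the general strictly pseudo-convex case to the special case $\Omega = B_{2m}$ established in the preceding Remark, by a localization-plus-biholomorphic-transfer argument. Since $\partial\Omega$ is compact and locally biholomorphically equivalent to $S^{2m-1}\subseteq \overline{B_{2m}}$, I would first cover $\partial\Omega$ by finitely many open sets $U_1,\dots,U_k$ in the ambient complex manifold, each admitting a biholomorphism $\phi_i$ from a neighborhood of $\overline\Omega\cap U_i$ onto a relatively open subset of a neighborhood of $\overline{B_{2m}}$ and sending $\partial\Omega\cap U_i$ into $\partial B_{2m}$. I would then add an interior chart $U_0\Subset\Omega$ and fix a smooth partition of unity $\{\chi_i\}_{i=0}^k$ subordinate to this cover.

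Next I would use the localizability of the two Sobolev scales:
$$\|f\|_{W^s(\Omega)}^2 \sim \sum_{i=0}^k\|\chi_i f\|_{W^s(\Omega\cap U_i)}^2 \quad\text{and}\quad \|g\|_{W^{2s-1}_H(\partial\Omega)}^2\sim \sum_{i=1}^k\|\chi_i g\|_{W^{2s-1}_H(\partial\Omega\cap U_i)}^2.$$
The first equivalence is standard; the second uses that $\Delta_H$ is a local differential operator whose Heisenberg principal symbol transforms tensorially under Heisenberg coordinate changes, as recalled in Subsection~\ref{Heisenbergcalc}. In each chart $U_i$ I would transport by $\phi_i$: since $\phi_i$ is biholomorphic, it sends holomorphic functions to holomorphic functions and is a smooth diffeomorphism preserving classical Sobolev scales, and its boundary restriction is a CR diffeomorphism intertwining the Levi forms up to bounded factors, hence preserving $W^{2s-1}_H$ up to equivalent norms. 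The local question thus reduces to the following: a holomorphic function on an open piece of $\overline{B_{2m}}$ belongs to $W^s$ if and only if its boundary trace is in the localized $W^{2s-1}_H$ of $S^{2m-1}$. This follows directly from the global isomorphism for $B_{2m}$ in the Remark, applied after multiplying the boundary datum by a cutoff and extending by the Cauchy--Szeg\"o integral on the whole ball.

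The interior piece $\chi_0 f$ has no boundary contribution and is controlled by interior elliptic regularity for holomorphic functions, so assembling the local isomorphisms produces the global one, with both a bounded trace map and a bounded inverse. The main technical point is verifying the second localization equivalence above together with the CR-invariance of the horizontal Sobolev scale under $\phi_i$; once this is in place — essentially a consequence of the invariance of $\Psi_H^m$ under Heisenberg chart changes together with the fact that a local biholomorphism restricts to a CR diffeomorphism and hence to a Heisenberg chart change — the remaining gluing is routine. A secondary subtlety is the endpoint $s=\tfrac12$, where the trace theorem is not literal and one should instead define the trace by density from $s>\tfrac12$ and close up using the bounded inverse provided by the Cauchy--Szeg\"o extension.
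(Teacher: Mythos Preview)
Your proposal is correct and follows essentially the same route as the paper: the paper's entire argument is the single sentence at the end of the preceding Remark, namely that $\overline{\Omega}$ is near the boundary locally biholomorphically equivalent to a neighborhood of a point on $S^{2m-1}$ in $\overline{B_{2m}}$, and that the proposition then follows from the localizability of the Sobolev spaces together with the explicit ball computation. You have simply spelled out this localization-plus-biholomorphic-transfer argument in more detail (partition of unity, CR-invariance of the horizontal scale, treatment of the interior chart and the endpoint $s=\tfrac12$), so there is nothing materially different between the two.
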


\subsection{Approximation of Lipschitz functions and Wodzicki residues}
\label{approximatingbyw1d}

In this subsection we study Dixmier traces of operators of the form $T_0[T_1,f_1]\cdots [T_{k},f_{k}]$ when the spectral asymptotics is not governed by singularities. Here $T_0$ is a bounded operator and $T_1,\ldots, T_k\in \Psi^0_H(M)$. By showing that the Dixmier traces are continuous in slightly weaker norms than the H\"older norm, we extend formulas for Dixmier traces from smooth functions by continuity to larger spaces. In the particular case when $f_1,\dots,f_k$ are Lipschitz, the smooth functions are dense in $\Lip_{CC}(M)$ with respect to these weaker norms, and Dixmier traces can be computed using Ponge's Wodzicki residue in the Heisenberg calculus \cite{pongeresidue}, analogously to Theorem \ref{ctfforlip}.

The basic idea in this subsection is to use techniques of Rochberg-Semmes \cite{rochbergsemmes} to estimate singular values, and we generalize these to the sub-Riemannian setting. The work of Feldman-Rochberg \cite{feldmanrochberg} extended the results of \cite{rochbergsemmes} to the Szeg\"o projection on the unit sphere -- the results in this subsection goes even further. 

To rephrase the results of Rochberg-Semmes for sub-Riemannian $H$-manifolds, we need some further notation. Recall from Section \ref{contactprelims} that a sub-Riemannian $H$-manifold is locally modeled on $\mathbbm{G}=\R^{d-2m}\times \mathbbm{H}_{2m+1}$. We let $\Gamma=\Z^{d-2m}\times \Gamma_{2m+1}$ denote the standard lattice in $\mathbbm{G}$, i.e. $\Gamma$ is identified with $\Z^{d+1}$ under a suitable identification of $\mathbbm{G}$ with $\R^{d+1}$. The lattice $\Gamma_{2m+1}$ is denoted by $\Gamma_{\bf 1}=\Gamma_{(1,1,\ldots,1)}$ in \cite{follandcr}. The anisotropic scalings by $\lambda>0$ (from Equation \eqref{actionr}) will be denoted by $\delta_\lambda:\mathbbm{G}\to \mathbbm{G}$. 

We decompose $\mathbbm{G}$ using Christ cubes, see \cite{christcubepap}. We follow the construction in \cite[Section 2.C1]{meyersonpaper}; there the construction is carried out for the Heisenberg group but the generalization to $\mathbbm{G}=\R^{d-2m}\times \mathbbm{H}_{2m+1}$ is straightforward. The technical details are found in \cite[Section 3]{christcubepap}. Let $\mathcal{Q}_C$ denote a Christ cube centered at $0\in \mathbbm{G}$, written $Q(0,\alpha)$ in the notation of \cite[Section 2.C1]{meyersonpaper}. The set $\mathcal{Q}_C$ is open and pre-compact. For a suitable re-scaling, that we suppress for notational simplicity, the family $\{\gamma \mathcal{Q}_C: \gamma \in \Gamma\}$ partitions  $\mathbbm{G}$ up to a set of measure $0$. 

For $\gamma\in \Gamma$ and $\beta\in \Z$ we define
$$\mathcal{Q}_{\gamma,\beta}:=\delta_{2^{\beta}}\left(\gamma.\mathcal{Q}_{C}\right)\subseteq \mathbbm{G}.$$
For any $\beta$, $\mathbbm{G}\setminus \cup_{\gamma\in \Gamma}\mathcal{Q}_{\gamma,\beta}$ has measure zero. We write $\mathcal{I}=\{\mathcal{Q}_{\gamma,\beta}: \,\gamma\in \Gamma, \,\beta\in \Z\}$. We often identify $\mathcal{I}$ with $\Gamma\times \Z$, and sometimes with the subset $\{(\delta_{2^\beta} \gamma,2^\beta): \,\gamma\in \Gamma, \,\beta\in \Z\}\subseteq \mathbbm{G}\times \R_+$. The set $\mathcal{I}$ has the property that if $\mathcal{Q}_1,\mathcal{Q}_2\in \mathcal{I}$ satisfy $\mathcal{Q}_1\cap \mathcal{Q}_2\neq \emptyset$, then $\mathcal{Q}_1\subseteq \mathcal{Q}_2$ or $\mathcal{Q}_2\subseteq \mathcal{Q}_1$. Moreover, for any $(\gamma_1,\beta_1)$ and $\beta_2\geq \beta_1$ there is a unique $\gamma_2$ such that $\mathcal{Q}_{\gamma_1,\beta_1}\subseteq \mathcal{Q}_{\gamma_2,\beta_2}$. For $\mathcal{Q}=\mathcal{Q}_{\gamma,\beta}\in \mathcal{I}$ we write
$$\xi(\mathcal{Q}):=\delta_{2^\beta} \gamma\in \mathbbm{G}\quad\mbox{and}\quad \eta(\mathcal{Q}):=2^\beta\in \R_+.$$
We also write $|\mathcal{Q}|$ for the euclidean volume of $\mathcal{Q}$. Note that $|\mathcal{Q}|\sim \eta(\mathcal{Q})^{d+2}$. Motivated by Rochberg-Semmes, we make the following definition.

\begin{deef}
A sequence $(e_\mathcal{Q})_{\mathcal{Q}\in \mathcal{I}}\subseteq L^2(\mathbbm{G})$ is called an HNWO-sequence (Heisenberg Nearly Weakly Orthogonal) if there is a $C=C((e_\mathcal{Q})_\mathcal{Q})>0$ such that for any $f\in L^2(\mathbbm{G})$, we have 
$$\|f^*\|_{L^2}\leq C\|f\|_{L^2},$$
where 
$$f^*(x):=\sup\left\{ \frac{|\langle f,e_\mathcal{Q}\rangle|}{|\mathcal{Q}|^{1/2}}: \mathcal{Q}\in \mathcal{I} \mbox{  s.t.  }|x^{-1}\xi(\mathcal{Q})|_H<\eta(\mathcal{Q})\right\}.$$
We call $C$ the HNWO-constant of $(e_\mathcal{Q})_\mathcal{Q}$.
\end{deef}

The analogous definition in the euclidean case was called an NWO-sequence in \cite{rochbergsemmes}. The reader is referred to the discussion in \cite{rochbergsemmes} regarding the uses of NWO-sequences. In \cite{feldmanrochberg}, the holomorphic extension of the kernel of the Szeg\"o projection gave rise to the HNWO-sequence needed to prove estimates of singular values.

\begin{prop}[cf. \cite{rochbergsemmes}, bottom of page 239]
\label{lpboundsandhnwo}
Let $(e_\mathcal{Q})_{\mathcal{Q}\in \mathcal{I}}\subseteq L^2(\mathbbm{G})$ be a sequence of functions such that $\supp e_{\mathcal{Q}}\subseteq \delta_3(\mathcal{Q})$ for each $\mathcal{Q}$ and such that there is a $p>2$ with $\|e_\mathcal{Q}\|_{L^p}\leq c|\mathcal{Q}|^{\frac{1}{p}-\frac{1}{2}}$ for each $\mathcal{Q}$. Then $(e_\mathcal{Q})_{\mathcal{Q}\in \mathcal{I}}\subseteq L^2(\mathbbm{G})$ is an HNWO-sequence.
\end{prop}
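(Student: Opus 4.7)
The plan is to run the standard Rochberg--Semmes argument, adapted to the homogeneous group $\mathbbm{G}=\R^{d-2m}\times \mathbbm{H}_{2m+1}$ equipped with its Koranyi gauge. The only geometric ingredients needed beyond $\R^n$-style H\"older's inequality are that $|\delta_3\mathcal{Q}|\sim|\mathcal{Q}|$ and that the family of Koranyi balls supports a Hardy--Littlewood maximal inequality, both of which are standard on homogeneous groups (cf.\ Folland--Stein).

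First, fix $x\in\mathbbm{G}$ and a cube $\mathcal{Q}\in\mathcal{I}$ with $|x^{-1}\xi(\mathcal{Q})|_H<\eta(\mathcal{Q})$. Let $p'=p/(p-1)\in(1,2)$ denote the conjugate exponent. Since $\supp e_\mathcal{Q}\subseteq\delta_3\mathcal{Q}$, H\"older's inequality together with the hypothesis $\|e_\mathcal{Q}\|_{L^p}\leq c|\mathcal{Q}|^{1/p-1/2}$ gives
\begin{equation*}
|\langle f,e_\mathcal{Q}\rangle|\leq \|f\|_{L^{p'}(\delta_3\mathcal{Q})}\|e_\mathcal{Q}\|_{L^p}\leq c\,|\mathcal{Q}|^{1/p-1/2}\|f\|_{L^{p'}(\delta_3\mathcal{Q})}.
\end{equation*}
Dividing by $|\mathcal{Q}|^{1/2}$ and using $1/p-1=-1/p'$ yields
\begin{equation*}
\frac{|\langle f,e_\mathcal{Q}\rangle|}{|\mathcal{Q}|^{1/2}}\leq c\left(\frac{1}{|\mathcal{Q}|}\int_{\delta_3\mathcal{Q}}|f|^{p'}\right)^{1/p'}.
\end{equation*}

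Next, I would compare the right-hand side with the Hardy--Littlewood maximal function on $(\mathbbm{G},|\cdot|_H)$. The localization condition $|x^{-1}\xi(\mathcal{Q})|_H<\eta(\mathcal{Q})$ ensures that $\delta_3\mathcal{Q}$ is contained in a Koranyi ball $B_H(x,C\eta(\mathcal{Q}))$ of volume comparable to $|\mathcal{Q}|$. Consequently,
\begin{equation*}
\frac{1}{|\mathcal{Q}|}\int_{\delta_3\mathcal{Q}}|f|^{p'}\lesssim M_H(|f|^{p'})(x),
\end{equation*}
where $M_H$ denotes the Hardy--Littlewood maximal operator with respect to Koranyi balls. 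Taking the supremum over all admissible $\mathcal{Q}$ gives the pointwise bound $f^*(x)\lesssim M_H(|f|^{p'})(x)^{1/p'}$.

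Finally, since $2/p'>1$, the Folland--Stein maximal theorem on the homogeneous group $\mathbbm{G}$ yields
\begin{equation*}
\|f^*\|_{L^2}^{p'}\lesssim \|M_H(|f|^{p'})\|_{L^{2/p'}}\lesssim \||f|^{p'}\|_{L^{2/p'}}=\|f\|_{L^2}^{p'},
\end{equation*}
so $\|f^*\|_{L^2}\leq C\|f\|_{L^2}$ with $C$ depending on $c$, $p$ and the maximal function constant. The main (mild) obstacle is only bookkeeping: verifying that the dilated cubes $\delta_3\mathcal{Q}$ fit into Koranyi balls of comparable volume centered at any $x$ with $|x^{-1}\xi(\mathcal{Q})|_H<\eta(\mathcal{Q})$, which follows from the homogeneity of $|\cdot|_H$ under $\delta_\lambda$ and the triangle inequality for the Koranyi gauge.
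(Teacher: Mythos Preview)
Your proposal is correct and follows essentially the same route as the paper: H\"older's inequality on $\delta_3\mathcal{Q}$ using the $L^p$ bound, then a pointwise domination by a maximal function of $|f|^{p'}$, and finally the $L^{2/p'}$-boundedness of that maximal function from Folland--Stein. The only cosmetic difference is that the paper uses a cube-based maximal function $Mf(x)=\sup\{|\mathcal{Q}|^{-1}\int_{\mathcal{Q}}|f|:|x^{-1}\xi(\mathcal{Q})|_H<\eta(\mathcal{Q})\}$ directly, whereas you pass through Koranyi balls; the two are equivalent by exactly the volume-comparison step you flag at the end.
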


\begin{proof}
For $\mathcal{Q}\in \mathcal{I}$ and $h\in L^2(\mathbbm{G})$, we estimate for $|x^{-1}\xi(\mathcal{Q})|_H<\eta(\mathcal{Q})$
\begin{align*}
|\mathcal{Q}|^{-1/2} |\langle h, e_\mathcal{Q}\rangle|&\leq c|\mathcal{Q}|^{\frac{1}{p}-1}\|h \chi_{\delta_3(\mathcal{Q})}\|_{L^{p/(p-1)}}\\
&\leq c |\mathcal{Q}|^{-\frac{p-1}{p}}\left(\int_{\delta_3(\mathcal{Q})} |h|^{\frac{p}{p-1}}\right)^{(p-1)/p}\leq c' M(|h|^{\frac{p}{p-1}})^{\frac{p-1}{p}},
\end{align*}
where $M$ is the maximal function defined by 
$$Mf(x):=\sup\left\{ |\mathcal{Q}|^{-1} \int_{\mathcal{Q}} |f|: |x^{-1}\xi(\mathcal{Q})|_H<\eta(\mathcal{Q})\right\}.$$
It follows from \cite[Theorem 2.4.b]{follandstein} (or a classical ball counting argument) that $M$ is bounded on $L^q$ for $q>1$. Hence $\|h^*\|_{L^2}\leq c'\|M\|_{L^{2-\frac{2}{p}}\circlearrowleft}\|h\|_{L^2}$, and $(e_\mathcal{Q})_{\mathcal{Q}\in \mathcal{I}}$ is an HNWO-sequence.
\end{proof}

\begin{lem}[\cite{rochbergsemmes}]
Let $K$ be an integral operator on $L^2(\mathbbm{G})$ with kernel $k=k(x,y)$ and $p,q\in [1,\infty]$.
\begin{enumerate}
\item Suppose that there are two HNWO-sequences $(e_\mathcal{Q})_{\mathcal{Q}\in \mathcal{I}}$ and $(f_\mathcal{Q})_{\mathcal{Q}\in \mathcal{I}}$ such that 
\begin{equation}
\label{decomposinghnwo}
k(x,y)=\sum_{\mathcal{Q}\in \mathcal{I}} \lambda_\mathcal{Q} e_\mathcal{Q}(x)f_\mathcal{Q}(y).
\end{equation}
Then $\|K\|_{\mathcal{L}^{p,q}}\leq C\|(\lambda_\mathcal{Q})_{\mathcal{Q}}\|_{\ell^{p,q}(\mathcal{I})}$, where $C$ depends only on the HNWO-sequences $\{e_\mathcal{Q}\}_{\mathcal{Q}\in \mathcal{I}}$ and $\{f_\mathcal{Q}\}_{\mathcal{Q}\in \mathcal{I}}$.
\item Suppose that $k=\sum_{\mathbbm{k}\in \Z^N} k_{\mathbbm{k}}$, where each $k_{\mathbbm{k}}$ decomposes as in Equation \eqref{decomposinghnwo} for sequences $(\lambda_{\mathcal{Q},\mathbbm{k}})_{\mathcal{Q}\in \mathcal{I}}$ and two HNWO-sequences $(e_{\mathcal{Q},\mathbbm{k}})_{\mathcal{Q}}$ and $(f_{\mathcal{Q},\mathbbm{k}})_{\mathcal{Q}}$ with uniformly bounded HNWO-constants. Then there is a constant $C>0$ only depending on the HNWO-sequences such that for a large enough $a>0$, 
$$\|K\|_{\mathcal{L}^{p,q}}\leq C\sup_{\mathbbm{k}} |\mathbbm{k}|^a\|(\lambda_{\mathcal{Q},\mathbbm{k}})_\mathcal{Q})\|_{\ell^{p,q}(\mathcal{I})}.$$
\end{enumerate}
\end{lem}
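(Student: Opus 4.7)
My plan is to adapt the argument of Rochberg--Semmes to the sub-Riemannian setting, treating the Christ cube family $\mathcal{I}$ as the analog of the Euclidean dyadic grid and the HNWO property as a substitute both for Bessel-type inequalities (which $(e_\mathcal{Q})$ does not satisfy) and for Carleson/maximal bounds.

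\emph{Part (1).} The first step is to establish the operator norm bound $\|K\|_{\Bo(L^2(\mathbbm{G}))}\leq C\sup_\mathcal{Q} |\lambda_\mathcal{Q}|$ with $C$ depending only on the HNWO constants of $(e_\mathcal{Q})$ and $(f_\mathcal{Q})$. Writing $\langle Kg,h\rangle = \sum_\mathcal{Q}\lambda_\mathcal{Q}\overline{\langle \bar g, f_\mathcal{Q}\rangle}\langle e_\mathcal{Q},h\rangle$ and applying Cauchy--Schwarz reduces this to the tent-type estimate
\[
\sum_\mathcal{Q} |\langle g,f_\mathcal{Q}\rangle||\langle h,e_\mathcal{Q}\rangle|\lesssim \|g^{*}\|_{L^2}\|h^{*}\|_{L^2}\lesssim \|g\|_{L^2}\|h\|_{L^2},
\]
where $g^{*},h^{*}$ are the nontangential maximal functions from the definition of HNWO and the first inequality uses pointwise domination $|\langle g,f_\mathcal{Q}\rangle|\lesssim |\mathcal{Q}|^{1/2}g^{*}(x)$ for $x$ in the ball around $\xi(\mathcal{Q})$, combined with the fact that Christ cubes at any fixed scale $\beta$ are almost disjoint. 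Given the operator norm bound, the weak Schatten estimate $\|K\|_{\mathcal{L}^{p,\infty}}\lesssim \|(\lambda_\mathcal{Q})\|_{\ell^{p,\infty}}$ follows by the standard min--max argument: let $\lambda^{*}_1\geq \lambda^{*}_2\geq\cdots$ be the decreasing rearrangement of $(|\lambda_\mathcal{Q}|)$, truncate $K$ to the $N$ indices realizing $\lambda^{*}_1,\dots,\lambda^{*}_N$ to form a rank-$N$ operator $K_N$, and apply the operator norm bound to $K-K_N$ to obtain $\mu_{N+1}(K)\leq C\lambda^{*}_{N+1}$. The general Lorentz indices $(p,q)$ are covered by real interpolation between the endpoint estimates at $(p,\infty)$.

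\emph{Part (2).} Decomposing $K = \sum_{\mathbbm{k}} K_\mathbbm{k}$ and using the quasi-triangle inequality for $\mathcal{L}^{p,q}$, together with the uniform control of the HNWO constants in $\mathbbm{k}$ which makes the constant in Part (1) independent of $\mathbbm{k}$, yields
\[
\|K\|_{\mathcal{L}^{p,q}}\lesssim \sum_{\mathbbm{k}\in \Z^N}\|K_\mathbbm{k}\|_{\mathcal{L}^{p,q}}\leq \Bigl(\sup_\mathbbm{k} |\mathbbm{k}|^{a}\|(\lambda_{\mathcal{Q},\mathbbm{k}})_\mathcal{Q}\|_{\ell^{p,q}}\Bigr)\sum_{\mathbbm{k}\in \Z^N} |\mathbbm{k}|^{-a},
\]
and the last sum converges once $a>N$.

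\emph{Main obstacle.} The delicate step is the bilinear estimate behind Part (1). The family $\{e_\mathcal{Q}\}$ carries no orthogonality, and the HNWO hypothesis delivers only a nontangential maximal function bound. Getting the pointwise domination to cooperate with the almost-disjointness of Christ cubes across the full multiscale tree $\mathcal{I}$---without losing the constant to logarithmic overcounts---is the essential point and depends crucially on the nested structure of $\mathcal{I}$ and the doubling property of Lebesgue measure with respect to Carnot--Carath\'eodory balls. The analogous technical estimate in the Euclidean setting is the core of \cite{rochbergsemmes}, and its extension to $\mathbbm{G}$ is what allows the Heisenberg analogue to go through unchanged.
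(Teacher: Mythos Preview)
Your proposal is correct and follows the same route as the paper, which simply refers the proof of part (1) to \cite[p.~241]{rochbergsemmes} and deduces part (2) from part (1) together with \cite[Lemma 1.16]{rochbergsemmes}. One small simplification: once the min--max step yields $\mu_{N+1}(K)\le C\lambda^*_{N+1}$ pointwise in $N$, the Lorentz bound $\|K\|_{\mathcal{L}^{p,q}}\le C\|(\lambda_\mathcal{Q})\|_{\ell^{p,q}}$ follows directly for all $p,q$ from the definition of the Lorentz norms, so the separate interpolation step is not needed.
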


The proof of part 1) of the lemma can be found on \cite[page 241]{rochbergsemmes} and part 2) follows from part 1) and \cite[Lemma 1.16]{rochbergsemmes}.

\begin{lem}[cf. Proposition 4.1 of \cite{rochbergsemmes}]
\label{threeeq}
Let $f\in L^1_{loc}(\mathbbm{G})$, $p,q\in [1,\infty]$, $\beta>0$ and assume that $\{e_\mathcal{Q}\}_{\mathcal{Q}\in \mathcal{I}}$ is an HNWO-sequence. For $\mathcal{Q}\in \mathcal{I}$, we define 
$$f_\mathcal{Q}:=\frac{1}{|\mathcal{Q}|}\int_\mathcal{Q}f(y)\rd y.$$
For $r\geq 1$, we set
$$\mathrm{Osc}(f,r,\mathcal{Q}):=\left[\frac{1}{|\mathcal{Q}|}\int_\mathcal{Q}|f(x)-f_\mathcal{Q}|^r\rd x\right]^{1/r}.$$
The following statements are equivalent:
\begin{enumerate}
\item The sequence $(\mathrm{Osc}(f,r,\mathcal{Q}))_{\mathcal{Q}}$ belongs to $\ell^{p,q}(\mathcal{I})$ for all $r\geq 1$.
\item The sequence $(\mathrm{Osc}(f,r,\mathcal{Q}))_{\mathcal{Q}}$ belongs to $\ell^{p,q}(\mathcal{I})$ for $r= 1$.
\item There exists a function $F\in C^1(\mathbbm{G}\times \R_+)$ with $\lim_{s\to 0} F(x,s)=f(x)$ and $F^*\in L^{p,q}(\mathbbm{G}\times \R_+, s^{-d-3}\rd s\rd x)$ where
$$F^*(x,s):= \sup\left\{v|\tilde{\nabla}_H F(u,v)|: \; |ux^{-1}|_H<\beta s, v\in \left(s/2,s\right)\right\}.$$
Here $\tilde{\nabla}_H$ denotes the horizontal gradient on $\mathbbm{G}\times \R_+$.
\end{enumerate}
In particular, for any $r\geq 1$, there exists a constant $C>0$ such that 
$$\|(\mathrm{Osc}(f,r,\mathcal{Q}))_{\mathcal{Q}}\|_{\ell^{p,q}(\mathcal{I})}\leq C \inf\{\|F^*\|_{L^{p,q}(\mathbbm{G}\times \R_+, s^{-d-3}\rd s\rd x)}: \;F\in C^1(\mathbbm{G}\times \R_+)\mbox{   with   } F|_{s=0}=f\}.$$
\end{lem}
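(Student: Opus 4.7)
The plan is to follow the scheme of \cite[Proposition 4.1]{rochbergsemmes}, with three implications $(1)\Rightarrow(2)$ (trivial), $(2)\Rightarrow(1)$ via a John--Nirenberg argument on Christ cubes, and $(2)\Leftrightarrow(3)$ via construction and estimation of a horizontal extension. Throughout, the key point is that $(\mathbbm{G},\rd_{CC},\rd x)$ is a space of homogeneous type, so that the standard Calder\'on--Zygmund machinery (maximal operator bounds, decomposition lemmas, John--Nirenberg inequalities) is available; the Christ cubes $\{\mathcal{Q}_{\gamma,\beta}\}$ play the role of Euclidean dyadic cubes and the dilations $\delta_{2^\beta}$ provide the scaling structure.

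For the direction $(2)\Rightarrow(1)$ I would invoke a John--Nirenberg inequality adapted to the family $\mathcal{I}$: for each fixed $\mathcal{Q}\in \mathcal{I}$, $\mathrm{Osc}(f,r,\mathcal{Q})$ can be dominated by (a constant times) a supremum of $\mathrm{Osc}(f,1,\mathcal{Q}')$ over sub-cubes $\mathcal{Q}'\subseteq \mathcal{Q}$ together with averages on cubes of comparable size, by applying a Calder\'on--Zygmund stopping-time decomposition relative to the mean $f_\mathcal{Q}$. The resulting pointwise majorant is controlled by a Hardy--Littlewood type maximal operator on $\ell^{p,q}(\mathcal{I})$, which is bounded by the analogue on spaces of homogeneous type as used in the proof of Proposition \ref{lpboundsandhnwo}. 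This upgrades $\ell^{p,q}$-control for $r=1$ to $\ell^{p,q}$-control for all $r\geq 1$.

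For $(3)\Rightarrow(2)$, given an extension $F\in C^1(\mathbbm{G}\times \R_+)$ with $F|_{s=0}=f$ I would estimate $|f(x)-f(y)|$ for $x,y\in \mathcal{Q}$ by integrating $\tilde{\nabla}_H F$ along a horizontal path in $\mathbbm{G}\times \R_+$ that goes from $(x,0)$ up to scale $s\sim \eta(\mathcal{Q})$, then horizontally to $(y,s)$, then back down to $(y,0)$, using Chow's theorem at the sub-Riemannian level. The contribution of each leg is bounded pointwise by $F^*(x,s)$ or $F^*(y,s)$ (up to a fixed constant depending on $\beta$), so $\mathrm{Osc}(f,1,\mathcal{Q})\lesssim \eta(\mathcal{Q})^{-d-3}\int_{\delta_c \mathcal{Q}}\int_{\eta(\mathcal{Q})/2}^{\eta(\mathcal{Q})}F^*(u,v)\,\rd u\,\rd v$ up to the measure $v^{-d-3}\rd v\rd u$. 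Summing in $\mathcal{Q}$ and applying the $\ell^{p,q}\text{-}L^{p,q}$ duality together with the maximal function bound gives the quantitative estimate claimed in the final sentence of the lemma.

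The hardest direction is $(2)\Rightarrow(3)$: one must \emph{build} the extension $F$ and control $F^*$ by the oscillation sequence. My approach would be to mollify via the sub-Riemannian heat semigroup, setting $F(x,s):=(e^{-s^2\Delta_H}f)(x)$, or equivalently $F(x,s):=\phi_s*f(x)$ for a smooth, compactly supported, $\mathbbm{G}$-homogeneous mollifier $\phi_s$. A telescoping decomposition $F(x,s)-f(x)=\sum_{\beta:2^\beta\leq s}\bigl(F(x,2^{\beta+1})-F(x,2^\beta)\bigr)$, combined with cancellation in $\phi_{s}$, lets one dominate $|v\tilde{\nabla}_H F(u,v)|$ pointwise by a weighted average of oscillations $\mathrm{Osc}(f,1,\mathcal{Q})$ over Christ cubes $\mathcal{Q}$ with $|u\xi(\mathcal{Q})^{-1}|_H\lesssim \eta(\mathcal{Q})\sim v$. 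Taking the supremum over $(u,v)$ as in the definition of $F^*$ yields $F^*(x,s)$ bounded by a maximal average of $\mathrm{Osc}(f,1,\mathcal{Q})$ with $\eta(\mathcal{Q})\sim s$ and $\xi(\mathcal{Q})$ close to $x$, so that $\|F^*\|_{L^{p,q}(\mathbbm{G}\times\R_+,s^{-d-3}\rd s\rd x)}\lesssim \|(\mathrm{Osc}(f,1,\mathcal{Q}))_\mathcal{Q}\|_{\ell^{p,q}(\mathcal{I})}$, again via the Hardy--Littlewood maximal bound on the homogeneous space $(\mathbbm{G},\rd_{CC})$. The main obstacle is precisely this step: the smooth sub-Riemannian heat kernel has no elementary closed form, so the pointwise estimate on $|v\tilde{\nabla}_H F|$ must rely on Gaussian-type bounds for $\nabla_H e^{-t\Delta_H}$ known for Heisenberg-type groups (e.g.\ via Folland--Stein or Jerison--Sanchez-Calle); a more elementary alternative is to replace the heat semigroup by a Littlewood--Paley type extension built directly from the Christ-cube family $\mathcal{I}$ through a partition of unity subordinate to a dyadic covering, which avoids heat-kernel estimates at the cost of slightly more combinatorics.
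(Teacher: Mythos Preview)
Your plan is correct and follows the same route as the paper, which simply defers to \cite[Proposition 4.1]{rochbergsemmes} and notes that the only change in the Heisenberg setting is the volume relation $|\mathcal{Q}|\sim \eta(\mathcal{Q})^{d+2}$, accounting for the weight $s^{-d-3}$ in condition (3); you have correctly identified this and the underlying reason (space of homogeneous type, Christ cubes replacing Euclidean dyadic cubes). One small simplification: rather than proving $(2)\Rightarrow(1)$ directly via a John--Nirenberg argument, observe that your argument for $(3)\Rightarrow(2)$ works verbatim for any $r\geq 1$ (the path-integration bound on $|f(x)-f_\mathcal{Q}|$ is pointwise), so the cycle $(1)\Rightarrow(2)\Rightarrow(3)\Rightarrow(1)$ closes without a separate John--Nirenberg step.
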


The proof proceeds identically to the one of \cite[Proposition 4.1]{rochbergsemmes}, apart from the fact that $|\mathcal{Q}|\sim \eta(\mathcal{Q})^{d+2}$, which explains the different weight appearing in the $L^{p,q}$-space of Condition (3). Motivated by Lemma \ref{threeeq}, we drop $r$ from the notation and write simply $\mathrm{Osc}(f,\mathcal{Q})$.

\begin{deef}
We define
$$\mathrm{Osc}^{p,q}_H(\mathbbm{G}):=\{f\in L^1_{loc}(\mathbbm{G}): (\mathrm{Osc}(f,\mathcal{Q}))_{\mathcal{Q}\in \mathcal{I}}\in \ell^{p,q}\}.$$
If $M$ is a closed sub-Riemannian $H$-manifold, we write 
$$\mathrm{Osc}^{p,q}_H(M):=\{f\in L^1_{loc}(M): f \mbox{    belongs to $\mathrm{Osc}^{p,q}_H$ in each coordinate chart}\}.$$
\end{deef}

It is immediate from the construction that $\mathrm{Osc}^{p,q}_H(\mathbbm{G})$ and $\mathrm{Osc}^{p,q}_H(M)$ are Banach spaces in the norm defined from the embedding into $\ell^{p,q}(\mathcal{I})$. We now come to the main technical result of this subsection.

\begin{thm}
\label{oscestimate}
If $M$ is a closed sub-Riemannian $H$-manifold and $Q\in \Psi^0_H(M)$ there is a constant $C_Q>0$ such that for $a\in \mathrm{Osc}^{p,q}_H(M)$ we can estimate
$$[Q,a]\in \mathcal{L}^{p,q}(L^2(M))\quad\mbox{and}\quad \|[Q,a]\|_{\mathcal{L}^{p,q}}\leq C_Q \|a\|_{\mathrm{Osc}^{p,q}_H(M)}.$$
\end{thm}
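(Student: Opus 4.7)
The statement is local, so after a partition of unity argument and the invariance of the Heisenberg calculus under contact charts, we may reduce to the case $M=\mathbbm{G}$ with $Q\in \Psi^0_H(\mathbbm{G})$ having Schwartz kernel $k_Q$ supported near the diagonal and satisfying the Calder\'on-Zygmund-type estimates of Remark \ref{PsiDOprop}: $|k_Q(x,y)|\lesssim \rd_{CC}(x,y)^{-(d+1)}$ with the corresponding horizontal derivative bounds. The kernel of the commutator is
\[
k_{[Q,a]}(x,y)=k_Q(x,y)\bigl(a(y)-a(x)\bigr),
\]
so the task is to decompose $a(y)-a(x)$ in a way compatible with the Christ-cube structure and then apply part 2) of the HNWO decomposition lemma.

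The plan is to use a Littlewood--Paley/telescoping decomposition subordinate to $\mathcal{I}$. Writing $a_\mathcal{Q}$ for the cube-average as in Lemma \ref{threeeq}, I would build a resolution of unity $\{\psi_\mathcal{Q}\}_{\mathcal{Q}\in \mathcal{I}_\beta}$, for each scale $\beta$, subordinate to a mildly enlarged covering $\{\delta_3 \mathcal{Q}\}$, together with the martingale-type identity
\[
a(x)-a_{\mathcal{Q}_0}=\sum_{\beta\le \beta_0}\,\sum_{\mathcal{Q}\in \mathcal{I}_\beta}\bigl(a_{\mathcal{Q}'}-a_{\mathcal{Q}}\bigr)\chi_{\mathcal{Q}}(x)
\]
(where $\mathcal{Q}'$ is the parent of $\mathcal{Q}$) and an analogous identity for the smooth cutoffs. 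Substituting this into $a(y)-a(x)$ and using the symmetry between the two variables produces, up to a diagonal contribution, a decomposition
\[
k_{[Q,a]}(x,y)=\sum_{\mathbbm{k}\in \Z}\sum_{\mathcal{Q}\in \mathcal{I}}\lambda_{\mathcal{Q},\mathbbm{k}}\,e_{\mathcal{Q},\mathbbm{k}}(x)\,f_{\mathcal{Q},\mathbbm{k}}(y),
\]
where the index $\mathbbm{k}$ records the relative scale of the ``off-diagonal'' pairing, $\supp e_{\mathcal{Q},\mathbbm{k}}\cup\supp f_{\mathcal{Q},\mathbbm{k}}\subseteq \delta_3\mathcal{Q}$, and the coefficients are controlled by $|\lambda_{\mathcal{Q},\mathbbm{k}}|\lesssim 2^{-\epsilon|\mathbbm{k}|}\mathrm{Osc}(a,1,\mathcal{Q})$. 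The exponential factor $2^{-\epsilon|\mathbbm{k}|}$ comes from the homogeneous size/regularity estimates on $k_Q$: differences such as $k_Q(x,y)-k_Q(x,y_\mathcal{Q})$ gain a factor $\rd_{CC}(y,y_\mathcal{Q})/\rd_{CC}(x,y)$ on each annular shell, summed to give the geometric decay in $\mathbbm{k}$.

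To certify that the building blocks are HNWO-sequences, I would verify a uniform $L^p$-bound with $p>2$ on $e_{\mathcal{Q},\mathbbm{k}}$ and $f_{\mathcal{Q},\mathbbm{k}}$ of the form $\|e_{\mathcal{Q},\mathbbm{k}}\|_{L^p}\lesssim (1+|\mathbbm{k}|)^N|\mathcal{Q}|^{1/p-1/2}$; such a bound follows by plugging the kernel estimates of Remark \ref{PsiDOprop} into the definition of $e_{\mathcal{Q},\mathbbm{k}}$ and using that $|\mathcal{Q}|\sim \eta(\mathcal{Q})^{d+2}$, hence Proposition \ref{lpboundsandhnwo} applies with uniformly bounded HNWO constant in $\mathbbm{k}$. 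Part 2) of the HNWO decomposition lemma then yields
\[
\|[Q,a]\|_{\mathcal{L}^{p,q}}\lesssim \sup_{\mathbbm{k}}(1+|\mathbbm{k}|)^{a}\bigl\|(\lambda_{\mathcal{Q},\mathbbm{k}})_{\mathcal{Q}}\bigr\|_{\ell^{p,q}(\mathcal{I})}\lesssim \bigl\|(\mathrm{Osc}(a,\mathcal{Q}))_{\mathcal{Q}}\bigr\|_{\ell^{p,q}(\mathcal{I})}=\|a\|_{\mathrm{Osc}^{p,q}_H(\mathbbm{G})},
\]
which is the desired estimate.

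The main obstacle is the sub-Riemannian analogue of the Calder\'on-type bookkeeping that controls $a(y)-a(x)$ in terms of $\mathrm{Osc}(a,\mathcal{Q})$ on a single chain of Christ cubes joining $x$ to $y$: one must simultaneously exploit that Christ cubes are nested and that the Carnot-Carath\'eodory distance interacts with the Koranyi gauge, so that the annular decomposition of $\mathbbm{G}$ around each fixed $\mathcal{Q}$ has controlled overlap. Once this is carried out carefully--mirroring the euclidean construction of \cite{rochbergsemmes} but using the Christ cube machinery and the anisotropic dilations $\delta_\lambda$--the remaining steps are formal applications of the two lemmas stated above.
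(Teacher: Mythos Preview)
Your sketch takes a genuinely different route from the paper. The paper does not telescope $a$ over a martingale chain of Christ cubes; instead it follows \cite{rochbergsemmes} verbatim and decomposes the \emph{kernel}. One forms a Whitney decomposition $\mathcal{P}$ of $\mathbbm{G}\times\mathbbm{G}\setminus\textnormal{Diag}$ into products $\mathcal{Q}_1\times\mathcal{Q}_2$ of Christ cubes with $\eta(\mathcal{Q}_1)=\eta(\mathcal{Q}_2)\sim\mathrm{dist}(\mathcal{Q}_1\times\mathcal{Q}_2,\textnormal{Diag})$, and on each such product the kernel $k_Q$ is genuinely smooth. A smooth cutoff plus a Euclidean Fourier expansion on a box containing $\mathcal{Q}_1\times\mathcal{Q}_2$ gives $k_{\mathcal{Q}}(x,y)=\sum_{\mathbbm{k}\in\Z^{2(d+1)}}\lambda_{\mathcal{Q},\mathbbm{k}}|\mathcal{Q}_1|^{-1/2}f_{1,\mathcal{Q},\mathbbm{k}_1}(x)|\mathcal{Q}_2|^{-1/2}f_{2,\mathcal{Q},\mathbbm{k}_2}(y)$, with $|\mathbbm{k}|^a|\lambda_{\mathcal{Q},\mathbbm{k}}|\leq C$ for all $a>0$ by smoothness. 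Then one simply writes $a(x)-a(y)=(a(x)-a_{\mathcal{Q}_1})+(a_{\mathcal{Q}_1}-a(y))$ and absorbs each piece into the corresponding $f_{i,\mathcal{Q},\mathbbm{k}_i}$; the oscillation numbers appear because $(a-a_{\mathcal{Q}_1})f_{1,\mathcal{Q},\mathbbm{k}_1}$ has $L^p$-norm controlled by $\mathrm{Osc}(a,r,\mathcal{Q}_1)$. The comparison between the Whitney index set $\mathcal{P}$ and the dyadic index set $\mathcal{I}$ has bounded multiplicity, so the HNWO lemma applies directly.

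Your paraproduct approach could presumably be made to work, but as written it has a gap. You claim $\supp e_{\mathcal{Q},\mathbbm{k}}\cup\supp f_{\mathcal{Q},\mathbbm{k}}\subseteq\delta_3\mathcal{Q}$ for the \emph{same} cube $\mathcal{Q}$, yet the kernel of $[Q,a]$ couples points $x,y$ with $\rd_{CC}(x,y)$ of arbitrary size relative to any fixed scale, and your ``diagonal contribution'' is precisely where the singularity lives and is never addressed. The Whitney decomposition handles this automatically: $\mathcal{Q}_1$ and $\mathcal{Q}_2$ are distinct cubes at separation comparable to their scale, and the diagonal is never touched. The paper's route also avoids your delicate point about exponential decay $2^{-\epsilon|\mathbbm{k}|}$ coming from a single Calder\'on--Zygmund regularity step (which only gives one fixed $\epsilon$), replacing it by arbitrary polynomial decay from honest smoothness on Whitney cubes.
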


For the proof, one argues mutatis mutandis as in \cite[Chapter II.A]{rochbergsemmes}. We only recall its outline. The main difference is that the Calderon-Zygmund kernel $k_Q$ of $Q$ satisfies different estimates (see Remark \ref{PsiDOprop}) than in the Riemannian case (see \cite[Equation (2.2)]{rochbergsemmes}). This does not alter the structure of the proof, since the diagonal growth behavior is dampened by the size of the cubes in \cite[Lemma 2.9]{rochbergsemmes}, as captured in the next lemma.

We form a Whitney decomposition $\mathcal{P}$ of $\mathbbm{G}\times \mathbbm{G}\setminus \textnormal{Diag}$ using Heisenberg dyadic cubes. Here $\textnormal{Diag}:=\{(x,x): x\in \mathbbm{G}\}$ denotes the diagonal. The construction of the Whitney decomposition goes as in \cite[Chapter VI.1]{steindiff} using the dyadic decomposition of $\mathbbm{G}\times \mathbbm{G}$ defined from the lattice $\Gamma\times \Gamma\subseteq \mathbbm{G}\times \mathbbm{G}$ and its Christ cube $\mathcal{Q}_C\times \mathcal{Q}_C\subseteq  \mathbbm{G}\times \mathbbm{G}$. Its main properties are that a cube $\mathcal{Q}=\mathcal{Q}_1\times\mathcal{Q}_2\in \mathcal{P}$ satisfies that $\mathcal{Q}_1,\mathcal{Q}_2\in \mathcal{I}$ and 
$$\mathrm{diam}(\mathcal{Q}_1)= \mathrm{diam}(\mathcal{Q}_2)\sim \eta(\mathcal{Q}_i)\sim \mathrm{dist}_{\rd_H}(\mathcal{Q},\textnormal{Diag}), \quad i=1,2.$$
Moreover, we can guarantee that there is a number $N_0$ such that for a point $x\in \mathbbm{G}\times \mathbbm{G}\setminus \textnormal{Diag}$ there are at most $N_0$ cubes $\mathcal{Q}\in \mathcal{P}$ with $x\in \frac{6}{5}\mathcal{Q}$. 

The characteristic function of a cube $\mathcal{Q}$ will be denoted by $\chi_\mathcal{Q}$. Let $k$ denote the integral kernel of $Q$ and decompose $k=\sum_{\mathcal{Q}\in \mathcal{P}} k_\mathcal{Q}$, where $k_\mathcal{Q}=k\chi_\mathcal{Q}$.

\begin{lem}
Let $k$ denote the Calderon-Zygmund kernel of an operator in $\Psi^0_H(M)$. There exists an $a_0>0$ such that for $a>0$ there is a constant $C=C(k)>0$ such that for any cube $\mathcal{Q}=\mathcal{Q}_1\times \mathcal{Q}_2\in \mathcal{P}$ there exists:
\begin{enumerate}
\item a sequence of numbers $(\lambda_{\mathcal{Q},\mathbbm{k}})_{\mathbbm{k}\in \Z^{2(d+1)}}$ for which $|\mathbbm{k}|^a |\lambda_{\mathcal{Q},\mathbbm{k}}|\leq C$,
\item sequences of functions $(f_{i,\mathcal{Q},\mathbbm{k}_i})_{\mathbbm{k}_i\in \Z^{d+1}}$ on $\mathbbm{G}$, for $i=1,2$ such that $\supp f_{i,\mathcal{Q},\mathbbm{k}_i}\subseteq \mathcal{Q}_i$ and $\|f_{i,\mathcal{Q},\mathbbm{k}_i}\|_{L^\infty}\leq 1$,
\end{enumerate} 
and this data relates to the kernel $k_\mathcal{Q}$ via the identity
$$k_\mathcal{Q}(x,y)=\sum_{\mathbbm{k}=(\mathbbm{k}_1,\mathbbm{k}_2)}\lambda_{\mathcal{Q},\mathbbm{k}} |\mathcal{Q}_1|^{-1/2} f_{1,\mathcal{Q},\mathbbm{k}_1}(x)|\mathcal{Q}_2|^{-1/2} f_{2,\mathcal{Q},\mathbbm{k}_2}(y).$$
\end{lem}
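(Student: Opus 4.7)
Plan of proof: I would work in a single Heisenberg chart modeled on $\mathbbm{G}$ and fix a Whitney cube $\mathcal{Q}=\mathcal{Q}_1\times\mathcal{Q}_2\in\mathcal{P}$ with $\eta:=\eta(\mathcal{Q})$. The idea is to Fourier-expand the kernel $k$ on an anisotropic rectangular box enclosing $\mathcal{Q}$, tailored to the Heisenberg dilation, and to read off the $f_{i,\mathcal{Q},\mathbbm{k}_i}$ as the resulting complex exponentials truncated by $\chi_{\mathcal{Q}_i}$. By the Whitney property, $\frac{6}{5}\mathcal{Q}$ lies at CC-distance $\gtrsim\eta$ from the diagonal, so $k$ is $C^\infty$ on a neighborhood of it; the first task is to extract an anisotropic euclidean derivative bound
\[|\partial_z^\beta\partial_t^{\beta_0}\partial_{z'}^{\beta'}\partial_{t'}^{\beta'_0}k(x,y)|\lesssim\eta^{-Q-(|\beta|+2\beta_0+|\beta'|+2\beta'_0)},\]
where $Q$ is the homogeneous dimension, so that $|\mathcal{Q}_i|\sim\eta^Q$. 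I would deduce this from the horizontal bounds of Remark \ref{PsiDOprop} after a left-translation placing $\xi(\mathcal{Q}_i)$ at $0$: on the translated cube $|z_k|\lesssim\eta$, so the identity $\partial_{z_j}=X_j-\frac{1}{2}\sum_kL_{jk}z_k\partial_t$ exhibits every euclidean partial derivative as a bounded combination of $X_0,X_1,\ldots,X_d$ whose application to $k$ produces the correct Heisenberg weight.

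Next I would enclose each $\frac{6}{5}\mathcal{Q}_i$ in a rectangular box $B_i$ whose horizontal sides have euclidean length $\sim\eta$ and whose vertical side has length $\sim\eta^2$, choose a cutoff $\psi_i\in C^\infty_c(\frac{6}{5}\mathcal{Q}_i)$ with $\psi_i\equiv 1$ on $\mathcal{Q}_i$ and the Heisenberg-adapted derivative bound $|\partial_z^\beta\partial_t^{\beta_0}\psi_i|\lesssim\eta^{-|\beta|-2\beta_0}$, and set $\psi(x,y):=\psi_1(x)\psi_2(y)$. By Leibniz, $\psi k$ is smooth, compactly supported in the interior of $B_1\times B_2$, and inherits the same anisotropic derivative bound as $k$. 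Its $B$-periodic extension is therefore $C^\infty$, so it admits an absolutely convergent Fourier expansion in the $L^2$-orthonormal basis
\[\phi_{\mathbbm{k}_1,\mathbbm{k}_2}(x,y)=|B_1|^{-1/2}|B_2|^{-1/2}\exp\bigl(2\pi i[\mathbbm{k}_1\cdot\tau(x)+\mathbbm{k}_2\cdot\tau(y)]\bigr),\]
where $\tau$ rescales each horizontal euclidean coordinate by $\eta^{-1}$ and the vertical coordinate by $\eta^{-2}$.

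Each integration by parts against $\partial_{z_j}$ (horizontal) contributes a factor $\eta/|(\mathbbm{k}_s)_j|$, and each integration by parts against $\partial_t$ (vertical) contributes a factor $\eta^2/|k_{s,\mathrm{vert}}|$; these exactly cancel the $\eta^{-1}$ or $\eta^{-2}$ lost to the corresponding derivative of $\psi k$. Starting from the trivial bound $|c_{\mathbbm{k},\mathcal{Q}}|\leq\|\phi_\mathbbm{k}\|_\infty\|\psi k\|_{L^1(B)}\lesssim|B|^{1/2}\eta^{-Q}\sim 1$, uniformly in $\mathcal{Q}$, I expect $N$ integrations by parts to yield
\[|c_{\mathbbm{k},\mathcal{Q}}|\lesssim_N(1+|\mathbbm{k}_1|+|\mathbbm{k}_2|)^{-N}\]
uniformly in $\mathcal{Q}$, for every $N\in\N$.

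To assemble the decomposition I would set $f_{i,\mathcal{Q},\mathbbm{k}_i}(x):=\chi_{\mathcal{Q}_i}(x)\exp(2\pi i\mathbbm{k}_i\cdot\tau(x))$ (which has $\supp f_{i,\mathcal{Q},\mathbbm{k}_i}\subseteq\mathcal{Q}_i$ and $\|f_{i,\mathcal{Q},\mathbbm{k}_i}\|_\infty\leq 1$) and $\lambda_{\mathcal{Q},\mathbbm{k}}:=c_{\mathbbm{k},\mathcal{Q}}(|\mathcal{Q}_1||\mathcal{Q}_2|/|B_1||B_2|)^{1/2}$; since $|\mathcal{Q}_i|/|B_i|$ is bounded above and below, the bound $|\mathbbm{k}|^a|\lambda_{\mathcal{Q},\mathbbm{k}}|\leq C$ follows from the Fourier decay by taking $N\geq a$. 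On $\mathcal{Q}_1\times\mathcal{Q}_2$ both $\psi$ and each $\chi_{\mathcal{Q}_i}$ are identically $1$, so the Fourier series of $\psi k$ restricted to $\mathcal{Q}$ becomes the asserted representation of $k_\mathcal{Q}$; outside $\mathcal{Q}$ both sides vanish. I expect the main technical obstacle to be the coordinated anisotropic bookkeeping: the cutoff $\psi_i$, the enclosing box $B_i$, and the Fourier frequencies must all be set up so that every horizontal ($\eta^{-1}$) and vertical ($\eta^{-2}$) loss in a derivative of $\psi k$ is absorbed by the matching $\eta$ or $\eta^2$ produced by integration by parts, and so that the implicit constants are uniform in both $\eta$ and the choice of Whitney cube.
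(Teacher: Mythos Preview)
Your proposal is correct and follows essentially the same approach as the paper: smooth cutoff near the Whitney cube, Fourier expansion on an enclosing box, decay of coefficients from the derivative bounds of the kernel and cutoff, and $f_{i,\mathcal{Q},\mathbbm{k}_i}$ defined as truncated exponentials. Your careful anisotropic bookkeeping (box sides $\sim\eta$ horizontal and $\sim\eta^2$ vertical, so that each integration by parts exactly cancels the corresponding Heisenberg-weighted derivative loss) makes explicit what the paper leaves as ``standard estimates'' on ``a large enough euclidean cube''.
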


The decomposition of $k_\mathcal{Q}$ follows the idea in the proof of \cite[Lemma 2.9]{rochbergsemmes}: extend $k_\mathcal{Q}$ to a smooth compactly supported function $\tilde{k}_{\mathcal{Q}}$ on a $2\epsilon$-neighborhood of $\mathcal{Q}$ (for $\epsilon\sim\eta(\mathcal{Q}_i)/3$ so small that $\{x:\rd_{CC}(x,\mathcal{Q})<3\epsilon\}$ does not intersect the diagonal) by truncating $k$ using a smooth cutoff $\varphi_\mathcal{Q}$. The smooth cutoff $\varphi_\mathcal{Q}$ has to satisfy $\varphi_\mathcal{Q}\in C^\infty_c(\{x:\rd_{CC}(x,\mathcal{Q})<2\epsilon\})$, $\varphi_\mathcal{Q}|_{\mathcal{Q}}=1$ and that $|\partial^{\beta}\varphi_{\mathcal{Q}}|\lesssim \eta(\mathcal{Q}_1)^{-\langle \beta\rangle}$. As above, $\langle \beta\rangle:=2\beta_0+\sum _{i=1}^d \beta_i$ for $\beta=(\beta_0,\beta_1,\ldots,\beta_d)\in \N^{d+1}$. 

We expand $\tilde{k}_{\mathcal{Q}}$ in a double Fourier series on a large enough euclidean cube, 
$$\tilde{k}_{\mathcal{Q}}(x,y)=\sum_{\mathbbm{k}=(\mathbbm{k}_1,\mathbbm{k}_2)}\lambda_{\mathcal{Q},\mathbbm{k}} |\mathcal{Q}_1|^{-1/2} \pmb{e}_{\mathcal{Q},\mathbbm{k}_1}(x)|\mathcal{Q}_2|^{-1/2} \pmb{e}_{\mathcal{Q},\mathbbm{k}_2}(y),$$
where $\pmb{e}_{1,\mathcal{Q},\mathbbm{k}_1}$ and $\pmb{e}_{2,\mathcal{Q},\mathbbm{k}_2}$ are complex exponentials for $\mathbbm{k}_1,\mathbbm{k}_2\in\Z^2$. Since $\tilde{k}_\mathcal{Q}$ is smooth, standard estimates show that the coefficients $(\lambda_{\mathcal{Q},\mathbbm{k}})_{\mathbbm{k}\in I\times I}$ of $\tilde{k}_\mathcal{Q}$ in this expansion satisfy that for $a>0$ there is a constant $C>0$ (depending on $k$ and $a$) such that $|\mathbbm{k}|^a |\lambda_{\mathcal{Q},\mathbbm{k}}|\leq C$. We arrive at the conclusion of the lemma once setting 
$$f_{i,\mathcal{Q},\mathbbm{k}_i}=\mathbbm{e}_{\mathcal{Q},\mathbbm{k}_i}\chi_{\mathcal{Q}_i}.$$

Note that by Proposition \ref{lpboundsandhnwo}, the sequences $(|\mathcal{Q}_i|^{-1/2}f_{i,\mathcal{Q},\mathbbm{k}})_{\mathcal{Q}_i}$ form HNWO-sequences with uniformly bounded HNWO-constant. The argument to prove Theorem \ref{oscestimate} now proceeds as in \cite[Page 252-253]{rochbergsemmes}. We can write the integral kernel of $[Q,a]$ as 
\begin{align*}
\sum_{\mathcal{Q}\in \mathcal{P}}&\sum_{\mathbbm{k}\in \Z^{d+1}} \lambda_{\mathcal{Q},\mathbbm{k}} |\mathcal{Q}|^{-1/2} (a(x)-a_{\mathcal{Q}_1})f_{1,\mathcal{Q},\mathbbm{k}}(x)f_{2,\mathcal{Q},\mathbbm{k}}(y)\\
&+\sum_{\mathcal{Q}\in \mathcal{P}}\sum_{\mathbbm{k}\in \Z^{d+1}} \lambda_{\mathcal{Q},\mathbbm{k}} |\mathcal{Q}|^{-1/2} f_{1,\mathcal{Q},\mathbbm{k}}(x)(a_{\mathcal{Q}_1}-a(y)) f_{2,\mathcal{Q},\mathbbm{k}}(y).
\end{align*}
The HNWO-constants of the sequences appearing compare to the oscillation numbers of $a$ as in \cite[Page 252]{rochbergsemmes}, and the cubes in the Whitney decomposition $\mathcal{P}$ compare with the cubes in the dyadic decomposition $\mathcal{I}$ as in \cite[Page 253]{rochbergsemmes}.

\begin{remark}
In \cite{rochbergsemmes}, there are also sufficient conditions on the integral kernel of an operator $Q$ which guarantee that $[Q,a]\in \mathcal{L}^{p,q}(L^2(M))$ implies that  $a\in \mathrm{Osc}^{p,q}(M)$ (in the Riemannian setting). We note that since $\mathcal{L}^{p,q}$ is an operator ideal, the space $\{a:[Q,a]\in \mathcal{L}^{p,q}(L^2(M))\}$ forms a Banach $*$-algebra. Therefore, $\mathrm{Osc}^{p,q}(M)$ is a Banach $*$-algebra. As this line of questions is not relevant for the computation of Dixmier traces, we do not pursue it here.
\end{remark}

To apply Theorem \ref{oscestimate}, of course, the main issue is to identify the spaces $\mathrm{Osc}^{p,q}_H(M)$ and understand their topology. Based on the discussion in \cite[Chapter 4]{rochbergsemmes}, one is lead to suspect that $ \mathrm{Osc}^{p,q}_H(M)$ coincides with a sub-Riemannian Besov-type space if $p>d+2$ or $q<\infty$. However, it is unclear what the correct definition of Besov spaces is in the sub-Riemannian setting. We will be interested in the case $p=d+2$ and $q=\infty$. Here the results of \cite[Appendix]{connessulltele} lead one to believe that $\mathrm{Osc}^{d+2,\infty}_H(M)$ coincides with $W^{1,d+2}_H(M)$. We only require one of these inclusions.

\begin{prop}
\label{dinftyprop}
There is a bounded inclusion $W^{1,d+2}_H(M)\hookrightarrow \mathrm{Osc}^{d+2,\infty}_H(M)$.
\end{prop}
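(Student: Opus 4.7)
My plan is to exploit the criterion (3) of Lemma \ref{threeeq} and produce, for any $f\in W^{1,d+2}_H(\mathbbm{G})$, an explicit $C^1$ extension $F(x,s)$ to $\mathbbm{G}\times\R_+$ whose sharp function $F^*$ lies in $L^{d+2,\infty}(\mathbbm{G}\times\R_+,s^{-d-3}\rd s\rd x)$ with norm controlled by $\|\nabla_H f\|_{L^{d+2}}$. The assertion is local in the sub-Riemannian chart, so after a partition of unity I may assume $f$ is compactly supported in $\mathbbm{G}$.

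The extension I would use is a Heisenberg mollification: pick $\varphi\in C^\infty_c(\mathbbm{G})$ with $\int\varphi=1$, set $\varphi_s(y):=s^{-Q}\varphi(\delta_{1/s}y)$ with $Q=d+2$ the homogeneous dimension, and define
\[
F(x,s):=(\varphi_s*f)(x).
\]
For the horizontal derivatives, left-invariance of $X_1,\dots,X_d$ gives $X_j F(x,s)=(\varphi_s*X_j f)(x)$, so standard properties of the Heisenberg maximal function $M$ (cf.\ \cite[Theorem 2.4.b]{follandstein}) yield $|X_j F(x,s)|\lesssim M(|\nabla_H f|)(x)$ uniformly in $s$. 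For the $s$-derivative, a direct computation produces $s\partial_s\varphi_s=\tilde\psi_s$ for some $\tilde\psi\in C^\infty_c(\mathbbm{G})$ with the crucial property $\int\tilde\psi=0$ (obtained by differentiating $\int\varphi_s=1$ in $s$). Thus
\[
s\partial_s F(x,s)=\int\tilde\psi_s(y)\bigl(f(y^{-1}x)-f(x)\bigr)\rd y,
\]
and the sub-Riemannian pointwise estimate $|f(y^{-1}x)-f(x)|\lesssim |y|_H\,M(|\nabla_H f|)(x)$, valid for a.e.\ $x$ when $f\in W^{1,p}_H$ (via the standard telescoping argument over Heisenberg balls), combined with $\supp\tilde\psi_s\subseteq B_H(0,Cs)$, gives $|s\partial_s F(x,s)|\lesssim s\,M(|\nabla_H f|)(x)$. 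Altogether
\[
F^*(x,s)\lesssim s\,M(|\nabla_H f|)(x).
\]

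With this pointwise bound in hand, I can estimate the distribution function:
\[
\int\!\!\int_{\{F^*>\lambda\}}\!\!s^{-d-3}\rd s\rd x\;\lesssim\;\int_{\mathbbm{G}}\int_{\lambda/(CM(|\nabla_H f|)(x))}^{\infty}\!\!s^{-d-3}\rd s\,\rd x\;\sim\;\lambda^{-(d+2)}\int_{\mathbbm{G}}\bigl(M(|\nabla_H f|)\bigr)^{d+2}\rd x.
\]
The Hardy--Littlewood theorem on $\mathbbm{G}$ (again \cite[Theorem 2.4.b]{follandstein}) bounds the last integral by $\|\nabla_H f\|_{L^{d+2}}^{d+2}$, so $F^*\in L^{d+2,\infty}(\mathbbm{G}\times\R_+,s^{-d-3}\rd s\rd x)$ with the desired norm control. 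By Lemma \ref{threeeq} this shows $f\in\mathrm{Osc}^{d+2,\infty}_H(\mathbbm{G})$ with $\|f\|_{\mathrm{Osc}^{d+2,\infty}_H}\lesssim\|f\|_{W^{1,d+2}_H}$; the continuity of the inclusion on $M$ follows by partition of unity.

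The main obstacle I expect is handling the $\partial_s$ piece cleanly: unlike horizontal derivatives, which commute past left-invariant convolution, $\partial_s$ requires exploiting the cancellation $\int\tilde\psi=0$ and a quantitative pointwise horizontal Lipschitz estimate for $W^{1,d+2}_H$ functions. Once the mean-zero structure of $s\partial_s\varphi_s$ is recognised and the sub-Riemannian maximal function estimate for $|f(y^{-1}x)-f(x)|$ is invoked, the weighted measure $s^{-d-3}\rd s$ conspires perfectly with the factor $s^{d+2}$ from $F^*$ to yield exactly the weak $L^{d+2}$ bound.
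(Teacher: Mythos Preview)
Your argument is essentially correct and reaches the same endpoint---the bound $F^*(x,s)\lesssim s\cdot g(x)$ with $g\in L^{d+2}$, followed by the identical distribution-function computation---but the route is considerably more elaborate than the paper's. The paper first reduces by density to $f\in W^{1,d+2}_H(\mathbbm{G})\cap C^1_c(\mathbbm{G})$ and then takes the \emph{trivial} extension $F(x,s):=f(x)$. Since $\partial_s F\equiv 0$, there is no need for the mean-zero cancellation argument; one immediately gets $F^*(x,s)\lesssim s\,|\nabla_H f|$ (up to a local supremum over $|ux^{-1}|_H<\beta s$, which at worst introduces a maximal function), and the weak-$L^{d+2}$ bound drops out. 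Your mollified extension $F=\varphi_s*f$ buys you a genuine $C^1$ function without the preliminary density step, at the cost of having to handle $s\partial_s F$ via the mean-zero structure of $\tilde\psi$.

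One caveat: the one-sided pointwise inequality $|f(y^{-1}x)-f(x)|\lesssim |y|_H\,M(|\nabla_H f|)(x)$ that you invoke is not quite what the standard telescoping argument delivers; one typically obtains the symmetric bound $|y|_H\bigl(M(|\nabla_H f|)(x)+M(|\nabla_H f|)(y^{-1}x)\bigr)$. This is harmless for your purposes---after integrating the second term against $|\tilde\psi_s|$ you get $\lesssim s\,M\bigl(M(|\nabla_H f|)\bigr)(x)$, which is still in $L^{d+2}$ by iterated Hardy--Littlewood---but it should be stated accurately.
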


The horizontal Sobolev spaces were defined on page \pageref{horizontalestimaate} and the space $W^{1,p}_H(M)$ in Equation \eqref{horizontalestimaate}. The Riemannian analogue of Proposition \ref{dinftyprop} was proved in \cite[page 228]{otherrochbergsemmes}. 

\begin{proof}
By localizing and using a density argument, we may reduce the proof to showing the following; For $f\in W^{1,d+2}_H(\mathbbm{G})\cap C^1_c(\mathbbm{G})$, an estimate of the form $\|f\|_{\mathrm{Osc}^{d+2,\infty}}\leq C\|f\|_{W^{1,d+2}_H}$ holds. We define 
$$F(x,s):=f(x)$$ 
and estimate $F^*(x,s)\leq s|\nabla_Hf(x)|$, where $\nabla_H$ denotes the horizontal gradient on $\mathbbm{G}$. We compute that
\begin{align*}
m\{(x,s): &\;s|\nabla_Hf(x)|>u\}=\int_{s|\nabla_Hf(x)|>u}\int s^{-d-3}\rd s\rd x\\
&=\frac{1}{d+2}\int |\nabla_Hf(x)|^{d+2} u^{-d-2}\rd x=\frac{\||\nabla_Hf|\|_{L^{d+2}}}{d+2}u^{-d-2}\leq \frac{\|f\|_{W^{1,d+2}_H}}{d+2}u^{-d-2}.
\end{align*}
Therefore $\|F^*\|_{L^{p,q}(\mathbbm{G}\times \R_+, s^{-d-3}\rd s\rd x)}\leq C\|f\|_{W^{1,d+2}_H}$ and the proposition follows from the concluding remark of Lemma \ref{threeeq}.
\end{proof}

\begin{thm}
\label{extedi}
Let $M$ be an $n$-dimensional sub-Riemannian $H$-manifold and $T_0,T_1,\ldots, T_{n+1}\in \Psi^0_H(M)$. Consider the two $n+1$-linear mappings 
\begin{align}
\label{dtraaappcon}
C^\infty(M)^{\otimes n+1}\ni a_1\otimes \cdots \otimes a_{n+1}&\mapsto d(2\pi)^d\tra_\omega(T_0[T_1,a_1]\cdots [T_{n+1},a_{n+1}]),\\
\label{wresaappcon}
C^\infty(M)^{\otimes n+1}\ni a_1\otimes \cdots \otimes a_{n+1}&\mapsto \mathrm{WRes}_H(T_0[T_1,a_1]\cdots [T_{n+1},a_{n+1}]).
\end{align}
The two $n+1$-linear functionals in \eqref{dtraaappcon} and \eqref{wresaappcon} coincide and are both continuous in the $W^{1,n+1}_H$-topology. Moreover, for $a_1,\ldots, a_{n+1}\in W^{1,n+1}_H(M)$
$$\tra_\omega(T_0[T_1,a_1]\cdots [T_{n+1},a_{n+1}])= \mathrm{WRes}_H(T_0[T_1,a_1]\cdots [T_{n+1},a_{n+1}]).$$
In particular, $T_0[T_1,a_1]\cdots [T_{n+1},a_{n+1}]$ is measurable for $a_1,\ldots, a_{n+1}\in \Lip_{CC}(M)$.
\end{thm}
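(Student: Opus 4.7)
The plan is to adapt the strategy of Theorem \ref{ctfforlip} to the sub-Riemannian Heisenberg setting, replacing the classical Wodzicki residue with Ponge's Heisenberg residue $\mathrm{WRes}_H$ from \cite{pongeresidue} and the classical first-order Sobolev control with its horizontal counterpart. The homogeneous dimension of the Heisenberg structure is $d+2=n+1$, matching both the number of commutators and the exponent $W^{1,n+1}_H$; this ensures $T_0[T_1,a_1]\cdots[T_{n+1},a_{n+1}]\in\mathcal{L}^{1,\infty}(L^2(M))$ for the relevant regularity of the $a_j$.

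First, for smooth $a_1,\ldots,a_{n+1}\in C^\infty(M)$ the product $T_0[T_1,a_1]\cdots[T_{n+1},a_{n+1}]$ lies in $\Psi_H^{-(n+1)}(M)$, and Ponge's extension of the Connes residue trace theorem to the Heisenberg calculus yields
\[
d(2\pi)^d\,\tra_\omega\bigl(T_0[T_1,a_1]\cdots[T_{n+1},a_{n+1}]\bigr)=\mathrm{WRes}_H\bigl(T_0[T_1,a_1]\cdots[T_{n+1},a_{n+1}]\bigr),
\]
independent of the extended limit $\omega$. Hence the two functionals \eqref{dtraaappcon} and \eqref{wresaappcon} coincide on $C^\infty(M)^{\otimes(n+1)}$.

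Next, I would establish continuity of both functionals in the $W^{1,n+1}_H$-topology. For the Dixmier trace functional, Proposition \ref{dinftyprop} combined with Theorem \ref{oscestimate} provides the bound $\|[T_j,a]\|_{\mathcal{L}^{n+1,\infty}}\lesssim\|a\|_{W^{1,n+1}_H}$ for each $j$. The H\"older-type inequality for weak Schatten ideals, $\mathcal{L}^{n+1,\infty}\cdots\mathcal{L}^{n+1,\infty}\subseteq\mathcal{L}^{1,\infty}$ for a product of $n+1$ factors, then shows that $(a_1,\ldots,a_{n+1})\mapsto T_0[T_1,a_1]\cdots[T_{n+1},a_{n+1}]\in\mathcal{L}^{1,\infty}(L^2(M))$ is multilinear continuous, whence so is \eqref{dtraaappcon}. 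For \eqref{wresaappcon}, $\mathrm{WRes}_H$ evaluates the principal Heisenberg symbol, which on $[T_j,a_j]$ is a Heisenberg Poisson bracket involving only horizontal derivatives of $a_j$; combining this with the standard $L^{n+1}$-bound on the residue integrand yields continuity in $W^{1,n+1}_H$.

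Since $C^\infty(M)$ is dense in $W^{1,n+1}_H(M)$, the equality \eqref{dtraaappcon}=\eqref{wresaappcon} extends by continuity to $W^{1,n+1}_H(M)^{n+1}$, giving the displayed formula. Because $\Lip_{CC}(M)\subseteq W^{1,\infty}_H(M)\subseteq W^{1,n+1}_H(M)$, the formula applies to Lipschitz functions, and the $\omega$-independence of the right-hand side yields measurability. The hard part will be making rigorous the continuity of \eqref{wresaappcon}, since $\mathrm{WRes}_H$ is a priori defined only for classical Heisenberg operators while for merely $W^{1,n+1}_H$ functions $a_j$ the principal symbol of $[T_j,a_j]$ becomes distributional; this can be handled either by deriving an explicit Heisenberg analog of the symbol formula in Theorem \ref{ctfforlip} or by extending $\mathrm{WRes}_H$ to the relevant larger class by smooth approximation. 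A secondary technical point, the density of $C^\infty(M)$ in $W^{1,n+1}_H(M)$ on closed sub-Riemannian $H$-manifolds, is standard but requires sub-elliptic mollifiers.
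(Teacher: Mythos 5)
Your proposal is correct and follows essentially the same route as the paper, which simply transplants the proof of Theorem \ref{ctfforlip}: continuity of the Dixmier trace functional via Theorem \ref{oscestimate} and Proposition \ref{dinftyprop} (using that the homogeneous dimension is $d+2=n+1$), agreement with $\mathrm{WRes}_H$ on $C^\infty(M)$ by Ponge's residue trace theorem, and extension by density. Your worry about extending $\mathrm{WRes}_H$ to distributional symbols is not needed: one only extends the \emph{functional} \eqref{wresaappcon}, defined on smooth functions by an explicit integral bounded by $W^{1,n+1}_H$-norms, by continuity, rather than $\mathrm{WRes}_H$ itself.
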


Here $\mathrm{WRes}_H$ denotes Ponge's Wodzicki residue in the Heisenberg calculus (see \cite{pongeresidue}). The proof proceeds exactly as the one  of Theorem \ref{ctfforlip} and is left to the reader. Using Theorem \ref{extedi}, we can extend a computation of Englis-Zhang \cite{engzh} to a larger class of functions. 

\begin{cor}
\label{engszhangcor}
Suppose that $M$ is an $n$-dimensional contact manifold and that $P_M\in \Psi^0_H(M)$ is a Szeg\"o projection. For any $a_1,\ldots, a_{n+1}\in W^{1,n+1}_H(M)$ we have the equality
\begin{align*}
\tra_\omega&\left([P_Ma_1P_M,P_Ma_2P_M]\cdots [P_Ma_{n}P_M,P_Ma_{n+1}P_M]\right)\\
&\qquad=\frac{1}{n!(2\pi)^n} \int_{\partial \Omega} \mathcal{L}^*(\bar{\partial}_ba_1,\bar{\partial}_ba_2)\cdots \mathcal{L}^*(\bar{\partial}_ba_{n},\bar{\partial}_ba_{n+1}) \rd V_\theta,
\end{align*}
where $\bar{\partial}_b$ denotes the boundary $\bar{\partial}$-operator, $\mathcal{L}^*$ the dual Levi form and $\rd V_\theta$ the volume form associated with the contact structure. 
\end{cor}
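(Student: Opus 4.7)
The plan is to verify the formula for smooth $a_i$ (where it is precisely the main result of Englis-Zhang in \cite{engzh}) and to extend the identity to $W^{1,n+1}_H(M)$ via a joint continuity and density argument. The Lipschitz statement then follows from the chain of inclusions $\Lip_{CC}(M) \subseteq W^{1,\infty}_H(M) \subseteq W^{1,n+1}_H(M)$.

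For continuity of the left-hand side, first expand the Toeplitz commutators using that the $a_i$'s commute pointwise:
\begin{align*}
[P_M a P_M, P_M b P_M] = P_M\bigl([P_M, a][P_M, b] - [P_M, b][P_M, a]\bigr)P_M.
\end{align*}
The product of the $(n+1)/2$ Toeplitz brackets becomes a finite sum of operators of the form $P_M[P_M, b_1][P_M, b_2]P_M[P_M, b_3] \cdots P_M[P_M, b_{n+1}]P_M$, with each $b_i$ equal to some $a_{\sigma(i)}$. By Theorem \ref{oscestimate} together with Proposition \ref{dinftyprop}, each $[P_M, a_i]$ lies in $\mathcal{L}^{n+1,\infty}(L^2(M))$ with norm bounded by $\|a_i\|_{W^{1,n+1}_H(M)}$; since $P_M$ is bounded on $L^2(M)$, H\"{o}lder's inequality in weak Schatten ideals places each summand in $\mathcal{L}^{1,\infty}(L^2(M))$ with multilinear norm control $\prod_i \|a_i\|_{W^{1,n+1}_H(M)}$. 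Boundedness of $\tra_\omega$ on $\mathcal{L}^{1,\infty}$ then yields joint continuity of the left-hand side in the $W^{1,n+1}_H(M)$-topology.

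Continuity of the right-hand side is the classical H\"{o}lder inequality: the integrand is a product of $(n+1)/2$ scalars $\mathcal{L}^*(\bar{\partial}_b a_{2k-1}, \bar{\partial}_b a_{2k})$, each pointwise dominated by a product of two horizontal derivatives, so the integral is controlled by $\prod_i \|a_i\|_{W^{1,n+1}_H(M)}$. Since $C^\infty(M)$ is dense in $W^{1,n+1}_H(M)$, the Englis-Zhang identity in the smooth case extends to all of $W^{1,n+1}_H(M)^{n+1}$. The main obstacle I anticipate is in the first step: the algebraic expansion of the product of Toeplitz brackets produces many summands of differing shapes, and one must ensure that each really is a finite product of commutators $[P_M, b_i]$ interspersed with bounded operators, so that the sharp estimate of Theorem \ref{oscestimate} combined with the weak-H\"{o}lder bound applies uniformly across all summands. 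This is a matter of careful bookkeeping rather than analytic subtlety; once in hand, the density argument together with \cite{engzh} concludes the proof.
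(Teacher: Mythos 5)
Your proposal is correct and follows essentially the same route as the paper: the paper's proof of this corollary is precisely "combine the smooth-case Englis--Zhang identity with Theorem \ref{extedi}", and Theorem \ref{extedi} is itself established by exactly the continuity-plus-density argument you spell out (Theorem \ref{oscestimate} with Proposition \ref{dinftyprop} giving $[P_M,a_i]\in\mathcal{L}^{n+1,\infty}$ controlled by $\|a_i\|_{W^{1,n+1}_H}$, the weak-Schatten H\"older inequality, boundedness of $\tra_\omega$, and density of $C^\infty(M)$). The algebraic expansion of the Toeplitz brackets into products of commutators $[P_M,b_i]$ interspersed with the bounded operator $P_M$ is the standard identity and poses no obstacle, as you anticipate.
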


\section*{{\bf Acknowledgments}}

H. G. was supported by ERC Advanced Grant HARG 268105. M.G wishes to thank the Knut and Alice Wallenberg foundation for their support. M. G. was supported by the Swedish Research Council Grant 2015-00137 and Marie Sklodowska Curie Actions, Cofund, Project INCA 600398. We gratefully acknowledge additional travel support by the London Mathematical Society, the International Centre for Mathematical Sciences (Edinburgh) and the Centre for Symmetry and Deformation (Copenhagen). We thank Jon Johnsen, Jens Kaad, Steven Lord, Ryszard Nest, Fedor Sukochev and Dmitriy Zanin for fruitful discussions. We thank Michael E.~Taylor for pointing us to his unpublished preprint \cite{hbcom}. We are grateful to Richard Rochberg for suggesting the relevance of the results in \cite{rochbergsemmes} to contact manifolds.

\end{document}